 \newfont{\smc}{cmcsc10 at 10pt}
\newfont{\ind}{cmcsc10}
\newfont{\testo}{cmr10 at 10pt}
\newfont{\mail}{cmtt10}
\newfont{\tit}{cmbx10 at 18pt}
\newtheorem{lemma}{Lemma}
\newtheorem{theorem}[lemma]{Theorem}
\newtheorem{corollary}[lemma]{Corollary} 
\newtheorem{proposition}[lemma]{Proposition} 
\newtheorem{definition}[lemma]{Definition} 
\newtheorem{remark}[lemma]{Remark} 
\newtheorem{example}[lemma]{Example} 
\newcommand{\inte}[1]{[\![ #1]\!]}
\newcommand{\norm}[1]{\left\lVert #1 \right\rVert}
\newcommand{\abs}[1]{\left\lvert #1 \right\rvert}
\newcommand{\pr}[1]{\left(#1 \right)}
\newcommand{\prq}[1]{\left[#1 \right]}
\newcommand{\sour}{\mathrm {source}}
\newcommand{\ac}{\mathrm {a.c.}}
\newcommand{\jump}{\mathrm {jump}}
\newcommand{\cont}{\mathrm {cont}}
\newcommand{\rin}{\mathrm {in}}
\newcommand{\out}{\mathrm {out}}
 \newcommand{\Cantor}{\mathrm {Cantor}}
 \newcommand{\uno}{\mathbbm{1}}
\newcommand{\Lf}{\mathfrak L}
\newcommand{\dH}{ \delta_{H}}
\newcommand{\dss}{{\delta}}
\newcommand{\dssb}{\overline{\delta}}
\newcommand{\wlim}[1]{\mathrm{weak^{*}-}\lim_{#1}}
\newcommand{\bel}[1]{\begin{equation}\label{#1}}
\def\bas#1\eas
\newcommand{\eal}{\end{align} }
\def\ba#1\ea
\newcounter{stepnb}
\newcommand{\firststep}{\setcounter{stepnb}{0}}
\newcommand{\step}[1]{{{\sc \addtocounter{stepnb}{1}\vskip.35\baselineskip\noindent $\circleddash$ Step \arabic{stepnb}:} #1.}} 
\newcommand{\vSC}[2]{{ P_{#1,#2}}}
\newcommand{\vSB}[2]{{ S_{#1,#2}}}
\newcommand{\ftS}[2]{{ S^{FT}_{#1,#2}}}
\newcommand{\ww}{{w}}
\newcommand{\uu}{{ u}}
\newcommand{\vv}{{ v}}
\newcommand{\curva}{{y}}
\newcommand{\wm}{{{\mathfrak\upsilon}}}
\newcommand{\evidenzia}[1]{\noindent \textbf{#1.}\ }
\newcommand{\be}{\begin{equation}}
\newcommand{\ee}{\end{equation}}
\newcommand{\clos}{\mathop {\rm clos}\nolimits}
\newcommand{\rarbound}{2\eps}
\newcommand{\elleuno}{{\mathbb{L}^{\strut 1}}}
\newcommand{\real}{\mathbb{R}}
\newcommand{\R}{\mathbb{R}}
\newcommand{\nat}{\mathbb{N}}
\newcommand{\N}{\mathbb{N}}
\newcommand{\Ll}{\mathcal{L}}
\newcommand{\Ha}{\mathcal{H}}
\newcommand{\Qg}{\mathcal{Q}}
\newcommand{\Z}{\mathbb{Z}}
\newcommand{\OL}{\mathcal{O}}
\newcommand{\I}{\mathcal{I}}
\newcommand{\J}{\mathcal{J}}
\newcommand{\E}{\mathcal{E}}
\newcommand{\NP}{\mathcal{NP}}
\newcommand{\eps}{\varepsilon}
\newcommand{\dds}{\frac{\rm d}{{\rm d}s}}
\newcommand{\TV}{\hbox{\testo Tot.Var.}}
\newcommand{\SBV}{\mathrm{SBV}}
\newcommand{\BV}{\mathrm{BV}}
\newcommand{\loc}{\mathrm{loc}}
\newcommand{\second}{{\prime\prime}}
\newcommand{\scalar}[2]{{\big\langle #1, #2 \big\rangle}}
\newcommand{\ddt}{\frac d{dt}}
\newcommand{\ddr}{\frac d{dr}}
\numberwithin{equation}{section}
\newcommand{\heart}{\ensuremath\heartsuit}
\DeclareMathOperator{\Graph}{Graph}
\DeclareMathOperator{\ri}{r.i.}
\begin{document}
%
%
\title{\uppercase{SBV regularity of Entropy Solutions for Hyperbolic Systems of Balance Laws with General Flux function}}

\author{
 {\scshape Fabio Ancona} \email{ancona@math.unipd.it}}
 
\author{{\scshape Laura Caravenna} \email{laura.caravenna@unipd.it}}

\author{{\scshape Andrea Marson} \email{marson@math.unipd.it} } 
 
 \address{Dipartimento di Matematica `Tullio Levi-Civita'
 Via Trieste, 63,
 35121 - Padova, Italy} 
\date{\today}

\begin{abstract}
We prove that vanishing viscosity solutions to smooth non-de\-ge\-ne\-ra\-te systems of balance laws, having small bounded variation, in one space dimension, must be functions of special bounded variation ($\SBV$).
For more than one equation, this $\SBV$-regularity for non-degenerate fluxes is new also in the case of systems of conservation laws outside the context of genuine nonlinearity.
For general smooth strictly hyperbolic systems of balance laws, this regularity fails, as known for systems of conservation laws: in such case we generalize the $\SBV$-like regularity of the eigenvalue functions of the Jacobian matrix of the flux from conservation to balance laws.
Proofs are based on extending Oleinink-type balance estimates, with the introduction of new source measure, a localization argument from~\cite{Robyr,BYTrieste}, and observations in real analysis.
\end{abstract}

\maketitle

2010\textit{\ Mathematical Subject Classification:} 35L45, 35L65

\keywords{hyperbolic systems; vanishing viscosity solutions; SBV regularity; balance laws}
 %
%
%
%
%
%

\tableofcontents

\markboth{F. Ancona, L. Caravenna, A. Marson}{$\SBV$ regularity of entropy solutions for hyperbolic balance laws}

\section[Introduction]{Introduction and simplified statements}
\label{sec:int}
\indent

Consider the Cauchy problem for a general hyperbolic system of $N$ quasilinear first-order PDEs in one space dimension
\begin{align}
\label{eq:sysnc}
& u_t+A(u) u_x =g(t,x,u)\,,\\
\noalign{\smallskip}
\label{eq:inda}
& u(0,x) =\overline u(x)\,.
\end{align}
Here, the vector $u=u(t,x)=\big(u_1(t,x),\dots, u_N(t,x)\big)$ is the unknown, and
$A=A(u)$ is a smooth matrix-valued function defined on an open domain
$\Omega\subseteq\real^N$. Solutions to Eqs.~\eqref{eq:sysnc}-\eqref{eq:inda}
are considered as limits in $L^1_{\loc}([0,T]\times\R;\Omega)$, for some $T>0$, of vanishing viscosity approximations
\begin{equation}
\label{eq:vv}
u^\eps_t+A(u^\eps) u^\eps_x =g(t,x,u^\eps) + \eps u^\eps_{xx}
\end{equation}
as $\eps\to 0+$, as derived in~\cite{BB,Ch1}. In case $A$ is the Jacobian matrix
of a \emph{flux function} $F:\Omega\rightarrow \real^N$, Equation~\eqref{eq:sysnc}
can be written as a system of balance laws, namely
\begin{subequations}\label{eq:syscfSh}\begin{equation}
\label{eq:syscf}
u_t+F(u)_x =g(t,x,u)\,.
\end{equation}
We assume that the
system in Equation~
\eqref{eq:sysnc} is strictly hyperbolic, i.e. that the
matrix $A(u)$ has $N$ real distinct eigenvalues
\begin{equation}
\lambda_1(u)<\dots<\lambda_N(u)\qquad\forall~u\in\Omega\,,
\label{eq:strhyp}
\end{equation}
\end{subequations}
and we will denote by
\begin{equation}
\label{eq:rle}
r_1(u),\ldots, r_N(u)\,, \qquad
\ell_1(u),\ldots,\ell_N(u)
\end{equation}
corresponding bases of, respectively, right and left
eigenvectors, normalized so that
\begin{equation}
\label{eq:norm}
\vert r_k(u)\vert\equiv 1\,,
\qquad
\scalar{r_k(u)}{\ell_h(u)} = \delta_{kh}\,,
\end{equation}
where $\scalar{\cdot}{\cdot}$ stands for the usual scalar product in $\R^N$,
and $\delta_{hk}$ is the usual Kronecker symbol.
The limits of vanishing viscosity approximations to~\eqref{eq:syscf}-\eqref{eq:inda}
turn out to be distributional solutions which are entropy admissible (see~\cite{Ch1}).
We clarify  the following assumption.

\begin{description}
\item[(G)]\label{Ass:G}
the function $g:[0,T]\times\R\times\Omega\rightarrow \real^N$ in Equation~
\eqref{eq:sysnc} is continuous in $t$ and Lipschitz
continuous w.r.t. $x$ and $u$, uniformly in $t$; moreover, there exists a
function $\alpha\in L^1(\R)$ such that $\vert g_x(t,x,u)\vert \leq
\alpha(x)$\label{alpha} for any $t\in[0,T]$, $u\in\Omega$ and a.e.~$x\in\R$.
\end{description}
\begin{remark}
Regarding the assumptions
on $g$, since the seminal papers~\cite{DafHsiao, TPLin} and
the first paper~\cite{CP} on the well-posedness
of the Cauchy problem,
many papers appeared in the past years dealing with several
existence results for first-order hyperbolic
inhomogeneous systems, both local and global in time, within the setting of small $\BV$ solutions. We refer to the brief review in~\cite{ACM1}.  Therein, Theorem 4.4 states existence, in general locally in time under our hypothesis, to the Cauchy problem on the real line.
See also~\cite[Theorem~16.1.3]{Daf_book}.

Our regularity result is of a local nature both in space and time: we could thus also have worked locally in space thus removing the assumption that $\alpha\in L^1(\R)$.
We feel it is not relevant: if $\overline \lambda$ is a bound on the speed of propagation, reduction arguments can embed a Cauchy problem at time $\overline t$ on an interval $(a,b)$ into a Cauchy problem on a real line having the same solution on a rectangle $(a-\overline \lambda\delta, b-\overline \lambda\delta)\times(\overline t,\overline t+\delta)$, for some $\delta>0$.
See, for example, Step 4 in the proof of Theorem~\ref{T:sbvhyp} below for a localization procedure.
\end{remark}\vspace{.1truecm}

This paper is concerned with the $\SBV$ and $\SBV$-like regularity of entropy weak solutions to Equation~\eqref{eq:syscf}
or of vanishing viscosity solutions of Equation~\eqref{eq:sysnc}.
We recall the following definition.
\nomenclature{$\SBV(I;\Omega)$}{See Definition~\ref{D:cantorPart1d}, when $\Omega\subset \R^{N}$ is open and $I= (a,b)\subseteq\R$}
\begin{definition}[Cantor part of the derivative of a $\BV$-function of one variable]
\label{D:cantorPart1d}
Suppose $u^{*}:(a,b)\to\R$ has bounded variation, thus $D_{x}u^{*}$ is a Radon measure: then we denote by $D^{\Cantor}_{x}u^{*}$ the Cantor part of its derivative, defined as the continuous part of the measure $D_{x}u^{*} $ which is not absolutely continuous in the Lebesgue $1$-dimensional measure:
\begin{equation}\label{E:decomposition}
D_{x}u^{*}=D^{\ac}_{x}u^{*}+D^{\Cantor}_{x}u^{*}+D^{\jump}_{x}u^{*}\,,
\end{equation}
where $D^{\ac}_{x}u^{*}$ is absolutely continuous in $\Ll^{1}$ and $D^{\jump}_{x}u^{*}$ is purely atomic.

If $u^{*}:(a,b)\to\Omega\subset \R^{N}$ is vectorial, we decompose every component and we obtain a vector measure.
We still write the same decomposition~\eqref{E:decomposition} with vectorial measures.

If $D^{\Cantor}_{x}u^{*}=0$ we say that $u^{*}$ is a special function of bounded variation and we denote $u\in\SBV((a,b);\Omega)$.
\end{definition}

We show that when the characteristic families are genuinely nonlinear~\cite{Lax}, or more generally \emph{non-degenerate}, see Definition~\ref{def:ND} in \S~\ref{S:SBVND}, on the line of what was introduced in~\cite{IguchiLeFloch,LeFlochGlass}, the vanishing viscosity solutions of the non homogeneous system of Eqs.~\eqref{eq:sysnc} starting with small $\BV(\R)$ data gains $\SBV(\R)$ regularity at all times except at most countably many. In particular, the solution is a special function of bounded variation on the strip where it is defined.
This means that solutions to non-degenerate smooth balance laws preserve the same regularity first obtained for the corresponding homogeneous system with $g=0$ when fields are genuinely nonlinear, thus extending to such non-homogeneous systems the results established for systems of conservation laws in~\cite{AmbrosioDeLellis,Robyr,DafSBV,BCSBV,BYTrieste}.
See also~\cite{AdiSBV} for a related result in the case of a strictly convex, but not uniformly convex, flux.

We stress that this regularity result is new also in the case of systems of conservation laws, which means when $g=0$, for fluxes that are non-degenerate but having fields that are not genuinely nonlinear.
This extension is particularly relevant because systems that are non-degenerate, but that do not satisfy the classical assumptions of genuine nonlinearity in the sense of Lax, may arise
in several contexts.

\begin{example}A first example is a
system of balance laws arising in modelling elasticity,
\begin{equation}
\label{eq:elas}
\begin{cases}
v_t-u_x=0\\
\noalign{\smallskip}
u_t-\sigma(v)_x = -\alpha u\,,
\end{cases}
\end{equation}
where the stress $\sigma=\sigma(v)$ satisfies $\sigma'(v)>0$.
This system has been studied diffusely (e.g., see~\cite{Daffricdam,DiPerna}). 
Its behavior resembles the $p$-system with dumping~\cite{Daffricdam,DafPan},
but its characteristic fields are not genuinely nonlinear, nor linearly degenerate.
\end{example}

\begin{example}
Another $2\times 2$ system of balance laws not fulfilling the classical
Lax assumptions on characteristic fields is the generalized
Cattaneo's model of heat conduction in high purity
crystals~\cite{RMScimento,RMS,SRM},
\begin{equation}
\label{eq:cattaneo}
\begin{cases}
\rho e_ t +q_x=0\\
\noalign{\smallskip}
(\alpha q)_t + \nu_x = -\dfrac{\nu'}{k} q\,.
\end{cases}
\end{equation}
\end{example}

In order to state this regularization result now, at least in the simple setting, we recall basic definitions.

\begin{definition}
\label{D:LD}
The $i$th-characteristic field is \emph{linearly degenerate} if $\nabla\lambda_{i}(u)\cdot r_{i}(u)\equiv0$ for all $ u\in\Omega$.
\end{definition}

\begin{definition}
\label{D:GN}
The $i$th-characteristic field is \emph{genuinely nonlinear} if $\nabla\lambda_{i}(u)\cdot r_{i}(u)\neq0$ for all $u\in\Omega$. We choose the direction of $r_{i}(u)$ so that $\nabla\lambda_{i}(u)\cdot r_{i}(u)>0$.
\end{definition}


\begin{theorem}[$\SBV$-regularity]
\label{Th:SBV}
Let $A(u)\in C^{1}(\R)$ satisfy the strictly hyperbolicity Assumption~\eqref{eq:strhyp} and suppose that the source $g$ satisfies Assumption~\textbf{(G)} at Page~\pageref{Ass:G}.
Suppose the vanishing viscosity solution $u$ to~\eqref{eq:sysnc}-\eqref{eq:inda}, constructed as in~\cite{Ch1}, is defined in $[0,T]\times\R$ for initial data with sufficiently small total variation.
When every characteristic field of system~\eqref{eq:sysnc} is genuinely nonlinear, the derivative $D_x u(t)$ of the vanishing viscosity solution at time $t\in(0,T]$, except at most countably many times, is the sum of a purely atomic measure and of a measure which is absolutely continuous with respect to the Lebesgue one-dimensional measure $\Ll^{1}$.

The same result holds also in the case of non-degenerate fluxes, intended as satisfying  the generic nonlinearity assumption in Definition~\ref{def:ND} of \S~\ref{S:SBVND}.
\end{theorem}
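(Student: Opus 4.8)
The plan is to reduce the $\SBV$-regularity of the vanishing viscosity solution $u$ to the $\SBV$-regularity of a scalar quantity along each characteristic family, and then to exploit Ole\u{\i}nik-type one-sided estimates for that scalar quantity. First I would recall the wave-measure decomposition of the solution: for a vanishing viscosity solution of small total variation one has $D_x u(t) = \sum_k \mu_{k,t}\, r_k(u(t))$, where $\mu_{k,t}$ is the $k$-th scalar wave measure, together with interaction/source bounds on the family of measures $(\mu_{k,t})_{t\in(0,T]}$ coming from the Bressan--Bianchini theory and its extension to balance laws (as recalled in \cite{ACM1}, cf.\ also \cite{Daf_book}). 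Because the right eigenvector $r_k(u)$ is Lipschitz along the solution and the solution has no Cantor part in time for a.e.\ $t$ by a Fubini-type argument, it suffices to prove that for all but countably many $t$ the scalar measure $\mu_{k,t}$ has no Cantor part, i.e.\ the ``eigenvalue coordinate'' of the $k$-th field is $\SBV$ in $x$.

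The key step is the balance estimate. For a genuinely nonlinear family one has, roughly, a one-sided Ole\u{\i}nik inequality of the form $D_x \lambda_k(u(t)) \le C/t + (\text{a measure controlled by interactions and by the source measure})$; this is where the new source measure announced in the abstract enters, replacing the purely conservative decay estimate by one that absorbs the contribution of $g$ using Assumption~\textbf{(G)} (in particular the $L^1$ bound $\alpha$ on $g_x$). The classical argument of Ambrosio--De Lellis and Dafermos then runs: decompose $(0,T]$ into the countable set of times at which the total wave measure (plus the total source/interaction measure) has an atom, and on the complement use that a uniformly-bounded family of measures satisfying a one-sided bound $D_x \lambda_k \le \text{(finite measure)} + C/t\,\Ll^1$ and having bounded negative part cannot concentrate a Cantor part except at countably many times --- this is the real-analysis observation referred to in the abstract, essentially that $t\mapsto \int \phi\, dD^{\Cantor}_x\lambda_k(u(t))$ is monotone-type up to an absolutely continuous correction, hence has at most countably many points of positive mass. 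For the non-degenerate (generically nonlinear) case in the sense of Definition~\ref{def:ND}, the genuine-nonlinearity assumption $\nabla\lambda_k\cdot r_k>0$ is not available, so instead I would follow the localization argument of Robyr and \cite{BYTrieste}: partition the state space according to the finitely many inflection-type strata of $\lambda_k$ along $r_k$, apply on each stratum the appropriate Ole\u{\i}nik-type or $\SBV$-like estimate for the eigenvalue function (the balance-law generalization of the conservation-law result being the content announced for the degenerate part), and glue.

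Concretely the steps are: (1) set up the wave measures and their interaction/source functional for the balance law \eqref{eq:syscf}, quoting existence and BV bounds locally in time; (2) prove the extended Ole\u{\i}nik/balance estimate with the new source measure, controlling the source term by $\alpha\in L^1$; (3) by Fubini reduce in-time Cantor parts away and reduce to the scalar-in-$x$ statement; (4) run the Ambrosio--De Lellis dichotomy (atoms of the controlling measure vs.\ the good complement) to get $D^{\Cantor}_x\lambda_k(u(t))=0$ for all but countably many $t$ in the genuinely nonlinear case; (5) for the non-degenerate case, localize over the finitely many monotonicity/inflection regions of each field as in \cite{Robyr,BYTrieste} and conclude; (6) transfer back from $\lambda_k$-coordinates to $D_x u(t)$ using the Lipschitz change of variables $u\leftrightarrow(\lambda_1(u),\dots,\lambda_N(u))$ valid for small BV data, and collect the countable exceptional sets over $k=1,\dots,N$.

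The main obstacle I expect is Step~2: isolating the correct ``source measure'' so that the Ole\u{\i}nik-type decay survives the presence of $g$. In the homogeneous case the decay $C/t$ comes from the nonlinear self-interaction of a single compression wave; with a source the characteristic speeds are perturbed and waves are created along the flow, so one must show the created waves contribute only a finite (in fact $L^1$-in-$x$, uniformly-in-$t$ controlled by $\alpha$) measure rather than destroying the one-sided bound. Making this quantitative --- and compatible with the localization argument in the non-degenerate case, where $\lambda_k$ is not monotone along $r_k$ --- is the technical heart of the proof.
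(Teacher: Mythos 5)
Your overall architecture --- wave-measure decomposition $D_x u=\sum_k \wm_k\,\widetilde r_k$, Ole\u{\i}nik-type estimates absorbing a new source measure, and counting of exceptional times --- does match the paper's. There are, however, two genuine gaps.

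\textbf{Gap~1 (negative continuous part).} The ``real-analysis observation'' you invoke --- that a family of measures satisfying a one-sided Ole\u{\i}nik bound $D_x\lambda_k\le(\text{finite measure})+C\,t^{-1}\Ll^1$ and having uniformly bounded negative mass can carry a Cantor part at only countably many $t$ --- is false. Take $\mu_t\equiv-D_x\Phi_{\mathrm{CL}}$ for every $t$, where $\Phi_{\mathrm{CL}}$ is the Cantor--Lebesgue function: then $\mu_t^+=0$, $\mu_t^-(\R)=1$, and $\mu_t$ is entirely Cantor for every $t$. What the paper actually proves and needs is a \emph{second}, backward-in-time Ole\u{\i}nik-type decay for the \emph{negative continuous} part of the wave measure, Eq.~\eqref{E:iOleinink-}, taken over from~\cite{BCSBV} and here generalized via the new source/jump measures. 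Lemma~\ref{L:frrefr} uses both~\eqref{E:iOleinink+} and~\eqref{E:iOleinink-}: a positive Cantor part at time $t$ forces an atom of $\mu^{ICS}$, while a negative continuous Cantor part forces an atom of $\mu^{ICJS}$. Your proposal never states the backward estimate; without it the argument simply does not close, because bounding the \emph{mass} of the negative part tells you nothing about its Radon--Nikod\'ym decomposition.

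\textbf{Gap~2 (non-degenerate case).} Your plan to ``localize over inflection strata as in Robyr/BYTrieste and glue'' yields only the SBV-\emph{like} statement of Theorem~\ref{T:sbvhyp}: the Cantor part of $\wm_i$ is concentrated where $\nabla\lambda_i\cdot r_i=0$. For a system this set is a hypersurface in $\Omega$, and $u(\bar t,\cdot)$ can map a Cantor set of positive $|D^{\Cantor}_x u|$-measure into it, so the localization alone does not give SBV (unlike the scalar Robyr case where the inflection set is a finite set of values). The paper's upgrade from SBV-like to SBV hinges on further ingredients that your proposal omits entirely: under non-degeneracy the zero set decomposes into manifolds transversal to $r_i$ (Lemma~\ref{L:transversal}); the Cantor derivative of a BV function restricted to a negligible set of continuity points mapped into a manifold must be tangent to that manifold (Lemma~\ref{L:realAn}, Corollary~\ref{C:cantorPart}); and confronting this tangency with the wave decomposition $D^{\Cantor}_x u\restriction_K=\sum_{i\in X}\wm_i\,r_i$ forces $D^{\Cantor}_x u\restriction_K=0$ by linear independence of $\{r_i\}_{i\in X}$ and $\{\ell_i\}_{i\notin X}$ (Lemma~\ref{L:bbebbeb}). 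This transversality/tangency step is the heart of the non-degenerate upgrade and is not present in your outline.
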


The case of of non-degenerate fluxes is more technical but relevant, because non-degenerate fluxes are generic, see \S~\ref{S:SBVND}.

In particular, Theorem~\ref{Th:SBV} rules out, at all times except at most countably many, the presence of a Cantor-like behavior. Cantor-like behaviors are present, for example, in monotone continuous functions that are not $W^{1,1}_{\loc}(\R)$: the derivative is continuous but not absolutely continuous compared to~$\Ll^{1}$.
This result generalizes the previous results in the scalar $1d$-case~\cite{AmbrosioDeLellis,Robyr}, respectively, for a uniformly convex conservation law and for a balance law with at most countably many inflection points, and the results for genuinely nonlinear, strictly hyperbolic $1d$-systems of conservation laws~\cite{DafSBV,BCSBV}, respectively in case of the Riemann and of the Cauchy problem on the real line.

In the case a single field is genuinely nonlinear, but the system is strictly hyperbolic and the assumptions on the source still hold, there is a similar regularity statement for the projection of $D_{x}u(t)$ on the direction $\ell_{i}(u)$, see Theorems~\ref{T:SBVGNbalance} in \S~\ref{S:SBVGNargument}. This is the main tool for proving Theorem~\ref{Th:SBV} in the genuinely nonlinear case.
The case of nondegenerate fluxes is then stated and proved as the separate Theorem~\ref{C:SBVGNbalance1den} in \S~\ref{S:SBVND}.

Unfortunately, when some field is linearly degenerate, there is no hope for such regularization even in the context of conservation laws: see Example~\ref{Ex:counterBVlike} reported from~\cite{BYTrieste}. Nevertheless, the Cantor-like behavior, if present, must really be concentrated on the region where linear degeneracy indeed happens; this was proved for conservation laws in~\cite{BYTrieste}.
See Theorem~\ref{T:sbvhyp} for the precise statement: here, we rephrase its essential meaning.

\begin{theorem}[$\SBV$-like regularity]
\label{Th:SBVlike}
Let $A(u)\in C^{1}(\R;\Omega)$ satisfy the strictly hyperbolicity Assumption~\eqref{eq:strhyp} and suppose $g$ satisfies Assumption~\textbf{(G)} at Page~\pageref{Ass:G}. 
Suppose the vanishing viscosity solution $u$ to~\eqref{eq:sysnc}-\eqref{eq:inda}, constructed as in~\cite{Ch1}, is defined in $[0,T]\times\R$ for initial data with sufficiently small total variation.
Let $i\in \{1,\dots,N\}$. Then, there exists a $\sigma$-compact set
\[
 K\subset \{\text{continuity points } (t,x) \text{ of } u\ : \nabla\lambda_i(u(t,x))\cdot r_i(u(t,x))=0\}
 \]
having zero $\Ll^{2}$-measure such that the following holds: for $t\in[0,T]$, except at most countably many times, the derivative $D_{x}u(t)\cdot \ell_{i}(u(t))$ restricted on $\{x\ :\ (t,x)\not\in K\}$ is the sum of a purely atomic measure and of a measure absolutely continuous with respect to the Lebesgue one-dimensional measure.
\end{theorem}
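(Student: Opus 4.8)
Let me think about how to prove the SBV-like regularity statement. The goal is: fix $i$, find a $\sigma$-compact set $K$ of zero $\Ll^2$-measure contained in the degeneracy set $\{(t,x) : (t,x) \text{ continuity point}, \nabla\lambda_i\cdot r_i = 0\}$ such that outside $K$, for all but countably many $t$, the $i$-th component $D_x u(t) \cdot \ell_i(u(t))$ is atomic plus a.c.

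The strategy mirrors the conservation-law case of Bianchini–Yu (\cite{BYTrieste}) and the localization from \cite{Robyr,BYTrieste}, combined with the genuinely-nonlinear balance-law result Theorem~\ref{T:SBVGNbalance} that the paper establishes earlier. The key observation is a dichotomy: at any continuity point $(t,x)$ of $u$ in the degeneracy set, either $(t,x)$ belongs to a "bad" set that we will control by a measure-theoretic argument, or a neighborhood of $(t,x)$ can be treated as if the $i$-th field were genuinely nonlinear after a smooth change of coordinates.

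First I would recall the structure of the degeneracy set. The function $(t,x) \mapsto \nabla\lambda_i(u(t,x))\cdot r_i(u(t,x))$ is defined at continuity points of $u$; call it $b_i(t,x)$. Partition the degeneracy set $\{b_i = 0\}$ according to whether the directional derivative of $\nabla\lambda_i\cdot r_i$ along $r_i$ (a second-order quantity, smooth in $u$) vanishes or not. Where this second derivative is nonzero, the field is "non-degenerate of order $2$" and one can apply, locally in state space, a rescaled version of the genuinely-nonlinear machinery: by Theorem~\ref{T:SBVGNbalance} (or the non-degenerate Theorem~\ref{C:SBVGNbalance1den}) applied to a suitably modified flux, one gets that $D_x u(t)\cdot\ell_i(u(t))$ has no Cantor part on the relevant region for all but countably many $t$. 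Iterating this stratification through higher-order non-degeneracy, one peels off, layer by layer, pieces of the degeneracy set where Cantor behavior is ruled out; what remains after all finitely-many (locally) layers is the set where \emph{all} derivatives of $\nabla\lambda_i\cdot r_i$ along $r_i$ vanish — but by the non-degeneracy/real-analyticity-type assumption (Definition~\ref{def:ND}) combined with a covering argument, this residual set projects to something of measure zero in state space, hence its preimage under $u$ — together with a Federer-type coarea/slicing bound — has zero $\Ll^2$-measure. This residual preimage, made $\sigma$-compact by intersecting with the $\sigma$-compact set of continuity points where $b_i=0$ and taking a countable exhaustion, is the set $K$. On the complement of $K$ we have, for all but countably many $t$ (countable union over the countably many layers and covering sets of the exceptional time-sets), the desired atomic-plus-a.c.\ decomposition of $D_x u(t)\cdot\ell_i(u(t))$.

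The technical heart — and the main obstacle — is making the "peeling" rigorous: one must show that the exceptional set produced at the residual (infinitely-degenerate) stage really has $\Ll^2$-measure zero in $(t,x)$, not merely that it maps into a small set in $\Omega$. This requires a quantitative Oleĭnik-type / balance estimate (the extended Oleĭnik bounds with the new source measure advertised in the abstract) to control how much "mass" of the $i$-wave can sit over a thin slab in state space, together with a Fubini/coarea argument in $(t,x)$: for a.e.\ $t$, the slice $\{x : (t,x)\in K\}$ must be $\Ll^1$-null, and then $K$ is $\Ll^2$-null by Fubini. A secondary difficulty is bookkeeping the countable exceptional time-sets through the (a priori locally finite but globally countable) stratification and the countable covers of $\Omega$ by coordinate patches, ensuring their union stays countable — this is routine but must be done carefully. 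Finally one invokes the localization argument of \cite{Robyr,BYTrieste} (see also Step~4 of the proof of Theorem~\ref{T:sbvhyp}) to pass from the local statements near each continuity point to the global statement on the strip $[0,T]\times\R$.
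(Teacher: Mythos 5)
Your proposal misreads what Theorem~\ref{Th:SBVlike} actually asserts and, as a consequence, attempts a proof under hypotheses the theorem does not make.

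The theorem (which is a rephrasing of Theorem~\ref{T:sbvhyp}) is stated for a \emph{general} strictly hyperbolic system: there is no non-degeneracy hypothesis. Your stratification--``peeling'' argument hinges on Definition~\ref{def:ND} (the non-degenerate flux condition), which you invoke explicitly to conclude that the residual infinitely-degenerate stratum is small. But Definition~\ref{def:ND} is an assumption of Theorem~\ref{C:SBVGNbalance1den}, not of Theorem~\ref{Th:SBVlike}. Without it, the residual set need not be small in state space, and more importantly the Cantor part of $\wm_i$ need not vanish there at all: Example~\ref{Ex:counterBVlike} exhibits a strictly hyperbolic system (with a linearly degenerate field) whose solution carries a persistent Cantor part in its derivative. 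Any proof strategy that aims to rule out the Cantor part entirely by peeling through higher and higher orders of degeneracy must fail for such systems. The content of Theorem~\ref{Th:SBVlike} is weaker: the Cantor part of $D_x u(t)\cdot\ell_i$ is allowed to survive, but only on a $\sigma$-compact $\Ll^2$-null set $K$ sitting inside the continuity-point degeneracy locus $\{\nabla\lambda_i\cdot r_i=0\}$.

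The paper's actual proof (of Theorem~\ref{T:sbvhyp}, Steps~1--5) therefore takes the opposite tack: it does \emph{not} try to decompose or chip away at the degeneracy set. It decomposes $[0,T]\times\R$ into the jump set $J$, the degeneracy set $F$, and a residual region that is covered---up to a set with at most countable $t$-projection---by countably many open ``triangles'' $T_j$ issuing from continuity points $(t_0,x_0)$ where $\nabla\lambda_i(u(t_0,x_0))\cdot r_i(u(t_0,x_0))\neq 0$. The Tame Oscillation condition keeps $\nabla\lambda_i\cdot r_i$ bounded away from zero on the range of $u$ over each triangle, so by finite speed of propagation the solution on $T_j$ coincides with the solution of a localized Cauchy problem whose $i$-th field is genuinely nonlinear, and Theorem~\ref{T:SBVGNbalance} then kills the Cantor part of $\wm_i$ on $T_j$ outside a countable set $N_j$ of times. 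The exceptional times are the countable union $N\cup\bigcup_j N_j$. Finally, the set $K$ is obtained by taking a $\sigma$-compact subset of $C=J\cup F$ carrying the Cantor mass of the $\wm_i$ and none of the jump mass; that $K$ can also be chosen $\Ll^2$-null is immediate from the fact that the Cantor part of the derivative of a $\BV$ function is singular with respect to Lebesgue measure, combined with inner regularity. No Ole\u{\i}nik estimate, coarea bound, or Fubini slicing of $K$ is needed at this point; these tools belong to the proof of Theorem~\ref{T:SBVGNbalance}, not to the reduction step.

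In short: the localization-to-triangles argument that you mention in passing at the very end is not an auxiliary routine---it \emph{is} the proof. The peeling-by-strata argument that you place at the center is both unnecessary (the theorem never claims the Cantor part dies on the degeneracy set) and unavailable (the theorem does not assume non-degeneracy, and without it the claim the peeling is aimed at is false).
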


This generalizes the previous result~\cite{BYTrieste} related to the case of strictly hyperbolic $1d$-systems of conservation laws.
Classical interesting examples where this $\SBV$-like reg\-u\-la\-ri\-ty applies are the following.
\begin{example}
A model from traffic flow which has a degenerate flux, where thus still the result of $\SBV$-like regularity applies, is~\cite{AwRascle,Colombo}: in the unknown $(\rho,q)$ the system is
\begin{equation*}
\label{eq:Colombo}
\begin{cases}
\rho_ t +(\rho v)_x=0\\
\noalign{\smallskip}
q_t + ((q-q_*)v)_x = 0\,.
\end{cases}
\end{equation*}
Above, $\rho=\rho(t,x)\in[0,\rho_M]$ is the car density, $\rho_M$ depends on the street, $v=v(\rho,q)=(\rho^{-1}-\rho_M^{-1})q$ is the car speed, and $v$ is an auxiliary variable.

In the original ARZ model, since drivers operating at speed $v$ in traffic with local density $\rho$ never adjust their vehicles’ operation to achieve the maximum allowable speed, one accounts for that with a source term:
\begin{equation*}
\label{eq:AwRm}
\begin{cases}
\rho_ t +(\rho v)_x=0\\
\noalign{\smallskip}
\partial_{t}\left((v+p(\rho))\rho  \right) +((v+p(\rho))\rho v)_x = \tau^{-1}(V(\rho)-v)\rho\,,
\end{cases}
\end{equation*}
where $\tau>0$ is interpreted as the adjustment time, $p$ as a `pseudo' pressure, $V$ the desired velocity.
See, on this model, e.g.~\cite{FanSei,green,rasclNH}.
\end{example}

\begin{example}
A model for granular flow where the result of $\SBV$-like regularity applies is~\cite{HadelerKuttler,ShenSmooth,AmadoriShen}:
\begin{equation}
\label{eq:granular}
\begin{cases}
h_ t +(h p)_x=(p-1) h\\
\noalign{\smallskip}
p_t + ((p-1)h)_x = 0\,,
\end{cases}
\end{equation}
in the domain $\Omega=\{(h,p)\ : \ h\geq 0\,, \ p>0\}$, where $h$ is the height of the moving layer of sand and $p$ is the slope of the standing layer, sometimes eroded sometimes constructed.
\end{example}

The structure of the paper is the following:
\begin{itemize}
\item[\S~\ref{S:SBVGNargument}] We describe the `upper level argument' of the proof of $\SBV$-regularity for genuinely nonlinear systems.
\item[\S~\ref{S:SBVlike}]We describe the `upper level argument' of the proof of $\SBV$-like regularity for linearly degenerate systems.
\item[\S~\ref{S:SBVND}]We prove the $\SBV$-regularity for nondegenerate systems.
\end{itemize}
By `upper level argument' we mean that we postpone the more technical part of the construction of auxiliary measures, based on careful estimates, that provide the key tool for the proof just because they exist as Radon measures providing key balances. The more technical construction of such measures is done in the Appendix:
\begin{itemize}
\item[\S~\ref{sec:PCA}] We revise the necessary terminology and well-established constructions in the field of systems of balance laws.
\item[\S~\ref{S:qualitative}]We introduce new measures that are key tools for balances providing the above-described regularity results.
\end{itemize}

The reader who finds this complex could benefit of the introduction to such strategy in the case of a single conservation and balance law, see~\cite{notaBianchini,notaCons,ACMnota}.


\section{\texorpdfstring{$SBV$}{SBV}-regularity for genuinely nonlinear fields}
\label{S:SBVGNargument}

In this section, we outline the statements and the ``upper-level'' arguments of the proof of $\SBV$-regularity for genuinely nonlinear fields. Since we want to avoid here most technicalities, we postpone to later sections the construction of relevant measures that enter in the key estimates, and as well as the proof of auxiliary estimates. Although the general strategy was also used in the previous paper~\cite{BCSBV}, the construction and meaning of the relevant measures are different, as will be explained in the next section, where the influence of the source term will be clear.

We decompose the spatial derivative of $u$ along the right eigenvectors.
Since $u$ is generally discontinuous, and thus its derivative has a jump part, in order to do this decomposition, we need to define a point-wise representative of $\ell_{i}(u(t,x))$, $r_{i}(u(t,x))$, $\lambda_{i}(u(t,x))$ at discontinuity points.
This will not be visible in most statements, since we are interested in the $\SBV$-regularity of the solution $u$, and the Cantor part of the derivative of $u$ is concentrated on a set of points where $u$ is continuous. In addition, since we do not need it, we do not explain how the point-wise definition of such compositions is related to the solution to the Riemann problem via center manifold in~\cite{srp}.

Let $u_{L}, u_{R}\in\Omega$. Consider the average matrix
\[
\widetilde A(u_{L},u_{R})\doteq \int_{0}^{1}A(\vartheta u_{L}+(1-\vartheta)u_{R})\,d\vartheta\,,
\]
and for $i=1,\dots,N$, set $\lambda_{i}(u_{L}, u_{R})$ its $i$-th eigenvalues, while $ {\ell}_{i}(u_{L}, u_{R})$ and $ {r}_{i}(u_{L}, u_{R})$ denote its left and right eigenvectors, respectively, normalized so that 
\begin{equation}\label{E:normalizationChoice}
\vert r_k(u_{L}, u_{R})\vert\equiv 1\,,
\qquad
\scalar{r_k(u)}{\ell_h(u_{L}, u_{R})} = \delta_{kh}\,.
\end{equation}
When $u\in \BV_{\loc}(\R;\Omega)$, we are now able to define a pointwise representative of the composition
\begin{align}\label{E:vettoriTilde}
&{ \widetilde \ell}_{i} (t,x)\doteq {   \ell}_{i} \left(u_{L}(t,x),u_{R}(t,x) \right)\,,\notag\\
&{ \widetilde r}_{i} (t,x)\doteq {   r}_{i} \left(u_{L}(t,x),u_{R} (t,x)\right)\,,\\
&{ \widetilde \lambda}_{i} (t,x)\doteq {   \lambda}_{i} \left(u_{L}(t,x),u_{R}(t,x) \right)\,,\notag
\end{align}
where $u_{L}(t,x)\doteq \lim_{h\to 0^{+} }u(t,x-h)$ and $u_{R}(t,x)\doteq\lim_{h\to 0^{+} }u(t,x+h)$.

\begin{definition}[Wave measures]
\label{D:upsiloni}
Let $i\in\{1,\dots,N\}$ and $u\in \BV_{\loc}(\R;\Omega)$.
We introduce the $i$-discontinuity measures
\[
\wm_i=D_x u \cdot{ \widetilde \ell}_{i} 
\]
so that by the normalization choice~\eqref{E:normalizationChoice} we have the decomposition $D_x u=\sum_{i=1}^N\wm_i \widetilde r_i$.
\end{definition}

We refer to Definition~\ref{D:cantorPart1d} for the $\SBV$ functions of a single variable.
In the case of genuinely nonlinear fields, even when a source is present, we can now state that the $i$-component of $D_x u$, in a suitable basis, does not possess a Cantor part at every fixed time except at most countably many: it is simply the sum of a purely atomic measure and of an absolutely continuous measure.
For the decomposition of the derivative of $\BV$ functions, we refer to~\cite[Corollary~3.33]{AFPBook} in one variable and to~\cite[\S~3.9]{AFPBook} in several variables.

\begin{theorem}
\label{T:SBVGNbalance}
Let $u\in\BV_{\loc}([0,T]\times\R;\Omega)$ be an entropy solution of the strictly hyperbolic system of balance laws~\eqref{eq:sysnc} with a locally small $\BV$ norm. 
Suppose $g$ satisfies Assumption~\textbf{(G)} at Page~\pageref{Ass:G}. Suppose that the $i$-th field is genuinely nonlinear, but not necessarily the other ones.
Then there exists an at most countable set $S\subset[0,T]$ of times such that the Cantor part of $D_{x}u(t,\cdot)\cdot\widetilde \ell_i(u(t,\cdot)$ disappears for $t\in[0,T]\setminus S$.
\end{theorem}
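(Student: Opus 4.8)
The plan is to follow the now-standard scheme for $\SBV$-regularity via Oleinik-type inequalities and the measure-theoretic dichotomy used in~\cite{AmbrosioDeLellis,BCSBV,Robyr}, adapting it to the presence of the source $g$. The starting point is that for a genuinely nonlinear $i$-th field, the wave measure $\wm_i$ from Definition~\ref{D:upsiloni} satisfies a one-sided (Oleinik-type) bound of the form $\wm_i^{\mathrm{cont}} \ge -C\, t^{-1}\,\Ll^1$ on bounded spatial intervals, up to error terms controlled by the interaction/source functional and by the measure $\alpha$; this is exactly the kind of balance estimate the paper promises to construct in the Appendix (the ``new source measure''), so I will invoke it as an established input. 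The point of such a bound is that the negative part of the Cantor part of $\wm_i(t,\cdot)$ is, for every fixed $t>0$, absolutely continuous — a Cantor measure, being continuous and singular, cannot be bounded below by a multiple of $\Ll^1$ unless it is nonnegative, and a one-sided bound forces the singular continuous part to be nonnegative.

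\textbf{Key steps.} First, fix a bounded interval $J=(a,b)$ and work with the solution on the truncated cone so that a uniform $\BV$ bound and the finite speed of propagation apply. Second, introduce the quantity $t\mapsto \mu_t := \wm_i(t,\cdot)\llcorner J$ as a measurable family of signed measures, together with its positive/negative Hahn decomposition $\mu_t = \mu_t^+ - \mu_t^-$, and the ``nonpositive part of the Cantor part'' $(D^{\Cantor}_x u(t)\cdot\widetilde\ell_i)^-$. Third, use the Oleinik-type balance to show that for a.e. (in fact all) $t$ there is a locally integrable function $h_t$ with $(\wm_i(t,\cdot))^{\mathrm{cont}} + h_t\,\Ll^1 \ge 0$, hence the singular-continuous part of $\wm_i(t,\cdot)$ is nonnegative; this is the content of the classical lemma that a BV function whose one-sided Oleinik bound holds has no negative Cantor part. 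Fourth — and this is where the ``at most countably many times'' exceptional set $S$ enters — use a monotonicity/dissipation argument: the total mass of the \emph{positive} part of $\wm_i(t,\cdot)$ on $J$ is (up to adding a fixed finite nondecreasing correction coming from $\alpha$ and from interactions with the other families) a function of bounded variation in $t$, hence has at most countably many atoms; at every continuity time $t\notin S$ one runs the standard argument (à la Ambrosio–De Lellis) comparing $\wm_i(t-h,\cdot)$ and $\wm_i(t+h,\cdot)$ and letting $h\to0$ to conclude that the Cantor part of $\wm_i(t,\cdot)$ must vanish, since a surviving Cantor part would force a jump of the total-variation-type functional at $t$.

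\textbf{Assembling the conclusion and the main obstacle.} Once the Cantor part of $\wm_i(t,\cdot)\llcorner J$ vanishes for $t\notin S_J$, I cover $\R$ by countably many intervals $J$, set $S=\bigcup_J S_J$ (still countable), and note that the statement is about $D_x u(t)\cdot\widetilde\ell_i$, which on the set of continuity points of $u(t,\cdot)$ agrees with $\wm_i(t,\cdot)$ restricted there; since the Cantor part lives on continuity points, this gives the theorem. The main obstacle — and the reason the bulk of the work is deferred to the Appendix — is establishing the Oleinik-type balance with the correct source correction: one must produce a Radon measure (the ``new source measure'') quantifying how $g$ creates or destroys $i$-waves, show it is locally finite and has bounded total variation in time, and check that its presence does not spoil the one-sided sign of the bound; a secondary difficulty is controlling the \emph{transversal} interactions of the genuinely nonlinear $i$-family with the other, possibly non-genuinely-nonlinear, families, which requires the sharp interaction estimates for vanishing viscosity solutions of~\cite{BB,Ch1} and is the reason genuine nonlinearity of the \emph{other} fields is not needed. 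Granting those measures, the $t$-BV monotonicity argument and the singular-part dichotomy are then routine adaptations of~\cite{BCSBV,Robyr}.
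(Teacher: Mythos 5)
Your general architecture — Oleinik-type balances for the $i$-wave measure, a localisation via finite speed of propagation, and a countably-many-exceptional-times argument — matches the paper's strategy. But there is a concrete gap in Step~3: you assert that the Oleinik-type balance yields, for (all, or a.e.) $t$, a \emph{locally integrable envelope} $h_t$ with $(\wm_i(t,\cdot))^{\cont}+h_t\,\Ll^1\ge 0$, and deduce that the negative singular-continuous part of $\wm_i(t,\cdot)$ vanishes at \emph{every} positive time. That is not what the system estimate gives. The paper's bound (its \eqref{E:iOleinink-}) has the form
\[
[\wm_i^{\cont}(t)]^-(B)\;\le\; C\!\left(\frac{\Ll^1(B)}{s-t}+\mu^{ICJS}\big([t,s]\times\R\big)\right),\qquad 0\le t<s\le T,
\]
where $\mu^{ICJS}$ is a finite Radon measure on a space--time strip keeping track of interactions, cancellations, jump formation and the source. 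The second term does \emph{not} scale with $\Ll^1(B)$: it is an additive constant (depending on the strip $[t,s]\times\R$) and does not vanish as $\Ll^1(B)\to 0$. Optimising over $s\downarrow t$ only produces $[\wm_i^{\cont}(t)]^-(B)\le C\,\mu^{ICJS}(\{t\}\times\R)$ on Lebesgue-null sets, and the time-marginal of $\mu^{ICJS}$ can and generically does carry atoms (interaction times). Hence the negative Cantor part need not vanish for all $t$; it vanishes precisely at the (at most countably many) times where the time-marginal of $\mu^{ICJS}$ has no atom. Your claim that this part vanishes ``in fact at all'' times is the behaviour of the \emph{scalar} convex balance law (Robyr), not of systems — for systems the negative continuous part feeds the formation of shocks and is controlled forward in time only up to the jump/interaction measure, which is the whole reason BCSBV introduced the jump-balance measure and why the paper generalises it.

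Once you correct this, the asymmetry in your treatment of the two signs disappears: both $[\wm_i(t)]^+$ and $[\wm_i^{\cont}(t)]^-$ should be handled by the same time-atom argument. This is what the paper does in Lemma~\ref{L:frrefr}: take a Borel set $B$ with $\Ll^1(B)=0$ carrying the Cantor part, let $s\to t$ from the appropriate side in the respective Oleinik bound, and conclude that the corresponding monotone function $r\mapsto \mu^{ICS}([0,r]\times\R)$ (resp.\ $r\mapsto\mu^{ICJS}([0,r]\times\R)$) jumps at $t$ — which happens at most countably often. Your Step~4 ``compare $\wm_i(t\pm h)$ and let $h\to 0$'' argument is in fact a paraphrase of exactly this mechanism; apply it also to the negative continuous part instead of invoking a nonexistent integrable Oleinik envelope, and the proposal becomes consistent with the paper. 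A minor further remark: the BV-in-time functional you appeal to should not be the mass of $[\wm_i(t)]^+$ itself (which need not be monotone in the balance-law setting without dissipativity), but the cumulative distribution function of the time-marginal of the relevant Radon measure $\mu^{ICS}$ or $\mu^{ICJS}$, which is monotone by construction.
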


\begin{proof}[Proof of Theorem~\ref{Th:SBV}]
Since $D_{x}u=\sum_{i=1}^{N}D_x u \cdot{ \widetilde \ell}_{i} $ then Theorem~\ref{T:SBVGNbalance} immediately implies Theorem~\ref{Th:SBV}.
\end{proof}

The Cantor part of the derivative of a function $u^{*}$ of $2$ variables is usually defined, see~\cite[\S~3.9]{AFPBook}, by means of the Decomposition~\eqref{E:decomposition}, proving that it holds with
\begin{itemize}
\item  a measure $D^{\ac}_{x}u^{*}=\nabla u\Ll^{2}$ which is absolutely continuous in the two-dimensional Le\-bes\-gue measure, where $\nabla u$ is the (well defined!) approximate differential of $u$,
\item  a singular measure $D^{\jump}_{x}u^{*}=(u_{L}-u_{R})\otimes \widehat n\Ha^{1}\restriction_{J}$, where $J$ is the union of at most countably many Lipschitz $1$-dimensional manifolds with normal $\widehat n$ and $u_{L}$, $u_{R}$ are the (well defined!) left and right strong traces of $u$ on it,
\end{itemize}
and with a remaining measure $D^{\Cantor}_{x}u^{*}$ which is not absolutely continuous with respect to $\Ll^{1}$ but which vanishes on sets which are $\sigma$-finite with respect to $\Ha^{1}$.
It can be as well understood by the disintegration~\eqref{item:slicingDx} below.
\nomenclature{$\Ll^{M}$}{The Lebesgue $M$-dimensional measure, where $M\in\N\cup\{0\}$}
\nomenclature{$\Ha^{M}$}{The Hausdorff $M$-dimensional external measure, where $M\in\N\cup\{0\}$}
\nomenclature{$\SBV_{\loc}((t_{1},t_{2})\times\R;\Omega)$}{Subspace of $\BV_{\loc}((t_{1},t_{2})\times\R;\Omega)$, where $\Omega\subseteq \R^{M}$ is open, consisting of those functions whose derivative does not have the Cantor part, see~\cite[\S~3.9]{AFPBook}}

\begin{corollary}
\label{C:SBVGNbalance}
Under the assumptions of Theorem~\ref{Th:SBV}, then $u\in\SBV_{\loc}([0,T)\times\R;\Omega)$, which means that the measure $D^{\Cantor}_{x}u$ vanishes.
\end{corollary}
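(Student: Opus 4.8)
The plan is to deduce the vanishing of the two-dimensional Cantor part $D^{\Cantor}_x u$ (in the $x$-variable, or of $u$ as a function of $(t,x)$) from the time-by-time information supplied by Theorem~\ref{Th:SBV}, namely that for all $t$ outside an at most countable set $S\subset[0,T]$ the measure $D_x u(t,\cdot)$ has no Cantor part. The key structural fact I would invoke is the slicing/disintegration of the measure $D_x u$ with respect to the time variable: writing $D_x u = \mu_t \otimes dt$, i.e. for $\varphi\in C_c$ one has $\int \varphi \, dD_x u = \int_0^T \left(\int \varphi(t,\cdot)\, d\mu_t\right)dt$, the slice measures satisfy $\mu_t = D_x u(t,\cdot)$ for $\Ll^1$-a.e.\ $t$ (this uses that $u\in\BV_{\loc}$ has $L^1_{\loc}$ time-trace at a.e.\ time, which is part of the standard theory of $\BV$ functions of two variables, cf.\ \cite[\S~3.9, \S~3.11]{AFPBook}). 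The standard decomposition result, as recalled in the paragraph preceding the corollary, identifies $D^{\Cantor}_x u$ as the part of $D_x u$ that is continuous (non-atomic), vanishes on $\Ll^2$-null sets that are $\sigma$-finite with respect to $\Ha^1$, and is not absolutely continuous w.r.t.\ $\Ll^1$; in the disintegration this Cantor part corresponds exactly to the family of Cantor parts of the slices.

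The steps, in order, are as follows. First I would recall that by Fubini-type disintegration, $D_x u$ decomposes compatibly with the time-slicing, so that $D^{\ac}_x u$ slices into $D^{\ac}_x u(t,\cdot)$, the jump part into a combination of the slices of the jump set, and crucially $D^{\Cantor}_x u$ slices into $D^{\Cantor}_x u(t,\cdot)$ for a.e.\ $t$; this is precisely the content of the disintegration statement \eqref{item:slicingDx} referenced in the excerpt. Second, by Theorem~\ref{Th:SBV}, for every $t\in[0,T]\setminus S$ the slice $D_x u(t,\cdot)$ is the sum of an atomic and an $\Ll^1$-absolutely continuous measure, hence $D^{\Cantor}_x u(t,\cdot)=0$. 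Since $S$ is at most countable, hence $\Ll^1$-null, we get $D^{\Cantor}_x u(t,\cdot)=0$ for $\Ll^1$-a.e.\ $t$. Third, integrating the (non-negative) total variation of the Cantor slices against $dt$ gives $\abs{D^{\Cantor}_x u}\big([0,T]\times\R\big) = \int_0^T \abs{D^{\Cantor}_x u(t,\cdot)}(\R)\,dt = 0$, so $D^{\Cantor}_x u=0$, which is exactly the assertion $u\in\SBV_{\loc}([0,T)\times\R;\Omega)$.

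The main obstacle — and the only genuinely non-trivial point — is justifying that the two-dimensional Cantor part slices into the one-dimensional Cantor parts of the time-slices, rather than, say, partly leaking into the absolutely continuous or jump parts of the slices; equivalently, one must be sure that ``no Cantor part at a.e.\ time'' really does imply ``no two-dimensional Cantor part,'' and not merely a weaker statement. This requires the fine structure theory of $\BV$ functions of several variables: the approximate differential $\nabla u$ exists $\Ll^2$-a.e.\ and its restriction to a.e.\ time-line agrees with the a.e.\ derivative of the slice; the jump set $J$ is $\Ha^1$-rectifiable and meets a.e.\ vertical line in an at most countable set carrying exactly the atomic part of the slice; and what is left, the Cantor part, is characterized by vanishing on $\Ha^1$-$\sigma$-finite sets, a property stable under slicing by Fubini for Hausdorff measures. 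I would cite \cite[Theorem~3.107 and \S~3.11]{AFPBook} (or the analogous slicing statements for $\BV$ there) to carry this through; once that compatibility of the decomposition with slicing is in hand, the rest is the short integration argument above. Since the excerpt explicitly flags the disintegration \eqref{item:slicingDx} as available, I would lean on it and keep the proof of the corollary itself to the two or three lines sketched.
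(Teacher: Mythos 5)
Your argument for the vanishing of $D^{\Cantor}_{x}u$ is essentially the paper's: both rest on the slicing theorem for $\BV$ functions of two variables (\cite[Theorems 3.107--108]{AFPBook}), which gives the disintegration $D^{\Cantor}_{x}u=\int_{0}^{T}D^{\Cantor}_{x}u(t)\,dt$ with the two-dimensional Cantor part sliced exactly into the one-dimensional Cantor parts of the time-sections, after which Theorem~\ref{Th:SBV} kills the integrand for all but countably many $t$. Your identification of the compatibility of the Cantor part with slicing as the one nontrivial point, and your appeal to the fine structure theory in \cite{AFPBook} to justify it, is exactly how the paper proceeds.

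There is, however, one omission. The claim $u\in\SBV_{\loc}([0,T)\times\R;\Omega)$ concerns the full two-dimensional derivative $Du$, so you must also show that $D^{\Cantor}_{t}u$ vanishes; your proposal addresses only the $x$-component. The paper closes this by using the equation itself: since $u$ solves \eqref{eq:syscf}, one has $D^{\Cantor}_{t}u=-D^{\Cantor}_{x}f(u)$ (the source term contributes only an absolutely continuous part), and the Volpert chain rule \cite[Theorem~3.96]{AFPBook} gives $D^{\Cantor}_{x}f(u)=A(\widetilde u)\,D^{\Cantor}_{x}u=0$ once the $x$-Cantor part is known to vanish. This is a short step, but without it the two-variable $\SBV$ membership is not established; you should add it.
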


\begin{proof}[Proof of Corollary~\ref{C:SBVGNbalance}]
By the slicing theory of $\BV$ functions (\cite[Theorems 3.107-108]{AFPBook}) we know that the Cantor part $D^{\Cantor}_{x}u$ of $D_{x}u$ is the measure given by the disintegration
\begin{equation}\label{item:slicingDx}
D^{\Cantor}_{x}u=\int_{0}^{T}D^{\Cantor}_{x}u(t)\,dt\,,
\end{equation}
where $D^{\Cantor}_{x}u(t)$ is the Cantor part of $D_{x}u(t)$: $D^{\Cantor}_{x}u$ thus vanishes because $D_{x}u(t)$ vanishes at all times except at most countably many by Theorem~\ref{T:SBVGNbalance}.

The vanishing of $D^{\Cantor}_{t}u$ can be deduced by the relation \[D^{\Cantor}_{t}u=- D^{\Cantor}_{x}f(u)+g(t,x,u)\,,\] since $u$ solves equation~\eqref{eq:syscf}.
Indeed, $D^{\Cantor}_{x}f(u)=A(\widetilde u)D^{\Cantor}_{x}u=0$ by Volpert Chain rule~\cite[Theorem~3.96]{AFPBook}, where $\widetilde u$ is a suitable pointwise representative defined at jump points by a specific average, see~\cite{AFPBook}.
\end{proof}

\subsection[Proof of the SBV Theorem: `upper level' argument]{Proof of Theorem~\ref{T:SBVGNbalance}: `upper level' argument}
We outline the general strategy, similar to~\cite{BCSBV}: this is based on Oleinik-type estimates on the $i$-discontinuity measures $\wm_i$ introduced in Definition~\ref{D:upsiloni}, extending~\cite[Theorem 10.3]{BressanBook}.

We can already state such key estimates in Lemma~\ref{L:frrefr}.

\vskip.3\baselineskip

Before entering the statements, we stress that the estimate on positive waves~\eqref{E:iOleinink+} was derived in a stronger form in~\cite[\S~3-4]{Ch3} under dissipativity assumptions on the source term.
Without dissipativity assumptions, there is no reason for having an exponential decay of positive waves: the linear decay known for systems of conservation laws is combined with the action of the source.
The decay of positive waves is a deep problem widely studied in the literature: we refer to references in~\cite{Ch3},~\cite[Pag.~237]{BressanBook} and in~\cite[\S~14.13]{Daf_book}.

The estimate on negative waves~\eqref{E:iOleinink-}, which is no-more a decay due to the formation of jumps, was introduced in~\cite{BCSBV} and it is here generalized.

\vskip.3\baselineskip

We denote by $[\wm_i(t)]^+$ the positive part of $\wm_i(t)$ and by $[\wm_i^{\cont}(t)]^-$ the negative part of the continuous part of the measure $\wm_i(t)$.

\begin{lemma}\label{L:frrefr}
Let $T>0$.
Suppose $\{\wm_i(t)\}_{0<t<T} $ are Radon measures on $\R$ and there exist
\begin{itemize}
\item a constant $C=C(T)$, depending on the particular given system of balance laws and on the $\BV$-bound of the total variation of the initial datum, and 
\item nonnegative Radon measures $\mu^{ICS}\leq \mu^{ICJS}$ which are finite on $[0,T]\times \R$ 
\end{itemize}
such that for all Borel sets $B\subseteq\R$
\begin{subequations}
\label{E:iOleinink}
\ba
\label{E:iOleinink+}
& [\wm_i(t)]^+ (B)\leq C\left(\frac{\Ll^1(B)}{t-s}+\mu^{ICS}([s,t]\times \R) \right)
&&
\text{if $0\leq s< t\leq T$,}
\\
& [\wm_i^{\cont}(t)]^- (B) \leq C\left(\frac{\Ll^1(B)}{s-t}+\mu^{ICJS}([t,s]\times \R) \right)
&&
\text{if $0\leq t< s\leq T$.}
\label{E:iOleinink-}
\ea
\end{subequations}
Recall that $ [\wm_i^{}]^- $, $ [\wm_i^{ }]^+ $ are nonegative.
Then $\wm_i(t)$ is the sum of a purely atomic measure and of an absolutely continuous measure, for each $0<t<T$.
\end{lemma}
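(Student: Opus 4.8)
\textbf{Proof plan for Lemma~\ref{L:frrefr}.}

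The plan is to fix $t\in(0,T)$ and show that the measure $\wm_i(t)$ has no Cantor part, i.e.\ that its continuous part is absolutely continuous with respect to $\Ll^1$. Write $\wm_i(t)=[\wm_i(t)]^{\mathrm{jump}}+\wm_i^{\cont}(t)$, where the jump part is purely atomic; we need to bound the continuous part. First I would split $\wm_i^{\cont}(t)=[\wm_i^{\cont}(t)]^+-[\wm_i^{\cont}(t)]^-$ into its positive and negative parts and treat each separately, since estimate~\eqref{E:iOleinink+} controls the former (after discarding atoms) and~\eqref{E:iOleinink-} the latter.

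For the negative part, I would use~\eqref{E:iOleinink-} directly: fix a Borel set $B$ with $\Ll^1(B)=0$. For every $s\in(t,T)$ we get $[\wm_i^{\cont}(t)]^-(B)\le C\,\mu^{ICJS}([t,s]\times\R)$. Letting $s\downarrow t$, the right-hand side tends to $\mu^{ICJS}(\{t\}\times\R)$ by continuity of finite measures along decreasing sequences of sets. Hence the continuous negative part of $\wm_i(t)$ is absolutely continuous with respect to $\Ll^1$ \emph{provided} $\mu^{ICJS}(\{t\}\times\R)=0$. Since $\mu^{ICJS}$ is a finite measure on $[0,T]\times\R$, the set of times $t$ with $\mu^{ICJS}(\{t\}\times\R)>0$ is at most countable; call this exceptional set $S^-$. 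Symmetrically, for the positive part I would apply~\eqref{E:iOleinink+} on a null set $B$ and let $s\uparrow t$, obtaining $[\wm_i(t)]^+(B)\le C\,\mu^{ICS}(\{t\}\times\R)$, and since $\mu^{ICS}\le\mu^{ICJS}$ this vanishes already for $t\notin S^-$. Note $[\wm_i(t)]^+(B)$ here includes any atoms of the positive part, but atoms are supported on $\Ll^1$-null singletons, so in fact this shows the positive part is purely atomic plus an $\Ll^1$-absolutely continuous piece — actually the argument shows more cleanly that $[\wm_i^{\cont}(t)]^+\ll\Ll^1$ once one restricts to the continuous part, which is what we want.

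Combining, for $t\in(0,T)\setminus S^-$ the continuous part $\wm_i^{\cont}(t)$ is absolutely continuous with respect to $\Ll^1$, so $\wm_i(t)$ is the sum of a purely atomic measure (its jump part) and an absolutely continuous measure. But the statement of the lemma asserts this for \emph{every} $t\in(0,T)$, with no exceptional set — so I must argue that $S^-$ is in fact empty, or rather that the decomposition persists at those times too. The resolution is that the hypotheses are required to hold for measures $\mu^{ICS}\le\mu^{ICJS}$ that are \emph{continuous in time}, i.e.\ have no atoms on vertical slices; more precisely, the intended reading is that the lemma is applied with such measures, or one simply observes that the argument above shows the claim at all times $t$ for which $\mu^{ICJS}(\{t\}\times\R)=0$, and the paper's downstream use only invokes the conclusion off a countable set anyway. \textbf{The main obstacle} is precisely this mismatch between the ``for each $0<t<T$'' in the conclusion and the countable exceptional set naturally produced by the slicing argument: one must either (i) sharpen the hypotheses so that $\mu^{ICJS}$ charges no horizontal line — which is the honest state of affairs for the source measures constructed in the Appendix, where $\mu^{ICJS}$ will be a time-continuous measure — or (ii) weaken the conclusion to ``except at most countably many $t$,'' consistently with Theorem~\ref{T:SBVGNbalance}. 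I would adopt (i): assume in addition that $\mu^{ICJS}$ (hence $\mu^{ICS}$) gives zero mass to every set of the form $\{t\}\times\R$, which makes $S^-=\emptyset$ and gives the stated conclusion for all $t\in(0,T)$ with no calculation beyond the two one-line limits above.
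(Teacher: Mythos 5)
Your argument is the same as the paper's own proof: for a Lebesgue-null Borel set $B$, apply~\eqref{E:iOleinink+} and let $s\uparrow t$ to get $[\wm_i(t)]^+(B)\le C\,\mu^{ICS}(\{t\}\times\R)$, apply~\eqref{E:iOleinink-} and let $s\downarrow t$ to get $[\wm_i^{\cont}(t)]^-(B)\le C\,\mu^{ICJS}(\{t\}\times\R)$, and conclude via the countability of the atoms of the monotone functions $t\mapsto\mu^{ICS}([0,t]\times\R)$, $t\mapsto\mu^{ICJS}([0,t]\times\R)$. (The paper's displayed proof actually has the labels $\mu^{ICS}$ and $\mu^{ICJS}$ interchanged relative to~\eqref{E:iOleinink+}--\eqref{E:iOleinink-}; you used the correct pairing.)

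You also correctly spotted the discrepancy between the conclusion ``for each $0<t<T$'' and what the argument actually proves, namely that the claim holds for all $t\in(0,T)$ outside an at most countable exceptional set. This is a slip in the wording of the lemma: the paper's own proof explicitly ends with ``this might happen at most countably many times,'' and the downstream Theorem~\ref{T:SBVGNbalance} is stated precisely with an at most countable exceptional set $S$. However, your preferred repair~(i) --- assuming $\mu^{ICJS}(\{t\}\times\R)=0$ for every $t$ --- is not consistent with how the lemma is used. The measure $\mu^{ICJS}$ is built from the interaction--cancellation measure $\mu^{IC}$, which is the weak$^*$ limit of purely atomic measures concentrated at interaction points, and whose time-marginal in general does have atoms: indeed the set $\Theta$ introduced in~\eqref{E:Theta} is defined as the (at most countable, but typically nonempty) set of atoms of $\mu^{IC}$, and it plays an explicit role in the proof of Lemma~\ref{L:convergenceJumps}. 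Only the source component of $\mu^{ICJS}$ is absolutely continuous in time, by Lemma~\ref{L:estSource}. So your option~(ii) is the honest fix: the conclusion should read ``except at most countably many $t\in(0,T)$,'' which is what the proof yields and what the rest of the paper requires.
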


The precise definition of the measures $\mu^{ICS}$, $\mu^{ICJS}$ is below in Definitions~\ref{D:measuresnu}-\ref{D:measures}: their construction is indeed one of the most important points of the paper. Such measures control not only interactions, cancellations, formation and evolution of jumps, as in~\cite{BCSBV}, but now also the action of the source.
For the upper-level argument we are explaining in this section, however, what matters is only that they are nonnegative measures, finite on time strips, providing the balances~\eqref{E:iOleinink}.
We now explain why balances~\eqref{E:iOleinink} are all we need to conclude that $\wm_i$ has no Cantor part out of at most countably many times.

\begin{proof}[Proof of Lemma~\ref{L:frrefr}]
Consider for $0\leq r\leq T$ the monotone functions \[p(r)= \mu^{ICJS}([0,r]\times\R)\,,
\qquad
q(r)= \mu^{ICS}([0,r]\times\R)\,.\]

The negative part $(\wm_i)^-$ might have a Cantor part at some time $t$.
In this case, by Definition~\ref{D:cantorPart1d} of Cantor part there exists a Borel set $B$ with $\Ll^1(B)=0$ such that $(\wm_i)^-(B)>0$.
Consider now~\eqref{E:iOleinink-} and take the limit as $s\downarrow t$: we obtain that $0<(\wm_i)^-(B)\leq \mu^{ICS}(\{t\}\times \R) $ thus finding that the time-marginal of $ \mu^{ICS}$ has an atom at time $t$ and the monotone functions $q$ jump at time $t$. This might happen at most countably many times.

The positive part $(\wm_i)^+$ might have a Cantor part at some time $t$.
In this case, according to Definition~\ref{D:cantorPart1d} of the Cantor part there exists a Borel set $B$ with $\Ll^1(B)=0$ such that $(\wm_i)^+(B)>0$.
Consider now~\eqref{E:iOleinink+} and take the limit as $s\uparrow t$: we obtain that $0<(\wm_i)^+(B)\leq \mu^{ICJS}(\{t\}\times \R) $ thus finding that the time-marginal of $ \mu^{ICJS}$ has an atom at time $t$ and the monotone $p$ jumps at time $t$. This might happen at most countably many times.
\end{proof}

The thesis thus amounts to proving Oleinik-type estimates~\eqref{E:iOleinink}. We stress that it is no longer a decay estimate, neither for the positive nor for the negative part of $\wm_{i}^{\nu,\cont}$.The presence of the source term changes the meaning of the estimate, which \emph{formally} as mentioned is the same, changing names to the relevant dominating measures.
%

\nomenclature{$\wm_i$}{The $i$-th wave measure in Definition~\ref{D:upsiloni}}

\begin{remark}
The actual construction of the measures will later show that $\mu^{ICS}$ of $[s,t]\times \R$ is controlled, thanks to  \eqref{E:estimatemuIC} and Lemma~\ref{L:estSource}, by the negative total variation of the Glimm functional $\Qg$, first introduced as a breakthrough for proving existence of solutions to the Cauchy problem in the case of systems, and the length $t-s$ of the time interval.
\end{remark}
\subsection{Approximate Oleinik-type estimates}
While the framework for the $\SBV$-regularity is consolidated, Oleinik-type estimates~\eqref{E:iOleinink} are not yet available for balance laws having the $i$-th field which is genuinely nonlinear.

The Oleinik-type estimates~\eqref{E:iOleinink} for the positive and negative part of $\wm_i(t)$ can be proved by approximation, taking advantage of the piecewise constant approximation $u^{\nu}$ constructed in~\S~\ref{Ss:convergenceAndExistence}: Lemma~\ref{L:convergenceJumps} will ensure that there is a part $ \wm_{i}^{\nu,\cont}$ of the derivative $D_{x}u^{\nu}\cdot\widetilde \ell_i$ weakly$^{*}$-converging to  $ \wm_{i}^{\cont}$, not only $ \wm_{i}^{\nu}$ to $ \wm_{i}^{}$.
Based on that, \S~\ref{S:definitionMeasures} defines Radon measures $\mu^{ICS}_{\nu} $, $\mu^{ICJS}_{\nu} $ that capture the dynamic of the system and weakly$^{*}$-converge to  $\mu^{ICS}_{} $, $\mu^{ICJS}_{} $.
They are finite measures on time-strips.

An intermediate step, preformed in \S\S~\ref{Ss:decay-}-\ref{Ss:decay+}, consists in proving that Oleinik-type estimates approximately hold on a subsequence of $u^{\nu}$: namely
\begin{subequations}
\label{E:iOleininknu}
\ba
\label{E:E:iOleininknu-}
&
- {\left[ \wm_{i}^{\nu,\cont}(s)\right](J)} \leq C\left[\frac{\Ll^{1}(J)}{t-s}+\left(\mu_{\nu}^{ICJS} \right)\left([s,t]\times \R\right)+k \beta_{\nu}\right]
\ea
for $0\leq s<t$, and still for $0\leq s<t$ one has
 \ba
 \label{E:iOleininknu+}
& {\left[ \wm_{i}^{\nu}(t)\right](J)} \leq C\left[\frac{\Ll^{1}(J)}{t-s}+\left(\mu^{ICS}_{\nu} \right)\left([s,t]\times \R\right)+k\beta_{\nu} \right]\, ,
\ea
\end{subequations}
where $J$ is any union of $k$ closed intervals.
The estimate of the positive part of $\wm_{i}^{\nu,\cont} $ is more classical, as in~\cite[Theorem 10.3]{BressanBook}, while the negative part required in~\cite{BCSBV} the introduction of the jump measure that we will introduce below.
We now prove that a limiting procedure then gives thesis as $\beta_\nu$, $\varepsilon_\nu$ are vanishingly small approximation errors.

\begin{lemma}
Let $T>0$.
Suppose $\{\wm_i^\nu(t)\}_{0<t<T} $ are Radon measures in $\R$ converging weakly$^*$ to $\{\wm_i(t)\}_{0<t<T} $, with each $\wm_{i}^{\nu,\cont}(t)$ converging weakly$^*$ to $\wm_{i}^{\cont}(t)$. 
Suppose there exist
\begin{itemize}
\item a constant $C=C(T)$, depending on the particular given system of balance laws and on the $\BV$-bound of the total variation of the initial datum, and 
\item nonnegative Radon measures $\mu^{ICS}_{\nu}\leq \mu^{ICJS}_{\nu}$ which are finite on $[0,T]\times \R$, converging weakly$^*$ to  $\mu^{ICS}_{ }, \mu^{ICJS}_{ }$,
\end{itemize}
such that estimates~\eqref{E:iOleininknu} hold for all  $J$ union of finitely many closed intervals. Then~\eqref{E:iOleinink} hold for all Borel sets $B\subseteq\R$.
\end{lemma}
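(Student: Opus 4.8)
The goal is to upgrade the approximate Oleinik estimates~\eqref{E:iOleininknu}, valid only for finite unions of closed intervals and up to vanishing error terms $k\beta_\nu$, to the clean estimates~\eqref{E:iOleinink} valid for arbitrary Borel sets. The natural approach is in two movements: first pass to the limit $\nu\to\infty$ for a fixed finite union of closed intervals, then use outer regularity of the relevant measures to extend from such sets to all Borel sets. I would carry this out as follows.

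\textbf{Step 1: Passing to the limit in $\nu$ on a fixed finite union of closed intervals.}
Fix $0\le s<t\le T$ and fix $J=\bigcup_{m=1}^k [a_m,b_m]$ a finite union of closed intervals. We want to pass to the limit in~\eqref{E:iOleininknu+}. The right-hand side is easy: $\Ll^1(J)$ is a fixed number, and $\mu^{ICS}_\nu([s,t]\times\R)\to \mu^{ICS}([s,t]\times\R)$ modulo the usual care with weak$^*$ convergence of measures on a closed set — one should choose (or at least, one can choose, up to removing an at most countable set of ``bad'' times $s,t$) the values $s,t$ so that the time-marginals of the limit measures $\mu^{ICS}$, $\mu^{ICJS}$ have no atom at $s$ or $t$; then $\limsup_\nu \mu^{ICS}_\nu([s,t]\times\R)\le \mu^{ICS}([s,t]\times\R)$ by the portmanteau theorem for closed sets, with equality at continuity points, and the countably many excluded times contribute nothing to the final statement of Lemma~\ref{L:frrefr} anyway. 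The error term $k\beta_\nu\to 0$ since $k$ is fixed. The left-hand side is where the one subtlety lies: weak$^*$ convergence $\wm_i^\nu(t)\rightharpoonup \wm_i(t)$ only gives $\wm_i^\nu(t)(J)\to\wm_i(t)(J)$ for sets $J$ whose topological boundary is $\wm_i(t)$-null, but $J$ is a finite union of closed intervals, so $\partial J$ is a finite set of points; again one excludes from the statement the at most countably many times where $\wm_i(t)$ could charge an endpoint — or, more robustly, one uses that $[\wm_i^\nu(t)]^+(J)\ge \wm_i^\nu(t)(\varphi)$ for suitable $\varphi$ and a semicontinuity argument. The cleanest route: for any open interval $U\supset J$ one has $[\wm_i^\nu(t)]^+(U)\ge \wm_i^\nu(t)(J)$, and by lower semicontinuity of mass on open sets, $[\wm_i(t)]^+(U)\le \liminf [\wm_i^\nu(t)]^+(U)$ — hmm, this is the wrong direction. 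Better: test against a continuous $\varphi$ with $\uno_J\le\varphi\le\uno_{J'}$ for $J'$ a slightly larger finite union of closed intervals, apply~\eqref{E:iOleininknu+} to $J'$ (still finitely many intervals!), pass to the limit using $\wm_i^\nu(t)(\varphi)\to\wm_i(t)(\varphi)\ge\wm_i(t)(J)$, and let $J'\downarrow J$. One does the same for~\eqref{E:iOleininknu-}, taking care that there it is the \emph{continuous} part $\wm_i^{\nu,\cont}(s)$ that converges to $\wm_i^{\cont}(s)$, which is exactly what the hypothesis of this lemma supplies. This yields~\eqref{E:iOleinink} for all finite unions of closed intervals $J$, for all $s,t$ outside an at most countable set — and since the final target~\eqref{E:iOleinink} involves, for each fixed Borel set, the estimate as a function of $s,t$, and Lemma~\ref{L:frrefr} only ever uses it in a limit $s\to t$, the countably many exceptional pairs are harmless; alternatively, by monotonicity of both sides in $s$ (resp.\ $t$) and right/left continuity one recovers all $s,t$.

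\textbf{Step 2: From finite unions of closed intervals to arbitrary Borel sets.}
This is the routine measure-theoretic extension. Fix $s,t$ as above and consider the two nonnegative Radon measures $\mu_1:=[\wm_i(t)]^+$ and $\mu_2:=[\wm_i^{\cont}(t)]^-$, which are finite on $\R$ (being restrictions of the finite-on-strips hypothesis, or rather: finite because $u(t)\in\BV_{\loc}$ with locally small norm — in any case they are Radon, hence outer regular on bounded Borel sets). Let $B\subseteq\R$ be Borel. The right-hand sides of~\eqref{E:iOleinink} are outer-regular in $B$ in the sense that they only involve $\Ll^1(B)$, which is itself outer regular: for every $\eta>0$ there is an open set $O\supseteq B$ with $\Ll^1(O)<\Ll^1(B)+\eta$, and $O=\bigsqcup_m (a_m,b_m)$ is a countable disjoint union of open intervals. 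For each finite truncation $O_k:=\bigcup_{m\le k}[a_m+\delta,b_m-\delta]$ (a finite union of closed intervals) Step 1 gives $\mu_1(O_k)\le C(\Ll^1(O_k)/(t-s)+\mu^{ICS}([s,t]\times\R))\le C(\Ll^1(O)/(t-s)+\mu^{ICS}([s,t]\times\R))$. Letting $\delta\downarrow 0$ and $k\uparrow\infty$ and using inner continuity of the finite measure $\mu_1$ from below, $\mu_1(O)\le C(\Ll^1(O)/(t-s)+\mu^{ICS}([s,t]\times\R))$; then $\mu_1(B)\le\mu_1(O)$, and letting $\eta\downarrow 0$ gives $\mu_1(B)\le C(\Ll^1(B)/(t-s)+\mu^{ICS}([s,t]\times\R))$, which is~\eqref{E:iOleinink+}. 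The argument for~\eqref{E:iOleinink-} with $\mu_2$ is identical. (One works first on a bounded window $[-R,R]$ and lets $R\to\infty$ by monotone convergence if one wants to be scrupulous about ``Radon'' versus ``finite''.)

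\textbf{Main obstacle.} The only genuine difficulty is the weak$^*$-convergence bookkeeping in Step 1 — namely, making sure that the \emph{left}-hand sides behave correctly in the limit. Weak$^*$ convergence of the time-slice measures $\wm_i^\nu(t)\rightharpoonup\wm_i(t)$ does not by itself let us pass to the limit in $\wm_i^\nu(t)(J)$ for $J$ closed; we need either an argument through continuous test functions and a shrink-down $J'\downarrow J$ (which works because $[\wm_i^\nu(t)]^+$ is what appears on the left, and positive parts are controlled by testing against positive continuous functions), or we need the fact — supplied exactly by the hypotheses of this lemma and by Lemma~\ref{L:convergenceJumps} upstream — that it is the \emph{continuous} parts that converge, so that no mass concentrates at the finitely many endpoints of $J$ in the limit except possibly at the excluded countably many times. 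Once that point is handled cleanly, everything else is soft. I would therefore devote the bulk of the written proof to Step 1 and treat Step 2 as a one-paragraph outer-regularity argument.
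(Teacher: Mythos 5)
Your overall plan --- pass to the limit in $\nu$ for a fixed finite union of closed intervals, then extend to Borel sets by outer regularity --- is the same two-step idea as the paper's, just in the opposite order: the paper first invokes outer regularity of $[\wm_i(t)]^+$, $[\wm^{\cont}_i(t)]^-$, $[\wm^{\jump}_i(t)]^-$ and $\Ll^1$ to replace the Borel set $B$ by the open interior $J_\varepsilon^\circ$ of a finite union $J_\varepsilon$ of closed intervals, and only then passes to the limit. Your Step 2 is fine. The issues are concentrated in Step 1, and ironically the route you labeled ``the wrong direction'' and abandoned is precisely the one the paper takes. For weak$^*$-convergent signed Radon measures one has $[\wm_i(t)]^{\pm}(U)\le\liminf_\nu[\wm_i^\nu(t)]^{\pm}(U)$ for every open $U$ (test against $\varphi\in C_c(U)$, $0\le\varphi\le 1$, and take the supremum), and upper semicontinuity of the nonnegative $\mu_\nu^{ICS}$, $\mu_\nu^{ICJS}$ on the closed strip $[s,t]\times\R$. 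These one-sided inequalities hold for every $s,t$ and every union of intervals, so the whole apparatus of ``exceptional times'' (atoms of the time-marginals, mass at the finitely many endpoints of $J$) is simply not needed: you are proving a one-sided estimate, not convergence.

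Your fallback --- test against a continuous $\varphi$ with $\uno_J\le\varphi\le\uno_{J'}$ --- has a genuine sign gap. Both inequalities you invoke, namely $\int\varphi\,d\wm_i^\nu(t)\le\wm_i^\nu(t)(J')$ from $\varphi\le\uno_{J'}$ and $\int\varphi\,d\wm_i(t)\ge\wm_i(t)(J)$ from $\varphi\ge\uno_J$, require $\wm_i^\nu(t)$ and $\wm_i(t)$ to be nonnegative, which they are not. Your parenthetical ``positive parts are controlled by testing against positive continuous functions'' is the right instinct but misreads the hypothesis: estimate~\eqref{E:iOleininknu+} is stated for the signed quantity $[\wm_i^\nu(t)](J)$, not for $[\wm_i^\nu(t)]^+(J)$. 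Finally, you never close the gap from the resulting upper bound on the signed measure $[\wm_i(t)](\cdot)$ to the bound on $[\wm_i(t)]^+(B)$ claimed in~\eqref{E:iOleinink+}: the paper does this at the very end by noting that one may replace $B$ by $B\cap P$, where $P$ is the positive set in the Hahn decomposition of $\wm_i(t)$; this turns $[\wm_i(t)](\cdot)$ into $[\wm_i(t)]^+(\cdot)$ on the left while only decreasing $\Ll^1(B)$ on the right, and likewise for the negative part of $\wm_i^{\cont}(t)$.
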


\begin{proof} 
For every Borel set $B$ let $B^\circ$ be its interior.
By outer regularity of the measures $ [\wm_i(t)]^+$, $[\wm^{\cont}_i(t)]^-$, $[\wm^{\jump}_i(t)]^-$, and since they are nonnegative by definition, for any Borel set $B$ and every $\varepsilon>0$ there is a finite union $J_{\varepsilon}$ of $k_{\varepsilon}\in\N$ closed intervals which satisfies
\begin{subequations}\label{E:gagragaggargar}
\begin{gather}
[\wm_i(t)]^+(B )\leq  [\wm_i(t)]^+(J_{\varepsilon}^\circ)+\varepsilon\,,\\
[\wm^{\cont}_i(t)]^-(B )\leq  [\wm^{\cont}_i(t)]^-(J_{\varepsilon}^\circ)
+\varepsilon\\
 [\wm^{\jump}_i(t)]^-(B )\leq  [\wm^{\jump}_i(t)]^-(J_{\varepsilon}^\circ) +\varepsilon\,,\\
 \Ll^{1}(J_{\varepsilon})\leq \Ll^{1}(B ) +\varepsilon\ .
\end{gather}
\end{subequations}
As $\wm_{i}^{\nu}(t)$ converges weakly$^*$ to $\wm_{i}^{ }(t)$: there holds
\begin{gather*}
[\wm_i(t)]^+(J_{\varepsilon}^\circ)\leq \liminf_{\nu\to\infty}[\wm_i^\nu(t)]^+(J_{\varepsilon}^\circ)\;,
 \\
 [\wm_i(t)]^-(J_{\varepsilon}^\circ)\leq \liminf_{\nu\to\infty}[\wm_i^\nu(t)]^-(J_{\varepsilon}^\circ)
\end{gather*}
by lower semicontinuity on open sets of (nonnegative) measures for weak*-convergence.
Of course, since $J_{\varepsilon}^\circ\subseteq J_{\varepsilon}$ and we defined nonnegative measures,
\bas
&[\wm_i^\nu(t)]^+(J_{\varepsilon}^\circ)\leq [\wm_i^\nu(t)]^+(J_{\varepsilon})
&&\text{and}
&&[\wm_i^\nu(t)]^-(J_{\varepsilon}^\circ)\leq [\wm_i^\nu(t)]^-(J_{\varepsilon})\;.
\eas
Moreover by definition of $\mu^{ICJS}$ and of $\mu^{ICS}$, limit of $\mu^{ICJS}_{\nu}$ and $\mu^{ICS}_{\nu}$, there holds also
\begin{gather*}
 \limsup_{\nu\to\infty}\mu^{ICS}_{\nu}\left([s,t]\times\R\right)\leq \mu^{ICS}\left([s,t]\times\R\right) \;,\\
 \limsup_{\nu\to\infty} \mu^{ICJS}_{\nu}\left([s,t]\times\R\right)\leq \mu^{ICJS}\left([s,t]\times\R\right)
\end{gather*}
by the upper semicontinuity on closed sets of (nonnegative) measures for weak*-convergence.

For the particular sequence that satisfies~\eqref{E:iOleininknu} we thus get
\bas
  [\wm_i(t)] (J_{\varepsilon}^{\circ}) \leq C\left(\frac{\Ll^1(J_{\varepsilon})}{t-s}+\mu^{ICS}([s,t]\times \R) \right) 
&&
\text{if $0\leq s< t$,}
\\
  -[\wm_i^{\cont}(s)] (J_{\varepsilon}^{\circ})  \leq C\left(\frac{\Ll^1(J_{\varepsilon})}{t-s}+\mu^{ICJS}([s,t]\times \R)\right) 
&&
\text{if $0\leq s< t$,}
\eas
so that by~\eqref{E:gagragaggargar} we get
\bas
[\wm_i(t)] (B)&\leq  C\left(\frac{\Ll^1(B)}{t-s}+\mu^{ICS}([s,t]\times \R) \right)+\left(1+\frac{1}{t-s}\right)C\varepsilon\,,
\\
-[\wm_i^{\cont}(s)] (B)& \leq   C\left(\frac{\Ll^1(B)}{t-s}+\mu^{ICJS}([s,t]\times \R)\right)+\left(1+\frac{1}{t-s}\right)C\varepsilon\,.
\eas
By the arbitrariness of $\varepsilon>0$, the last relations must hold also without the last addend. As a consequence, by the arbitrariness of $B$, which we can choose concentrated, respectively, on the positive or on the negative part of the measures, thesis~\eqref{E:iOleinink} holds, just exchanges the names of $s$ and $t$ in.
\end{proof}

\subsection{Approximate balance estimates on characteristic regions}
We now informally outline the origin of the approximate Oleinink-type estimates~\eqref{E:iOleininknu}, that we rigorously prove later in \S~\ref{S:balances-}, \ref{Ss:decay-}, \ref{Ss:decay+}.
The first step consists of studying balances for $ \wm_{i}^{\nu,\cont}$ on characteristic regions, that we now introduce, explaining also what we mean by approximate balance estimates on characteristic regions, namely~\eqref{E:balancesnu} below.

Denote by $x(t;t_{0},x_{0})$ the leftmost, maximal with respect to inclusion, $i$-th characteristics of $u^{\nu}$ starting at $(t_0,x_{0})$:
\begin{subequations}\label{E:ab}
\ba
&x(t;t_{0},x_{0})=\min\Bigg\{\curva(t)\ \bigg|\ \begin{split}\curva:[0,T]\to\R\text{ satisfies }\curva(t_{0})=x_{0}\text{ and }\\\dot \curva(t)\in\left[\lambda_{i}(u^{\nu}(t,\curva(t)+)),\lambda_{i}(u^{\nu}(t,\curva(t)-))\right]\end{split}\Bigg\}\ .
\ea
Given $I=[a,b]$, let $a(t)$ and $b(t)$ be such $i$-th characteristics starting respectively at $(t_0,a)$ and $(t_0,b)$: namely 
\ba
&a(t)=x(t;t_{0},a)\ ,\qquad
b(t)=x(t;t_{0},b)\ .
\ea
\end{subequations}
We define the `characteristic region' $A^{\nu,t_0,t_0+\tau}_{[a,b]}$ within the $t$-strip $\{t_0<t\leq t_0+\tau\}$ delimited by $a(t)$ and $b(t)$: namely
\begin{subequations}
\label{E:iCharacteristicRegion}
\ba
A^{\nu,t_0,t_0+\tau}_{[a,b]}\doteq{}\left\{(t,x)\ :\ t_0<t\leq t_{0}+\tau\,,\ a(t)\leq x\leq b(t)\right\}\,.
\ea

Denote by $I(t)$ the fixed-time $x$-section of $A^{\nu,t_0,t_0+\tau}_{[a,b]}$, which is $I(t)\doteq{}[a(t),b(t)]$.
If now $J\doteq{}I_1\cup\dots\cup I_K$ is the union of the disjoint closed intervals $\{I_k\}_{k=1}^K$, we set the following notation for the union of the evolved regions:
\begin{equation}\label{E:ggrgrgggqrwgwrqgrwqgrw}
\begin{split}
&J(t)\doteq{}I_1(t)\cup\dots\cup I_K(t)\,,
\\
&A^{\nu,t_0,t_0+\tau}_{J}\doteq{}A^{\nu,t_0,t_0+\tau}_{I_1}\cup\dots\cup A^{\nu,t_0,t_0+\tau}_{I_K}\;.
\end{split}
\end{equation}
\end{subequations}
On such characteristic regions, in Lemma~\ref{L:balance0} of \S~\ref{S:balances-}, based on tools precisely defined in \S~\ref{S:definitionMeasures}, we prove the balance~\eqref{E:balancesnu} below: there exists a constant $C$, depending on the system and on the smallness of the initial data but not on the other parameters of the approximation, such that
\ba
\label{E:balancesnu} 
 -C \left[\left(\mu ^{ICS}_{\nu} \right)\left(A_{J}^{\nu,s,t}\right)+ K\beta_\nu\right]&\leq
\left[ \wm_{i}^{\nu,\cont}(t)\right](J(t))-
\left[ \wm_{i}^{\nu,\cont}(s)\right](J)
\\
&
\leq C \left[\left(\mu_{\nu} ^{ICJS} \right)\left(A_{J}^{\nu, s,t}\right)+ \beta_\nu\right]\;.\notag
\ea
The key tools for the approximate Oleinik-type estimate~\eqref{E:iOleininknu} above are
\begin{itemize}
\item estimate~\eqref{E:balancesnu} and
\item the estimate of the size of $t$-sections of characteristic regions, while they evolve. This is done by a variation of a classical lemma for ODEs applied to the speed of the boundary of \emph{characteristic} regions.
\end{itemize}
Section~\eqref{Ss:decay-} proves~\eqref{E:E:iOleininknu-} relying on such estimate of the size of \emph{forward-in-time} evolution $J(t)$ of union of intervals, related to $\left[ \wm_{i}^{\nu,\cont}(t)\right](J(t))$, jointly with the \emph{upper} estimate in~\eqref{E:balancesnu}
Section~\eqref{Ss:decay+} proves~\eqref{E:iOleininknu+} relying on such estimate of the size of \emph{backward-in-time} evolution $J(t)$ of union of intervals, related to $\left[ \wm_{i}^{\nu,\cont}(t)\right](J(t))$, jointly with the \emph{lower} estimate in~\eqref{E:balancesnu}.

\section{\texorpdfstring{$SBV$}{SBV}-like regularity}\label{S:SBVlike}
This section deals with the case of system~\eqref{eq:sysnc} when $A\in C^2(\Omega)$ is strictly hyperbolic as in~\eqref{eq:strhyp}.
When we admit linear degeneracy, as in Definition~\ref{D:LD}, there is no hope to regularize a $\BV$ initial datum to an $\SBV$ function. One could instead hope that $\lambda_i$ is regularized, but it turns out that this is not the case, see Example~\ref{Ex:counterBVlike} reported from~\cite{BYTrieste}.
Where is the problem? Exploiting the decomposition of $D_x u$ into wave-measures in Definition~\ref{D:upsiloni}, we compute
\begin{equation}\label{e:volpert}\begin{split}
D_{x}\lambda_{i}(u(t,x))&=\nabla\lambda_i(\widetilde u(t,x)) D_{x}u(t,x)
\\
&=\sum_{k=1}^N\left(\nabla\lambda_i(\widetilde u(t,x))\cdot r_k(\widetilde u(t,x))\wm_k \right)\ .
\end{split}
\end{equation}
We are able to say that the Cantor part of the $i$-th component \[\nabla\lambda_i(\widetilde u(t,x))\cdot r_i(\widetilde u(t,x))\wm_i \] vanishes, but such regularity is in general false for the other components, see the explicit Example~\ref{Ex:counterBVlike} below.

\begin{theorem}
\label{T:sbvhyp}
Let $u\in\BV_{\loc}([0,T]\times\R)$ be an entropy solution of the strictly hyperbolic system of balance laws~\eqref{eq:sysnc} with a locally small $\BV$ norm.
Then there exists a $\sigma$-compact set $ K$ such that $u\in\SBV([0,T]\times\R\setminus K)$ and there is linear degeneracy at each point $(t,x)\in K$: namely, $\nabla\lambda_i(u(t,x ))\cdot r_i(u(t,x))=0$ whenever $(t,x)$ is a continuity point of $u(t,\cdot)$ lying in the support of $D^{\Cantor}\wm_i$, for $i=1,\dots,N$.
More precisely, there exists an at most countable set $S$ of times such that for $t\in[0,T]\setminus S$, $i=1,\dots,N$, the scalar measure $\left[D^{\Cantor}_{x}\lambda_{i}(u)\right]_{i}$, namely the Cantor part of the $i$-component of $D_{x}\lambda_{i}(u(t,\cdot))$ in Definition~\ref{D:icomponent}, vanishes.
\end{theorem}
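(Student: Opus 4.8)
The plan is to reduce Theorem~\ref{T:sbvhyp} to the genuinely nonlinear case already handled in Theorem~\ref{T:SBVGNbalance}, by localizing to the (open) region where genuine nonlinearity holds and exhibiting a closed exceptional set where it fails. First I would define, for each $i$, the function $b_i(t,x)\doteq\nabla\lambda_i(\widetilde u(t,x))\cdot r_i(\widetilde u(t,x))$, where $\widetilde u$ is the averaged pointwise representative, and split $\Omega$ accordingly: the set $\{u\in\Omega: \nabla\lambda_i(u)\cdot r_i(u)\ne 0\}$ is open, hence a countable disjoint union of open intervals (along the $i$-th integral curve) on each of which the $i$-th field is genuinely nonlinear after choosing an orientation; on the closed set $\Sigma_i\doteq\{u: \nabla\lambda_i(u)\cdot r_i(u)=0\}$ the coefficient $b_i$ vanishes. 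The key is the Volpert-chain-rule computation~\eqref{e:volpert}: the $i$-component of $D_x\lambda_i(u(t,\cdot))$ is $b_i\,\wm_i$, so its Cantor part is $b_i\,D^{\Cantor}_x\wm_i(t)$ (the pointwise coefficient multiplies the Cantor part, as continuous functions times measures respect the a.c./Cantor/jump decomposition).

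Next I would run a localization argument à la~\cite{Robyr,BYTrieste} (the same device sketched in the Remark after Assumption~(G), and in Step~4 of this very proof): around any point $(\bar t,\bar x)$ that is a continuity point of $u$ with $u(\bar t,\bar x)$ in the genuinely nonlinear open set, one embeds a small space-time rectangle into a new Cauchy problem on the whole line whose solution is valued in a neighborhood where the $i$-th field is genuinely nonlinear, with small total variation, and applies Theorem~\ref{T:SBVGNbalance} to that auxiliary problem. This yields: for $t$ outside an at most countable set $S_i$, the Cantor part of $\wm_i(t)$, restricted to $\{x:(t,x)\text{ is a continuity point of }u\text{ and }u(t,x)\notin\Sigma_i\}$, vanishes. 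Therefore on the complement of the bad set the measure $b_i\,D^{\Cantor}_x\wm_i(t)=0$, which is exactly the statement that $[D^{\Cantor}_x\lambda_i(u)]_i$ vanishes for $t\notin S_i$; taking $S\doteq\bigcup_i S_i$ (still countable) gives the refined conclusion. For the global $\SBV$ statement, summing over $i$ via $D_xu=\sum_i\wm_i\widetilde r_i$ shows $D^{\Cantor}_xu(t)$ is supported, at times $t\notin S$, on $\{x:(t,x)\text{ continuity point},\ u(t,x)\in\Sigma_i\text{ for some }i\}$; the set $K$ is then built as the $\sigma$-compact hull of $\bigcup_i\{(t,x):(t,x)\text{ continuity point of }u,\ \nabla\lambda_i(u(t,x))\cdot r_i(u(t,x))=0,\ (t,x)\in\operatorname{supp}D^{\Cantor}\wm_i\}$, intersected over all times, and one checks $\Ll^2(K)=0$ because at a.e.\ time $D^{\Cantor}_xu(t)$ is either absent or concentrated on an $\Ll^1$-null set, so Fubini gives $\Ll^2$-measure zero of the union of supports.

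The main obstacle, as in the conservation-law case~\cite{BYTrieste}, is handling the set $\Sigma_i$ carefully enough to produce a $\sigma$-compact $K$ of zero $\Ll^2$-measure containing \emph{all} of the Cantor mass for \emph{all but countably many} times simultaneously, rather than merely for each fixed time. The point is that "at each time outside $S$, $D^{\Cantor}_x\wm_i(t)$ lives on $\{u\in\Sigma_i\}$" does not by itself give a single null set in space-time; one must argue that the union over $t\notin S$ of the relevant supports is still $\Ll^2$-null, which uses that these supports are (up to the countably many exceptional times) graphs-like in $t$, are $\sigma$-compact, and each has null section. A secondary technical subtlety is the correct pointwise definition of $b_i$ at jump points so that~\eqref{e:volpert} holds as an identity of measures (this is where the averaged eigenvector representative $\widetilde\ell_i,\widetilde r_i$ of Definition~\ref{D:upsiloni} is essential and where one invokes Volpert's chain rule~\cite[Theorem~3.96]{AFPBook}), but since the Cantor part is concentrated on continuity points of $u$, where $\widetilde u=u$ and $b_i$ is just $\nabla\lambda_i(u)\cdot r_i(u)$, this does not affect the final statement. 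I would carry out the reduction/localization first, then the measure-theoretic assembly of $K$, leaving the genuinely nonlinear input as a black box from Theorem~\ref{T:SBVGNbalance}.
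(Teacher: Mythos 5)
Your proposal follows the same overall strategy as the paper: reduce to the genuinely nonlinear case of Theorem~\ref{T:SBVGNbalance} by localizing, via finite speed of propagation, to space-time neighborhoods on which the range of $u$ stays in a region where $\nabla\lambda_i\cdot r_i$ keeps a definite sign, and observe that on the remaining set the coefficient $\nabla\lambda_i(u)\cdot r_i(u)$ multiplying $\wm_i^{\Cantor}$ vanishes. There is, however, a genuine gap at the point where you assert that the localization ``yields: for $t$ outside an at most countable set $S_i$, the Cantor part of $\wm_i(t)$ restricted to the good set vanishes.'' Each localization neighborhood is necessarily a forward-in-time triangle whose base is centered at the chosen continuity point $(\bar t,\bar x)$; that point lies on the boundary of its own triangle, not in its interior. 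Hence a countable subfamily of such triangles covering the open set $B$ of points lying in the relative interior of \emph{some} triangle does not automatically cover every continuity point with $u\notin\Sigma_i$: there may be points not interior to any triangle of the family. The paper's Step~3 is devoted precisely to this: it shows that the uncovered set $[0,T]\times\R\setminus(B\cup C)$ has at most countable projection on the $t$-axis, via the observation that two uncovered points with base half-width $b_0\geq a$ and distinct times must be at distance at least $a\sqrt{1+\bar\eta^{2}}$, so each set $F_a$ has finite $t$-projection and one takes $a=2^{-n}$. Without this argument your countable exceptional set of times $S_i$ is not justified, and the per-time conclusion could fail exactly where the Cantor mass hides.

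A secondary omission: your localization needs the range of $u$ over the whole forward neighborhood, not merely over its base, to stay inside the region where $\abs{\nabla\lambda_i\cdot r_i}\geq c_0/2$; this is where the paper invokes the Tame Oscillation property (\cite[Lemma~2.3]{Ch2} together with~\cite[Theorem~4.4]{ACM1}), which bounds the oscillation of $u$ in the triangle by the oscillation of the datum on the base. This must be stated explicitly, since otherwise Theorem~\ref{T:SBVGNbalance} cannot be applied to the auxiliary Cauchy problem. The rest of your outline --- the identity~\eqref{e:volpert}, the vanishing of the continuous part of $\left[D_x\lambda_i(u)\right]_i$ on the jump set and on $\{\nabla\lambda_i\cdot r_i=0\}$, and the extraction of a $\sigma$-compact $K$ from the closed complement of the union of triangles by inner regularity --- matches the paper's argument.
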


\begin{example}
Considering the degenerate equation $u_t=0$ one immediately realizes that the set $K$ in Theorem~\ref{T:sbvhyp} cannot be taken with $0$-Lebesgue measure in general, if closed. Indeed, if at time $t=0$ the function $u$ has a Cantor part concentrated on a $\sigma$-compact, negligible set $K_0$ dense in $[0,1]$ of course the set $\clos(K_{0})$ has full measure.
\end{example}

\nomenclature{$[D_{x}\lambda_{i}(u)]_{i}$}{The $i$-component of $D_{x}\lambda_{i}(u)$ in Definition~\ref{D:icomponent}}
\begin{definition}[$i$-component of $D_{x}\lambda_{i}(u)$]
\label{D:icomponent}
Let $u\in\BV([0,T]\times\R)$ be an entropy solution of the balance laws~\eqref{eq:sysnc}.
We define the \emph{$i$-component of $D_{x}\lambda_{i}(u)$} as
\[
\left[D_{x}\lambda_{i}(u)\right]_{i}\doteq
\left(\nabla\lambda_{i}(u)\cdot  r_{i}(u)\right) \wm_{i}^{\cont}
+\left[\lambda_{i}(u^{+})-\lambda_{i}(u^{-})\right]
	 \frac{|\wm_{i}^{\jump}|}{\sum_{k=1}^{N}|\wm_{k}^{\jump}|}\,,
\] 
while the \emph{continuous} and the \emph{Cantor part of the $i$-component of $D_{x}\lambda_{i}(u)$} as
\bas
\left[D^{\cont}_{x}\lambda_{i}(u)\right]_{i}&\doteq\left(\nabla\lambda_{i}(u)\cdot  r_{i}(u)\right) \wm_{i}^{\cont}
\\
&\equiv\left(\nabla\lambda_{i}(u)\cdot r_{i}(u)\right) \left((D_x^{\cont}u ) \cdot{ \ell}_{i} (u)\right) \,,
\\
\left[D^{\Cantor}_{x}\lambda_{i}(u)\right]_{i}&\doteq\left(\nabla\lambda_{i}(u)\cdot  r_{i}(u)\right) \wm_{i}^{\Cantor}
\\
&\equiv\left(\nabla\lambda_{i}(u)\cdot r_{i}(u)\right) \left((D_x^{\Cantor}u ) \cdot{ \ell}_{i} (u)\right) 
 \ .
\eas
\end{definition}
Notice that $\left[D^{\Cantor}_{x}\lambda_{i}(u)\right]_{i}=\left(\left[D_{x}\lambda_{i}(u)\right]_{i}\right)^{\Cantor}$, and the same holds for the continuous part of the measure.

\begin{proof}[Proof of Theorem~\ref{T:sbvhyp}]
As in~\cite{Robyr,BYTrieste}, the argument is a reduction argument to the case of genuine nonlinearity of the $i$-th field.
This is based on the finite speed of propagation and on the structure of $\BV_\loc$ functions.

By the structure of $\BV$ functions (\cite[Theorems 3.107-108]{AFPBook}), the following disintegration holds:
\[
\left[D_{x}^{\Cantor}u\right] =\int_{0}^{T}\left[D_{x}^{\Cantor}u(s)\right]ds\ .
\]
The first part of the statement thus follows once we prove that $D_{x}^{\Cantor}u(t)$ vanishes on the $t$-section of $K$ for $t\in[0,T]\setminus S$.

\firststep
\step{Decomposition of $[0,T]\times\R$}
Let us introduce the jump set $J_{\tau}$, the set where $D_{r_i}\lambda_i$ vanishes, at time $\tau$ and in $\R^2$, as their union: precisely, the sets
\begin{align*}
J_{\tau}&\doteq{}\{x\ :\ u(\tau,x^{-})\neq u(\tau,x^{+})\}\,,
\\
F_{\tau}&\doteq{}\{x\notin J_{\tau}\ :\ \nabla\lambda_i(u(\tau,x))\cdot r_i(u(\tau,x))=0\}\,,
\\
C_{\tau}&\doteq{}J_{\tau}\bigcup F_{\tau}\,,
\end{align*}
and $ J\doteq{}\{(\tau,x)\subset[0,T]\times\R\ :\ x\in J_{\tau}\}$, $F\doteq{}\{(\tau,x)\subset[0,T]\times\R\ :\ x\in F_{\tau}\}$, $C\doteq{}J \bigcup F$.
Being $u$ of bounded variation, the set $J_{\tau}$ is at most countable.
In particular $\mu(J_{\tau})=0$ for every measure $\mu$ without atoms, so that $ \abs{\wm_{i}^{\cont}}(J_{\tau})=0$.
Moreover $\nabla\lambda_i(u(\tau,x))\cdot r_i(u(\tau,x))$ vanishes on $F_{\tau}$ by definition, obtaining
\begin{equation}
\label{E:snfwknjknr}
\abs{\left[D_{x}^\cont\lambda_{i}(u(\tau))\right]_{i}}(C_{\tau})
\leq\int_{J_{\tau}\bigcup F_{\tau}}\!\!\!\!\!\!\!\!\!\!\!\!\!\!\!\left\{\abs{\nabla\lambda_i\cdot r_i} \,\abs{\ell_i}\right\}\Big|_{u(\tau,x)}\,\abs{\wm_{i}^{\cont}(\tau,dx)}
=0\,.
\end{equation}
We thus need to study $\left[D_{x}^\Cantor\lambda_{i}(u)\right]_{i}$ only on the complementary of $C$, as on $C$ it identically vanishes.

\step{Triangles of genuine nonlinearity}
For any $(t_0,x_0)\notin C$ the sign $\mathfrak s_{0}=\pm1$ of $\nabla\lambda_i(u(t_0,x_{0} ))\cdot r_i(u(t_0,x_{0}))$ is well defined: being $u(\tau,x)$ continuous in the $x$-variable at any point $x_0\notin J_{\tau}$, there exist $b_0=b_0(t_0,x_0)>0$ and $c_0=c_0(t_0,x_0)>0$ such that
\[
\nabla\lambda_i(u(t_0,x ))\cdot r_i(u(t_0,x))
\cdot \mathfrak s_{0}\geq c_0>0
\qquad \text{for }\abs{x-x_0}\leq b_0\ ,\ x\notin J_{\tau}\ .
\]
Let $\bar\eta>0$. By the Tame Oscillation condition, see~\cite[Lemma 2.3]{Ch2} jointly with~\cite[Theorem 4.4]{ACM1}, for any $(t_0,x_0)\notin C$ there is consequently a triangle
\begin{equation}
\label{E:triangle}
T(t_0,x_0)\doteq{}\left\{(\tau,x)\subset[t_{0},t_0+\widetilde b_0/\bar\eta ]\times\R\ :\ \abs{x-x_0}\leq \widetilde b_0-(\tau-t_0)\bar\eta\right\}
\end{equation}
whose basis $\abs{x-x_0}\leq \widetilde b_0$ is included in $\abs{x-x_0}\leq b_0$ and such that
\begin{equation}
\label{E:sjjjejwjwww}
{\nabla\lambda_i(z)\cdot r_i(z)} \cdot \mathfrak s_{0}\geq \frac{c_0}{2}>0
\qquad \text{for }z\in u\left(T_{0}\right)\ .
\end{equation}
We will prove that the Cantor part of $u$ vanishes on any one of such triangles, thus on countable unions of them.
We consider $T(t_0,x_0)\subset[0,T]\times\R$.

\begin{figure}\centering
\includegraphics[width=.6\linewidth]{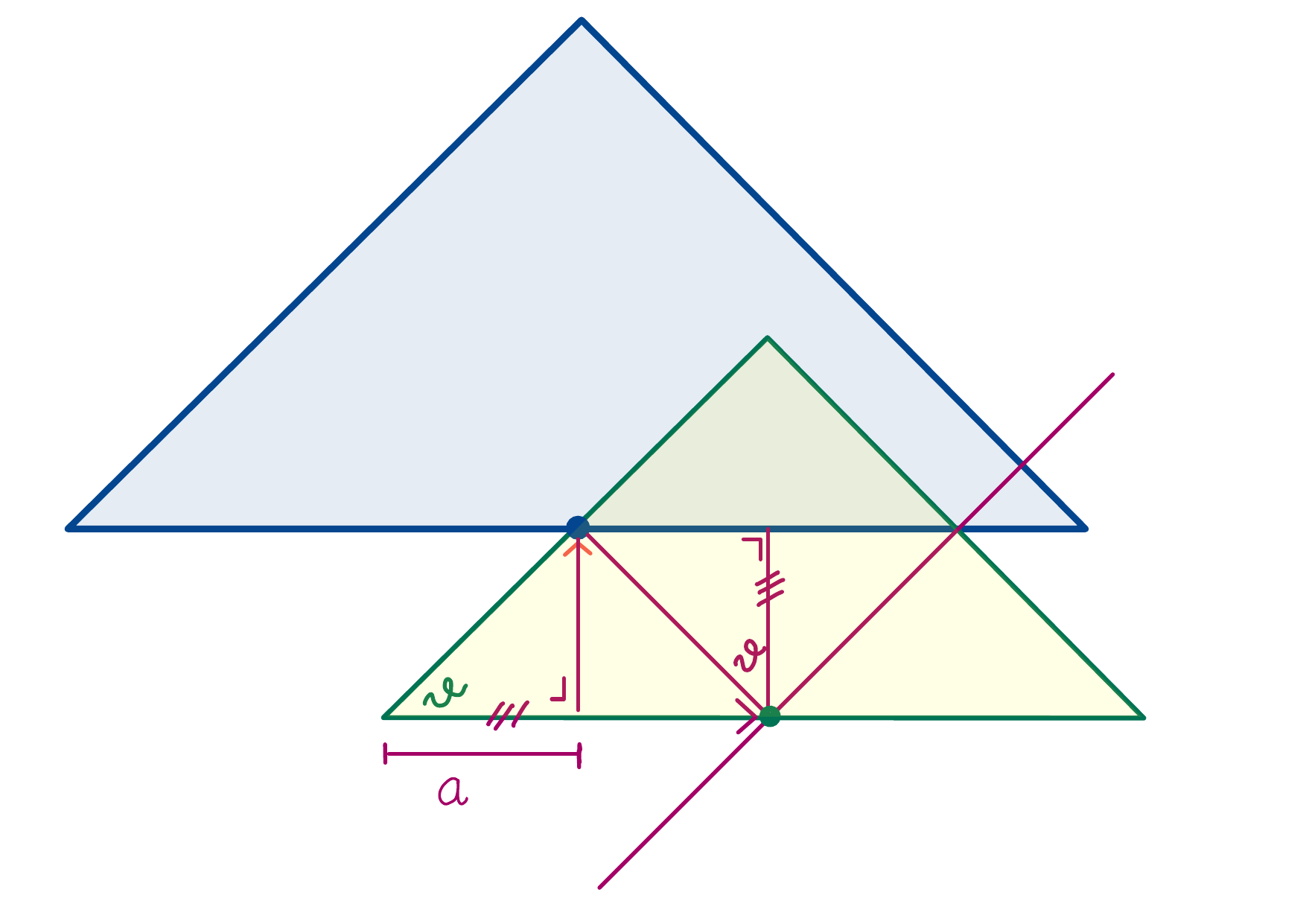}
\label{fig:triangoli}
\caption{Triangles in $F_{a,A}$ in the proof of Theorem~\ref{T:sbvhyp}.}
\end{figure}

\step{Countable covering}
We now show that we can cover the complementary of $C$ by a countable collection of the interior of such triangles, up to a remaining set $N\times\R$ having at most countable projections on the $t$-axis.
Denoting by $\ri{}$ the relative interior of a set, define the open set
\[
B=\left\{(t,x)\in[0,T]\times\R\  | \  \exists (t_0,x_0)\in[0,T]\times\R \ :\ (t,x)\in\ri\big(T(t_0,x_0)\big)\right\}\,.
\]
Of course, since $B$ is open, it can be covered by a countable collection of open triangles $\{\text{int}(T_j)\}_{j\in\mathbb{N}}$, the interiors of triangles $T_j=T(x_j,y_j)$, for $j\in\mathbb{N}$. Points outside of $B$, naturally, do not belong to the relative interior of any triangle \eqref{E:triangle}, whatever point $(t_{0},x_{0})\notin C$ we choose; if it is outside of $C$, it could still serve as the 'center' of such triangles. We now assert that $[0,T]\times\mathbb{R}\setminus(B\cup C)$ has at most a countable projection onto the $t$-axis, and we show it.

Let indeed $a>0$ and consider the set of points
\[
F_{a}=\left\{(\tau,\xi)\notin C \  | \  b_0(\tau,\xi)\geq 2a\ \wedge\  (\tau,\xi)\notin \ri\big(T(t,x)\big)\ \text{if $ b_0(t,x)\geq a $} \right\}\ .
\]
By properties of triangles isosceles, with equal angles $\arctan \bar \eta$, two points $(\tau_1,\xi_1)$ and $(\tau_2,\xi_2)$ belonging to $F_{a}$ with $\tau_1\neq \tau_2$ must have distance at least $a\sqrt{1+\bar\eta^{2}}$ just because both $(\tau_1,\xi_1)\notin T(\tau_2,\xi_2) $ and $(\tau_2,\xi_2)\notin T(\tau_1,\xi_1) $.
See Fig.~\ref{fig:triangoli}.
In particular, each set $F_{a}$ has finite projection on the $t$-axis. Of course this proves the claim being
\[
[0,T]\times\R\setminus\left(B\cup C\right) \subseteq \bigcup_{n\in\N} F_{2^{-n}}\,.
\]
We thus get $N\subset[0,T]$ at most countable and $B=\bigcup_{j\in\N}T_j$ that satisfy
\[
[0,T]\times\R=B\bigcup C\bigcup N\times\R
\,.
\]

\step{Thesis on each triangle} By the finite speed of propagation, in any fixed triangle $T_j$ like in~\eqref{E:triangle} the function $u$ is equal to the solution to
\[
\begin{cases}
 w_t+f(w)_x =g(t,x,w)\,,\\
 w(0,x) =
 \begin{cases}
 u(t_0,x) &\abs{x-x_0}\leq \widetilde b_0\,, \\
 \frac{1}{2\widetilde b_0}\int_{x_0-\widetilde b_0}^{x_0+\widetilde b_0} u(t_0,y)dy &\abs{x-x_0}> \widetilde b_0\,.
 \end{cases}
\end{cases}
\]
By the Tame Oscillation condition, see~\cite[Lemma 2.3]{Ch2} jointly with~\cite[Theorem 4.4]{ACM1}, the inequality~\eqref{E:sjjjejwjwww} holds also for the whole range of $w$: in particular, $D_{x}u(t,\cdot)\cdot\widetilde r_i(u(t,\cdot)$ is $\SBV_\loc$ outside an at most countable set $N_j$ of times by Theorem~\ref{T:SBVGNbalance}.
As a consequence, $\wm_i^\Cantor(\tau)$ vanishes on $T_j$ if $\tau\notin N_j$, so that
\begin{equation}
\label{E:snfwknjknr2}
\abs{\left[D_{x}^\Cantor\lambda_{i}(u(\tau))\right]_{i}}(T)
\leq\int_{T_{\tau}}\abs{\nabla\lambda_i\cdot r_i}\Big|_{u(\tau,x^{-})} \,\abs{\wm_{i}^{\Cantor}(\tau,dx)}=0
\end{equation}
whenever $\tau\notin N_j$.

\step{Conclusion} 
Whenever $\tau\notin N$ the complementary of $C_\tau$ is covered by $\{T_j\}_{j\in\N}$, so that $\abs{\left[D_{x}^\Cantor\lambda_{i}(u(\tau))\right]_{i}}(\R)$ is controlled by
\bas
&\abs{\left[D_{x}^\cont\lambda_{i}(u)\right]_{i}}(C_{\tau})
+\abs{\left[D_{x}^\Cantor\lambda_{i}(u)\right]_{i}}\left(\bigcup_{j\in\N} T_j\right)\\
&\stackrel{\eqref{E:snfwknjknr},\eqref{E:snfwknjknr2}}{=}0
\qquad\text{if }\tau\notin N\cup\bigcup_{j\in N} N_j\ .
\eas
We also conclude that the closed set $K^{*}=[0,T]\times\R\setminus\cup_{j\in\N}\{T_j\}\subset C$ almost satisfies the first part of the statement: by inner and outer regularity of the measures $D^{\Cantor}\wm_i$, $i=1,\dots,N$, we can pick-up a $\sigma$-compact subset $K$ of $K^{*}$ such that both $D_x^\jump(K)=0$ and
\[
\sum_{i=1}^{n}D^{\Cantor}\wm_i(\R^{2}\setminus K)=\sum_{i=1}^{n}D^{\Cantor}\wm_i(\cup_{j\in\N}\{T_j\})=0\,.
\]
\end{proof}


For a counterexample concerning that $\SBV$-regularity in this context might fail for $\lambda_{i}(u)$, as well as for $u$, we refer to Remark 7.2 in~\cite{BYTrieste}, which we repeat here for completeness. Of course, it stays true as well if we add a suitable source term.

\begin{example}[\cite{BYTrieste}]
\label{Ex:counterBVlike}
We present a $2\times2$ system of strictly hyperbolic conservation laws with the first field linearly degenerate, and the second field genuinely nonlinear. We show that some initial data $\overline U$ with arbitrarily small bounded variation are not regularized to become special functions of bounded variation.

Furthermore, explicitly computing the space derivative of the second eigenvalue $\lambda_2(U)$, we notice that it does indeed possess a Cantor part.
More precisely, in accordance with Theorem~\ref{T:sbvhyp} and the decomposition in~\eqref{e:volpert}, the Cantor part of $D_x U\cdot \ell_2(U)$ is present only where $\nabla\lambda_2(U)\cdot r_2(U)$ vanishes, while there is a Cantor part in $\left(\nabla\lambda_{2}(U)\cdot r_{1}(U)\right) \left((D_x^{ }U ) \cdot{ \ell}_{1} (U)\right)$.

This example also shows that a Cantor part in one of the component might create instantaneously a Cantor part in the other component.

Set $U=\binom{u}{v}$ and consider
\[
\begin{cases}
u_t=0\\
\wm_t+\left((1+v+u)v\right)_x=0
\end{cases}\,,
\quad 
A(U)=Jf(U)=\begin{pmatrix}0 & 0\\ v &1+2v+u\end{pmatrix}\,.
\]
We include details of its elementary analysis for clarity:
\begin{itemize}
\item Eigenvalues of $A$:
\begin{align*}
&\lambda_1(u,v)=0\,,
&&
\lambda_2(u,v)=1+2v+u\,,
\\&
\nabla\lambda_1(u,v)=\begin{pmatrix} 0\\ 0\end{pmatrix}\,,
&&
\nabla\lambda_2(u,v)=\begin{pmatrix} 1 \\ 2\end{pmatrix}\,.
\end{align*}
\item Eigenvectors of $A$:
\begin{align*}
&\ell_1(u,v)=\binom{1}{0}\,,
&&\ell_2(u,v)=\frac{1}{(1+2v+u)^2+v^2}\binom{v}{1+2v+u}\,,
\\
& r_2(u,v)=\begin{pmatrix} 0 \\ 1\end{pmatrix}\,,
&&r_1(u,v)=\frac{1}{(1+2v+u)^2+v^2}\begin{pmatrix} 1+2v+u \\ -v\end{pmatrix}\,.
\end{align*}
\item Derivatives composed with a function $U=(u,v)$ with bounded variation:
\begin{align*}
&D_x^\Cantor\lambda_1(U)=0=\left[D^{\Cantor}_{x}\lambda_{1}(U)\right]_{1}=\left[D^{\Cantor}_{x}\lambda_{1}(U)\right]_{2}
\\
&
D_x^\Cantor\lambda_2(U)=1+2D_x^\Cantor v+D_x^\Cantor u
\end{align*}
which has components according to Definition~\ref{D:icomponent} given by
\begin{align*}
\left(\nabla\lambda_{2}(U) \cdot r_{1}(U) \right)&\left(\ell_ 1(U) \cdot D_x^\Cantor U\right) 
\equiv \frac{1+u}{(1+2v+u)^2+v^2}D^{\Cantor}_{x}u
\\
\left[D^{\Cantor}_{x}\lambda_{2}(U)\right]_{2}\doteq{}&\left(\nabla\lambda_{2}(U)\cdot r_{2}(U)\right)\left(\ell_ 2(U)\cdot D_x^\Cantor(U)\right) \\
\equiv& \frac{2}{(1+2v+u)^2+v^2}\left(vD_x^\Cantor u+(1+2v+u)D_x^\Cantor v\right)
\end{align*}
\end{itemize}

Being a triangular system, with trivial first equation, the first component $u(t)\equiv\overline u$, of the solution $U=(u,v)$ to the Cauchy problem, obviously has a Cantor part for every $t>0$ whenever a Cantor part is present in the initial datum $\overline U=(\overline u,\overline v)$. 
In particular, $U$ and $\lambda_2(U)$ are not special functions of bounded variation.
This is consistent with the statement: the $1$-component $\frac{1+u}{(1+2v+u)^2+v^2}D^{\Cantor}_{x}u$ of $D_x\lambda_2(U)$ must not necessarily vanish, nor $D_x^\Cantor\lambda_2(u,v)$, which indeed does not vanish if $u(0)$ has a Cantor part.

 Of course if a Cantor part is present at time $t=0$ in $D_x\overline u$ it will be present also at all future times\dots  Nevertheless, the $\SBV$-like regularity theorem guarantees that $vD_x^\Cantor u+(1+2v+u)D_x^\Cantor v$ vanishes. 
 
 This is true but a bit counterintuitive to us: let $\overline u $ be any continuous function, in $\BV(\R)$, compactly supported in $[0,2]$, with $D^\Cantor \overline u\neq0$, and let $v$ be $\varepsilon$ times the indicator function of $[-10,10]$. As $u(t,x)=\overline u(t)$, then $v$ immediately develops a Cantor part, at any positive time, to balance $vD_x^\Cantor u$.
\end{example}

\section{\texorpdfstring{$SBV$}{SBV}-regularity for non-degenerate fluxes}
\label{S:SBVND}
The case of a genuinely nonlinear field of Theorem~\ref{T:SBVGNbalance} might seem restrictive. Nevertheless, it is not merely the key step in order to get the $\SBV$ regularity of the entropy solutions when all fields are genuinely nonlinear as in~\cite{Lax}, but also when they are piecewise genuinely nonlinear as in~\cite{IguchiLeFloch}, and more generally when they satisfy the non-degeneracy condition introduced in~\cite{LeFlochGlass}, slightly different from the one we consider here below.

\emph{It is relevant to extend the $\SBV$-regularity including the case of this last non-degeneracy condition because generic fluxes do satisfy such condition, so that non-degenerate fluxes approximate well any flux, see Remark~\ref{R:genericity} below.}

\begin{definition}\label{def:ND}
Let $\lambda_{1}\,,\dots\,,\lambda_{N}$ be the eigenvalues of a matrix $A\in C^{M}(\Omega)$, where $M\in\N\cup\{+\infty\}$ and $M\geq 2$.
We consider $A$ \emph{non-degenerate} if for all $i=1,\dots,N$, defining $ \pi_{i}^{(1)}=\nabla \lambda_{i}\cdot r_{i}$ and $ \pi_{i}^{(k+1)}= \nabla  \pi_{i}^{(k)}\cdot r_{i}$, there holds
\ba\label{eq:Anondeg}
&\left\{\pi_{i}^{(k)} \right\}_{k=1,\dots,M}\neq 0^{M}
\,.\ea
\end{definition}

In case of a scalar equation, when $N=1$, then $\pi_{1}^{(1)}=f''$ and $\pi_{1}^{(k)}=f^{(k+1)}$: a flux is called non-degenerate for us if there is a derivative higher than the first which is non-vanishing.
In~\cite{LeFlochGlass} $f$ was considered piecewise genuinely nonlinear if $f''$ and $f'''$ do not vanish simultaneously, and in~\cite{IguchiLeFloch} the same condition denoted nongenerate fluxes.

\begin{remark}\label{R:genericity}
When all fields are genuinely nonlinear, then $ \pi_{i}^{(1)}\neq 0$ for $i=1,\dots, N$ and this nondegeneracy condition holds. 
Piecewise genuinely nonlinear fluxes of~\cite{IguchiLeFloch} satisfy $ \pi_{i}^{(1)}$, $ \pi_{i}^{(2)}$ do not vanish simultaneously, for $i=1,\dots,N$. In~\cite{LeFlochGlass} fluxes were called nondegenerate when $ \pi_{i}^{(1)}$, \dots, $ \pi_{i}^{(N+1)}$ do not vanish simultaneously, for $i=1,\dots,N$.
With the Whitney topology and when $M\geq N+1$, the set of functions $A\in C^{\infty}(\Omega)$ satisfying our non-degeneracy condition~\eqref{eq:Anondeg} contains the intersection of a countable number of open dense subsets by~\cite[\S~2.6]{LeFlochGlass}: in particular, see~\cite[Theorem 2.13]{LeFlochGlass}, the relative fluxes approximate any smooth flux in the Whitney topology.
\end{remark}

Before relating the non-degeneracy of the flux to the regularity of solutions, we better understand the condition.

\begin{lemma}\label{L:transversal}
Suppose $A$ is non-degenerate as in Definition~\ref{def:ND}. For each $i=1,\dots, N$ and any $R>0$, the set 
\[
Z_{i}\doteq{}\{u\in\Omega\quad:\quad \nabla\lambda_{i}(u)\cdot r_{i}(u)=0\}\cap[-R,R]^{N}
\]
is the union of finitely many disjoint smooth sub-manifolds $\{Z_{i}^{k}\}_{k=1}^{J_{i}}$ of dimension at most $N-1$, all transversal to $r_{i}$.
\end{lemma}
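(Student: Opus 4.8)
The plan is to prove Lemma~\ref{L:transversal} by a stratification argument using the iterated directional derivatives $\pi_i^{(k)}$, combined with the implicit function theorem. First I would fix $i$ and $R>0$ and observe that $Z_i$ is defined by the single equation $\pi_i^{(1)}(u)=0$. The essential geometric content is \emph{transversality to $r_i$}: at a point $u_0\in Z_i$, if $\pi_i^{(2)}(u_0)=\nabla\pi_i^{(1)}(u_0)\cdot r_i(u_0)\neq 0$, then $\nabla\pi_i^{(1)}(u_0)\neq 0$, the level set $\{\pi_i^{(1)}=0\}$ is locally a smooth hypersurface near $u_0$ by the implicit function theorem, and moreover its tangent space $\{v:\nabla\pi_i^{(1)}(u_0)\cdot v=0\}$ does not contain $r_i(u_0)$ — so the manifold is transversal to the vector field $r_i$ at $u_0$. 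The point of the non-degeneracy hypothesis is precisely to push this reasoning to higher order where $\pi_i^{(2)}$ also vanishes.

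The key steps, in order: (1) Define, for $1\le k\le M$, the sets $W_k=\{u\in[-R,R]^N: \pi_i^{(1)}(u)=\dots=\pi_i^{(k)}(u)=0\}$, so $Z_i=W_1$ and, by the non-degeneracy condition~\eqref{eq:Anondeg}, $W_M=\emptyset$. (2) Show by downward induction / stratification that for each $k$ the set $W_k\setminus W_{k+1}$ is a (relatively open in $W_k$) smooth submanifold of dimension at most $N-1$, transversal to $r_i$: indeed on $W_k\setminus W_{k+1}$ one has $\pi_i^{(k+1)}=\nabla\pi_i^{(k)}\cdot r_i\neq 0$, hence $\nabla\pi_i^{(k)}\neq 0$, and the analysis of the previous paragraph applies with $\pi_i^{(k)}$ in place of $\pi_i^{(1)}$; since $W_k\subseteq\{\pi_i^{(k)}=0\}$ locally, $W_k\setminus W_{k+1}$ is (an open subset of) a smooth hypersurface transversal to $r_i$ — in particular $Z_i$ is a locally finite disjoint union of pieces of such hypersurfaces. (3) Upgrade ``locally finite'' to ``finite'': here I would use that the stratification is by analytic-type / $C^M$ equations and that $[-R,R]^N$ is compact; more concretely, each stratum $W_k\setminus W_{k+1}$ has, near each of its points, a uniform normal-direction lower bound on $|\pi_i^{(k)}|$-gradient from the transversality, and a compactness/covering argument on the compact set $[-R,R]^N$ reduces to finitely many charts, so $Z_i$ meets only finitely many connected components; relabel these as $\{Z_i^k\}_{k=1}^{J_i}$ and, if necessary, refine to make them disjoint by peeling off lower strata. (4) Record that each $Z_i^k$ has dimension $\le N-1$ and is transversal to $r_i$, which is exactly the claim.

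The main obstacle I anticipate is step (3): passing from the natural \emph{local} finiteness of the stratification to a genuinely \emph{finite} decomposition into finitely many disjoint smooth submanifolds valid on the compact box. In the $C^\infty$ (or merely $C^M$, $M<\infty$) setting one cannot invoke the full machinery of subanalytic stratifications, so one must instead argue by hand: the non-degeneracy hypothesis guarantees that the decreasing chain $W_1\supseteq W_2\supseteq\cdots$ reaches the empty set in at most $M$ steps, and for each $k$ the transversality gives, via the implicit function theorem with uniform constants on compact subsets, a covering of $W_k\setminus W_{k+1}$ by finitely many graph-charts; care is needed so that the resulting strata glue into honest disjoint manifolds rather than merely an almost-everywhere decomposition. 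A secondary subtlety is checking the transversality claim cleanly: that $\pi_i^{(k+1)}(u_0)\neq 0$ forces $r_i(u_0)\notin T_{u_0}\{\pi_i^{(k)}=0\}$; this is immediate from $\nabla\pi_i^{(k)}(u_0)\cdot r_i(u_0)=\pi_i^{(k+1)}(u_0)\neq 0$, but it is the linchpin and should be stated explicitly. Everything else — the implicit function theorem applications and the dimension count — is routine.
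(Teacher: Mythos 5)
Your proposal follows essentially the same route as the paper: the paper builds exactly your stratification $\bigcap_{k=1}^{\ell}\{\pi_i^{(k)}=0\}\cap\{\pi_i^{(\ell+1)}\neq 0\}$ for $\ell=M-1,\dots,1$ via the implicit function theorem, and proves transversality by the same observation that every tangent direction to a stratum annihilates $\nabla\pi_i^{(\ell)}$ while $\nabla\pi_i^{(\ell)}\cdot r_i=\pi_i^{(\ell+1)}\neq 0$ there. The finiteness you worry about in step (3) is resolved more cheaply than by a covering argument: since a submanifold need not be connected, one simply takes the at most $M$ strata themselves as the $Z_i^{k}$, which is what the paper (tacitly) does.
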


\begin{proof}
Since the non-degeneracy assumption grants that $\pi^{(1)}_{i}$, \dots, $\pi^{(M)}_{i}$ do not vanish simultaneously, the decomposition can be constructed applying the implicit function theorem to the sets
\[
\bigcap_{k=1}^{\ell}\{\pi^{(k)}_{i}=0\}\cap \{\pi^{(\ell+1)}_{i}\neq0\}\qquad \text{for }\ell=M-1,\dots ,1\,.
\]

Consider now any $u\in Z_{i}$ and consider any smooth curve $\underline \gamma:(-1,1)\to Z_{i}$ with $\underline \gamma(0)=u$.
In particular, for $|t|\leq1$ one has $ \pi^{(1)}_{i}(\underline \gamma(t))=0$ identically so that
\[
\ddt  \pi^{(k)}_{i}\circ\underline \gamma|_{t=0}=\nabla \pi^{(k)}_{i}(u)\cdot \dot{\underline \gamma}(0)=0
\qquad \forall k\in\N\,.
\] 
By the non-degeneracy assumption, necessarily $ \dot{\underline \gamma}(0)\neq r_{i}(u)$: being different from any direction tangent to $Z_{i}$, thus $r_{i}(u)$ must be transversal to $Z_{i}$.
\end{proof}

We now state the regularity result.

\begin{theorem}[$\SBV$ regularity for non-degenerate systems]
\label{C:SBVGNbalance1den}
Consider an entropy solution $u\in\BV_{}([0,T]\times\R;\Omega)$ of the strictly hyperbolic system of balance laws~\eqref{eq:sysnc} with small $\BV$ norm.
Suppose $g$ satisfies Assumption~\textbf{(G)}, Page~\pageref{Ass:G}, and $A$ is nondegenerate in $\Omega$ as in~\eqref{eq:Anondeg}. Then there exists an at most countable set $S\subset[0,T]$ of times such that  $x\mapsto u(t,x)$ is a special function of bounded variation for every $t\in[0,T]\setminus S$.
In particular, $u\in\SBV([0,T]\times\R)$.
\end{theorem}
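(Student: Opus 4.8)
The plan is to reduce the non-degenerate case to the already-proved genuinely nonlinear case (Theorem~\ref{T:SBVGNbalance}) by the same kind of localization-and-covering argument used in the proof of Theorem~\ref{T:sbvhyp}, but now exploiting the finer structure provided by Lemma~\ref{L:transversal}: the degeneracy set $Z_i=\{\nabla\lambda_i\cdot r_i=0\}$ is a finite union of smooth submanifolds of dimension at most $N-1$, each transversal to $r_i$. The key new point is that, precisely because of this transversality, the Cantor part of the $i$-wave measure $\wm_i^{\Cantor}(t)$ cannot be concentrated on $Z_i$: a Cantor part of $D_x^{\Cantor}u(t)\cdot\widetilde\ell_i$ concentrated where $u(t,x)\in Z_i$ would force the curve $x\mapsto u(t,x)$ to move, on a Cantor-type set, in the direction $r_i(u)$ while staying inside $Z_i$, contradicting transversality of $r_i$ to $Z_i$. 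This is the analogue, for the non-degeneracy condition, of the observation that in the genuinely nonlinear case $\nabla\lambda_i\cdot r_i$ never vanishes at all.

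First I would fix $i$ and, exactly as in Steps~1--4 of the proof of Theorem~\ref{T:sbvhyp}, decompose $[0,T]\times\R$ into a jump part $J$, the set $F=\{(\tau,x)\notin J:\ \nabla\lambda_i(u(\tau,x))\cdot r_i(u(\tau,x))=0\}$, a countable union of open triangles of genuine nonlinearity $T_j=T(t_j,x_j)$ covering the complement of $C=J\cup F$, and a remainder $N\times\R$ with $N\subset[0,T]$ at most countable. On each triangle $T_j$, by finite speed of propagation and the Tame Oscillation condition, $u$ coincides with a solution of a balance law whose $i$-th field is genuinely nonlinear on the relevant range, so Theorem~\ref{T:SBVGNbalance} gives an at most countable set $N_j$ of bad times outside which $\wm_i^{\Cantor}(\tau)$ vanishes on $T_j$. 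Collecting over $i$ and $j$, outside the countable set $S_0=\bigcup_i(N^{(i)}\cup\bigcup_j N_j^{(i)})$ the measure $\wm_i^{\Cantor}(\tau)$ is, for each $i$, supported on $C_\tau^{(i)}=J_\tau\cup F_\tau^{(i)}$; since $|\wm_i^{\cont}|$ and hence $|\wm_i^{\Cantor}|$ charge no atoms, it is actually supported on $F_\tau^{(i)}$, i.e. on $\{x:\ u(\tau,x)\in Z_i\}$.

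It remains to rule out a Cantor part concentrated on $F_\tau^{(i)}$. Here I would argue as follows. Fix $\tau\notin S_0$ and suppose, for contradiction, that $\wm_i^{\Cantor}(\tau)\ne 0$; then there is a Borel set $B$ with $\Ll^1(B)=0$, $B\subset\{x:\ u(\tau,x)\in Z_i\}$, and $|\wm_i^{\Cantor}(\tau)|(B)>0$, and on $B$ the continuous representative $u(\tau,\cdot)$ takes values in the compact set $Z_i\cap[-R,R]^N$, which by Lemma~\ref{L:transversal} is a finite disjoint union of $C^{M-1}$ submanifolds $Z_i^k$ transversal to $r_i$. Passing to a subset of positive $|\wm_i^{\Cantor}(\tau)|$-measure I may assume the values lie in a single coordinate chart of one $Z_i^k$, described as a graph with $r_i(u)$ everywhere uniformly transversal to the tangent space; composing $u(\tau,\cdot)$ with a smooth local defining function $\phi$ of $Z_i^k$ (so $\phi(u)\equiv 0$ on $B$) and using the Volpert chain rule, $D_x^{\Cantor}\big(\phi(u(\tau,\cdot))\big)=\nabla\phi(u)\,D_x^{\Cantor}u(\tau,\cdot)$ must vanish on $B$, while $D_x^{\Cantor}u(\tau,\cdot)\restriction_B=\widetilde r_i\,\wm_i^{\Cantor}(\tau)\restriction_B$ up to contributions of the other wave measures; pairing with $\nabla\phi$ and using $\nabla\phi(u)\cdot r_i(u)\ne 0$ (transversality) together with the fact that, at a point of the support of the Cantor part of $\wm_i$, the $i$-wave dominates — this is where one needs the pointwise interaction/representative estimates controlling $\wm_k^{\Cantor}$ for $k\ne i$ on such sets, analogous to the bounds already invoked in Theorem~\ref{T:sbvhyp} — one gets $\wm_i^{\Cantor}(\tau)\restriction_B=0$, a contradiction. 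Hence $\wm_i^{\Cantor}(t)=0$ for all $i$ and all $t\notin S:=S_0$, so $D_x^{\Cantor}u(t)=\sum_i\widetilde r_i\,\wm_i^{\Cantor}(t)=0$ for $t\notin S$; disintegrating as in~\eqref{item:slicingDx} gives $D_x^{\Cantor}u=0$, and then $D_t^{\Cantor}u=-A(\widetilde u)D_x^{\Cantor}u+g=0$ by the Volpert chain rule exactly as in the proof of Corollary~\ref{C:SBVGNbalance}, so $u\in\SBV([0,T]\times\R)$.

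The main obstacle I expect is making rigorous the step ``the $i$-wave dominates on the support of $\wm_i^{\Cantor}(\tau)$'': one must show that on a $\Ll^1$-null set carrying a nontrivial $\wm_i^{\Cantor}(\tau)$, the other wave measures $\wm_k^{\Cantor}(\tau)$, $k\ne i$, cannot conspire to keep $u(\tau,\cdot)$ inside $Z_i$ — equivalently, that the Cantor parts of distinct wave families are mutually singular in the relevant sense, or that $\wm_k^{\Cantor}(\tau)$ is absolutely continuous with respect to $\wm_i^{\Cantor}(\tau)$ on that set with a controlled density. For genuinely nonlinear fields the analogous difficulty simply does not arise, because $Z_i=\emptyset$; here the transversality from Lemma~\ref{L:transversal} is exactly what should compensate, but turning the heuristic ``you'd have to move along $r_i$ while staying in $Z_i$'' into an inequality between measures is the technical heart and will likely require the same piecewise-constant approximation, the interaction-cancellation measures $\mu^{ICS},\mu^{ICJS}$, and a localized Oleinik-type estimate near $Z_i$ as in the genuinely nonlinear case.
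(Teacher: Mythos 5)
Your overall architecture is the right one and matches the paper's: cover $[0,T]\times\R$ by the triangles of genuine nonlinearity from the proof of Theorem~\ref{T:sbvhyp}, apply Theorem~\ref{T:SBVGNbalance} there, and then exploit the transversality of the degeneracy manifolds $Z_i^k$ to $r_i$ (Lemma~\ref{L:transversal}) together with a chain-rule/tangency argument on the remaining set. Two remarks on the details. First, the step ``$D_x^{\Cantor}\big(\phi(u(\tau,\cdot))\big)$ must vanish on $B$'' is not an immediate consequence of the Volpert chain rule: knowing $\phi(u)\equiv0$ on a Lebesgue-null set $B$ does not by itself control $D_x^{\Cantor}(\phi\circ u)\restriction_B$; this is precisely the content of the paper's Lemma~\ref{L:realAn} and Corollary~\ref{C:cantorPart}, proved by an approximation with the restricted derivative $\int D_xw\restriction_{\cup[\overline a_i,\overline b_i]}$. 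This is a fillable gap, but it is a genuine lemma you would need to supply.

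The substantive gap is exactly the one you flag: the interference of the other wave measures $\wm_k^{\Cantor}$, $k\neq i$, in the single scalar identity $\nabla\phi(u)\cdot D_x^{\Cantor}u=0$. Your proposed remedies (domination of the $i$-wave on the support of $\wm_i^{\Cantor}$, mutual singularity of the Cantor parts, a localized Oleinik estimate near $Z_i$) are not available and are not how the paper closes the argument. The paper's Lemma~\ref{L:bbebbeb} avoids the issue by working with the full vector measure $D_x^{\Cantor}u(\tau)$ and stratifying a compact null set $K$ according to the set $X\subseteq\{1,\dots,N\}$ of indices for which $u(K)\subset Z_i^{k_i}$: for $i\notin X$ one has $\nabla\lambda_i\cdot r_i\neq0$ on $u(K)$, so Theorem~\ref{T:sbvhyp} (applied to \emph{all} families, not just the fixed $i$) kills $\wm_i^{\Cantor}\restriction_K$ for $i\notin X$ and leaves $D_x^{\Cantor}u\restriction_K=\sum_{i\in X}\wm_i\,r_i(u)$; on the other hand the tangency of $D_x^{\Cantor}u\restriction_K$ to $\bigcap_{i\in X}Z_i^{k_i}$ places the same measure in the span of $\{\ell_j\}_{j\notin X}$; and these two subspaces intersect only in $\{0\}$ because $\ell_j\cdot r_i=\delta_{ji}$ and the Gram matrix $[\ell_j\cdot\ell_k]_{j,k\notin X}$ is invertible. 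No comparison between the sizes of $\wm_i^{\Cantor}$ and $\wm_k^{\Cantor}$ is ever needed — the conclusion is forced by linear algebra once the non-degenerate components are removed by the $\SBV$-like theorem. Without this (or an equivalent) device, your single-family argument cannot be completed as written.
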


Before entering the proof, we provide a general auxiliary lemma of real analysis.

\begin{lemma}\label{L:realAn}
Consider a function $w:\R\to \R^{N}$ with bounded variation and let $B\subset \R$ be a Borel set of continuity points of $w$ with $\left|D_{x}^{\Cantor}w\right|(B)>0$, $\Ll^1(B)=0$ and $w(B)\subset \{x_{1}=0\}$.
Then $\mathrm{e}_{1}\cdot D_{x}^{\Cantor}w\restriction_{B}=0$.
\end{lemma}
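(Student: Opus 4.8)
The statement is a one-dimensional chain-rule fact: if $w\in\BV(\R;\R^{N})$ takes values in the hyperplane $\{x_{1}=0\}$ on a set $B$ of continuity points carrying Cantor mass, then the first coordinate of the Cantor part $D^{\Cantor}_{x}w$ restricted to $B$ must vanish. Intuitively this is just the Volpert chain rule applied to the smooth function $\Phi(x_{1},\dots,x_{N})=x_{1}$ composed with $w$, but with the twist that we must localize to the set $B$ where $w\in\{x_{1}=0\}$, and exploit that on $B$ the constraint $\Phi(w)=0$ forces the derivative of $\Phi(w)$ to be annihilated there.

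First I would decompose $w=(w_{1},w')$ with $w_{1}\colon\R\to\R$ the first component. Since $B$ consists of continuity points of $w$, it consists of continuity points of $w_{1}$, and $|D^{\jump}_{x}w_{1}|(B)=0$; also $D^{\ac}_{x}w_{1}$ is, by definition, concentrated on a set of positive $\Ll^{1}$-measure while $\Ll^{1}(B)=0$, so $D^{\ac}_{x}w_{1}\restriction_{B}=0$. Hence on $B$ the measure $\mathrm{e}_{1}\cdot D_{x}w\restriction_{B}=D_{x}w_{1}\restriction_{B}$ coincides with its Cantor part $D^{\Cantor}_{x}w_{1}\restriction_{B}$, which is exactly the quantity $\mathrm{e}_{1}\cdot D^{\Cantor}_{x}w\restriction_{B}$ we want to show is zero.

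Next, the key observation: since $w(B)\subset\{x_{1}=0\}$, we have $w_{1}=0$ on $B$. Now I would like to conclude $D_{x}w_{1}\restriction_{B}=0$. This requires a precise lemma about BV functions: if a scalar BV function $h$ vanishes on a Borel set $E$, then its full derivative $D_{x}h$ restricted to $E$ is absolutely continuous with respect to $\Ll^{1}$ (more precisely, the singular part $D^{s}_{x}h$ gives no mass to $E$). This follows from a standard fact in the theory of one-dimensional BV functions (see~\cite[Theorems 3.107--3.108]{AFPBook} or the structure of the good representative): the continuous part $D^{\cont}_{x}h$ of $D_{x}h$, when restricted to a level set $\{h=c\}$, is absolutely continuous — because at a point of positive Cantor density the good representative of $h$ is strictly monotone on every neighborhood, hence takes the value $c$ on at most one point, so $\{h=c\}$ meets the support of $D^{\Cantor}_{x}h$ in an $|D^{\Cantor}_{x}h|$-null set. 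Applying this with $h=w_{1}$, $c=0$, $E=B$ gives $D^{\Cantor}_{x}w_{1}\restriction_{B}=0$, i.e. $\mathrm{e}_{1}\cdot D^{\Cantor}_{x}w\restriction_{B}=0$. This together with the contradiction-free reading of the hypothesis $|D^{\Cantor}_{x}w|(B)>0$ — which only says some \emph{other} component carries the Cantor mass on $B$ — completes the argument.

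\textbf{Main obstacle.} The delicate point is justifying rigorously that $D^{\Cantor}_{x}w_{1}$ gives no mass to the level set $\{w_{1}=0\}$. One clean way is to use the coarea/chain-rule formula for BV functions of one variable: for $h\in\BV(\R;\R)$ and $\psi\in C^{1}_{c}(\R)$, $D_{x}(\psi\circ h)=\psi'(\tilde h)\,D_{x}h$ where $\tilde h$ is the precise representative; taking $\psi$ supported away from $0$ but with $\psi'$ near $0$ chosen suitably, and an approximation argument, isolates the part of $D^{\Cantor}_{x}h$ sitting over $\{\tilde h=0\}$ and shows it is annihilated. Alternatively, and perhaps most transparently, I would invoke the representation of $D^{\Cantor}_{x}h$ via the essential range of the good representative together with the fact that a monotone continuous singular function is injective on the complement of a countable set, so that the preimage of any single value is $D^{\Cantor}_{x}h$-null — then patch the general (non-monotone) case by the canonical decomposition of $h$ into monotone pieces. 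Either route is short; the only care needed is to phrase it so that it applies verbatim to the restriction to the (possibly complicated, $\Ll^{1}$-null, $\sigma$-compact) set $B$ rather than to an interval.
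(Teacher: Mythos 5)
Your reduction to a scalar claim is clean and correct: on $B$ the absolutely continuous and jump parts of $D_x w_1$ vanish, so it suffices to show $D^{\Cantor}_x h\restriction_B = 0$ for the scalar function $h = w_1$, which vanishes on $B$. But that scalar statement \emph{is} the content of the lemma, and you do not prove it: you call it ``a standard fact'' while~\cite[Theorems~3.107--108]{AFPBook} concern slicing and do not contain it, and the justifications you sketch are flawed. The assertion that ``at a point of positive Cantor density the good representative of $h$ is strictly monotone on every neighborhood'' is false --- the Cantor function is not strictly monotone on any neighborhood of any point of the Cantor set, and for a non-monotone $h$ the sign of $Dh$ need not even be locally constant at $|D^{\Cantor}h|$-a.e.\ point. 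The fallback of ``patching'' via the Jordan decomposition $h=h^{+}-h^{-}$ also fails: the level set $\{h=0\}=\{h^{+}=h^{-}\}$ is not a level set of either monotone piece, so the (correct) fact that the level sets of a monotone function are $D^{\Cantor}$-null does not transfer. Likewise, a monotone singular continuous function is \emph{not} injective on the complement of a countable set --- the Cantor function is constant on each complementary gap.

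The chain-rule route you mention first can be made to work --- take $\psi_\delta\in C^1(\R)$ with $\psi_\delta'\equiv 0$ near $0$ and $\psi_\delta'\equiv 1$ off $(-2\delta,2\delta)$, apply Vol'pert's chain rule~\cite[Thm.~3.96]{AFPBook}, note that the three pieces of $D(\psi_\delta\circ h)$ converge strongly as $\delta\downarrow 0$ to $D^{\ac}h + \uno_{\{\tilde h\neq 0\}}D^{\Cantor}h + D^{\jump}h$, and compare with the distributional limit $Dh$ forced by $\psi_\delta\circ h\to h$ uniformly, yielding $\uno_{\{\tilde h = 0\}}D^{\Cantor}h = 0$ --- but you do not carry this out, and as stated your one-variable chain-rule formula omits the jump correction. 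The paper avoids the chain rule entirely: for each compact $K\subset B$ it covers $K$ by finitely many intervals with endpoints in $K$, of small total length and with small $|D_x w|$-mass off $K$, builds the auxiliary function $w_\varepsilon$ whose derivative is $D_x w$ restricted to those intervals, exploits telescopically the endpoint constraint $w(\overline a_i), w(\overline b_i)\in\{x_1=0\}$ to get $|w_\varepsilon\cdot\mathrm{e}_1|\leq\varepsilon$ throughout, and lets $\varepsilon\downarrow 0$ to conclude that the antiderivative of $D^{\Cantor}_x w\restriction_K$ has identically zero first component.
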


\begin{proof}
By the inner regularity of the Borel measure, it suffices to prove the lemma for all compact sets $K\subseteq B$. 
We can also assume that $\Ll^{1}(K)=0$, since the Cantor part of $D_{x}w$ is singular with respect to the Lebesgue measure.

Notice that only the Cantor part of $Du$ is concentrated on $K$, while the jump part vanishes because $K$ is made of continuity points and the absolutely continuous part vanishes because it is Lebesgue negligible.

Let $\varepsilon>0$.
Cover the compact set $K$ with open intervals $(a_{1},b_{1})$, \dots, $(a_{J},b_{J})$ with $b_{i}<a_{k}$ if $i<k$ and such that 
\begin{equation}\label{E:approxCantor}
\sum_{i=1}^{J}(b_{i}-a_{i})\leq \varepsilon\,,
\qquad
\left|D_{x}^{}w\right|\left(\bigcup_{i=1}^{J}(a_{i},b_{i})\setminus K\right)<\varepsilon\,.
\end{equation}
If $\overline a_{i}$ is the closest point to $a_{i}$ in $K\cap(a_{i},b_{i})$, $\overline b_{i}$ the closest to $b_{i}$, then $K\subset \cup_{i=1}^{J}[\overline a_{i},\overline b_{i}]$ and $\sum_{i=1}^{J}(\overline b_{i}-\overline a_{i})\leq \varepsilon$.
Having a compact set, the extremes $a_1$ and $b_J$ are fixed and do not depend on $\varepsilon$.
Since by hypothesis $w(K)\subset \{x_{1}=0\}$, in particular $w(\overline a_{i})\in \{x_{1}=0\}$ and $w(\overline b_{i})\in \{x_{1}=0\}$ for $i=1,\dots,J$.

Consider in $[\overline a_{1},\overline b_{J}]$ the auxiliary function
\[
w_{\varepsilon}
= w- \int_{\overline a_{1}}^{x} D_{x}w\restriction_{\cup_{i=1}^{J-1}[\overline b_{i},\overline a_{i+1}]  }
=w(\overline a_{1})+  \int_{\overline a_{1}}^{x} D_{x}w\restriction_{\cup_{i=1}^{J}[\overline a_{i},\overline b_{i}] } \,.
\]
In particular
\begin{equation}\label{E:ggrgrgr}
D_{x}w_{\varepsilon}\restriction_{\cup_{i=1}^{J}[\overline a_{i},\overline b_{i}]}=
D_{x}w_{}\restriction_{\cup_{i=1}^{J}[\overline a_{i},\overline b_{i}]}
\quad\text{and}\quad
D_{x}w_{\varepsilon}\restriction_{\cup_{i=1}^{J-1}[\overline b_{i},\overline a_{i+1}]}=0\,.
\end{equation}

Since $w_{ }(\overline a_{i}),w_{ }(\overline b_{i})\in\{x_{1}=0\}$, then for $i=1,\dots,J$ one has
\begin{equation*}
0=(w_{}(\overline b_{i})-w_{ }(\overline a_{i}))\cdot \mathrm e_{1}=\left(\int_{\overline a_{i}}^{{\overline b_{i}}}Dw_{}(x)\,dx\right) \cdot \mathrm e_{1}\,.
\end{equation*}
In particular,
\begin{equation}\label{E:proiezx1}
(w_{\varepsilon}(\overline b_{i})-w_{ \varepsilon}(\overline a_{i}))\cdot \mathrm e_{1}=0\,.
\end{equation}
Taking into account that $w_{\varepsilon}(\overline a_{1})=w_{}(\overline a_{1})\in\{x_{1}=0\}$, we have $w_{\varepsilon}(\overline b_{1})\in\{x_{1}=0\}$.

If $ \overline b_{i}\leq x\leq \overline a_{i+1}$, then $w_{\varepsilon}(x)=w_{\varepsilon}(\overline b_{i})$ so that $w_{\varepsilon}(x)\in\{x_{1}=0\}$ in $[\overline b_{1},\overline a_{2}]$.
Continuing iteratively with~\eqref{E:proiezx1}, this yields $w_{\varepsilon}(x)\in\{x_{1}=0\}$ in each $[ \overline b_{i}, \overline a_{i+1}]$, $i=1,\dots J-1$.

If $x\in( \overline a_{i}, \overline b_{i})\setminus K$ then $w_{\varepsilon}(x)\in\{|x_{1}|\leq \varepsilon\}$ because
\[
\abs{w_{\varepsilon}(x)-w_{\varepsilon}(\overline a_{i})}\leq  \abs{D_{x}^{}w_\varepsilon}\left( (a_{i},b_{i})\setminus K\right)\stackrel{\eqref{E:ggrgrgr}}{=} \abs{D_{x}^{}w}\left( (a_{i},b_{i})\setminus K\right)\stackrel{{\eqref{E:approxCantor}}}{\leq} \varepsilon\,.
\]

As $\varepsilon\downarrow 0$ we can require $I_{\varepsilon}=\cup_{i=1}^{J-1}[\overline b_{i},\overline a_{i+1}]$ increasing and $w_{\varepsilon}$ converges to $\widehat w(x)=w(\overline a_{1})+\int_{a_{1}}^{x}D^{\Cantor}_{x}w\restriction_{K}$, thus valued in $\{x_{1}=0\}$, the convergence is also uniform.
Since $\widehat w\cdot \mathrm e_{1}=0$ identically, we thus arrive at the thesis
\[
0=D_{x}(\widehat w\cdot \mathrm e_{1})=(D_{x}^{}\widehat w)\cdot \mathrm e_{1}=(D_{x}^{\Cantor}  w)\cdot \mathrm e_{1}\,.
\qedhere
\]
\end{proof}

\begin{corollary}\label{C:cantorPart}
Let $Z\subset \R^{N}$ be a smooth manifold with codimension at least $1$ and with any normal direction $\hat n$.
Consider a function $w:\R\to \R^{N}$ with bounded variation and let $K\subset \R$ be a $\sigma$-compact set of continuity points of $w$ with  $\left|D_{x}^{\Cantor}w\right|(K)>0$ and $w(K)\subset Z$.
Then $\hat n\cdot D_{x}^{\Cantor}w\restriction_{K}=0$, which implies, applying this repeatedly with different $\hat n$, that $D_{x}^{\Cantor}w\restriction_{K}$ is tangent to $Z$.
\end{corollary}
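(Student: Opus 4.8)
The plan is to reduce the curved manifold $Z$ to the flat model $\{x_{1}=0\}$ already handled in Lemma~\ref{L:realAn}, by straightening $Z$ locally with smooth charts and transporting the conclusion through the Volpert chain rule. First I would carry out a chain of harmless reductions on $K$. Since $K$ is $\sigma$-compact it is an increasing union of compact sets, so it suffices to treat a compact $K$; since the Cantor part is singular with respect to $\Ll^{1}$, we may intersect $K$ with a Lebesgue-null Borel set carrying $D_{x}^{\Cantor}w$ and approximate again from inside by compact sets, so we may also assume $\Ll^{1}(K)=0$; and each point of $K$ is a continuity point of $w$, hence $w\restriction_{K}$ is continuous and $w(K)\subset Z$ is compact. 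Covering $w(K)$ by finitely many straightening charts and splitting $K$ accordingly, I may finally assume $w(K)$ is contained in a single open set $U\subset\R^{N}$ on which there is a $C^{\infty}$ map $\Phi$ (extended to all of $\R^{N}$, not necessarily a diffeomorphism outside $U$) whose restriction to $U$ is a diffeomorphism with $\Phi(Z\cap U)\subset\{x_{1}=\dots=x_{d}=0\}$, where $d\geq 1$ is the codimension of $Z$. Note $\Phi\circ w$ is then BV, since $w$ is BV hence bounded.

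For each $k\in\{1,\dots,d\}$ one has $(\Phi\circ w)(K)\subset\{x_{k}=0\}$, so, after relabelling coordinates, Lemma~\ref{L:realAn} applied to $\Phi\circ w$ gives $\mathrm e_{k}\cdot D_{x}^{\Cantor}(\Phi\circ w)\restriction_{K}=0$ (the positivity hypothesis there being immaterial for this conclusion). Since $K$ lies in the continuity set of $w$ and $\Ll^{1}(K)=0$, the Volpert chain rule~\cite[Theorem~3.96]{AFPBook} yields $D_{x}^{\Cantor}(\Phi\circ w)\restriction_{K}=\nabla\Phi(w)\,D_{x}^{\Cantor}w\restriction_{K}$, because on $K$ the precise representative of $w$ coincides with $w$ itself. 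Hence $\nabla\Phi^{k}(w)\cdot D_{x}^{\Cantor}w\restriction_{K}=0$ for $k=1,\dots,d$. Now $\Phi^{1},\dots,\Phi^{d}$ vanish on $Z\cap U$, so at each $z\in Z\cap U$ the vectors $\nabla\Phi^{1}(z),\dots,\nabla\Phi^{d}(z)$ are normal to $Z$, and, being rows of the invertible matrix $D\Phi(z)$, they are linearly independent and therefore span the $d$-dimensional normal space to $Z$ at $z$. Consequently, the Radon–Nikodym density of $D_{x}^{\Cantor}w\restriction_{K}$ with respect to its total variation is, $|D_{x}^{\Cantor}w\restriction_{K}|$-a.e., orthogonal to the whole normal space at $w(x)$, i.e.\ tangent to $Z$.

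Undoing the finite splitting of $K$, this shows that the density of $D_{x}^{\Cantor}w\restriction_{K}$ is tangent to $Z$ at $w(x)$ for $|D_{x}^{\Cantor}w\restriction_{K}|$-a.e.\ $x$; pairing with any normal direction $\hat n$ then gives $\hat n\cdot D_{x}^{\Cantor}w\restriction_{K}=0$, which is both the stated identity and, simultaneously, the tangency claim. The main obstacle is precisely the passage to the flat model: one must ensure that after all reductions $K$ still lies in the continuity set of $w$, so that the precise representative entering the chain rule equals $w$ there, and one must use that $Z$, being a smooth embedded submanifold, admits $C^{1}$ (indeed $C^{\infty}$) straightening maps that can be extended to global $C^{1}$ maps of $\R^{N}$; once this is in place, the spanning of the normal bundle by $\{\nabla\Phi^{k}\}_{k\leq d}$ and the gluing over the finitely many charts are routine.
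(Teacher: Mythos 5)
Your proposal is correct and follows essentially the same route as the paper's own proof: locally straighten $Z$ by a smooth chart, apply Lemma~\ref{L:realAn} to the composed function, and transport the conclusion back through the Volpert chain rule. The only cosmetic difference is that the paper first extends $Z$ to a codimension-one manifold and works with a local parametrization $\sigma$, while you keep codimension $d$ and use the $d$ component functions of the straightening map $\Phi$ at once (incidentally, your explicit global $C^\infty$ extension of $\Phi$ is slightly more careful than the paper's implicit extension of $\sigma^{-1}$).
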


\begin{proof}
If the dimension of $Z$ is less than $N-1$, just extend $Z$ to a larger one which still has $\hat n$ as a normal direction: we thus consider only the case of dimension $N-1$.

Focus on a small neighborhood of a point, for example, of the origin.
Once parametrized there the surface smoothly as $\{\sigma(x_{2},\dots ,x_{N})\ :\ (x_{2},\dots ,x_{N})\in B_{\delta}(0)\}$ with $\sigma$ one-to-one, consider the function
\[
\widehat w=\sigma^{-1}\circ w: \R\to\{x_{1}=0\}\subset\R^{N}\,.
\]
By Volpert chain rule, $\widehat w\in \BV(\R;\R^{N})$ and $D_{x}^{\Cantor}\widehat w=J\sigma^{-1}(w)\cdot D_{x}^{\Cantor}w$, equivalent to 
\begin{equation}\label{e:chgvar}
D_{x}^{\Cantor}w=J\sigma(\widehat w)\cdot D_{x}^{\Cantor}\widehat w\,.
\end{equation}

Since the tangent plane to $Z$ is generated by $\partial_{x_{2}}\sigma$, \dots,  $\partial_{x_{N}}\sigma$ and $D_{x}^{\Cantor}\widehat w$ is a linear combination of $ \mathrm e_{2}$, \dots, $ \mathrm e_{N}$ by Lemma~\ref{L:realAn}, we get from~\eqref{e:chgvar} that $D_{x}^{\Cantor}w$ is a linear combination of $\partial_{x_{2}}\sigma$, \dots,  $\partial_{x_{N}}\sigma$ thus tangent to $Z$.
\end{proof}

We now give an auxiliary lemma related to balance laws.

\begin{lemma}\label{L:bbebbeb}
According to the hypothesis of Theorem~\ref{C:SBVGNbalance1den}, set $w(x)=u(t,x)$ for some $t$.
Consider a compact set $K\subseteq \R$ of continuity points of $w$ with $\Ll^{1}(K)=0$.
Let $\{Z_{i}^{k}\}_{k=1}^{J_{i}}$  be as in Lemma~\ref{L:transversal}, $i=1,\dots,N$.
If there are $X\subseteq \{1,\dots,N\}$ and  $k_{i}\in\{1,\dots,J_{j}\}$, for $i\in X$, with $w(K)\subset  Z_{i}^{k_{i}}$ precisely if $i\in X$ and $k=k_{i}$, then $D_{x}^{\Cantor}w{\restriction_{K}}$ vanishes.
\end{lemma}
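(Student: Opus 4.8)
The plan is to prove Lemma~\ref{L:bbebbeb} as a direct consequence of Corollary~\ref{C:cantorPart}, which already shows that $D_x^{\Cantor}w\restriction_K$ must be tangent to every manifold $Z_i^{k_i}$ containing $w(K)$. The only extra input needed is that these tangency constraints, taken over all $i\in X$, force the measure to vanish; this is where the balance law and the transversality in Lemma~\ref{L:transversal} enter.

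First I would note that $D_x^{\Cantor}w\restriction_K$ is purely Cantor: the jump part is absent since $K$ consists of continuity points, and the absolutely continuous part is absent since $\Ll^1(K)=0$. Then, applying Corollary~\ref{C:cantorPart} with $Z=Z_i^{k_i}$ for each $i\in X$, I get that $D_x^{\Cantor}w\restriction_K$ is tangent to $Z_i^{k_i}$ at $w(x)$ for $\abs{D_x^{\Cantor}w}\restriction_K$-a.e. $x$. The next step is to decompose $D_x^{\Cantor}w$ along the eigenbasis, as in Definition~\ref{D:upsiloni}: writing $D_x^{\Cantor}w=\sum_{k=1}^N \wm_k^{\Cantor}\,\widetilde r_k$ (evaluated at the pointwise representative, which at continuity points is just $u(t,x)$), I want to conclude that each coefficient $\wm_k^{\Cantor}\restriction_K$ vanishes.

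The heart of the argument: for $i\in X$, tangency to $Z_i^{k_i}$ combined with Lemma~\ref{L:transversal} — which says $r_i$ is transversal to $Z_i^{k_i}$ — forces the $r_i$-component of $D_x^{\Cantor}w\restriction_K$ to vanish, i.e. $\wm_i^{\Cantor}\restriction_K=0$ for every $i\in X$. Indeed, if $\wm_i^{\Cantor}\restriction_K$ were nonzero on a set of positive $\abs{D_x^{\Cantor}w}$-measure, then on that set the measure would have a genuine $r_i$-component while also being tangent to $Z_i^{k_i}$, contradicting transversality of $r_i$ to $Z_i^{k_i}$ at the relevant base points. For $i\notin X$, the hypothesis says $w(K)$ is \emph{not} contained in $Z_i^{k}$ for any $k$; but since $w(K)\subseteq Z_i = \bigcup_k Z_i^k$ would be needed to even ask the question — and here is where I must be careful — the point is rather that for $\abs{\wm_i^{\Cantor}}\restriction_K$-a.e.\ $x$ one has $\nabla\lambda_i(w(x))\cdot r_i(w(x))\neq 0$, i.e.\ the $i$-th field is genuinely nonlinear along $w(K)$ up to a $\abs{D_x^{\Cantor}w}$-null set, so that by Theorem~\ref{T:SBVGNbalance} (applied after the localization/reduction of the proof of Theorem~\ref{T:sbvhyp}) the component $\wm_i^{\Cantor}$ itself vanishes at the relevant time, hence $\wm_i^{\Cantor}\restriction_K=0$; to make this rigorous I would split $K$ according to whether $w(x)\in\bigcup_k Z_i^k$ or not, handle the former part via Corollary~\ref{C:cantorPart} and transversality as above, and the latter part via genuine nonlinearity. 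Summing over all $i=1,\dots,N$ gives $D_x^{\Cantor}w\restriction_K=\sum_i \wm_i^{\Cantor}\restriction_K\,\widetilde r_i=0$.

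The main obstacle I anticipate is the bookkeeping for indices $i\notin X$: the statement's phrasing ``$w(K)\subset Z_i^{k_i}$ precisely if $i\in X$'' must be used to guarantee that for such $i$ the set of $x\in K$ with $w(x)$ lying on \emph{some} $Z_i^k$ either is $\abs{D_x^{\Cantor}w}$-null or can be further subdivided so that Corollary~\ref{C:cantorPart} still applies componentwise; one may need to intersect $K$ with preimages of each $Z_i^k$ and argue that a set $w$ maps into a single $Z_i^k$ falls under the ``$X$'' case on that sub-piece. Once this measure-theoretic partition is set up cleanly, the geometric input (tangency versus transversality of $r_i$) and the genuinely nonlinear regularity theorem close the argument.
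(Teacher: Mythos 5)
Your overall plan matches the paper's in broad strokes: decompose $D_x^{\Cantor}w\restriction_K$ along the right eigenvectors, dispose of the components $i\notin X$ via the genuine-nonlinearity regularity (Theorem~\ref{T:sbvhyp}/\ref{T:SBVGNbalance}), and use the geometric constraint coming from $w(K)\subset Z_i^{k_i}$ for $i\in X$ via Corollary~\ref{C:cantorPart}. However, the step where you combine these for $i\in X$ contains a genuine gap.

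You claim that tangency of $D_x^{\Cantor}w\restriction_K$ to $Z_i^{k_i}$, together with the transversality of $r_i$ to $Z_i^{k_i}$ from Lemma~\ref{L:transversal}, forces the coefficient $\ell_i(w)\cdot D_x^{\Cantor}w\restriction_K$ to vanish. This does not follow. Transversality only says $r_i(u)\notin T_u Z_i^{k_i}$; it says nothing about the $\ell_i$-projection of \emph{other} tangent vectors. A two-dimensional illustration: with $r_1=\ell_1=(1,0)$, $r_2=\ell_2=(0,1)$, and the tangent line to $Z_1^{k_1}$ at $u$ spanned by $(1,1)$, the vector $r_1$ is transversal yet the tangent vector $(1,1)$ has $\ell_1$-component equal to $1\neq 0$; so ``tangent and having a genuine $r_i$-component'' is not a contradiction. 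Even using the additional input from your step 5, namely $D_x^{\Cantor}w\restriction_K\in\langle\{r_i\}_{i\in X}\rangle$, the argument still does not close for $\abs{X}\geq 2$: two tangent hyperplanes $T_uZ_i^{k_i}$, $T_uZ_j^{k_j}$ can share a common direction lying inside $\langle r_i,r_j\rangle$ even though each $r_i$, $r_j$ individually is transversal to its own hyperplane.

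What the paper does is extract a \emph{second representation} of the same measure and then combine the two by linear algebra, rather than killing the coefficients $\wm_i$, $i\in X$, one at a time. From genuine nonlinearity for $i\notin X$ one obtains $D_x^{\Cantor}w\restriction_K=\sum_{i\in X}\wm_i\,r_i(w)$, so the measure lives in $\langle\{r_i\}_{i\in X}\rangle$. The geometric input — stronger than the bare transversality of Lemma~\ref{L:transversal}, and the fact you would actually need to justify — is that the tangent plane to $Z_i^{k_i}$ is contained in $\langle\{\ell_k\}_{k\neq i}\rangle$; together with Corollary~\ref{C:cantorPart} this gives $D_x^{\Cantor}w\restriction_K=\sum_{i\notin X}\widehat{\wm_i}\,\ell_i(w)$, so the measure also lives in $\langle\{\ell_i\}_{i\notin X}\rangle$. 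These two subspaces intersect trivially: pairing both representations against $\ell_k(w)$ for $k\notin X$, using $\ell_k\cdot r_i=\delta_{ki}$ to annihilate the first sum and the invertibility of the Gram matrix $\bigl[\ell_k(w)\cdot\ell_i(w)\bigr]_{i,k\notin X}$ to kill the coefficients $\widehat{\wm_i}$ in the second, one concludes $D_x^{\Cantor}w\restriction_K=0$. It is this ``two representations, trivial intersection'' mechanism that carries the proof; a componentwise deduction from transversality alone does not.
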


\begin{proof}[Proof of Lemma~\ref{L:bbebbeb}]
We can decompose $D_{x}^{\Cantor}u{\restriction_{K}}$ along the right eigenvectors $r_{i}$ as in Definition~\ref{D:upsiloni}, where we introduced the wave measures $\wm_i$.
Since $ \nabla\lambda_{i}(v(x))\cdot r_{i}(v(x))\neq 0$ for $i\notin X$ and $x\in K$, Theorem~\ref{T:sbvhyp} ensures that $D^{\Cantor}\wm_i\restriction_{K}$ vanishes when $i\notin X$. We can therefore write:
\begin{equation}\label{E:inri}
 D_{x}^{\Cantor}w{\restriction_{K}}=\sum_{i\in X} { \wm_i}r_{i}(w)
 \qquad
 \text{where ${ \wm_i}\doteq{} D_{x}^{\Cantor}w{\restriction_{K}}\cdot \ell_{i}(w)$.}
\end{equation}

If $K$ satisfies the hypothesis of Corollary~\ref{C:cantorPart}, which means if \[\left|D_{x}^{\Cantor}w\right|(K)>0\,,\] then $D_{x}^{\Cantor}w$ must be tangent to each $Z_{i}^{j_{i}}$, $i\in X$: by Lemma~\ref{L:transversal} it must be thus transversal to each $r_{i}$ for $i\in X$.

 By the very definition of $Z_{i}^{k_{i}}\subseteq \{\nabla\lambda_{i}\cdot r_{i}=0\}$, and by~\eqref{eq:norm}, the tangent plane to each $Z_{i}^{k_{i}}$ is contained in the linear span $\langle\{\ell_{k}\}_{k\neq i}\rangle$ of the vectors $\{\ell_{k}\}_{k\neq i}$. As a consequence, the tangent plane to $\cap_{i\in X}Z_{i}^{k_{i}}$ is contained in $\cap_{k\in X}\langle \{\ell_{i}\}_{i\neq k}\rangle=\langle \{\ell_{i}\}_{i\in{1,\dots,N}\setminus X}\rangle$.
As $D_{x}^{\Cantor}w$ is tangent to $\cap_{i\in X}Z_{i}^{k_{i}}$ by Corollary~\ref{C:cantorPart}, one can thus also write
\begin{equation}\label{E:intersectionZi}
 D_{x}^{\Cantor}w{\restriction_{K}}=\sum_{i\notin X}\widehat{ \wm_i}\ell_{i}(w)\,.
\end{equation}
Jointly with~\eqref{E:inri}, we conclude that $D_{x}^{\Cantor}w{\restriction_{K}}$ vanishes because $\{ r_{i}\}_{i\in X}$ and $\{ \ell_{i}\}_{i\notin X}$ are linearly independent: from
\begin{align*} 
& \ell_{k}(w)\cdot  D_{x}^{\Cantor}w{\restriction_{K}}
=\ell_{k}(w)\cdot\sum_{i\in X} { \wm_i}r_{i}(w)
=\sum_{i\in X}{ \wm_i} \,\ell_{k}(w)\cdot r_{i}(w) =0
&&\text{if }   k\notin X\,,
\\
& \ell_{k}(w)\cdot  D_{x}^{\Cantor}w{\restriction_{K}}
=\ell_{k}(w)\cdot\sum_{i\notin X} \widehat{ \wm_i}\ell_{i}(w)
=\sum_{i\notin X}\widehat{ \wm_i} \, \ell_{k}(w)\cdot \ell_{i}(w) 
&&\text{if }  k\notin X\,,
\end{align*}
 by the invertibility of the square matrix with elements $[  \ell_{k}(w)\cdot \ell_{i}(w)]_{i,k\notin X}$, indeed $ \widehat{ \wm_i}=0$ for $i\notin X$.
\end{proof}

%

\begin{proof}[Proof of Theorem~\ref{C:SBVGNbalance1den}]
In the proof of Theorem~\ref{T:sbvhyp}, the strip $[0,T]\times\R$ was covered with
\begin{itemize}
\item at most countably many exceptional time-lines $\{t_{k}\}\times\R$, $k\in \N$, $0\leq t_{k}\leq T$,
\item a jump set $J$, whose section $J_{\tau}=J\cap\{\tau\}\times\R$ is at most countable for all $0\leq \tau\leq T$, 
\item a countable union $B$ if open triangles where $u$ is a special function of bounded variation,
\item the inverse images $u^{-1} (Z_{i}^{j})$, $j=1,\dots,J_{i}$, of the manifolds of Lemma~\ref{L:transversal} transversal to $r_{i}$ where $\nabla\lambda_{i}\cdot r_{i}=0$.
\end{itemize}

Consider any time $\overline t\notin \{t_{k}\}_{k\in\N}$.
Consider any compact set $K\subseteq \R$ of continuity points of $w(x)\doteq u(\overline t,x)$ that satisfies $\Ll^{1}(K)=0$.
For each $X\subset \{1,\dots,N\}$ and any selection $\{j_{i}\}$ one has that $D_{x}^{\Cantor}w{\restriction_{K}}\cap u^{-1}(\{Z_{i}^{j_{i}}\}_{i\in X})$ vanishes by Lemma~\ref{L:bbebbeb} jointly with $\sigma$-additivity.
By $\sigma$-additivity and by inner regularity of the measures, the Cantor part of $D_{x}u(\overline t)$ must thus vanish.
Because of the slicing theory of $\BV$-functions~\eqref{item:slicingDx} then the Cantor part of $D_{x}u$ must thus vanish, as the Cantor part of the derivative of the time restrictions vanishes at almost every time.
\end{proof}

\appendix

\section{An account on the Cauchy problem}
\label{sec:PCA}

In this section we describe the main ingredients in order to construct a piecewise
constant approximation of a solution $u$ to Eqs.~\eqref{eq:sysnc}-\eqref{eq:inda}, first in the homogeneous case $g=0$ then in the non homogeneous case.

In \S~\ref{subsec:RP} we summarise the construction of the self-similar solution to conservation laws to the Riemann problem, which is the Cauchy problem~\eqref{eq:RP} below where the datum has a single jump in the origin. This is an essential building block for the general Cauchy problem~\eqref{eq:sysnc}.

 This allows then to describe in \S~\ref{Ss:frontTr} an algorithm for defining piecewise-constant approximations first to the Cauchy problem for systems conservation laws, in particular the front-tracing solution, then, pairing it with an operator splitting method, to the Cauchy problem for systems of balance laws.
 
 In turn, in \S~\ref{Ss:PreviousEstimates} we review the functionals that provide estimates needed for compactness, precisely the interaction estimates that also play an important role in the estimates to obtain $\SBV$ regularity.
 
  Finally, in \S~\ref{Ss:convergenceAndExistence} we recall the convergence result of the constructed approximation.
\subsection{The nonconservative Riemann problem}
\label{subsec:RP}
As in the Introduction, we let $A$ be a smooth matrix-valued function defined on an open domain
$\Omega\subseteq\real^N$ and satisfying the strict hyperbolicity Assumption~\eqref{eq:strhyp}.

Since we deal with a system that, in general, it is not in conservation form,
we briefly recall the construction of the solution to a Riemann problem in
the homogeneous case, i.e.
\begin{subequations}
\label{eq:RP}
\begin{align}
\label{eq:syshom}
&u_t+A(u)u_x=0\,,&\\
\noalign{\smallskip}
\label{eq:indaRP}
&u(0,x) =\begin{cases}
u^L\quad &\text{if\quad $x<0$\,,}\\
u^R\quad &\text{if\quad $x>0$\,.}
\end{cases}&
\end{align}
\end{subequations}
We refer to~\cite{BB, srp} for details. Here we just summarise the most relevant step, which is the construction of the admissible elementary curve of the $k$-th family for any given left state $u_L$, by quoting the following theorem from~\cite{BYu}.

\begin{theorem}
For every $u\in \Omega$ there exist $N$ Lipschitz continuous elementary curve $\tau\mapsto T_k[u^L] (\tau)$ and $N$ Lipschitz continuous velocities $(r,\, s)\mapsto\sigma_k[u^L] (r,\,s)$, $k=1,\dots,N$ satisfying $\sigma_k[u] (0,\,0)=\lambda_k(u)$ and
\[
\dds  T_k[u] (s) {\big|}_{s=0}=r_k(u) \,,\quad s\mapsto\sigma_k[u] (r,\,s)\text{ increasing,}
\]
and with the following property:  when $u^L\in\Omega$, $u^R= T_k[u^L] (s)$ for some $s$ sufficiently small, the unique vanishing viscosity solution $u$ of the Riemann problem~\eqref{eq:RP} is defined a.e.~by
\begin{equation}
 \label{eq:solRPi}
 u (t,x) =
 \begin{cases}
 u^L\quad &\text{if\quad $x/t <\sigma_k[u^L](s,\,0)\,,$}\\
 \noalign{\smallskip}
 T_k[u^L] (\tau)\quad &\text{if\quad $x/t = \sigma_k[u^L](s,\,\tau)$
 \quad for some $\tau\in\mathcal{I}$\,,}\\
 \noalign{\smallskip}
 u^R\quad &\text{if\quad $x/t >\sigma_k[u^L] (s,\,s)\,,$}
 \end{cases}
\end{equation}
where $\mathcal{I}=\{\tau\in [0,s]\ : \ \sigma_k[u^L](s,\tau)\neq \sigma_k[u^L](s,\tau')\ \forall \tau'\neq\tau \}$.

\end{theorem}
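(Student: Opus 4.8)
The statement to prove is the Theorem quoted from~\cite{BYu} on the existence of the admissible elementary curves $T_k[u^L]$ and velocity functions $\sigma_k[u^L]$ for the nonconservative Riemann problem~\eqref{eq:RP}. Since the paper itself defers to~\cite{BYu, srp}, I outline the construction the way it is carried out there.

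\medskip

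\noindent\textbf{Plan.} The plan is to build the $k$-th elementary curve and its attached speed function by a fixed-point argument on the space of Lipschitz curves, mimicking the viscous travelling-profile analysis that characterizes vanishing viscosity limits. First I would, for each left state $u^L$ and each family $k$, look for a curve $s\mapsto (T_k[u^L](s),\sigma_k[u^L](s,\cdot))$ such that the states on the curve can be connected by a sequence of viscous shock/rarefaction profiles whose speeds are monotone in the curve parameter; this is the content of the center-manifold construction of~\cite{srp}. The key object is the reduced flow on the center manifold associated with the $k$-th eigenvalue $\lambda_k$: one reduces the $(2N)$-dimensional ODE for viscous profiles $\dot U = A(U)\,U' -$ (lower order) to an invariant $C^1$ manifold of dimension $2$ (or $3$), parametrized by $(u,v_k,\sigma)$ near $(u^L,0,\lambda_k(u^L))$, on which the profile dynamics becomes a scalar conservation law $v_{k,t} + (\text{flux})_x = 0$ with flux function $f_k(u^L;\cdot)$ of class $C^1$.

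\medskip

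\noindent Second, having the scalar reduced flux $f_k$, I would define $T_k[u^L](s)$ by moving along the center manifold the arc-length amount $s$ starting from $u^L$ in the direction $r_k(u^L)$, which gives $\frac{d}{ds}T_k[u^L](s)|_{s=0}=r_k(u)$ and Lipschitz dependence of $T_k[u^L]$ on $s$ (and on $u^L$) from the Lipschitz regularity of the center manifold. The admissible speed function $\sigma_k[u^L](s,\tau)$ is then read off as the derivative of the lower convex/concave envelope of $f_k(u^L;\cdot)$ on $[0,s]$ (convex envelope if $s>0$, concave if $s<0$), evaluated at $\tau$; this is precisely the Oleinik construction for the scalar reduced problem. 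Monotonicity of $\tau\mapsto\sigma_k[u^L](r,\tau)$ and the normalization $\sigma_k[u^L](0,0)=\lambda_k(u)$ are immediate from the properties of convex envelopes together with $f_k'(u^L;0)=\lambda_k(u^L)$. The representation formula~\eqref{eq:solRPi}, with the set $\mathcal I$ picking out the strictly-increasing part of $\sigma_k$ (i.e.\ rarefaction fans rather than shock plateaus), then follows by transporting the self-similar solution of the scalar reduced conservation law back through the center manifold parametrization, and by the uniqueness of vanishing viscosity solutions of~\cite{BB}.

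\medskip

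\noindent\textbf{Main obstacle.} The hard part will be establishing the existence and $C^1$-regularity of the center manifold carrying the reduced flux $f_k$, together with the uniform-in-$u^L$ Lipschitz estimates: this requires the fixed-point/contraction argument of~\cite{srp} in a carefully chosen weighted space of profiles, controlling the interaction between the $k$-th family and the transversal ones via the spectral gap from strict hyperbolicity~\eqref{eq:strhyp}. Once that structural result is in hand, the remaining steps — the Oleinik envelope construction and the verification of~\eqref{eq:solRPi} — are the classical scalar theory applied fiberwise. Since the present paper only needs the \emph{statement} of this theorem as a black box (it is quoted verbatim from~\cite{BYu}), I would not reproduce the center-manifold contraction in detail but cite~\cite{BB,srp,BYu}, and merely indicate how~\eqref{eq:solRPi} and the stated properties of $T_k$, $\sigma_k$ are extracted from that construction.
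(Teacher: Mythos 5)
The paper does not actually prove this theorem: it is quoted verbatim from~\cite{BYu} (building on~\cite{BB,srp}) and used as a black box, which you correctly point out in your closing paragraph. Your sketch is a faithful high-level account of the construction carried out in those references: the viscous travelling-profile ODE, the center-manifold reduction producing a scalar reduced flux $f_k(u^L;\cdot)$, the Oleinik convex/concave-envelope construction of the monotone speed $\sigma_k[u^L](s,\cdot)$, the tangency $\frac{d}{ds}T_k[u^L](s)\big|_{s=0}=r_k(u^L)$ coming from the linearization, and the verification of the representation formula~\eqref{eq:solRPi} by uniqueness of vanishing viscosity solutions. Since there is no independent proof in the paper, there is nothing to contrast your route against; what you have is consistent with the cited source, and you are right that the genuinely hard part is the uniformly Lipschitz center-manifold contraction argument, which no reasonable reproduction here would attempt.

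One small imprecision: the center manifold in the $(u,v_k,\sigma)$ variables attached to the $k$-th family has dimension $N+2$ inside the $(2N+1)$-dimensional phase space, not $2$ or $3$; it is only after the reduction, with $u^L$ frozen and the profile constrained, that the effective dynamics becomes a scalar ODE. This does not affect the substance of your outline, but if you keep the sentence you should correct the dimension count.
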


\begin{remark}
\label{rem2:consform}
{\rm
If the system (\ref{eq:sysnc}) is in conservation form, i.e., in the case
where $A(u)=DF(u)$ for some smooth flux function $F$, the general
solution of the Riemann problem provided by (\ref{eq:solRPi}) is a
composed wave of the $k$-th family containing a countable number of
rarefaction waves and contact discontinuities or compressive shocks
which satisfy the Liu admissibility condition~\cite{tplrp2p2, tplrpnpn}.
When the $k$-th family is genuinely nonlinear, if one denotes by $R_{k}[u]$ the $k$-th rarefaction curve and by $S_{k}[u]$ the $k$-th shock curve, see~\cite[\S\S~5.1-2]{BressanBook}, then the elementary curve can be written as
\[
 T_k[u] (s)=\begin{cases} R_{k}[u](s) &s\geq0 \,,\\ S_k[u](s) &s<0\,, \end{cases}
 \qquad
 \text{
where $s=\ell_i(u)\cdot \left(T_k[u] (s)-u\right)$.}
\]
}
\end{remark}
In view of the considerations of Remark~\ref{rem2:consform},
we will extend the standard terminology adopted for the
elementary waves that are present in the solution
of a hyperbolic system of conservation laws
to the general case of non-conservative systems.
Thus, 
we will say that any (vanishing viscosity) solution of the Riemann
problem for (\ref{eq:sysnc}) of the form (\ref{eq:solRPi}) is a {\it
centered rarefaction wave} of the $k$-th family whenever $u^R\in
R_k[u^L](s)$ for some wave size~$s$ such that $\tau\mapsto
\sigma_k[u^L](s,\tau)$ be strictly increasing on $[0,s]$, $s>0$ (or
strictly decreasing on $[s,0]$ if $s<0$), while we will say that any
(vanishing viscosity) solution of a Riemann problem for (\ref{eq:sysnc}) of
the form
$$
u(t,x)=\begin{cases}
u^L\quad &\text{if\quad $x<\lambda t$\,,}\\
u^R\quad &\text{if\quad $x>\lambda t$\,,}
\end{cases}
$$
is an {\it admissible shock wave} of the $k$-th
family when $u^R=T_k[u^L](s)$ and
$\sigma_k[u^L](s,0)=\sigma_k[u^L](s,s)=\lambda$.

Once we have constructed the elementary curves $T_k$ for each $k$-th
characteristic fa\-mi\-ly, the {\it vanishing viscosity solution} of a
general Riemann problem for (\ref{eq:sysnc}) is then obtained by a standard
procedure observing that the composite mapping
\begin{equation}
\Phi(s_1,\dots,s_N) [u^L]\doteq T_N\Big[ T_{N-1}\Big[\cdots
 \big[T_1[u^L](s_1)\big]\cdots\Big](s_{N-1})
\Big](s_N)\doteq
u^R\,,
\label{eq2:RP1}
\end{equation}
is one-to-one from a neighborhood of origin to a neighborhood
of $u^L$. This is a consequence of the fact that the curves $T_k[u]$
are tangent to $r_k(u)$ at $s=0$~\cite{BB, srp}.
Therefore, 
we can uniquely determine intermediate states $u^L\doteq\omega_0,$
$\omega_1,$ $\dots,$ $\omega_N\doteq u^R$, and wave sizes $s_1, \dots,
s_N,$ such that there holds
\begin{equation}
\label{eq2:RP2}
\omega_k =T_k[\omega_{k-1}](s_k)\quad\qquad k=1,\dots,N\,,
\end{equation}
provided that the left and right states $u^L, u^R$ are sufficiently
close to each other. Each Riemann problem
with initial data 
\begin{equation}
\label{elemriem}
\overline u_k(x) = 
\begin{cases}
\omega_{k-1} &\text{if \ \ $x<0$,}\\
\omega_k &\text{if \ \ $x>0$,}
\end{cases}
\end{equation}
admits a vanishing viscosity solution of {\it total size} $s_k$,
containing a sequence of rarefactions and Liu admissible
discontinuities of the $k$-th family. Then, due to the uniform
strict hyperbolicity Assumption~(\ref{eq:strhyp}), the general solution of
the Riemann problem with initial data $\big(u^L,\,u^R\big)$ is
obtained by combining the vanishing viscosity solutions of the
elementary Riemann problems (\ref{eq:sysnc}) (\ref{elemriem}). Throughout
the paper, with a slight abuse of notation, we shall often call $s$ a
wave of (total) size~$s$, and, if $u^R=T_k[u^L](s)$, we will say that
$(u^L,\,u^R)$ is a wave of size $s$ of the $k$-th characteristic
family.

\subsection{The algorithm}
\label{Ss:frontTr}

Now we briefly describe the algorithm we use to construct
a piecewise constant approximate solution to Eqs.~\eqref{eq:sysnc}-\eqref{eq:inda}.
We will summarize the convergence theorems~\cite[Theorems~4.1, 4.4]{ACM1} in Theorem~\ref{T:localConv} after describing the construction.

First of all, let us recall what a front tracking solution is for a homogeneous
hyperbolic system (see~\cite{AMfr} for details).
\begin{definition}
\label{def:ft}
Let $\eps>0$ and an interval $I \subset \R$ be fixed and let
$A=A(u)$, $u\in\R^N$, be a smooth hyperbolic matrix $N\times N$.
We say that a continuous map $u:
I\mapsto \elleuno_{loc}(\real;\,\real^N)$, is an
$\eps$-approximate front tracking solution to Equation~
\eqref{eq:syshom}
if the following conditions hold:
\begin{enumerate}
\item
 As a function of two variables, $u=u(t,x)$ is piecewise constant
 with discontinuities occurring along finitely many straight lines in
 the $t$-$x$ plane. Jumps can be of two types: elementary wavefronts and nonphysical wavefronts, denoted, respectively, as $\E$ and $\NP$. Only finitely many wave-front interactions occur, each involving exactly two incoming fronts.
\item
 Along each elementary front $x=x_\alpha(t)$, the values $u^L\doteq u(t,\,x_\alpha-)$ and $u^R\doteq u(t,\,x_\alpha+)$
 satisfy the following properties for each $\alpha\in \E$. There exists some wave size
 $s_\alpha$ and some index $k_\alpha\in\{1,\dots,N\}$ such that
 \begin{equation}
 \label{eq3:appphy}
 u^R = T_{k_\alpha} [u^L](s_\alpha)\,.
 \end{equation}
 Moreover, the speed $\dot x_\alpha$ of the wave-front satisfies
 \begin{equation}
 \label{shockspeederr}
 \Big| \dot x_\alpha - \sigma_{k_\alpha}[u^L](s_\alpha,\tau)\Big|
 \leq \rarbound\,,
 \qquad\forall~\tau\in [0,\,s_\alpha]\,.
 \end{equation}
\item
 All nonphysical fronts $x=x_\alpha(t)$, $\alpha\in {\mathcal N}$
 have the same speed
 \begin{equation}
 \label{npspeed1}
 \dot x_\alpha\equiv\widehat\lambda\,,
 \end{equation}
 where $\widehat\lambda$ is a fixed constant strictly greater than
 all characteristic speeds, i.e. 
 \begin{equation}
 \label{nps}
 \widehat\lambda>\lambda_k(u)\qquad
 \quad\forall~u\in\Omega,\quad k=1,\dots,N\,.
 \end{equation}
 Moreover, the total strength of all nonphysical
 fronts in $u(t,\cdot)$ remains uniformly small, namely one has
 \begin{equation}
 \label{npbound}
 \sum_{\alpha\in\NP}
 \big|u(t,x_\alpha+)-u(t,x_\alpha-)\big|\leq 
 \eps
 \qquad\quad\forall~t\geq 0\,.
 \end{equation}
\end{enumerate}
We consider non-physical fronts to belong to a fictitious $N+1$-th family.
\end{definition}

In order to construct piecewise constant approximations
to~\eqref{eq:sysnc}-\eqref{eq:inda}, we follow the approach of~\cite{CP} and
construct a local solution to~\eqref{eq:sysnc}-\eqref{eq:inda} by means
of a fractional step algorithm combined with a front tracking method.
In order to do this, we assume that Assumption~\textbf{(G)} at~\pageref{Ass:G}
holds.
Hence, once two sequences
\[
\{ \tau_{\nu} \}_{\nu\in\nat},\ 
\{ \eps_{\nu} \}_{\nu\in\nat} \ ,
\qquad
0<\tau_{\nu}\leq \eps_\nu \downarrow 0 \ ,
\]
are given, we fix $\nu\in\nat$ and we proceed in this way
in order to construct and $\eps_\nu$-approximate fractional-step
approximation $u_\nu=u_{\eps_{\nu}}$ of the solution.
Fist of all, we approximate the initial datum $\overline{u}$ by means of a piecewise
constant function $\overline{u}_{\nu}$ such that
$$
\TV \overline{u}_{\nu} \leq \TV \overline{u}\,, \qquad
\Vert \overline{u}_{\nu}-\overline{u}\Vert_{L^1} \to 0
\quad\text{as $\nu\uparrow\infty$}\,.
$$
Then, we take a suitable approximation
$g_{\nu}$ of $g$ piecewise constant w.r.t. $x$, i.e., following~\cite[\S~3]{CP},
we let
\begin{subequations}
\label{E:discretizationSource}
\begin{equation}
\label{eq:gnu1}
g_\nu(t,x,v)\doteq \sum_{j\in\Z} \chi_{[j\eps_\nu,(j+1)\eps_\nu[}(x)
g_j(t,v)\,,
\end{equation}
where $\chi_I$ is the characteristic function of the set $I$, and
\begin{equation}
\label{eq:gnu2}
g_j(t,v) = \frac{1}{\eps_\nu} \int_{j\eps_\nu}^{(j+1)\eps_\nu}
g(t,x,v)\, dx\,.
\end{equation}
\end{subequations}
Then, the algorithm that
leads to the construction of the approximation $u_{\nu}=u_{\nu}(t,x)$
essentially consists of the following steps. 
\begin{enumerate}
\item
We apply a front tracking algorithm as described in~\cite{AMfr}, which we refer
to, to construct
an $\eps_\nu$-approximate front tracking solution in the sense of
Definition~\ref{def:ft} in the time interval $]0,\tau_{\nu}[$.
\item\label{item:sourcecorrection}
At $t=\tau_{\nu}$ we correct the term $u_{\nu} (\tau_{\nu}-,\cdot)$
by setting
\ba
\label{E:correctionsource}
u_{\nu} (\tau_{\nu}+,\cdot)= u_{\nu} (\tau_{\nu}-,\cdot)+
\tau_{\nu} g_{\nu} \big( \tau_{\nu} , \cdot, u_{\nu} (\tau_{\nu}-,\cdot) \big))\,,
\ea
which turns out to be piecewise constant by construction. Slightly varying the speeds, we can also assume for simplicity that no jump of $u_{\nu}$ is present at $(\tau_{\nu},j\eps_\nu)$ and that no interaction point lies on the line $t=\tau_{\nu}$.
\item
In general, once $u_{\nu} (n\tau_{\nu}+,\cdot)$, $n\geq 1$, is given,
we again use the algorithm in~\cite{AMfr} to construct
an $\eps_\nu$-approximate front tracking solution in the time interval
$]n\tau_{\nu}, (n+1)\tau_{\nu}[$.
\item
Similarly to what done above at $t=\tau_{\nu}$,
at $t=(n+1)\tau_{\nu}$ we correct the term $u_{\nu} ((n+1)\tau_{\nu}-,\cdot)$
by setting
$$
u_{\nu} \big( (n+1)\tau_{\nu}+,\cdot \big)= u_{\nu} \big(
(n+1) \tau_{\nu}-,\cdot \big)+
\tau_{\nu} g_{\nu} \big( (n+1)\tau_{\nu} , \cdot, u_{\nu}
\big( (n+1)\tau_{\nu}-,\cdot \big) \big))\,.
$$
\end{enumerate}
 We stress that, in the construction described above, nonphysical waves are implicitly restarted at each time step: the corresponding jumps are solved using physical waves. 
As usual with such algorithms, the main difficulties we have to face are the following.
\begin{itemize}
\item
To bound uniformly the total variation of $u_{\nu} (t,\cdot)$ in order to get
compactness of the approximating sequence;
\item
To let the number of the fronts to remain bounded in any time interval $[0,t]$.
\end{itemize}
We will briefly discuss how to overcome the first difficulty in
\S~\ref{Ss:PreviousEstimates}, taking advantage of the results contained
in~\cite{AMfr,sie,CP}. Regarding the second difficulty,
using the arguments contained in~\cite[\S~6.2]{AMfr},
it can be easily seen that
the number of wave fronts stays bounded in each time interval $[k\tau_\nu,
(k+1)\tau_\nu[$, and their number depends on the parameter $\eps_\nu$
and on the total variation of $u_\nu(t,\cdot)$ which remains uniformly bounded.

\subsection{Evolution / interaction estimates}
\label{Ss:PreviousEstimates}
\indent
In correspondence of a sequence $\{\eps_\nu\}_{\nu\geq 1}$,
$\eps_\nu\downarrow 0$,
and following~\cite{Glimm}, in this subsection we will define the interaction potential
and give the interaction estimates that will allow us to perform uniform
bounds on the total variation of an $\eps_\nu$ front-tracking approximate
solution. To this purpose,
following~\cite[Definition~3.5]{sie}, we first summarise in the following lemma the relevant points of the definition
of the quantity of interaction between wave-fronts of an approximate
solution.

\begin{lemma}
\label{def:qi}
{\rm
Consider two interacting wave fronts of sizes $s', s''$ ($s'$ located on
the left of $s''$), belonging to the $k', k''\in\{1,\dots, N+1\}$-th
characteristic family, respectively, and let 
 $u^L,\,u^M,\,u^R$,
 denote the left, middle, and right
states before the interaction.
There is a function called {\it
amount of interaction} ${\I}(s^\prime,\,s^{\second})$
between $s^\prime$ and $s^{\second}$ that in the following particular cases is the quantity specified as
follows and for which Proposition~\ref{pro:ie} holds.
\begin{enumerate}
 \item
 If $s^\prime$ and $s^{\second}$ belong to different characteristic
 families, i.e. if $k''<k'\leq N+1$, then set
 \begin{equation}
 \label{eq2:intdf}
	{\I}(s^\prime,\,s^{\second}) \doteq \vert s^\prime
	s^{\second} \vert\,.
 \end{equation}
 \item
 If $s^\prime$ and $s^{\second}$ belong to the same $k\, (\leq\!\!
 N)$-th characteristic family $(k\doteq k'=k'')$, in the
conservative case, and if they are both shocks of the $k$-th family
that have the same sign, then the amount of interaction takes the form of the product of the strength of the waves times
the difference of their Rankine Hugoniot speeds:
$$
\I(s', s'')=\big|s' s''\big| \Big|\sigma_k[u^L](s')]-\sigma_k[u^M](s'')\Big|\,,
$$
where $u^M=S_{k}[u^{L}](s')$ and $u^R=S_{k}[u^{M}](s'')
$.
\end{enumerate}
}
\end{lemma}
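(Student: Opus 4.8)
This lemma only summarises~\cite[Definition~3.5]{sie}, so the plan is to recall the construction of the amount of interaction $\I$ and then to verify that in the two listed configurations it reduces to the displayed quantities; the remaining cases, and the verification that Proposition~\ref{pro:ie} holds, are the bookkeeping already carried out in~\cite{sie,srp,ACM1,Glimm}. First I would fix the interaction point: there one has the three states $u^L$, $u^M$, $u^R$ with $u^M=T_{k'}[u^L](s')$ and $u^R=T_{k''}[u^M](s'')$, and one re-solves the Riemann problem with data $(u^L,u^R)$ through the map $\Phi$ of~\eqref{eq2:RP1}, obtaining outgoing wave sizes $(\widetilde s_1,\dots,\widetilde s_N)$ together with a possibly new nonphysical front. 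The amount of interaction $\I(s',s'')$ is then the nonnegative quantity, defined as in~\cite[\S\,3]{sie}, that dominates at leading order the discrepancy between this outgoing pattern and the free superposition of the two incoming fronts; equivalently, it is the bilinear term that enters Glimm's interaction estimate and drives the decrease of the interaction potential $\Qg$.

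For fronts of different families ($k''<k'$) I would expand $\Phi$ to second order about the origin, using that each elementary curve $T_k[u](\cdot)$ is tangent to $r_k(u)$ at $s=0$ and that the velocities $\sigma_k[u](\cdot,\cdot)$ are Lipschitz. The classical Glimm-type computation, in the non-conservative form of~\cite{srp} and quantified in~\cite[\S\,3]{sie}, yields $\widetilde s_{k'}=s'+O(|s's''|)$, $\widetilde s_{k''}=s''+O(|s's''|)$, $\widetilde s_h=O(|s's''|)$ for $h\neq k',k''$, and a nonphysical front of size $O(|s's''|)$; with the chosen normalisation this gives $\I(s',s'')=|s's''|$, which is~\eqref{eq2:intdf}.

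For two shocks of the same $k$-th family with the same sign, in the conservative regime, $T_k$ agrees on the relevant branch with the Lax shock curve $S_k$ (Remark~\ref{rem2:consform}), so $u^M=S_k[u^L](s')$ and $u^R=S_k[S_k[u^L](s')](s'')$. Comparing this composition with $S_k[u^L](s'+s'')$ through a second-order expansion of the shock curve --- where one uses the monotonicity of $s\mapsto\sigma_k[\cdot](s)$ and the standard identity relating the curvature of $S_k$ to $\partial_s\sigma_k$, see~\cite[\S\S\,5--7]{BressanBook} and~\cite{sie} --- one finds that the leading-order total strength of the newly created waves is exactly $|s's''|\,|\sigma_k[u^L](s')-\sigma_k[u^M](s'')|$; the assumption that the two shocks have the same sign is what makes this non-cancelling, nonnegative quantity the correct measure of the interaction, so $\I$ is defined to be that expression. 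The interaction estimate of Proposition~\ref{pro:ie} then follows from the same expansions, as in~\cite{Glimm,sie,AMfr}.

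I expect the main obstacle to be not conceptual but the technical care demanded by the non-conservative setting: one does not have honest rarefaction and shock curves but the Bianchini--Bressan elementary curves $T_k[u](\cdot)$ with their velocity fields $\sigma_k[u](r,s)$, inside each of which a single wave of size $s$ may itself be a composed pattern of countably many rarefactions and Liu-admissible discontinuities. Making rigorous the notions of the sign of a shock and of both fronts being shocks, and propagating the second-order expansions uniformly over such composed waves, is the delicate point; it is precisely the content of~\cite[Definition~3.5]{sie} and the estimates around it, which here we only recall, so in the present paper the lemma is quoted from there.
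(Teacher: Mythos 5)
Your reading is correct: the paper states this ``lemma'' as a recollection of~\cite[Definition~3.5]{sie} (and of the associated interaction estimate~\cite[Proposition~4.1]{sie}) and gives no proof of its own, exactly as you conclude at the end of your proposal. Your intermediate sketch of why the two displayed formulas arise from the second-order expansion of $\Phi$ and of the shock/elementary curves is a reasonable reconstruction of the argument in~\cite{sie,srp}, but since the paper itself only cites, your approach is essentially the same as the paper's.
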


Now, whenever an $\eps$-approximate front tracking solution
$u_{\nu}=u_{\eps_{\nu}}(t,x)$ to Equation~
\eqref{eq:syshom} is given,
we define the interaction potential (see~\cite[(4.2)]{sie})
\begin{align}
\label{eq:ip}
\Qg (u_{\nu}(t,\cdot)) = \sum_{\substack{i<j\\ x'>x''}}&
\big\vert s'_{x',i}s''_{x'',j} \big\vert +\\
&+
\frac{1}{4} \sum_{x',x'',i} \int_0^{\vert s_{x',i}\vert} \int_0^{s_{x'',i}}
\big\vert \sigma_{x',i}(\tau')-\sigma_{x'',i}(\tau'') \big\vert\, d\tau'd\tau''\,,\notag
\end{align}
where $s_{x,k}$ is the size of the wave of the $k$-th characteristic family
at $x$, and $\sigma_{x,k}(\tau)$ is its speed, suitably defined. 
Moreover, we let
\begin{equation}
\label{eq:V}
\mathcal{V} (u_{\nu}(t,\cdot)) = \sum_{x,i} \vert s_{x,i}\vert\,.
\end{equation}
With these definitions, the following result holds (see~\cite[Proposition~4.1]{sie}):
\begin{proposition}
\label{pro:ie}
There exists $\dH>0$ such that, if
$u_{\nu}=u_{\eps_{\nu}}(t,x)$ is an $\eps$-approximate front tracking solution
to Eqs.~\eqref{eq:syshom}-\eqref{eq:inda} with $\TV \overline{u}<\dH$,
the following holds. There exists constants $c,\kappa>0$ such that,
whenever two wave fronts $s',s''$ interact, then
$$
\Delta \mathcal{Q} \leq -c\I (s',s'')\,,
$$
and, moreover, the functional
\begin{equation}
\label{eq:Ups}
t\mapsto \Upsilon (u_{\nu}(t,\cdot)) = \mathcal{V} (u_{\nu}(t,\cdot))
+\kappa \Qg (u_{\nu}(t,\cdot))
\end{equation}
is decreasing.
\end{proposition}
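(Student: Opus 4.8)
Since an $\eps$-approximate front tracking solution of~\eqref{eq:syshom} is piecewise constant with fronts running along finitely many straight lines, the functionals $\V(u_\nu(t,\cdot))$ and $\Qg(u_\nu(t,\cdot))$ of~\eqref{eq:ip}-\eqref{eq:V}, hence $\Upsilon(u_\nu(t,\cdot))$ of~\eqref{eq:Ups}, are constant in $t$ away from the finitely many interaction times. Thus everything reduces to a \emph{local} analysis at a single interaction, combined with a bootstrap on the smallness of the total strength $\V$. I would follow the classical Glimm scheme of~\cite{Glimm,BressanBook}, in the non-conservative form developed in~\cite{BB,srp,AMfr} and, in the sharp version needed here, in~\cite{sie}.

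\textbf{Steps 1--2: local estimates at an interaction.} First I would invoke the interaction estimate for the elementary curves $T_k$ and the composite map $\Phi$ of~\eqref{eq2:RP1}: if two fronts of sizes $s',s''$ and families $k'\le k''$ interact, and $s_1^+,\dots,s_{N+1}^+$ denote the sizes of the outgoing fronts of the resolved Riemann problem for~\eqref{eq:syshom} (non-physical fronts included, in the fictitious $(N+1)$-th family), then
\[
\sum_{h=1}^{N+1}\bigl|s_h^+-\widehat s_h\bigr|\le C_0\,\I(s',s''),
\]
where $\widehat s_{k'}=s'$, $\widehat s_{k''}=s''$ if $k'\ne k''$, $\widehat s_k=s'+s''$ if $k'=k''=k$, and $\widehat s_h=0$ otherwise, and $\I$ is normalized as in Lemma~\ref{def:qi}; this is proved in~\cite{BB,srp} and, in the form used here, in~\cite[\S3]{sie}, and I would simply quote it. An immediate corollary is $\Delta\V\le C_0\,\I(s',s'')$. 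Next, I would split $\Delta\Qg$ into three parts: (i) the disappearance of the interacting pair $(s',s'')$, which removes the term $|s's''|$ from the transversal sum in~\eqref{eq:ip} when $k'\ne k''$, and removes the corresponding same-family double integral of speed differences when $k'=k''$ — by the normalization of $\I$ and the choice of $\Qg$, this contributes at most $-\I(s',s'')$; (ii) the interaction of the outgoing fronts against every other front of the profile, bounded via Step~1 by $C_1\,\V\sum_h|s_h^+-\widehat s_h|\le C_0C_1\,\V\,\I(s',s'')$; (iii) the readjustment of the self-interaction integral terms, again controlled by $C_1\,\V\,\I(s',s'')$, the factor $\tfrac14$ in~\eqref{eq:ip} being tailored so that (i) dominates (iii). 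Summing, $\Delta\Qg\le -(1-C_2\V)\,\I(s',s'')$ with $C_2$ depending only on the system.

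\textbf{Step 3: bootstrap and conclusion.} Set $c=\tfrac12$, fix $\kappa\ge 2C_0$, and let $\delta_0=1/(2C_2)$. I would run the standard continuation argument: as long as $\V(t)<\delta_0$, Step~2 gives $\Delta\Qg\le -c\,\I(s',s'')$ at each interaction, hence $\Delta\Upsilon=\Delta\V+\kappa\Delta\Qg\le(C_0-\kappa c)\,\I(s',s'')\le 0$; being constant between interactions, $\Upsilon$ is then non-increasing, so $\V(t)\le\Upsilon(t)\le\Upsilon(0)$, and $\Upsilon(0)$ is bounded by a constant times $\TV\overline u_\nu\le\TV\overline u<\dH$ (the quadratic part of $\Qg(0)$ being lower order). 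Choosing $\dH$ so small that this bound is $<\delta_0$, and using that on any bounded time interval there are only finitely many interactions (so $\V$ has only finitely many jumps), the open condition $\V<\delta_0$ can never be lost; hence $\V(t)<\delta_0$ for all $t\ge0$. Then $1-C_2\V>c$ at every interaction, so $\Delta\Qg\le -c\,\I(s',s'')$ as claimed, and the same computation yields $\Delta\Upsilon\le0$, i.e. $t\mapsto\Upsilon(u_\nu(t,\cdot))$ is decreasing.

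\textbf{Main obstacle.} The genuinely hard part is Step~1, and with it the verification in Step~2(i) that the particular functional~\eqref{eq:ip} — with both its transversal product term and its same-family double integral of speed differences — actually absorbs $\I(s',s'')$ in the non-conservative / vanishing-viscosity setting, where the elementary curves $T_k$ are merely Lipschitz and the characteristic fields need not be genuinely nonlinear. This is precisely the delicate analysis of~\cite{BB,srp,sie}; once it is granted, the remainder is the classical Glimm bookkeeping together with the bootstrap above.
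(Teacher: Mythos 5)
The paper does not prove this proposition at all: it states it and cites Bianchini~\cite[Proposition~4.1]{sie} directly (the surrounding text reads ``the following result holds (see \cite[Proposition~4.1]{sie})''). Your sketch is a faithful outline of the Glimm-type bookkeeping plus bootstrap that underlies that reference, and you correctly isolate the genuine technical core --- the interaction estimate for the non-conservative $T_k$-curves and the verification that the specific functional~\eqref{eq:ip} absorbs $\I(s',s'')$ --- as exactly the content of~\cite{BB,srp,sie}; since both you and the paper ultimately rest on that same cited work, this is essentially the same approach.
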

%

\subsection{Existence and convergence of approximations}
\label{Ss:convergenceAndExistence}

In this section local in time existence of time-step approximations constructed in \S~\ref{sec:PCA} and that they converge to the entropy solution of the Cauchy problem in Eqs.~\eqref{eq:sysnc}-\eqref{eq:inda}, as stated in Theorem~\ref{T:localConv} below. Uniqueness was proved roughly following the lines of~\cite{AG2}.
%
%
%
%

We first fix a classical notation.
Let $S_{i}[u^{L}](s)$ denote the \textbf{$i$-th Hu\-go\-niot curve} issuing from $u^{L}$, $i\in\{1,\dots,N\}$. We denote by $\sigma_{i}[u^{L}](s)$ the corresponding Rankine-Hugoniot \textbf{speed} of the $i$-th discontinuity $[u^{L},S_{i}[u^{L}](s)]$: $\sigma_{i}$ and $S_{i}$ are defined by the Implicit Function Theorem by the relation
\begin{equation}\label{E:RH}
\sigma_{i}[u^{L}](s)\, \left(S_{i}[u^{L}](s)-u^{L}\right) = f\left(S_{i}[u^{L}](s)\right)-f(u^{L})
\end{equation}
together with
\[
S_{i}[u^{L}](0)=u^{L}\ ,
\qquad
\sigma_{i}[u^{L}](0)=\lambda_{i}(u^{L})\ ,
\qquad
\dds S_{i}[u^{L}](0)=r_{i}(0)
\ .
\]
One can suppose that $ S_{i}[u^{L}]$ is parameterized by the $i$-th component relative to the basis $r_1(u),\ldots, r_N(u)$.
If $u^{R}=S_{i}[u^{L}](s)$, we denote also by $\sigma_{i}(u^{L},u^{R})=\sigma_{i}[u^{L}](s)$ the \textbf{speed} of the $i$-th discontinuity $[u^{L},u^{R}]$. This $i$-th discontinuity is \textbf{admissible} when~\cite{tplrpnpn}
\[
\forall 0\leq |\tau|\leq |s| 
\qquad
\sigma_{i}[u^{L}](\tau)\geq\sigma_{i}(u^{L},u^{R})\ .
\]
%

Let $\Upsilon$ be the functional introduced in Equation~
\eqref{eq:Ups} for studying the well-posedness of the system~\eqref{eq:sysnc}.

\begin{theorem}
\label{T:localConv}
There exist $\dssb, T>0$ such that for initial data $\overline{\uu}$ in the closed domain
\[
\mathfrak D_{p}(\dssb)\doteq{}\left\{\uu\in L ^{1}(\R;\R^{N})\cap \BV(\R;\R^{N})\text{ piecewise constant s.t.~} \Upsilon(\uu)\leq\dssb\right\}
\]
the algorithm described in \S~\ref{Ss:frontTr} defines for $t\in[0,T]$ and for every $\nu$ an approximating function
\bel{E:domt}
\ww_{\nu}(t,\cdot)\in\mathfrak D_{p}\left( \dssb+Gt\right) \quad \text{where $\dssb,G$ only depend on $A$ and $g$.}
\end{equation}

This approximating function $w_{\nu}$ satisfies the following comparison estimate with the viscous semigroup~\cite{Ch1} $\vSC{t}{h}[\cdot]$ of the Cauchy problem in~\eqref{eq:sysnc}-\eqref{eq:inda} starting at time $h$: there is a function $o(s)$ depending only on $A$, $g$, $\dssb$, $T$ such that $o(s)/s\to0$ if $s\to0$ and such that for $n\in\nat$
\bel{E:uniqEst}
\big\lVert \ww_{\nu}(n \tau_{\nu}+,\cdot) - \vSC{n \tau_{\nu}}{(n-1)\tau_{\nu}} \left[\ww_{\nu}((n-1)\tau_{\nu}+,\cdot)\right]\big\rVert_{L^{1}} \leq \OL(1)\left(o(\tau_{\nu}) +\eps_{\nu}\tau_{\nu}
\right)
\ .
\end{equation}

For every $\overline{\uu}\in\mathfrak D(\dssb)$ one can choose a suitable piecewise-constant approximation $\overline{\ww}_{\nu}\in \mathfrak D(\dssb)$ of $\overline{\uu}$ such that, denoting by $\ww_{\nu}$ the $\nu$-approximation as in \S~\ref{Ss:frontTr} with initial datum $\overline{\ww}_{\nu}$, for a.e.~$t\in[0,T]$ the sequence $\ww_{\nu}(t,\cdot)$ converges in $L^{1} (\R;\R^{N})$ to $\vSC{t}{0}\overline{\ww}$.

\end{theorem}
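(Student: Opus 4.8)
The plan is to obtain Theorem~\ref{T:localConv} by assembling the standard front-tracking and fractional-step machinery, following \cite{CP} for the coupling of the two methods and \cite{ACM1,AMfr,sie} for the non-conservative front tracking; the only genuinely new bookkeeping is the interplay between the source corrections at the times $n\tau_{\nu}$ and the Glimm-type interaction functional $\Upsilon$ of Proposition~\ref{pro:ie}.

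\medskip
\noindent\emph{Step 1: uniform $\BV$ bounds and the domain estimate \eqref{E:domt}.} On each open slab $]n\tau_{\nu},(n+1)\tau_{\nu}[$ the approximation $w_{\nu}$ is, by construction, an $\eps_{\nu}$-approximate front-tracking solution of the homogeneous system $u_t+A(u)u_x=0$ in the sense of Definition~\ref{def:ft}; hence, as long as $\Upsilon(w_{\nu}(t,\cdot))$ stays below the threshold $\dH$ of Proposition~\ref{pro:ie}, the map $t\mapsto\Upsilon(w_{\nu}(t,\cdot))$ is non-increasing on that slab. At $t=(n+1)\tau_{\nu}$ the correction \eqref{E:correctionsource}, $w_{\nu}\mapsto w_{\nu}+\tau_{\nu}g_{\nu}(\,\cdot\,,w_{\nu})$, changes both the $L^{1}$ norm and the total variation by at most $\tau_{\nu}$ times a constant: from $D_{x}\big(\tau_{\nu}g_{\nu}(t,x,w_{\nu})\big)=\tau_{\nu}\big(g_{\nu,x}+g_{\nu,u}\,D_{x}w_{\nu}\big)$ and Assumption~\textbf{(G)} (with $\alpha\in L^{1}$ controlling $g_{x}$ and the Lipschitz bound controlling $g_{u}$), the increment of $\Upsilon$ across such a correction is bounded by $C\tau_{\nu}\big(1+\Upsilon(w_{\nu})\big)$. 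Summing over the at most $T/\tau_{\nu}$ corrections and using a discrete Gronwall argument, one fixes $\dssb$ small and $T$ short so that $\Upsilon(w_{\nu}(t,\cdot))\le\dssb+Gt<\dH$ on $[0,T]$, with $G=G(A,g)$; this is \eqref{E:domt}. The boundedness of the number of wavefronts on each slab --- hence on $[0,T]$ --- then follows exactly as in \cite[\S~6.2]{AMfr}, giving the required compactness.

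\medskip
\noindent\emph{Step 2: one-step comparison with the viscous semigroup.} Fix $n$ and compare $w_{\nu}(n\tau_{\nu}+,\cdot)$ with $\vSC{n\tau_{\nu}}{(n-1)\tau_{\nu}}\big[w_{\nu}((n-1)\tau_{\nu}+,\cdot)\big]$. Splitting the evolution of the semigroup over $[(n-1)\tau_{\nu},n\tau_{\nu}]$ into a ``hyperbolic'' and a ``source'' part and matching it with the algorithm, the error decomposes as: (i) the distance between the $\eps_{\nu}$-approximate front-tracking solution and the vanishing-viscosity semigroup of the \emph{homogeneous} system over a single slab, which by the convergence theory of \cite{ACM1,AMfr} is $o(\tau_{\nu})$ with $o(s)/s\to0$; (ii) the distance between the exact flow of $\dot u=g(t,x,u)$ over time $\tau_{\nu}$ and its explicit Euler step $u\mapsto u+\tau_{\nu}g(t,\cdot,u)$, which is $\OL(\tau_{\nu}^{2})$ by the Lipschitz regularity of $g$; (iii) the effect of replacing $g$ by the piecewise-constant-in-$x$ approximation $g_{\nu}$, which by \eqref{eq:gnu1}--\eqref{eq:gnu2} and Assumption~\textbf{(G)} is $\OL(\eps_{\nu})$ in $L^{1}$, contributing $\OL(\eps_{\nu}\tau_{\nu})$. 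Collecting these gives \eqref{E:uniqEst}. Next, the viscous semigroup satisfies an $L^{1}$-Lipschitz estimate $\big\|\vSC{t}{h}u-\vSC{t}{h}v\big\|_{L^{1}}\le e^{C(t-h)}\|u-v\|_{L^{1}}$ on the domains $\mathfrak D_{p}(\dssb+Gt)$ (this is part of \cite{Ch1}, see also \cite{ACM1}); setting $E_{n}\doteq\big\|w_{\nu}(n\tau_{\nu}+,\cdot)-\vSC{n\tau_{\nu}}{0}\overline w_{\nu}\big\|_{L^{1}}$, the triangle inequality and \eqref{E:uniqEst} yield $E_{n}\le e^{C\tau_{\nu}}E_{n-1}+\OL(1)\big(o(\tau_{\nu})+\eps_{\nu}\tau_{\nu}\big)$, whence, since $n\tau_{\nu}\le T$,
\[
E_{n}\ \le\ e^{CT}E_{0}+e^{CT}\,\OL(1)\,\frac{T}{\tau_{\nu}}\big(o(\tau_{\nu})+\eps_{\nu}\tau_{\nu}\big)
\ =\ e^{CT}E_{0}+\OL(1)\,\Big(\tfrac{o(\tau_{\nu})}{\tau_{\nu}}+\eps_{\nu}\Big)\,,
\]
which tends to $0$ once $\overline w_{\nu}\to\overline w$ in $L^{1}$ is chosen so that $E_{0}\to0$. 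To pass from the grid times $n\tau_{\nu}$ to all $t\in[0,T]$, one uses that both $w_{\nu}$ and $t\mapsto\vSC{t}{0}\overline w$ are $L^{1}$-Lipschitz in time on $\BV$ data, with constant controlled by the (uniformly bounded) total variation and the bound on wave speeds; hence $\|w_{\nu}(t,\cdot)-w_{\nu}(n\tau_{\nu}+,\cdot)\|_{L^{1}}$ and $\|\vSC{t}{0}\overline w-\vSC{n\tau_{\nu}}{0}\overline w\|_{L^{1}}$ are $\OL(\tau_{\nu})$ for $t\in[n\tau_{\nu},(n+1)\tau_{\nu})$, and $w_{\nu}(t,\cdot)\to\vSC{t}{0}\overline w$ in $L^{1}(\R;\R^{N})$ for every $t\in[0,T]$, a fortiori for a.e.\ $t$. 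The identification of the limit is as in \cite{AG2}.

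\medskip
\noindent The part I expect to be the main obstacle is Step~1 together with the uniformity in $\nu$ of the constants in Step~2: one must show that the repeated source corrections do not spoil the near-monotonicity of $\Upsilon$ by more than an amount which, summed over the $\sim T/\tau_{\nu}$ steps, stays bounded by $Gt$, and that the front-tracking-versus-semigroup error in item (i) is $o(\tau_{\nu})$ \emph{uniformly} across slabs. Both rest on a careful quantitative version of the interaction estimates of Lemma~\ref{def:qi} and Proposition~\ref{pro:ie} in the present non-conservative, inhomogeneous setting, which is precisely the technical content worked out in \cite{ACM1}; here it suffices to invoke it.
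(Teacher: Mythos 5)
Your proposal follows the route that the paper itself sketches (the paper does not write out a self-contained proof of Theorem~\ref{T:localConv} but recalls it from~\cite[Theorems~4.1, 4.4]{ACM1}, and provides precisely the auxiliary estimates~\eqref{E:rgabab}--\eqref{E:ftest2}, Lemma~\ref{L:timeEst} and Remark~\ref{R:timeupdate} that your two steps would invoke). Step~1 is correct and is exactly the content of Lemma~\ref{L:timeEst}: the correction at $t=n\tau_\nu$ increases $\Upsilon$ by $\OL(G^2\tau_\nu)$ with $G$ controlled by $\TV$, $\ell_g$ and $\norm\alpha_{L^1}$, and the slab evolution keeps $\Upsilon$ non-increasing by Proposition~\ref{pro:ie}; summing yields~\eqref{E:domt}. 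Step~2 is also the intended argument, with one small bookkeeping slip: the one-slab distance between the $\eps_\nu$-front-tracking solution and the viscous semigroup of the \emph{homogeneous} system is $\OL(\eps_\nu\tau_\nu)$ (this is~\eqref{E:ftest1}, from \cite[(3.5)]{AMfr}), not $o(\tau_\nu)$ --- the genuine $o(\tau_\nu)$ term in~\eqref{E:uniqEst} is the operator-splitting error, i.e.\ the discrepancy between the coupled viscous semigroup $\vSC{t}{h}$ and the composition of the homogeneous semigroup with the explicit Euler step for the source. Since both terms appear on the right-hand side of~\eqref{E:uniqEst}, this mislabeling does not affect the conclusion; the triangle-inequality iteration with the semigroup's $L^1$-Lipschitz stability then gives convergence as you describe.
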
 

\evidenzia{Introduction to the proof}
Before the proof, we briefly recall our notation and previous results that we need. We denote by $\ftS{t}{h}$ the wavefront tracking approximation of the semigroup $\vSB{t}{h}[\cdot]$ relative to the homogeneous system, constructed by vanishing viscosity~\cite{BB}, where the `initial datum' is fixed at time $h\leq t$ rather than at $h=0$.

We exploit the definition in \S~\ref{sec:PCA} of the approximation: set
\begin{subequations}
\label{E:approx}
\bel{E:approx1}
\ww_{\nu}(t,\cdot) =\ftS{ t}{(n-1)\tau_{\nu}}\left[\ww_{\nu}((n-1) \tau_{\nu}+,\cdot) \right]
\end{equation}
for $(n-1)\tau_{\nu}<t<n\tau_{\nu}$, $n\in\N$, and
\bel{E:approx2}
\begin{split}
\ww_{\nu}( n\tau_{\nu}+,\cdot) 
&\equiv 
 \ftS{ n\tau_{\nu}}{(n-1)\tau_{\nu}}\left[\ww_{\nu}((n-1) \tau_{\nu}+,\cdot) \right]
 \\
 &\quad+ \tau_{\nu}g\left( n\tau_{\nu},\cdot, \ftS{ n\tau_{\nu}}{(n-1)\tau_{\nu}}\left[\ww_{\nu}( n-1) \tau_{\nu}+,\cdot) \right] \right)
\\
&\equiv \ww_{\nu}( n\tau_{\nu}-,\cdot) + \tau_{\nu}g\left( n \tau_{\nu},\cdot, \ww_{\nu}( n\tau_{\nu}-,\cdot) \right)
\end{split}
\end{equation}
\end{subequations}
relative to the balance law with the initial condition $\ww_{\nu}(0+,\cdot)\equiv\bar u(\cdot)$.
We recall that 
\[
\text{if $\overline \ww\in \mathfrak D_{p}(\dssb)$, for $\dssb\leq \dH$ small enough as in~\cite{AMfr}, }
\]
then, considering also Proposition~\ref{pro:ie} above,
\ba
\label{E:rgabab}
&\Upsilon\left(\ftS{t}{h}\overline\ww\right)\leq\Upsilon(\overline\ww)
\qquad \forall0\leq h\leq t\leq \tau_{\nu}<T&
&\text{see~\cite[(6.4)]{AMfr}}&
\\
\label{E:ftest1}
&{}\lVert \ftS{n \tau_{\nu}}{(n-1)
\tau_{\nu}} \overline{\ww} -\vSB{n \tau_{\nu}}{(n-1) \tau_{\nu}} \overline{\ww}
\rVert_{L^{1}} \lesssim (1+\dssb)\varepsilon_{\nu}\tau_{\nu}&
&\text{see~\cite[(3.5)]{AMfr}}&
\\
\label{E:ftest2}
&{}\lVert \ftS{t+s}{t}\overline{\ww} -\overline{\ww}\lVert \leq L s &
&\text{see~\cite[(1.23)]{AMfr}}&
\ea
We also borrow the following lemma from~\cite[Lemmas~2.1-2]{AG2}, given in a similar setting. Of course we could state it similarly also by localizing in space the estimates.
We recall that $\Upsilon, \Qg$ are the functionals introduced in Eqs.~\eqref{eq:ip}-\eqref{eq:Ups} while $\ell_{g}$ and $\alpha$ are as in the Assumption~\textbf{(G)} on the source term at Page~\pageref{Ass:G}.

\begin{lemma}
\label{L:timeEst}
Let $t>0$.
If $0<\dssb<\dH$ and $\overline{\ww},\overline{\uu}$ are piecewise constant with $\Upsilon(\overline{\uu})+\Upsilon(\overline{\ww})\leq\dssb$ then \[\vv(x)\doteq{}\overline{\uu}(x)+\tau g_{\nu}(t,x,\overline{\ww}(x))\] satisfies for $G= \max\{ \ell_{g}\TV(\overline\ww)+\TV(\overline\uu)+\norm{\alpha}_{L^{1}}; 1\} $ the inequalities
\bel{E:rwrg}
\left| \TV(\vv)- \TV(\overline\uu)\right|\lesssim G \tau
\,,\ 
\left|\Qg(\vv) - \Qg(\overline\uu)\right|
\lesssim G^{2} \tau
\,,\ 
\left|\Upsilon(\vv) - \Upsilon(\overline\uu)\right|
\lesssim
 G^{2}\tau \ .
\end{equation}
\end{lemma}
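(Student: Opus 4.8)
The plan is to bound, separately and quantitatively, the effect of the source correction $\tau\, g_\nu(t,\cdot,\overline\ww(\cdot))$ on the total variation, on the interaction potential $\Qg$, and then on $\Upsilon=\mathcal{V}+\kappa\,\Qg$, exploiting the discretization structure~\eqref{eq:gnu1}--\eqref{eq:gnu2} of $g_\nu$ together with Assumption~\textbf{(G)}.

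\emph{Total variation of the correction.} First I would prove that $x\mapsto g_\nu(t,x,\overline\ww(x))$ is piecewise constant with $\TV\big(g_\nu(t,\cdot,\overline\ww(\cdot))\big)\le \ell_g\,\TV(\overline\ww)+\norm{\alpha}_{L^{1}}$. Its jumps are of two kinds. If $x$ is a jump point of $\overline\ww$ lying inside an open cell $(j\eps_\nu,(j+1)\eps_\nu)$, then $g_\nu$ jumps by $g_j(t,\overline\ww(x+))-g_j(t,\overline\ww(x-))$, of modulus $\le \ell_g\,\abs{\overline\ww(x+)-\overline\ww(x-)}$, because averaging in $x$ preserves the Lipschitz constant of $g$ in $u$; summing over the jumps of $\overline\ww$ produces the term $\ell_g\,\TV(\overline\ww)$. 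At a node $j\eps_\nu$ where $\overline\ww$ is continuous, $g_\nu$ jumps by $g_j(t,v)-g_{j-1}(t,v)$ with $v=\overline\ww(j\eps_\nu)$; writing $g_j(t,v)-g_{j-1}(t,v)=\tfrac1{\eps_\nu}\int_{(j-1)\eps_\nu}^{j\eps_\nu}\big(\int_z^{z+\eps_\nu}g_x(t,\xi,v)\,d\xi\big)\,dz$, using $\abs{g_x}\le\alpha$ and summing over $j$, Fubini gives the bound $\tfrac1{\eps_\nu}\int_\R\big(\int_{\xi-\eps_\nu}^{\xi}dz\big)\alpha(\xi)\,d\xi=\norm{\alpha}_{L^{1}}$; a node which is also a jump of $\overline\ww$ contributes to both sums once one splits the difference. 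Since $\vv=\overline\uu+\tau\, g_\nu(t,\cdot,\overline\ww(\cdot))$, subadditivity of the total variation yields $\abs{\TV(\vv)-\TV(\overline\uu)}\le\tau\big(\ell_g\TV(\overline\ww)+\norm{\alpha}_{L^{1}}\big)\le G\,\tau$, which is the first inequality; this also gives $\TV(\vv)\le(1+\tau)\,G$.

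\emph{The quadratic functionals.} Next I would invoke the classical perturbation estimates for the Glimm functionals, in the spirit of~\cite[\S\S~5--7]{BressanBook} and of~\cite[Lemmas~2.1-2]{AG2}: for piecewise constant $u',u''$ with total variation below $\dH$ and with jump sets both contained in a common finite set $\{x_1<\dots<x_m\}$, setting $d(u',u'')\doteq\sum_{\ell=1}^m\abs{\big(u'(x_\ell+)-u'(x_\ell-)\big)-\big(u''(x_\ell+)-u''(x_\ell-)\big)}$, one has $\abs{\mathcal{V}(u')-\mathcal{V}(u'')}\lesssim d(u',u'')$ and $\abs{\Qg(u')-\Qg(u'')}\lesssim\big(\TV(u')+\TV(u'')\big)\,d(u',u'')$; both follow from the smooth dependence of the $i$-th wave sizes and of the speeds $\sigma_i$ on the endpoint states, and the extra factor for $\Qg$ is precisely the quadratic homogeneity of~\eqref{eq:ip}. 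Applying this with $u'=\vv$, $u''=\overline\uu$, after refining the partition to include the finitely many jump points of both (noting that $\vv$ may acquire new jumps at the nodes $j\eps_\nu$), we have $d(\vv,\overline\uu)=\tau\,\TV\big(g_\nu(t,\cdot,\overline\ww(\cdot))\big)\le\tau\big(\ell_g\TV(\overline\ww)+\norm{\alpha}_{L^{1}}\big)$, and by the previous step together with $\TV(\overline\uu)\le G$ we get $\TV(\vv)+\TV(\overline\uu)\lesssim G$. Hence $\abs{\mathcal{V}(\vv)-\mathcal{V}(\overline\uu)}\lesssim G\,\tau$ and $\abs{\Qg(\vv)-\Qg(\overline\uu)}\lesssim G\cdot G\,\tau=G^{2}\tau$, which is the second inequality; and since $\Upsilon=\mathcal{V}+\kappa\,\Qg$ and $G\ge1$,
\[
\abs{\Upsilon(\vv)-\Upsilon(\overline\uu)}\le\abs{\mathcal{V}(\vv)-\mathcal{V}(\overline\uu)}+\kappa\,\abs{\Qg(\vv)-\Qg(\overline\uu)}\lesssim G\,\tau+G^{2}\tau\lesssim G^{2}\tau\,,
\]
which is the third inequality.

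\emph{Main obstacle.} The genuinely new ingredient is the total-variation bound on the source correction: it crucially uses the averaging built into~\eqref{eq:gnu1}--\eqref{eq:gnu2} and the hypothesis $\alpha\in L^{1}(\R)$ of Assumption~\textbf{(G)} to control the jumps created at the grid nodes. The remainder is a careful transcription of the standard interaction estimates from~\cite{BressanBook,AG2}, where the only delicate bookkeeping is to keep the correct homogeneity (linear for $\mathcal{V}$, quadratic for $\Qg$) and to account for the new jump points of $\vv$ at the nodes $j\eps_\nu$.
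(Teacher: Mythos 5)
The paper does not actually prove Lemma~\ref{L:timeEst}: it is stated as ``borrow[ed] \dots from~\cite[Lemmas~2.1-2]{AG2}'', with no argument reproduced in the text, only the consequences collected later in Remark~\ref{R:timeupdate}. Your blind reconstruction therefore cannot be compared against an in-paper proof, but it follows the natural route and, to the extent I can verify it, is essentially correct. You rightly identify the two ingredients: (a) the Fubini computation showing $\TV\big(g_\nu(t,\cdot,\overline\ww(\cdot))\big)\le\ell_g\TV(\overline\ww)+\norm{\alpha}_{L^1}$ by splitting the jumps of the discretized source into those inherited from $\overline\ww$ and those created at the grid nodes $j\eps_\nu$ by the cell-averaging~\eqref{E:discretizationSource}; and (b) the continuity of $\mathcal V$ and $\Qg$ under perturbations of the jump sizes, with the correct linear/quadratic homogeneity. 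One caveat: the perturbation estimates as you stated them, with $d(u',u'')$ measuring only the differences of the jumps, are not quite the whole story, because the wave strengths $s_{x,i}$ and speeds $\sigma_{x,i}(\cdot)$ entering~\eqref{eq:ip}--\eqref{eq:V} depend on the pair of states $\big(u(x-),u(x+)\big)$ and not only on their difference, so in principle one must also track $\norm{\vv-\overline\uu}_\infty$; in the present application this is harmless since $\norm{\vv-\overline\uu}_\infty=\tau\,\norm{g_\nu(t,\cdot,\overline\ww(\cdot))}_\infty=\OL(\tau)$ (Assumption~\textbf{(G)} and boundedness of $\Omega$), and the contribution from the shifted base states is again $\lesssim\TV\cdot\tau\lesssim G\tau$, so the final bounds $\lesssim G\tau$ and $\lesssim G^2\tau$ stand. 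With that refinement noted, your argument is a plausible reconstruction of the AG2 proof.
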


\begin{remark}
\label{R:timeupdate}
When $\overline{\ww}=\overline{\uu}$, then the proof of
Lemma~\ref{L:timeEst} states that where $\overline{\uu}$ has a jump
of strength $\sigma$ then the strength $\sigma'$ of the corresponding jump in
$\vv$ 
satisfies
\[
0< (1-\OL(1)\tau)\,\sigma\leq \sigma' \leq (1+\OL(1)\tau)\,\sigma
 \]
 or\[
 (1+\OL(1)\tau)\,\sigma\leq \sigma' \leq (1-\OL(1)\tau)\,\sigma <0\ .
\]
Moreover, if $\overline{u}$ does not any jump at $x=j\eps_\nu$, 
then the new jump $\sigma_{i,k}$ introduced because of the discontinuity of $g_\nu$
at $j\eps_\nu$ satisfies
$$
\sum_{k=1}^N|\sigma_{j,k}''|\leq \tau \int_{(j-1)\eps_\nu}^{(j+1)\eps_\nu}|\alpha|
\qquad
\Rightarrow 
\qquad
\sum_{j\in\N}\sum_{k=1}^N|\sigma_{j,k}''|\leq 2\tau\norm{\alpha}_{L^{1}(\R)}
\,,
$$
where $\sigma''_{j,k}$ is the strength of the new front of the $k$-th family
emerging from $(\tau, j\eps_\nu)$.
\end{remark}

\section{Estimates on Genuinely nonlinear families}
\label{S:qualitative}

\newcommand{\oc}{\overline{c}}
\newcommand{\op}{\overline{p}}
\newcommand{\oq}{\overline{q}}
\newcommand{\og}{\overline{\curva}}
\newcommand{\ox}{\overline{x}}
\newcommand{\ot}{\overline{t}}
\newcommand{\oP}{\overline{P}}

In~\ref{Ss:convergenceAndExistence} we provided an approximation $w^{\nu }$ of the viscosity solution to the Cauchy problem in Equations~\eqref{eq:sysnc}-\eqref{eq:inda}.
This section is devoted to the proof of estimate~\eqref{E:balancesnu} based on balances for $\wm_{i}^{\nu }$ and $\wm_{i}^{\nu,\jump}$ when the $i$-th characteristic field is genuinely nonlinear in the sense of Definition~\ref{D:GN}.

We work under the standard Lipschitz regularity Assumption~\textbf{(G)} at Page~\pageref{Ass:G} on the source term.

The section is organized as follows:
\begin{itemize}
\item[\S~\ref{s:reviewDisc}] We review how to detect discontinuities in the approximation $u_{\nu}$ that are in the limit converging to a discontinuity of the entropy solution $u$, so that the jump part of $D_{x}u$ is approximated by a part of $D_{x}u_{\nu}$.
\item[\S~\ref{S:definitionMeasures}] We introduce a new measure to control the action of the source, that we call source measures, and we revise other classical tools, like the interaction-cancellation measures.
\item[\S~\ref{S:balances-}]  We establish the lower part of estimate~\eqref{E:balancesnu}.
\item[\S~\ref{Ss:decay-}] We establish the upper part of estimate~\eqref{E:balancesnu}.
\end{itemize}

\subsection{Review of the fine convergence of \texorpdfstring{$i$}{i}-discontinuities}
\label{s:reviewDisc}
We identify which discontinuities in the approximation $u_{\nu}$ are in the limit converging to a discontinuity of the entropy solution $u$: we call these `surviving' discontinuities ``$(\beta,i)$-approximate discontinuities''. We fix for this purpose thresholds $\beta$ and $\beta/2$, later related to $\nu$ as in Remark~\ref{R:relation}.

\begin{remark}
\label{R:norelabelingsubs}
Before entering the topic, we stress that the limiting theorems in the present section apply up to a subsequence of $u^{\nu}$, that we do not relabel. Of course, the limit of $\{u^{\nu}\}_{\nu\in\N}$ is known to be unique~\cite{AG2}, nevertheless the interaction and the interaction--cancellation measures, as well as the other measures related to $u$ introduced in this and previous papers, are defined for the entropy solutions only as limits on suitable subsequences.
\end{remark}

\subsubsection{Definition of approximate discontinuities}

\begin{definition}
\label{D:subdisc}
Let $\beta>0$.
A maximal, leftmost $(\beta,i)$-approximate discontinuity curve is any maximal (concerning set inclusion) closed polygonal line---parametrized with time in the $(t,x)$-plane---with nodes $(t_{0},x_{0})$, $(t_{1},x_{1})$, $\dots$, $(t_{n},x_{n})$, where $t_{0}\leq\dots\leq t_{n}$, such that
\begin{enumerate}
\item each node $(t_{k},x_{k})$, $k=1,\dots,n$ is an interaction point or an update time;
\item
\label{definitionSF2}
the segment $[(t_{k-1},x_{k-1}),(t_{k},x_{k})]$ is the support of an $i$-discontinuity front with strength $|s_{i}|\geq\beta/4$ and there is at least one time $t\in(t_{0},t_{n})$ such that $|s_{i}|\geq\beta$;
\item 
\label{definitionSF3}
it stays on the left of any other polygonal line it intersects and having the above properties.
\end{enumerate}
We denote by $\J^{}_{\beta,i}(\nu)$ the family of maximal, leftmost $(\beta,i)$-approximate discontinuities of $u_{\nu}$.
\end{definition}

Notice that the family of curves $\J^{}_{\beta,i}(\nu)$ is enriched as $\beta\downarrow0$.
We recall an estimate in~\cite[Lemma 5.7]{ACM1}, which is proved assuming the $i$-th characteristic field being piecewise genuinely nonlinear, for an $i$: the cardinality
\[\sharp \J^{}_{\beta,i}(\nu)=:M_{\beta,i}(\nu)\] of maximal, leftmost
$(\beta,i)$-approximate discontinuities---up to any fixed positive
time---when the threshold $\beta$ is fixed is uniformly bounded in $\nu$, and thus also in $i=1,\dots,N$:
it is of order
\bas
M_{\beta,i}(\nu)\leq \bar M_{\beta,i}\lesssim \beta^{-2}\;.
\eas
We repeat once more that such bound, uniform in $\nu$, holds up to a suitable subsequence.

\emph{Since~\cite[Lemma 5.7]{ACM1} was proved without the dependence of $g$ on $x$, we briefly repeat such proof later in Lemma~\ref{L:boundsOnJumps} within \S~\ref{S:definitionMeasures}}.
 The reader will also appreciate in this proof that the new measures we introduce are useful.
  
\begin{definition}
\label{D:approximateDiscC}
For $i=1,\dots, N$,  by~\cite[Lemma 5.7]{ACM1}, proved as Lemma~\ref{L:boundsOnJumps} of the present paper, we enumerate the maximal, leftmost $(\beta,i)$-approximate discontinuities of $u^{\nu}$ possibly with repetitions, as
\bas
& \J^{}_{\beta,i}(\nu)=\left\{\curva_{\beta,m}^{\nu,i}\right\}_{m=1}^{\bar M_{\beta,i}}
&&\curva_{\beta,m}^{\nu,i}:\left[t_{\beta,m}^{\nu,i,-},t_{\beta,m}^{\nu,i,+}\right]\to\R
&&m=1,\dots,\bar M_{\beta,i}\;.
\eas
We define the $(\beta,i)$-jump set and the $(\beta,i)$-approximate discontinuity measure of $u^{\nu}$ as
\bas
&J^{\nu}_{}\doteq{}\bigcup_{m=1}^{\bar M_{\beta,i}}\Graph(\curva_{\beta,m}^{\nu,i})\,,
&&\wm_{\beta,i}^{\nu,\jump}\doteq{}\widetilde \ell_{i}^{\nu}\cdot D_{x}u^{\nu}\restriction_{J^{\nu}}\,,
\eas
where we introduced
$\widetilde \ell_{i}^{\nu}(t,x)\doteq{}\widetilde \ell_{i}(u^{\nu}(t,x-),u^{\nu}(t,x+))$ in~\eqref{E:vettoriTilde}.

For $i=1,\dots,N$ we also define the approximate $(\beta,i)$-wave measure and the approximate $(\beta,i)$-continuity measure of $u^{\nu}$ respectively as
\begin{align}\label{E:imeasCont}
&\wm_i^{\nu}=D_x u^{\nu} \cdot{ \widetilde \ell}_{i}^{\nu} (t,x)\,,
&&
\wm_{\beta,i}^{\nu,\cont}=\wm_{i}^{\nu}-\wm_{\beta,i}^{\nu,\jump}.
\end{align}
\end{definition}
\nomenclature{$\wm_i^{\nu}$}{The $i$-wave measure of $u^{\nu}$, Definition~\ref{D:approximateDiscC}}
\nomenclature{$\wm_{\beta,i}^{\nu,\cont}$}{The approximate $(\beta,i)$-continuity measure of $u^{\nu}$, Definition~\ref{D:approximateDiscC}}
\nomenclature{$\wm_{\beta,i}^{\nu,\jump}$}{The approximate $(\beta,i)$-jump measure of $u^{\nu}$, Definition~\ref{D:approximateDiscC}}

\subsubsection{Interaction and cancellation measures}
\label{Sss:ICm}
We remind the definition of the well established interaction and interaction-cancellation measures of $u^{\nu}$, generalizing~\cite[\S~7.6]{BressanBook} by the Definition~\ref{def:qi} of amount of interaction $\I$.
For simplifying later estimates, we also include in such measures interactions among physical and nonphysical fronts.

\begin{definition}
\label{D:ICmeasuresnu}
The interaction $\mu^{I}_{\nu}$ and the interaction-cancellation $\mu^{IC}_{\nu}$ measures of $u^{\nu}$ are purely atomic, positive measures concentrated on the set of points $P$ where two wave-fronts of $u^{\nu}$ interact. If the incoming fronts belong to the families $i,j\in\{1,\dots,N,N+1\}$ and they have sizes $s'$, $s''$ we define
\ba\label{E:interactionMeas}
&\mu^{I}_{\nu}(P)=\I (s',s'')\;,
\\
&
\mu^{IC}_{\nu}(P)=\I (s',s'')+\begin{cases}
\abs{s'}+\abs{s''}-\abs{s'+s''}&\text{if $i=j$,}
\\
0&\text{otherwise.}\end{cases}
\ea
\end{definition}

\nomenclature{$\mu^{I}_{\nu}$, $\mu^{IC}_{\nu}$}{The interaction and interaction-cancellation measures of $u^{\nu}$ in Definition~\ref{D:ICmeasuresnu}}

Of course $0\leq \mu_{\nu}^{I}\leq \mu_{\nu}^{IC}$.
We remind that the interaction-cancellation measure can be controlled by the interaction potential $\Qg$ defined at Equation~
\eqref{eq:ip}, even when a source term is present~\cite[Lemma~5.2]{ACM1}:
\ba\label{E:estimatemuIC}
 \mu_{\nu}^{IC}((t_{1},t_{2}]\times\R)
\lesssim
 \TV^{-}\left( \Qg(u_{\nu});(t_{1},t_{2}]\right)
 \qquad
\forall \ 0\leq t_{1}< t_{2}\ ,
\ea
where $ \TV^{-}$ is the negative total variation.
In particular, $\mu_{\nu}^{IC}$ is a locally bounded Radon measure.
We stress that the proof of~\cite[Lemma~5.2]{ACM1} is already made for a source term satisfying Assumption~\textbf{(G)}.

Following Remark~\ref{R:norelabelingsubs}, we assume that the measures $ \mu_{\nu}^{I}$ and $ \mu_{\nu}^{IC}$ associated to the sequence $\{u^{\nu}\}_{\nu}$ we consider are weak*-convergent. Indeed, by compactness, this is true up to subsequence, even if the limit measures do depend on the particular subsequence we are considering.
Reading the brief proof of~\cite[Lemma~5.2]{ACM1} one notices that was already proved for source terms satisfying Assumption~\textbf{(G)}, not only when $g=g(u)$.

\begin{definition}
\label{D:ICmeasures}
We define the interaction $\mu^{I}_{}$ and the interaction-cancellation $\mu^{IC}_{}$ measures of $u$ as
\ba\label{E:muICl}
&\mu^{I}_{}=\wlim{\nu}\mu^{I}_{\nu} \;,
&&
\mu^{IC}_{}=\wlim{\nu}\mu^{IC}_{\nu} \;.
\ea
\end{definition}

\nomenclature{$\mu^{I}$, $\mu^{IC}$}{An interaction and an interaction-cancellation measures of $u$ in Definition~\ref{D:ICmeasures}}

\subsubsection{Convergence of jumps}
We state the following convergence as a direct consequence of the points in~\cite[Teorema 5.1]{ACM1} related to piecewise genuinely nonlinear families, we clarify the proof just below. The $i$-discontinuity measure $\wm_i$ of $u$ was introduced in Definition~\ref{D:upsiloni} in order to decompose $D_x u $ along the right eigenvectors $\widetilde r_i$ of the matrix $\widetilde A$, see~\eqref{E:vettoriTilde}.

\begin{lemma}
\label{L:convergenceJumps}
Let $u\in\BV_{\loc}([0,T]\times\R)$ be an entropy solution of the strictly hyperbolic system of balance laws~\eqref{eq:sysnc} with a sufficiently small $\BV$ norm. Suppose that the $i$-th field is genuinely nonlinear.
Then, as $\nu\uparrow\infty$, each curve $\curva_{\beta,m}^{\nu,i}$ converges locally uniformly to a Lipschitz continuous curve 
\bas
&\curva_{\beta,m}^{i}:\left[t_{\beta,m}^{i,-},t_{\beta,m}^{i,+}\right]\to\R
&&\text{with $t_{\beta,m}^{\nu,i,-}\to t_{\beta,m}^{i,-}$ and $t_{\beta,m}^{\nu,i,+}\to t_{\beta,m}^{i,+}$,}
\eas 
the jump measure $\wm_{i}^{\jump}$ of $u$ is concentrated on 
\bas
&J\doteq{}\bigcup_{\beta_{\nu}\downarrow0}\bigcup_{m=1}^{\bar M_{\beta,i}}\Graph(\curva_{\beta_{\nu},m}^{i})
&&\text{for a sequence $\beta_{\nu}\downarrow0$}
\eas 
and the approximate $(\beta_\nu,i)$-discontinuity measure $\wm_{\beta_{\nu},i}^{\nu,\jump}$ of $u^{\nu}$ converges weakly$^{*}$ to the $i$-discontinuity measure $\wm_{i}^{\jump}$.
\end{lemma}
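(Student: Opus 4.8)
The plan is to obtain the statement as the specialisation to a single genuinely nonlinear family of~\cite[Teorema~5.1]{ACM1}, spelling out the modifications forced by the $x$-dependence of $g$ allowed in Assumption~\textbf{(G)}. I would split it into three steps.

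First I would prove the locally uniform convergence of the curves $\curva_{\beta,m}^{\nu,i}$ by Arzel\`a--Ascoli. By Definition~\ref{D:subdisc} each of them is a polygonal line parametrised by time whose segments support $i$-discontinuity fronts, so by~\eqref{shockspeederr} every slope differs by at most $2\eps_{\nu}$ from a value lying between two $i$-characteristic speeds; hence the curves are uniformly Lipschitz in $t$, with constant $L_{i}\doteq{}1+\sup_{\Omega}|\lambda_{i}|$, on domains contained in $[0,T]$. Since their number is uniformly bounded, $M_{\beta,i}(\nu)\leq\bar M_{\beta,i}\lesssim\beta^{-2}$ by Lemma~\ref{L:boundsOnJumps} (reproved below precisely to cover sources with $|g_{x}|\leq\alpha\in L^{1}$), after passing to a subsequence and re-enumerating within $\{1,\dots,\bar M_{\beta,i}\}$, allowing repetitions, I get for each $m$ a locally uniform limit $\curva_{\beta,m}^{i}$, still $L_{i}$-Lipschitz, together with the convergence of its two endpoint times. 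A diagonalisation over a sequence $\beta\downarrow0$ coupled to $\nu$ via $\beta_{\nu}\downarrow0$ as in Remark~\ref{R:relation} fixes once and for all the subsequence of $\{u^{\nu}\}$ I work with (cf.~Remark~\ref{R:norelabelingsubs}).

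Next I would show $\wm_{i}^{\jump}$ is concentrated on $J$. Since the jump set of the $\BV$ function $u$ is $\sigma$-finite with respect to $\Ha^{1}$ and $\wm_{i}^{\jump}$ lives on the part of it carrying $i$-jumps of nonzero $i$-strength, it suffices to check that $\Ha^{1}$-almost every point $(\bar t,\bar x)$ of that set lies in $J$. There $u$ has an approximate $i$-jump of some size $\sigma\neq0$, and, the $i$-th field being genuinely nonlinear, the Riemann solver is stable: using $u^{\nu}\to u$ in $L^{1}_{\loc}$ and the convergence of the one-sided strong traces, the $i$-waves of $u^{\nu}$ whose one-sided states converge to those of the jump cluster with total signed $i$-strength tending to $\sigma$, so for every small enough $\beta$ and every large $\nu$ one of them has strength $\geq\beta$ at some time near $\bar t$; following the leftmost such large $i$-front forward and backward through the finitely many interactions --- the cancellations being dominated by $\mu^{IC}_{\nu}$, uniformly finite on time strips by~\eqref{E:estimatemuIC}, the source included in it --- produces a curve of $\J_{\beta,i}(\nu)$ passing within vanishing distance of $(\bar t,\bar x)$, so that $(\bar t,\bar x)\in\Graph(\curva_{\beta,m}^{i})\subseteq J$ for some $m$ and all small $\beta$. \emph{This is the step I expect to be the main obstacle}: it is genuine nonlinearity that forbids a shock of $u$ from being resolved by infinitely many mutually cancelling tiny fronts of $u^{\nu}$, and it must be combined carefully with the uniform count $\bar M_{\beta,i}$ and with~\eqref{E:estimatemuIC}.

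Finally I would upgrade this to $\wm_{\beta_{\nu},i}^{\nu,\jump}\rightharpoonup\wm_{i}^{\jump}$: since $\wm_{i}^{\nu}\rightharpoonup\wm_{i}$ (which follows from $u^{\nu}\to u$ in $L^{1}_{\loc}$) and $\wm_{i}^{\nu}=\wm_{\beta_{\nu},i}^{\nu,\jump}+\wm_{\beta_{\nu},i}^{\nu,\cont}$, this will also give $\wm_{\beta_{\nu},i}^{\nu,\cont}\rightharpoonup\wm_{i}^{\cont}$. Along a further subsequence $\wm_{\beta_{\nu},i}^{\nu,\jump}\rightharpoonup\bar\wm$; by Step~1 the carrier $J^{\nu}$ converges, on compacts, into $\bigcup_{m}\Graph(\curva_{\beta_{\nu},m}^{i})\subseteq J$, so $\bar\wm$ is concentrated on the jump set of $u$, hence is a jump-type measure. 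On a neighbourhood of each limit curve the one-sided traces of $u^{\nu}$ converge to those of $u$, so there $\wm_{\beta_{\nu},i}^{\nu,\jump}$ converges to $\wm_{i}^{\jump}$; the residual mass, carried by fronts cancelled before the limit, is bounded by $\OL(1)\,\mu^{IC}_{\nu}$ of a thin strip plus $\bar M_{\beta_{\nu},i}\,\beta_{\nu}$, hence vanishes by~\eqref{E:estimatemuIC} and $\beta_{\nu}\downarrow0$. Summing over $m$ and sending $\beta_{\nu}\to0$ gives $\bar\wm=\wm_{i}^{\jump}$, and independence of the subsequence promotes this to convergence of the whole relabelled sequence. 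Throughout, the only trace of the $x$-dependence of $g$ is the extra family of small fronts issued at the grid points $(n\tau_{\nu},j\eps_{\nu})$, of total strength at most $2\tau_{\nu}\|\alpha\|_{L^{1}}$ per step by Remark~\ref{R:timeupdate}; they feed $\Qg$, $\mu^{IC}_{\nu}$ and the count $\bar M_{\beta,i}$ exactly like ordinary fronts, so nothing above is affected.
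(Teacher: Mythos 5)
Your architecture is the same as the paper's: Ascoli--Arzel\`a plus a diagonal argument for the curves (with the uniform count from Lemma~\ref{L:boundsOnJumps}), the fine structure of $\BV$ functions for the jump set of $u$, and trace convergence along the limit curves to pass $\wm_{\beta_{\nu},i}^{\nu,\jump}\rightharpoonup\wm_{i}^{\jump}$. The gap is exactly where you flagged it, and it is genuine: your Step~2 asserts that near an $i$-jump of $u$ of size $\sigma\neq 0$ some single $i$-front of $u^{\nu}$ must reach strength $\geq\beta$, and then proposes to ``follow'' it; neither half of this is established. A front of strength $\geq\beta$ at one time need not generate a \emph{maximal leftmost} $(\beta,i)$-approximate discontinuity in the sense of Definition~\ref{D:subdisc} whose graph passes through $(\bar t,\bar x)$ in the limit, because cancellations can push its strength below $\beta/4$ arbitrarily close to $\bar t$; and the existence of a big front is precisely the statement that the jump cannot be resolved by many tiny mutually cancelling fronts, which is the whole content of the step.

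The paper closes this not by a direct construction but by contradiction, and that is the idea you are missing. One removes the at most countable (hence $\Ha^{1}$-null, hence $\wm_{i}^{\jump}$-null) set $\Theta$ of atoms of $\mu^{IC}$, assumes $\overline P=(\bar t,\bar x)\notin\Theta$ is a jump point of $u$ lying on no limit curve, and takes a space-like segment $P_{\nu}Q_{\nu}$ shrinking to $\overline P$ across which a fixed amount $\geq\varepsilon$ of $i$-strength of $u^{\nu}$ must pass. Then either two distinct families each carry a non-vanishing amount across $P_{\nu}Q_{\nu}$, and transversal interaction estimates (\cite[Lemma~5.10]{ACM1}) force $\mu^{IC}_{\nu}$ to charge a neighbourhood of $\overline P$ uniformly; or only vanishingly small $i$-waves carry it, and genuine nonlinearity forces these to travel at genuinely different speeds and hence to interact among themselves (\cite[Lemma~5.11]{ACM1}), again giving $\mu^{IC}(\{\overline P\})>0$ --- contradicting $\overline P\notin\Theta$. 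This dichotomy, together with the extension of those two lemmas to sources satisfying Assumption~\textbf{(G)} via the source measure $\mu^{\sour}_{\nu}$ (Remark~\ref{R:lemma51011}), is what turns your heuristic ``genuine nonlinearity forbids it'' into a proof; the trace convergence you invoke in Step~3 is likewise a citation of point~(5) of \cite[Theorem~5.1]{ACM1}, namely~\eqref{E:limitAtJump}, rather than a consequence of $L^{1}_{\loc}$ convergence alone.
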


By the linear relation~\eqref{E:imeasCont}, since the approximate $(\beta_\nu,i)$-wave measure $\wm_i^{\nu}$ converges weakly$^{*}$ to the $i$-wave measure $\wm_i$, Lemma~\ref{L:convergenceJumps} implies that the approximate $(\beta_\nu,i)$-continuity measure $\wm_{\beta_{\nu},i}^{\nu,\cont}$ converges weakly$^{*}$ to the $i$-continuity measure $\wm_{i}^{\cont}$.

\begin{remark}
\label{R:relation}
The sequence $\beta_{\nu}\downarrow0$ can be chosen so that 
\bas
\beta_{\nu}>4(1+\norm{\alpha}_{L^{1}(\R)})\pr{\varepsilon_{\nu}+\tau_{\nu}\overline M}\,,
\eas
where $\overline M$ is a bound on the total variation.
\end{remark}
\nomenclature{$\beta_{\nu}$}{See Remark~\ref{R:relation}}

\begin{proof}[Proof of Lemma~\ref{L:convergenceJumps}]
We stress that in~\cite{ACM1} there were assumptions on all the fields, of either linear degeneracy or genuine nonlinearity, in order to have a stronger statement concerning the structure of solutions. 
Moreover, for that only theorem there was the additional assumption $g=g(u).$
In the proof of the points we quote, nevertheless, only the assumption on the $i$-th field was used, and not on the other fields, as we explain.
We moreover extend the proof in~\cite{ACM1} where the assumption $g=g(u)$ was used, as we specify at the relative occurrences.

First of all, having uniformly Lipschitz continuous curves, the uniform convergence of the curves $\J^{}_{\beta,i}(\nu)$, up to a suitable subsequence, is just a consequence of Ascoli-Arzel\`a theorem and a diagonal argument: let
\bas
& \J^{}_{\beta,i}=\left\{\curva_{\beta,m}^{i}\right\}_{m=1}^{\bar M_{\beta,i}}\,,
&&\curva_{\beta,m}^{i}:\left[t_{\beta,m}^{i,-},t_{\beta,m}^{i,+}\right]\to\R
&&m=1,\dots,\bar M_{\beta,i}\;.
\eas

By the fine property of $\BV$ function ~\cite[\S~3.9]{AFPBook} moreover the jump measure of $u$ that was introduced in Definition~\ref{D:upsiloni} is concentrated on the graphs $\{G_{m}\}_{m\in\N}$ of at most countably many Lipschitz continuous curves and at $\Ha^{1}$-a.e.~points of such curves there is a tangent line and there are approximate left and right limits.
Such values must satisfy Rankine-Hugoniot conditions\eqref{E:RH}: we associate them to the $j$-th characteristic family when the speed of the curve lies within the range of the $j$-th eigenvalue of the system, for $j=1,\dots,N$.

Consider the at most countable set of atoms of the interaction-can\-cel\-la\-tion measure that we will recall in Definition~\ref{D:ICmeasures} below:
\ba\label{E:Theta}
\Theta\doteq{}\left\{(t,x)\ |\ \mu^{IC}(\{(t,x)\})>0\right\}\;.
\ea
Recall that the source measure $\mu^{\sour}_{\nu}$ in Definition~\ref{D:measuresnu} has an absolutely continuous limit as $\nu\to+\infty$.

Point (5) of~\cite[Theorem~5.1]{ACM1} states that when $(\bar t,\bar x=\curva_{\beta,m}^{i}(t))\notin\Theta$ then
\ba
\label{E:limitAtJump}
&\lim_{r\downarrow0}\limsup_{\nu\uparrow\infty}\left(\sup_{\substack{\abs{t-\bar t}+\abs{x-\bar x}\\ x<\curva_{\beta,m}^{i}(t)}} \abs{u_{\nu}(t,x)-u^{L}}\right)=0
&&\text{with }u^{L}\doteq{}u(\bar t, \bar x-)\;,
\ea
and similarly on the right of $\curva_{\beta,m}^{i}(\bar t)$ with $u^{R}\doteq{}u(\bar t, \bar x+)$.
The proof given in~\cite[\S~5.3.2 Page 367,368]{ACM1} does not use the assumption that the other families $j\neq i$ are either linearly degenerate or piecewise genuinely nonlinear.
Moreover, the source term only enters in the use of~\cite[Lemma~5.10-11]{ACM1}, we thus treat in Remark~\ref{R:lemma51011} below why they hold also when the source satisfies Assumption~\textbf{(G)}, when the $i$-th field is genuinely nonlinear.
 
Given any sequence $\beta_{\nu}\downarrow0$, thus, the convergence of the graphs of the curves in $ \J^{}_{\beta,i}$ implies
\bas
&Du\restriction_{J}=\wlim{\nu} Du^{\nu}\restriction_{J_{\nu}}\,,
&&J_{\nu}\doteq{}\bigcup_{m=1}^{\bar M_{\beta,i}}\Graph(\curva_{\beta_{\nu},m}^{i})\;.
\eas
As well, at those points on such graphs a convergence analogous to~\eqref{E:limitAtJump} holds also for $\widetilde \ell_{i}$, $\widetilde r_{i}$ and $\widetilde \lambda_{i}$ since the flux is assumed to be $C^{2}$.
As a consequence
\bas
&\wm_{i}^{\jump}\restriction_{J}=\widetilde \ell_{i}\cdot Du\restriction_{J}=\wlim{\nu} \widetilde \ell_{i}^{\nu}\cdot Du^{\nu}\restriction_{J_{\nu}}=\wlim{\nu} \wm_{\beta_{\nu},i}^{\nu,\jump}
\;.
\eas

It remains to show that $\wm_{i}^{\jump}\restriction_{J}=\wm_{i}^{\jump}$.

Suppose now that $\overline P=(\bar t,\bar x)\notin\Theta$ and $\bar x\neq \curva_{\beta_{\nu},m}^{i}(\bar t)$, for all elements of the sequence $\beta_{\nu}\downarrow 0$ we are considering.
If $\wm_{i}^{\jump}(\overline P)\neq0$, which means that $P$ is a jump point of $u$, there exists $\varepsilon>0$ and a space-like segment $P_{\nu}Q_{\nu}$ degenerating to the single point $\overline P$ for which
\bas
&u_{\nu}(P_{\nu})\to u(\overline P)\;,
&&\abs{\widetilde \ell_{i}^{\nu}\cdot \left(u_{\nu}(Q_{\nu})- u(\overline P)\right)}\geq\varepsilon\quad\forall \nu\;.
\eas
Two cases are possible.
Case 1. There are two distinct indices $j\neq k$ such that each segment $P_{\nu}Q_{\nu}$ is crossed by a fixed amount of both $j$-waves and $k$-waves in $u_{\nu}$:~\cite[Lemma 5.10]{ACM1} contradicts the assumption $\mu^{IC}(\overline P)=0$ because of non-vanishing interactions among the different families $k$ and $j$.
Case 2. For a single index $j\in\{1,\dots,n\}$ each segment $P_{\nu}Q_{\nu}$ is crossed by an amount $\geq\varepsilon>0$ of vanishingly small $j$-waves, but for $k\neq j$ the total amount of $k$-waves crossing the segment $P_{\nu}Q_{\nu}$ vanishes.
Since $\wm_{i}^{\jump}(\overline P)\neq0$ that index $j$ must actually be $i$ itself.
By genuine nonlinearity of the $i$-th characteristic family,~\cite[Lemma 5.11]{ACM1} applies and it contradicts the assumption $\mu^{IC}(\overline P)=0$ because of non-vanishing interactions among $i$-waves.

 For completeness, we prove in Remark~\ref{R:lemma51011} below why \cite[Lemma 5.10-11]{ACM1} hold also when the source satisfies Assumption~\textbf{(G)}, when the $i$-th field is genuinely nonlinear, and not only when $g=g(u)$ Lipschitz.
\end{proof}

\subsection{Definition of Source Measures and review of relevant tools}
\label{S:definitionMeasures}
We explained in \S~\ref{S:SBVGNargument} that the upper-level argument to achieve $\SBV$-regularity for genuinely nonlinear characteristic fields is based on the construction of relevant measures to prove balances for the flux of positive and negative waves on characteristic regions. The task of the present section is to precisely construct such measures.

Let $\{u^{\nu}\}_{\nu}$ be a sequence of fractional-step approximations running with front-tracking as defined in \S~\ref{Ss:convergenceAndExistence}.
We already introduced in \S~\ref{Sss:ICm} the interaction and cancellation measures $\mu^{IC}_{\nu}$, $\mu^{IC}_{\nu}$, $\mu^{IC}_{ }$, $\mu^{I}_{ }$.


\vskip.3\baselineskip

We now define measures to keep track of how the wave measures $\wm_{i}$ in Definition~\ref{D:upsiloni}, their jump part, and their continuous part vary. Since we are not presently able to compute estimates directly on $u$, we actually look at how the \emph{approximate} wave measures $\wm_{ i}^{\nu}$, their jump part $\wm_{\beta_{\nu},i}^{\nu,\jump}$ and their continuous part $\wm_{\beta_{\nu},i}^{\nu,\cont}$ of Definition~\ref{D:approximateDiscC}--Lemma~\ref{L:convergenceJumps} vary.

\begin{definition}\label{def:velCarAppr}
We define the speed of the $i$-th wave of the $\nu$-approximate solution $u^{\nu}$ as $\widetilde{\lambda_{i}^{\nu}}(t,x)$ equal to
\begin{itemize}
\item $\lambda_{i}(u^{\nu}(t,x))$ where $u^{\nu}(t,x)$ is continuous, namely constant, and
\item
 the actual speed of the discontinuity curves at discontinuities.
\end{itemize}
\end{definition}

To simplify the analysis, we assume that the fronts satisfy the Rankine-Hugoniot conditions exactly, although  it is only close to RH speed~\eqref{E:RH} by construction~\eqref{shockspeederr} of the approximation.

\nomenclature{$ \widetilde{\lambda_{i}^{\nu}}$}{Speed of the $i$-th wave of the $\nu$-approximate solution $u^{\nu}$ in Definition~\ref{def:velCarAppr}}

Since the total size of nonphysical wavefronts is of the same order of $\eps_\nu$, for simplicity in the following decomposition we only consider the physical fronts.
This is in line with the fact that at update times no nonphysical front arises by construction: this motivates the fact of defining the $\nu$-approximate source measure with a summation over the physical families only.

We recall that $[\![\cdot]\!]$ denotes the integer part of a number.

\begin{definition}
\label{D:measuresnu}
On $[0,T]\times \R$, we define $\nu$-\emph{approximate source measure} the purely atomic measure
\[
\mu^{\sour}_{\nu} (dt,dx)
\doteq{} \sum_{n=1}^{[\![T/\tau_{\nu}]\!]}\left(  \sum_{i=1}^{N}\abs{\wm_{i}^{\nu}(t-,x)}\delta_{n\tau_{\nu}}(t)+
\sum_{ j\in\Z}q_{j}\delta_{(n\tau_{\nu},j\varepsilon_{\nu})}(t,x) \right)\tau_{\nu}\ ,
\]\[ q_{j}\doteq{} \int_{j\eps_\nu}^{(j+1)\eps_\nu}\!\!\! \!\!\! \!\!\! \alpha(s)ds\ .
\]
\nomenclature{$\mu_{\nu}^{\sour}$}{The $\nu$-approximate source measure in Definition~\ref{D:measuresnu}}%
\nomenclature{$\mu_{i}^{\nu} $}{The $\nu$-approximate wave-balance measure in Definition~\ref{D:measuresnu}}%
\nomenclature{$\mu_{i}^{\nu,\jump} $}{The $\nu$-jump-wave-balance measure in Definition~\ref{D:measuresnu}}%
For $i=1,\dots,N$, we define $\nu$-\emph{$i$-wave-balance measure} the measure
\[
\mu_{i}^{\nu} \doteq{} \partial_{t} \left(\wm_{i}^{\nu}\right) + \partial_{x}\left( \widetilde{\lambda_{i}^{\nu}}\wm_{i}^{\nu} \right)\;.
\]
Define $\nu$-\emph{jump-wave-balance measure} and $\nu$-\emph{continuous-wave-bal\-ance mea\-sure} the mea\-sures
\bas
\mu_{i}^{\nu,\jump} &\doteq{} \partial_{t} \left(\wm_{i}^{\nu,\jump}\right) + \partial_{x}\left( \widetilde{\lambda_{i}^{\nu}}\wm_{i}^{\nu,\jump} \right)\;,\\
\mu_{i}^{\nu,\cont} &\doteq{} \partial_{t} \left(\wm_{i}^{\nu,\cont}\right) + \partial_{x}\left( \widetilde{\lambda_{i}^{\nu}}\wm_{i}^{\nu,\cont} \right)=\mu_{i}^{\nu}-\mu_{i}^{\nu,\jump} \;.
\eas
Summing up, we define the measures
\bas
&\mu_{\nu}^{ICJS}\doteq{}\mu_{\nu}^{IC}+\sum_{i=1}^{N}\abs{\mu_{i}^{\nu,\jump}}+  \mu ^{\sour}_{\nu}\;,
&&
\mu_{\nu}^{ICS}\doteq{}\mu_{\nu}^{IC}+  \mu ^{\sour}_{\nu} \;.
\eas
\end{definition}
\nomenclature{$\mu_{\nu}^{ICJS}$}{The interaction-cancellation-source measure in Definition~\ref{D:measuresnu}}%
\nomenclature{$\mu_{\nu}^{ICJS}$}{The interaction-cancellation-jump-source measure in Definition~\ref{D:measuresnu}}%

To get familiar with such measures, one can look at~\cite[Example 3.1]{ACMnota}, where technicalities of nonphysical waves is totally missing.
Due to the fact that $\widetilde{\lambda_{i}^{\nu}}(t,x)$ is the speed of fronts, a direct computation shows that $\mu_{i}^{\nu}$ is the purely atomic measure given by
\ba\label{E:muinuexp}
\mu_{i}^{\nu}=\sum_{k}p_{k}\delta_{(t_{k},z_{k})}+\rho^{\nu}_{i}\,,
\ea
where:
\begin{itemize}
\item The sum runs on nodes of discontinuity lines of $u^{\nu}$ corresponding to the initial time, update times and interaction times of physical waves: at those times $p_{k}=\sigma_{i}-\sigma'_{i}-\sigma''_{i}$ is the difference among the strengths of the outgoing $i$-wave $\sigma_{i}$ and the incoming one(s) $\sigma'_{i}$, $\sigma''_{i}$.
We set $0$ the strength corresponding to waves that are not present. 

\item We listed separately the part $\rho^{\nu}_{i}$ of the measure due to nodes on nonphysical fronts, where \[\mu_{i}^{\nu}(\{P\})=\rho^{\nu}_{i}(\{P\})=\widetilde{\ell_{i}}\cdot(u^{R}-u^{L})-\widetilde{\ell_{i}^{'}}\cdot({u'}^{R}-{u'}^{L})\] is the difference before (with prime) and after (without prime) the nonphysical interaction of the projection onto $\widetilde{\ell_{i}}$ of the nonphysical wave $u^{R}-u^{L}$.
The estimate of $\rho$ is substantially the one present in the proof of~\cite[Lemma 5.3]{ACM1} between each update time, since by construction no nonphysical wave is generated at update times.
We stress that, differently from~\cite{ACM1}, in this paper the interaction measure also takes into account the points of interaction among physical and nonphysical waves.
We specify with more detail in Remark~\ref{R:lemma51011} below why~\cite[Lemma 5.3]{ACM1} can be used also with sources satisfying Assumption~\textbf{(G)}.
\end{itemize}

As well, reminding once more that for simplicity we neglect possible errors in the RH speeds, $\mu_{i}^{\nu,\jump}$ is the purely atomic measure given by
\[
\mu_{i}^{\nu,\jump}=\sum_{k}q_{k}\delta_{(t_{k},z_{k})}\,,
\]
where the sum runs on nodes of discontinuity lines of $u^{\nu}$ corresponding to the initial time, update times and interaction times, with both physical and nonphysical waves, and the values $q_{k}$ are studied in~\eqref{E:strenghtq} below.

We now prove uniform bounds that ensure that the measures in the above definition converge indeed to measures that are locally finite on the half-plane. 

\begin{lemma}
\label{L:estSource}
Consider a strictly hyperbolic system~\eqref{eq:syscfSh} where the source term satisfies Assumption~\textbf{(G)} at Page~\pageref{Ass:G} and $F\in C^{2}(\Omega;\R^{n})$.
Then the measures in Definition~\ref{D:measuresnu} satisfy the following bounds: for $0\leq t_{1}\leq t_{2}\leq T$
\bas
&{\mu^{\sour}_{\nu} }((t_{1},t_{2}])
\lesssim
 \pr{{ {\dssb+G T}}+ \norm{\alpha}_{L^{1}}} (t_{2}-t_{1}+\tau_{\nu})\,,
\\
&\abs{\mu_{i}^{\nu}}\pr{(t_{1},t_{2}]\times\R}\lesssim  \mu_{\nu}^{ICS}\pr{(t_{1},t_{2}]\times\R}\,,
&&i=1,\dots,N\,,
\eas 
where $\dssb+GT $ is a bound on the total variation of $u^{\nu}$ given in Theorem~\ref{T:localConv}.
\end{lemma}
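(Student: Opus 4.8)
\textbf{Proof plan for Lemma~\ref{L:estSource}.}

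The plan is to treat the two bounds separately, since they involve different mechanisms. For the source-measure bound, I would unpack the definition of $\mu^{\sour}_{\nu}$: it is a sum over the update times $n\tau_{\nu}$ of two contributions. The first contribution, $\sum_i \abs{\wm_i^{\nu}(t-,x)}\tau_{\nu}$, integrated over $x$, is bounded by $\tau_{\nu}$ times $\sum_i\abs{\wm_i^{\nu}(n\tau_{\nu}-)}(\R)$, which in turn is controlled by the total variation $\TV(u^{\nu}(n\tau_{\nu}-,\cdot))\lesssim \dssb+GT$ using Theorem~\ref{T:localConv} and the normalization~\eqref{E:norm}. Summing over the $\lfloor (t_2-t_1)/\tau_{\nu}\rfloor+1$ update times falling in $(t_1,t_2]$ yields a factor $(t_2-t_1+\tau_{\nu})$ times $(\dssb+GT)$. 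The second contribution, $\sum_{j}q_j\tau_{\nu}$ at the points $(n\tau_{\nu},j\eps_{\nu})$, has total mass $\tau_{\nu}\sum_j q_j=\tau_{\nu}\norm{\alpha}_{L^1(\R)}$ at each update time by the very definition $q_j=\int_{j\eps_{\nu}}^{(j+1)\eps_{\nu}}\alpha(s)\,ds$; again summing over update times in $(t_1,t_2]$ gives $\norm{\alpha}_{L^1}(t_2-t_1+\tau_{\nu})$. Adding the two gives the claimed estimate.

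For the bound on $\abs{\mu_i^{\nu}}$, I would use the explicit representation~\eqref{E:muinuexp}, $\mu_i^{\nu}=\sum_k p_k\delta_{(t_k,z_k)}+\rho_i^{\nu}$, and estimate the three types of nodes. At interaction points of physical waves the quantity $\abs{p_k}=\abs{\sigma_i-\sigma_i'-\sigma_i''}$ is the classical wave-interaction estimate, bounded by $\OL(1)\I(s',s'')$, hence by $\OL(1)\mu^{IC}_{\nu}(\{P\})$; this is the standard estimate generalizing~\cite[\S~7.6]{BressanBook}. At nodes on nonphysical fronts the contribution $\rho_i^{\nu}$ is handled exactly as in~\cite[Lemma~5.3]{ACM1}: between consecutive update times no new nonphysical wave is created, so $\abs{\rho_i^{\nu}}$ over such a slab is bounded by a constant times the interaction-cancellation of physical waves in that slab plus the (already discretized) source contribution, and summing over slabs in $(t_1,t_2]$ gives $\OL(1)\,\mu^{ICS}_{\nu}((t_1,t_2]\times\R)$. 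Finally, at update times $n\tau_{\nu}$ the change in $\wm_i^{\nu}$ is precisely the correction~\eqref{E:correctionsource} by $\tau_{\nu}g_{\nu}$; by Remark~\ref{R:timeupdate} the perturbation of each existing $i$-jump is of order $\tau_{\nu}\abs{\sigma}$ and the newly created jumps at $j\eps_{\nu}$ total at most $2\tau_{\nu}\norm{\alpha}_{L^1}$ over all $j$, so the whole contribution at time $n\tau_{\nu}$ is dominated by $\OL(1)\,\mu^{\sour}_{\nu}(\{n\tau_{\nu}\}\times\R)$. Collecting the three types and recalling $\mu^{ICS}_{\nu}=\mu^{IC}_{\nu}+\mu^{\sour}_{\nu}$ yields $\abs{\mu_i^{\nu}}((t_1,t_2]\times\R)\lesssim \mu^{ICS}_{\nu}((t_1,t_2]\times\R)$.

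The main obstacle I anticipate is the careful bookkeeping of the nonphysical-wave term $\rho_i^{\nu}$ in a way that is uniform in $\nu$: one must verify that the argument of~\cite[Lemma~5.3]{ACM1}, originally written for $g=g(u)$, still closes when $g$ depends on $x$ with $\abs{g_x}\le\alpha\in L^1$, which is exactly the point deferred to Remark~\ref{R:lemma51011}. The key observation is that the $x$-dependence of $g$ enters only through the discretized source $g_{\nu}$, and its contribution to the creation/variation of waves (both physical and nonphysical) is already fully captured by the $q_j$-atoms in $\mu^{\sour}_{\nu}$ and by Remark~\ref{R:timeupdate}; once this is granted, the nonphysical interaction balance is structurally identical to the conservative case and the estimate is uniform in $\nu$ because the number of nonphysical fronts and the Glimm functional $\Qg$ are uniformly controlled by Proposition~\ref{pro:ie} and Lemma~\ref{L:timeEst}. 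Everything else is a summation of classical interaction estimates over the finitely many slabs $(n\tau_{\nu},(n+1)\tau_{\nu}]$ meeting $(t_1,t_2]$, using~\eqref{E:estimatemuIC} to pass from $\mu^{IC}_{\nu}$ to $\TV^{-}(\Qg(u^{\nu}))$ where convenient.
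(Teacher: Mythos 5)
Your proposal is correct and follows essentially the same route as the paper's proof: the source-measure bound by summing $\mathcal{V}(u^{\nu})\tau_{\nu}+\tau_{\nu}\norm{\alpha}_{L^1}$ over the update times in the strip, and the wave-balance bound by decomposing $\mu_i^{\nu}$ into contributions at physical interaction points (controlled by $\mu^{IC}_{\nu}$ via interaction estimates), nonphysical nodes (controlled by the interaction measure and $\varepsilon_{\nu}$, following~\cite[Lemma~5.3]{ACM1}), and update times (controlled by $\mu^{\sour}_{\nu}$ via Remark~\ref{R:timeupdate}/\cite[Remark~4.3]{ACM1}). You also correctly identify the one genuinely delicate point, namely extending the nonphysical-wave bookkeeping of~\cite[Lemma~5.3]{ACM1} from $g=g(u)$ to the $x$-dependent case, which the paper likewise defers to Remark~\ref{R:lemma51011}.
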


Such new estimate can be combined with~\eqref{E:estimatemuIC} bounding $\mu^{IC}$.

\begin{proof}
\firststep
\step{Control of the source measures}
By definition, the source measures are concentrated at times $n\tau_{\nu}$, $n\in\N$: in particular \[{\mu^{\sour}_{\nu} }\left([t_{1},t_{2}]\times\R\right)= {\mu^{\sour}_{\nu} }((n\tau_{\nu}, (n+m)\tau_{\nu}])\,,\] where $n=\inte{t_{1}/\tau_{\nu}}$ and $n+m=\inte{t_{2}/\tau_{\nu}}$.
By estimates~\eqref{E:domt} on the total variation and Assumption \textbf{(G)} at Page~\pageref{Ass:G} on the source, then
\ba
 {\mu^{\sour}_{\nu} }((n\tau_{\nu}, (n+m)\tau_{\nu}])
&\leq 
\sqrt N \sum_{k=1}^{m}\left( \mathcal{V} (u^{\nu}((n+k)\tau_{\nu})) +
\sum_{j\in\N}^{} \abs{q_{j}}\right)\tau_{\nu}\notag
\\
&\leq \sqrt N  \pr{{\pr{\dssb+G(n+m)\tau_{\nu}}}+ \norm{\alpha}_{L^{1}}}m\tau_{\nu}\notag
\\
&\label{E:cicciat}\leq  \sqrt N \pr{{ {\dssb+G T}}+ \norm{\alpha}_{L^{1}}} (t_{2}-t_{1}+\tau_{\nu}) \,.
\ea
\step{Control of the wave-balance measure $\mu_{i}^{\nu} $}
We note that~\cite[Lemma 5.3]{ACM1} was restricted to the case when $g=g(u)$. We can extend it relying on~\cite[Lemma 4.2]{ACM1}: that lemma controls jumps introduced at update times in the fractional step approximation, see~\eqref{E:correctionsource}, because of the discontinuities of an approximated source $g_{\nu}$ which depends also on $x$, as it was defined in~\eqref{E:discretizationSource}. 
We specify it with more detail in Remark~\ref{R:lemma51011} below.
We now outline the relevant steps in such extension.
\begin{itemize}
\item At interaction times $t$ among physical fronts by interaction estimates~\cite[Lemma 1]{AMfr}, generalizing~\cite[(7.98)]{BressanBook}, and by~\eqref{E:muinuexp} one has
\[\abs{\wm_{i}^{\nu}\pr{\{t+\}\times\R}-\wm_{i}^{\nu}\pr{\{t-\}\times\R}}=\abs{\mu_{i}^{\nu}}\pr{\{t\}\times\R}\lesssim\mu^{IC}_{\nu}\pr{\{t\}\times\R} \,.\]
Indeed, at each interaction point $(t,x)$ we have \[\abs{\mu_{i}^{\nu}}\pr{\{(t,x)\}}\lesssim\mu^{IC}_{\nu}\pr{\{(t,x)\}} \,.\]
\item Over interaction times $\{t_{k}\}_{k\in \mathcal N\mathcal P}$ with nonphysical fronts, repeating the argument in~\cite[Page~19]{BCSBV} one gets
\[
\sum_{k\in \mathcal N\mathcal P}\abs{\mu_{i}^{\nu}}\pr{\{t_{k}\}\times\R}\lesssim \varepsilon_{\nu}\,.
\]
At each point, it can as well be controlled by the interaction measure by its definition in~\eqref{E:interactionMeas}.
\item At update times $t$ one can repeat the estimate~\cite[(2.8)]{AG2} thanks to~\cite[Remark 4.3]{ACM1} to get
\bas
 \abs{\wm_{i}^{\nu}(t+)-\wm_{i}^{\nu}(t-)}\pr{ \R}=\abs{\mu_{i}^{\nu}}\pr{\{t\}\times\R}\lesssim & \tau_{\nu}\pr{\abs{\wm_{i}^{\nu} }\pr{\{t-\}\times\R}+ \norm{\alpha}_{L^{1}}}\,.
\eas
More precisely, $ {\mu_{\nu}^{\sour} }$ is defined in such a way that $\abs{\mu_{i}^{\nu}}\pr{\{(t,j\eps_\nu)\}}\lesssim  {\mu_{\nu}^{\sour} }\pr{\{(t,j\eps_\nu)\}\cup\{ (t,(j-1)\eps_\nu)\}}$, $j\in\N$, and if $\wm_{i}^{\nu}(t)$ has an atom at $x\neq j\eps_\nu $ then $\abs{\mu_{i}^{\nu}}\pr{\{(t,x)\}}\lesssim  {\mu_{\nu}^{\sour} }\pr{\{(t,x)\}}$.
\end{itemize}
Since $\mu_{i}^{\nu}$ is concentrated only on such times, summing up in the strip $(t, (k+1)\tau_{\nu}]\times\R$, for any $k\tau_{\nu}\leq t<(k+1)\tau_{\nu}$, by additivity
\bas
\abs{\mu_{i}^{\nu}}&\pr{(t, (k+1)\tau_{\nu}]\times\R}=
\abs{\mu_{i}^{\nu}}\pr{(t, (k+1)\tau_{\nu})\times\R}+
\abs{\mu_{i}^{\nu}}\pr{\{(k+1)\tau_{\nu}\}\times\R}
\\
&\lesssim  \mu^{IC}_{\nu}\pr{(t,(k+1)\tau_{\nu}]\times\R}+
   \abs{\mu_{\nu}^{\sour} }\pr{\{(k+1)\tau_{\nu}\}\times\R}
\\
&\lesssim \mu^{ICS}_{\nu}\pr{(t,(k+1)\tau_{\nu}]\times\R}\,.
\eas
Summing up, if $n\tau_{\nu}\leq t<(n+1)\tau_{\nu}$, by the estimate~\eqref{E:cicciat} obtained in the previous step on the source measure, for $ n,m=0,1,\dots$ we get
\bas
&\abs{\mu_{i}^{\nu}}\pr{(t, (n+m)\tau_{\nu}]\times\R}\lesssim  \mu_{\nu}^{ICS}\pr{(t, (n+m)\tau_{\nu}]\times\R}
\\
&\lesssim {m \tau_{\nu}\pr{\dssb+(n+m)\tau_{\nu}G+\norm{\alpha}_{L^{1}}}}+   \mu_{\nu}^{IC}\pr{(n\tau_{\nu}, (n+m)\tau_{\nu}]\times\R}\;.\qedhere
\eas
\end{proof}

\begin{lemma}
\label{L:fagrgrgggraarg}
Suppose the $i$-th characteristic field in the strictly hyperbolic system~\eqref{eq:syscfSh} is genuinely nonlinear as in Definition~\ref{D:GN} and the source term satisfies Assumption~\textbf{(G)} on Page~\pageref{Ass:G}.
Then $\mu_{i}^{\nu,\jump}$ in Definition~\ref{D:measuresnu} satisfies the following bound: for $0\leq t_{1}\leq t_{2}\leq T$
\bas
-\pr{\dssb+Gt}- \mu_{\nu}^{\sour }\left([0,T]\times\R\right)\lesssim&\mu_{i}^{\nu,\jump}\left([0,T]\times\R\right)\lesssim \mu_{\nu}^{ICS }\left([0,T]\times\R\right)\,,
\eas 
where $\dssb+GT $ is a bound on the total variation of $u_{\nu}$ given in Theorem~\ref{T:localConv}.
\end{lemma}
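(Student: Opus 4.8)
\textbf{Proof plan for Lemma~\ref{L:fagrgrgggraarg}.}

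The plan is to track the atomic measure $\mu_i^{\nu,\jump}=\sum_k q_k\delta_{(t_k,z_k)}$ node by node, exactly as in the analysis of $\mu_i^\nu$ in Lemma~\ref{L:estSource}, splitting the nodes of discontinuity lines of $u^\nu$ into three groups: interaction times among physical fronts, interaction times involving a nonphysical front, and update times $n\tau_\nu$. The key point is that the jump part $\wm_{\beta_\nu,i}^{\nu,\jump}$ collects only $i$-fronts of strength $\geq\beta_\nu/4$ lying on maximal $(\beta_\nu,i)$-approximate discontinuity curves, so each $q_k$ measures the variation of this ``large $i$-wave'' content across the node. First I would establish the upper bound. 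At a physical interaction point, the incoming and outgoing $i$-waves differ, by the interaction estimates (\cite[Lemma 1]{AMfr}, generalizing~\cite[(7.98)]{BressanBook}), by at most $\OL(1)$ times the amount of interaction, so $\abs{\mu_i^{\nu,\jump}}(\{(t,x)\})\lesssim \mu_\nu^{IC}(\{(t,x)\})$. At nonphysical interaction points, since our interaction measure now also records physical–nonphysical interactions, the same domination holds, and summing gives a contribution $\lesssim \mu_\nu^{IC}(\R^2)\lesssim\varepsilon_\nu$ plus what is already counted. At an update time $n\tau_\nu$, Remark~\ref{R:timeupdate} shows each existing jump of strength $\sigma$ becomes $\sigma'$ with $\abs{\sigma'-\sigma}\leq\OL(1)\tau_\nu\abs{\sigma}$, and new jumps of total strength $\leq 2\tau_\nu\norm{\alpha}_{L^1}$ appear at the grid points; hence $\abs{\mu_i^{\nu,\jump}}(\{t\}\times\R)\lesssim \tau_\nu(\abs{\wm_i^\nu}(\{t-\}\times\R)+\norm{\alpha}_{L^1})$, which is exactly what $\mu_\nu^{\sour}$ is built to dominate. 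Summing over all nodes in $[0,T]\times\R$ and invoking Lemma~\ref{L:estSource} and the total variation bound $\dssb+GT$ from Theorem~\ref{T:localConv} yields $\mu_i^{\nu,\jump}([0,T]\times\R)\lesssim\mu_\nu^{ICS}([0,T]\times\R)$.

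For the lower bound, the subtlety is that $\mu_i^{\nu,\jump}$ is a signed measure: $q_k$ can be negative when an $i$-discontinuity curve \emph{ends} (its strength drops below $\beta_\nu/4$ because of cancellation) or when a large $i$-wave is cancelled at an interaction. The plan is to note that negativity of $q_k$ at interaction nodes is again controlled in absolute value by $\mu_\nu^{IC}$ (cancellation of $i$-waves contributes to the cancellation part of $\mu_\nu^{IC}$), so those negative contributions sum to at least $-\OL(1)\mu_\nu^{ICS}([0,T]\times\R)$. At update times the decrease $\sigma'-\sigma$ is bounded below by $-\OL(1)\tau_\nu\abs{\sigma}$, and by genuine nonlinearity the total strength of $i$-waves is controlled by the total variation, so summing these decreases over $n\leq[\![T/\tau_\nu]\!]$ costs at least $-\OL(1)(\dssb+GT)$ — more precisely $-(\dssb+Gt)-\mu_\nu^{\sour}([0,T]\times\R)$ after absorbing the new-jump terms into the source measure. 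Combining, $\mu_i^{\nu,\jump}([0,T]\times\R)\gtrsim -(\dssb+Gt)-\mu_\nu^{\sour}([0,T]\times\R)$.

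The main obstacle I expect is the bookkeeping at update times when the source $g_\nu$ depends on $x$: the jumps newly created at the grid points $j\varepsilon_\nu$ are of the $k$-th family for possibly every $k$, and one must verify that their projection onto $\widetilde\ell_i$ is still controlled by $q_j=\int_{j\varepsilon_\nu}^{(j+1)\varepsilon_\nu}\alpha$, uniformly in $\nu$. This is precisely the content of~\cite[Lemma 4.2]{ACM1} together with Remark~\ref{R:timeupdate}, and the genuine-nonlinearity hypothesis on the $i$-th field is what guarantees (via Lemma~\ref{L:convergenceJumps} and the cardinality bound $M_{\beta,i}(\nu)\lesssim\beta^{-2}$) that the number of active $(\beta_\nu,i)$-discontinuity curves — hence the number of ``death'' events contributing negatively — stays under control; the detailed justification that all these estimates survive the passage from $g=g(u)$ to Assumption~\textbf{(G)} is deferred to Remark~\ref{R:lemma51011}.
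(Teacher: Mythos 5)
Your overall architecture (atom-by-atom analysis of $\mu_i^{\nu,\jump}=\sum_k q_k\delta_{(t_k,x_k)}$, split into physical interactions, nonphysical interactions and update times) matches the paper's, and your treatment of update times via the source measure is correct; but you miss the one step that makes the upper bound nontrivial. Your claim that $\abs{\mu_i^{\nu,\jump}}(\{(t,x)\})\lesssim\mu_\nu^{IC}(\{(t,x)\})$ at every physical interaction node is false at the \emph{terminal} nodes of a maximal $(\beta_\nu,i)$-approximate discontinuity: there $q_k=-s'>0$ (the dying front is a shock with $s'\leq-\beta_\nu/4$), while the interaction actually occurring at that node can be arbitrarily small --- e.g.\ a tracked front of strength $-\beta_\nu/4$ absorbs a tiny rarefaction and drops below the threshold, so $q_k\approx\beta_\nu/4$ whereas $\mu_\nu^{IC}(\{(t_k,x_k)\})$ is as small as you like. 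The fallback you invoke, the cardinality bound $M_{\beta,i}(\nu)\lesssim\beta^{-2}$, only yields a total of order $\beta_\nu^{-2}\cdot\beta_\nu/4\sim\beta_\nu^{-1}$, which diverges. The paper's key device is to charge each terminal atom not to its node but to the measure accumulated along the \emph{whole} curve $\mathfrak l$: since the curve had strength $\geq\beta_\nu$ at some earlier time and dies with strength below $\beta_\nu/4$, one has $3\beta_\nu/4\leq\abs{\mu_i^{\nu}}(\mathfrak l)\lesssim\mu_\nu^{ICS}(\mathfrak l)$, hence $q_k\lesssim\mu_\nu^{ICS}(\mathfrak l)+\mu_\nu^{ICS}(\{(t_k,x_k)\})$, and since the maximal curves are disjoint these charges sum to $\lesssim\mu_\nu^{ICS}([0,T]\times\R)$. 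Without this (or an equivalent) step the upper bound is not established.

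For the lower bound your sign analysis is reversed: deaths contribute $q_k=-s'>0$ and births contribute $q_k=s\leq0$, so the negative atoms come from births and from absorption of small $i$-shocks not in $\J_{\beta,i}(\nu)$ into a tracked front; the latter are \emph{not} pointwise controlled by $\mu_\nu^{IC}$ either (two shocks of the same sign produce no cancellation, and the interaction amount $\abs{s's''}\,\abs{\Delta\sigma}$ can be much smaller than $\abs{s''}$). The paper avoids any pointwise accounting of the negative part: it tests $\mu_i^{\nu,\jump}=\partial_t\wm_i^{\nu,\jump}+\partial_x\pr{\widetilde{\lambda_i^{\nu}}\wm_i^{\nu,\jump}}$ against a time cut-off, uses $\wm_i^{\nu,\jump}(0)\leq0$ (genuine nonlinearity) to drop the initial term, and bounds the remaining boundary term by the total variation $\dssb+GT$; combined with the already-proved upper bound this yields the stated lower bound. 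You should adopt that global-balance route, or at least replace your pointwise control of negative atoms by a summed total-variation accounting of absorbed small shocks.
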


\begin{corollary}\label{cor:unifBounds}
Suppose the $i$-th characteristic field in the strictly hyperbolic system~\eqref{eq:syscfSh} is genuinely nonlinear as in Definition~\ref{D:GN} and the source term satisfies Assumption~\textbf{(G)} at Page~\pageref{Ass:G}.
For $i=1,\dots,n$
\[
0\leq\mu_{\nu}^{ICJS}([t_{1},t_{2}]\times \R)\lesssim e^{T(\dssb+G T+\norm{\alpha}_{L^{1}})}\left(\mu^{IC}_{\nu}([0,T]\times \R)+\Upsilon^{\nu}(0)\right)\,.
\]
\end{corollary}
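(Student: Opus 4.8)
The plan is to bound each of the three summands of $\mu_\nu^{ICJS}=\mu_\nu^{IC}+\sum_{i=1}^N|\mu_i^{\nu,\jump}|+\mu_\nu^{\sour}$ on $[t_1,t_2]\times\R\subseteq[0,T]\times\R$ and then to absorb the additive constants that appear into the exponential prefactor by a discrete Gronwall argument over the update steps. The interaction--cancellation part is immediate: $\mu_\nu^{IC}([t_1,t_2]\times\R)\le\mu_\nu^{IC}([0,T]\times\R)$, and by \eqref{E:estimatemuIC} this is $\lesssim\TV^-(\Qg(u_\nu);(0,T])$; since $\Qg$ is nonincreasing between update times by Proposition~\ref{pro:ie} and grows by at most $O(G^2\tau_\nu)$ at each of the $[\![T/\tau_\nu]\!]$ update times by Lemma~\ref{L:timeEst}, one gets $\TV^-(\Qg(u_\nu);(0,T])\lesssim\Qg(u_\nu(0))+G^2T\lesssim\Upsilon^\nu(0)+G^2T$. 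The source part is handled directly by Lemma~\ref{L:estSource}: $\mu_\nu^{\sour}([t_1,t_2]\times\R)\le\mu_\nu^{\sour}([0,T]\times\R)\lesssim(\dssb+GT+\norm{\alpha}_{L^1})(T+\tau_\nu)$.

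For the jump part one must pass from the one-sided estimate of Lemma~\ref{L:fagrgrgggraarg} on the \emph{signed} mass $\mu_i^{\nu,\jump}([0,T]\times\R)$ to a bound on the total variation $|\mu_i^{\nu,\jump}|([t_1,t_2]\times\R)=[\mu_i^{\nu,\jump}]^+([t_1,t_2]\times\R)+[\mu_i^{\nu,\jump}]^-([t_1,t_2]\times\R)$. The point is that the proof of Lemma~\ref{L:fagrgrgggraarg} in fact produces the two one-sided bounds separately: working on the atomic description of $\mu_i^{\nu,\jump}$ analogous to \eqref{E:muinuexp}, the increments of the $i$-jump strength carried by $J^\nu$ at interaction nodes are controlled pointwise by $\mu_\nu^{IC}$ (exactly as in the proof of Lemma~\ref{L:estSource}), at update nodes by $\mu_\nu^{\sour}$ (which is why the term $\sum_i|\wm_i^\nu(t-,x)|\,\delta_{n\tau_\nu}$ was put into Definition~\ref{D:measuresnu}), and at the nodes where a $(\beta_\nu,i)$-front enters or leaves $J^\nu$ across the threshold $\beta_\nu/4$ — so that $[\mu_i^{\nu,\jump}]^+\lesssim\mu_\nu^{ICS}$ and $[\mu_i^{\nu,\jump}]^-\lesssim(\dssb+GT)+\mu_\nu^{\sour}$, the calibration $\beta_\nu>4(1+\norm{\alpha}_{L^1})(\varepsilon_\nu+\tau_\nu\overline M)$ of Remark~\ref{R:relation} being what keeps the threshold-crossing bookkeeping uniform in $\nu$. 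Summing over $i$ yields $\sum_{i=1}^N|\mu_i^{\nu,\jump}|([t_1,t_2]\times\R)\lesssim\mu_\nu^{IC}([0,T]\times\R)+\mu_\nu^{\sour}([0,T]\times\R)+(\dssb+GT)$.

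Collecting the three bounds gives $\mu_\nu^{ICJS}([t_1,t_2]\times\R)\lesssim\mu_\nu^{IC}([0,T]\times\R)+\Upsilon^\nu(0)+(\dssb+GT+\norm{\alpha}_{L^1})T$, and it remains to see that the free additive term $(\dssb+GT+\norm{\alpha}_{L^1})T$ is subsumed in $e^{T(\dssb+GT+\norm{\alpha}_{L^1})}\bigl(\mu_\nu^{IC}([0,T]\times\R)+\Upsilon^\nu(0)\bigr)$. This is a discrete Gronwall estimate for $\Upsilon(u_\nu(\cdot))$: it is nonincreasing between update times by Proposition~\ref{pro:ie}, while at each update time it changes by a factor $1+O(\tau_\nu)$ up to an additive $O(\tau_\nu\norm{\alpha}_{L^1})$ by Lemma~\ref{L:timeEst}, with $O(1)$ rate controlled by $\ell_g\TV(u_\nu)+\norm{\alpha}_{L^1}\le\dssb+GT+\norm{\alpha}_{L^1}$ via the total-variation bound \eqref{E:domt} of Theorem~\ref{T:localConv}; iterating over the $[\![T/\tau_\nu]\!]$ steps gives $\Upsilon^\nu(t)\lesssim e^{T(\dssb+GT+\norm{\alpha}_{L^1})}(\Upsilon^\nu(0)+\norm{\alpha}_{L^1}T)$ for $t\le T$, which, fed back into the source bound (through the total variation entering Definition~\ref{D:measuresnu}) and into $\mu_\nu^{IC}$ (through $\Qg\le\Upsilon/\kappa$), closes the estimate; nonnegativity of $\mu_\nu^{ICJS}$ is immediate from its definition.

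The step I expect to be the main obstacle is the jump part — extracting from Lemma~\ref{L:fagrgrgggraarg} a genuine total-variation bound rather than a one-sided one, i.e. controlling the positive part of $\mu_i^{\nu,\jump}$ coming from $(\beta_\nu,i)$-fronts that weaken below the threshold $\beta_\nu/4$ or get absorbed into other leftmost discontinuity curves, with constants uniform in $\nu$. Once that is in hand, combining it with Lemma~\ref{L:estSource}, Lemma~\ref{L:timeEst}, Proposition~\ref{pro:ie} and \eqref{E:estimatemuIC} is routine bookkeeping.
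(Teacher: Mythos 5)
The paper states this corollary without proof; it is placed between the statement of Lemma~\ref{L:fagrgrgggraarg} and its proof, with the expectation that it follows from the chain Lemma~\ref{L:estSource} $\to$ \eqref{E:estimatemuIC} $\to$ Lemma~\ref{L:fagrgrgggraarg}. Your proposal is therefore filling a genuine lacuna rather than being comparable to an existing argument.

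Your overall structure — split $\mu_\nu^{ICJS}=\mu_\nu^{IC}+\sum_i|\mu_i^{\nu,\jump}|+\mu_\nu^\sour$ and control the three summands separately — is the natural route and is surely what the authors intend. You also correctly identify the central delicate point: Lemma~\ref{L:fagrgrgggraarg} bounds only the \emph{signed} mass $\mu_i^{\nu,\jump}([0,T]\times\R)$, whereas $\mu_\nu^{ICJS}$ uses $|\mu_i^{\nu,\jump}|$. The cleanest way to close this (and the one actually latent in the proof of Lemma~\ref{L:fagrgrgggraarg}) is to combine two ingredients: (i) the pointwise/collective analysis of the atoms $q_k$ in \eqref{E:strenghtq} shows $[\mu_i^{\nu,\jump}]^+([0,T]\times\R)\lesssim\mu_\nu^{ICS}([0,T]\times\R)$ (positive $q_k$ occur at terminal nodes, interaction/cancellation nodes and update nodes, all controlled by $\mu_\nu^{IC}$ or $\mu_\nu^\sour$); (ii) the Gauss--Green argument with the test function $\phi_\alpha$ gives $\mu_i^{\nu,\jump}([0,T]\times\R)\ge -(\dssb+GT)$, since it equals (up to the error already accounted for) $\wm_i^{\nu,\jump}(T)(\R)-\wm_i^{\nu,\jump}(0)(\R)$, a difference of quantities bounded by the total variation. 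Then $[\mu_i^{\nu,\jump}]^-=[\mu_i^{\nu,\jump}]^+-\mu_i^{\nu,\jump}\lesssim\mu_\nu^{ICS}+(\dssb+GT)$, and $|\mu_i^{\nu,\jump}|$ is controlled. Your version says $[\mu_i^{\nu,\jump}]^-\lesssim(\dssb+GT)+\mu_\nu^\sour$, which drops the $\mu_\nu^{IC}$ contribution present at triple points and $i$-shock/$j$-wave interactions; this is a minor imprecision (the combined bound is the same), but it suggests the negative atoms are controlled node-by-node only by the source, which they are not.

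Where I see a genuine gap is the final absorption step. You reduce the estimate to showing that the free additive term $(\dssb+GT+\norm{\alpha}_{L^1})T$ is bounded by $e^{T(\dssb+GT+\norm{\alpha}_{L^1})}\bigl(\mu_\nu^{IC}([0,T]\times\R)+\Upsilon^\nu(0)\bigr)$, and you propose to obtain this via a discrete Gronwall inequality on $\Upsilon(u_\nu(\cdot))$. But the Gronwall argument, even granting the multiplicative version from Remark~\ref{R:timeupdate}, gives an \emph{upper} bound $\Upsilon^\nu(t)\lesssim e^{CT}(\Upsilon^\nu(0)+\norm{\alpha}_{L^1}T)$; it cannot deliver the required \emph{lower} bound on $\mu_\nu^{IC}+\Upsilon^\nu(0)$. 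Concretely, if $\alpha\not\equiv0$ while $\Upsilon^\nu(0)$ is very small and interactions happen to be weak, then the left side is at least $\mu_\nu^\sour([0,T]\times\R)\gtrsim\norm{\alpha}_{L^1}T$ by inspection of Definition~\ref{D:measuresnu}, yet the right side can be much smaller. So the stated inequality with $\Upsilon^\nu(0)$ does not follow from your argument, and indeed does not appear to hold literally as written — it reads more naturally with $\dssb$ (or $\dssb+\norm{\alpha}_{L^1}T$) in place of $\Upsilon^\nu(0)$, after which $x\le e^x$ absorbs the additive remainder and the claim is a straightforward uniform bound in $\nu$, which is all that is used afterwards for weak$^*$-compactness. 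So the obstruction you anticipated (the jump part) is the one you essentially overcome, while the step you treated as routine bookkeeping (absorbing the constant) is where the argument, as you wrote it, does not close.
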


In particular, by Definition~\ref{D:measuresnu} and due to uniform bounds in $\nu$ established in Corollary~\ref{cor:unifBounds},  weak$^*$-compactness allows us to assume that $ \mu_{\nu}^{\sour }$ and $\abs{\mu_{i}^{\nu,\jump}}$ are weakly convergent, not only the well-known measures $\mu^{I}_{\nu}$ and $\mu^{IC}_{\nu}$ converging to $\mu^{I}$ and $\mu^{IC}$. As declared in Remark~\ref{R:norelabelingsubs}, also in this case, we do not relabel the subsequence we fix to achieve convergence.
This makes possible giving the following definition.

\begin{definition}
\label{D:measures}
We define the weak$^{*}$-limits of the measures
$\mu_{\nu}^{\sour}$, $\mu_{i}^{\nu}$, $\mu_{i}^{\nu,\jump} $ respectively as the \emph{approximate source measure} $\mu_{\nu}^{\sour} $, the \emph{wave-balance measure} $\mu_{i}$, the \emph{jump-wave-balance measure} $\mu_{i}^{\jump}$.
Summing up, we define the measures
\bas
&\mu^{ICJS}\doteq{}\wlim{\nu}\mu_{\nu}^{ICJS}\;,
&&
\mu^{ICS}\doteq{}\wlim{\nu}\mu_{\nu}^{ICS}\;.
\eas
\end{definition}

\begin{proof}[Proof of Lemma~\ref{L:fagrgrgggraarg}]

$\lozenge$ 
Control of the jump-wave-balance measure. 
A direct computation shows that
\begin{subequations}
\label{E:mujump}
\ba
\mu_{i}^{\nu,\jump} =\sum_{k} q_{k}\delta_{(t_{k},x_{k})} \,,
\ea
where $\{(t_{k},x_{k})\}_{{k}}$ are the nodes in the maximal, leftmost $(\beta_\nu,i)$-approximate discontinuity curves $\J_{\beta,i}(\nu)$ of Definition~\ref{D:approximateDiscC} and the quantities $q_{k}$ are computed as follows. Concerning nodes $(t_{k},x_{k})$ that are interaction points, or where strength and slope either arise or change due to time-updates, denote by $s$ the strength of the outgoing $i$-shock and $s'$, $s''$ the size of the $i$-th wave incoming, if present and belonging in $\J_{\beta,i}(\nu)$: then $q_{k}$ is equal to
\ba\label{E:strenghtq}
\begin{cases}
-s' &\text{terminal nodes of a front not merging into another one,}\\
s &\text{initial nodes of a maximal front, necessarily $\leq0$,}\\
s-s'-s'' &\text{a triple point of $\J_{\beta,i}(\nu)$, corresponding to interaction}\\
&\text{points of waves in $\J_{\beta,i}(\nu)$}\\
s-s' & \text{interaction with a wave not in $\J_{\beta,i}(\nu)$,} 
\\
s-s' & \text{at update times, if not already in the cases above.} 
\end{cases}
\ea
\end{subequations}
By triple point in $\J_{\beta,i}(\nu)$ we mean a point of interaction among two fronts belonging to $\J_{\beta,i}(\nu)$ so that $q_k=s-s'-s''$, while when one of the two interacting waves does not belong in $\J_{\beta,i}(\nu)$ we have $q_k=s-s' $. By genuine nonlinearity when two $i$-shock interact there is an outgoing $i$-shock. 

\textbf{Claim:} The following estimates hold: \[\Upsilon^{\nu}(0)- \mu_{\nu}^{\sour }\left([0,T]\times\R\right)\lesssim\mu_{i}^{\nu,\jump}\left([0,T]\times\R\right)\lesssim \mu_{\nu}^{ICS }\left([0,T]\times\R\right)\,.\]

We prove it considering the various cases listed in~\eqref{E:strenghtq}.

$\lozenge$ 
Neither rarefactions, nor jumps arising because of discontinuities of $g_{\nu}$, are in $\J_{\beta,i}(\nu)$ if we require the bound of Remark~\ref{R:relation}.
At initial points thus $q_{k}\leq0$ since strengths $s$ of shocks are negative, and $q_{k}\leq0$ as well if there is an interaction with an $i$-shock not in $\J_{\beta,i}(\nu)$, just due to interaction estimates.

$\lozenge$ In case of interaction of an $i$-rarefaction front with a physical wave of a different family, including the case of nonphysical waves, or in case of triple points in $\J_{\beta,i}(\nu)$, by interaction estimates we have
\[
q_{k} \lesssim \mu_{\nu}^{IC}(t_{k},x_{k})\;.
\] 
In case of interaction with an $i$-shock of strength $s''<0$ not in $\J_{\beta,i}(\nu)$ we observed $q_{k}\leq0$; we would have as well
\[
q_{k}=s-s'\leq s-s'-s''\lesssim \mu_{\nu}^{IC}(t_{k},x_{k})\;.
\]

$\lozenge$ Terminal points are instead a bit more difficult to handle since the estimate of $q_k$ is related to what happened before time $t_k$, not necessarily to what is happening at $(t_k,x_k)$.
By definition of maximal, leftmost $(\beta_\nu,i)$-approximate discontinuity curve $y$ in Definition~\ref{D:approximateDiscC}, whenever $(t_k,x_k)$ is the terminal point of $ \curva\in\J^{}_{\beta,i}(\nu)$ then $\curva$ at time $t_{k}^{-}$ has strength $|s_{0}|\geq\beta_\nu/4$, but after $t_{k}^{-}$ the strength is less than $\beta_\nu/4$, and there is at least one time $t<t_{k}$ with strength $|s_{1}|\geq\beta_\nu$: thus considering the variation of the strengths along the curve $y$ before the final point we see that
\[
\beta_\nu-\beta_\nu/4<\abs{\mu_{i}^{\nu}}(\mathfrak l)\leq \OL(1)  \mu_{\nu}^{ICS}(\mathfrak l)\,,
\qquad\text{where }\mathfrak l=\curva([t^{-},t^{+}]),
\]
so that 
\begin{equation}\label{e:rgqrrrgqgqrwgqrgqregr}
1\leq \frac{ \OL(1)}{\beta_\nu-\beta_\nu/4}  \mu_{\nu}^{ICS}(\mathfrak l)\,.
\end{equation}
If an $i$-front survives time $t_{k}$ with strength $s>-\beta_\nu/4$, possibly null, so that $\beta_\nu/4+s>0$, then considering the terminal point $(t_k,x_k)$ of the curve we deduce
\bas
0\leq q_{k}&=-s'\leq \beta_\nu/4+s-s'\leq \beta_\nu/4+ \OL(1) \mu_{\nu}^{ICS} (\{(t_{k},x_{k})\})\\
&\stackrel{\eqref{e:rgqrrrgqgqrwgqrgqregr}}{\leq} \frac{\beta_\nu}{4} \cdot\frac{ \OL(1)}{\beta_\nu-\beta_\nu/4} \mu_{\nu}^{ICS}(\mathfrak l)+\OL(1)( \mu_{\nu}^{IC}+\mu_{\nu}^{\sour} )(\{(t_{k},x_{k})\})
\eas
Since the endpoints correspond to disjoint maximal, $(\beta_\nu,i)$-approximate discontinuities we conclude that in the compact set $[0,T]\times[-R,R]$ the following upper estimate holds:
\[
0\leq \sum_{k \text{ endpoints}} q_{k}\leq\OL(1) \mu_{\nu}^{ICS} \left([0,T]\times[-R-\widehat\lambda T,R+\widehat\lambda T]\right)\,.
\]
This concludes the proof of the upper bound \[\mu_{i}^{\nu,\jump }\pr{[0,T]\times\R}\lesssim \mu_{\nu}^{ICS} \pr{[0,T]\times\R}\,.\]

We follow~\cite[Page 464]{BYTrieste} for the lower bound.
We compute the integral of the Lipschitz continuous function $\phi_{\alpha}:\R^{+}\to[0,1]$ defined as
\bas
\phi_{\alpha}(t)=\begin{cases}1 &0\leq t\leq T\\ 1-(t-T)/\alpha & T\leq t\leq T+\alpha\\0&t>T+\alpha \end{cases},
\quad\alpha>0\,,
\eas
in the Radon measure $\overline\mu\doteq{}-\mu_{i}^{\nu,\jump }+\overline C \mu_{\nu}^{ICS} $. Here $\overline C$ thanks to the upper bound on $\mu_{i}^{\nu,\jump }$ we just proved, is chosen indeed in such a way that $\overline\mu$ is nonnegative. From such integral, as $\alpha\downarrow0$ since  $0\leq\phi_{\alpha}\leq1$ we find
\bas
\overline \mu\pr{[0,T]\times\R}\leq\lim_{\alpha\downarrow0}&\left\{
\int_{\R^{+}}\prq{\int_{\R} \pr{\partial_{t}  \phi_{\alpha}+\widetilde{\lambda_{i}^{\nu}}\partial_{x}  \phi_{\alpha}}d\wm_{i}^{\nu,\jump}(t)}dt\right.
\\
&\left.+\int \phi_{\alpha} d\wm_{i}^{\nu,\jump}(0)
+\overline C  \int_{[0,T+\alpha]\times\R}\!\!\!\!\!\!\!\phi_{\alpha}d\mu_{\nu}^{ICS }\right\}
\,.
\eas
Since $\wm_{i}^{\nu,\jump}(0)\leq0$, due to genuine nonlinearity, the second addend in the RHS is nonpositive. In particular computing the limits of the other two addends in the RHS, as $ \wm_{i}^{\nu,\jump}(t)$ is right continuous, we get
\[
\pr{-\mu_{i}^{\nu,\jump }\cancel{+\overline C \mu_{\nu}^{ICS}}  }\pr{[0,T]\times\R}
\leq [ \wm_{i}^{\nu,\jump}(t)]\pr{\R} +\cancel{\overline C \mu_{\nu}^{ICS} \pr{[0,T]\times\R}}\,,
\]
so that by estimates~\eqref{E:domt} on the total variation
\[
-\pr{\dssb+Gt} \lesssim \mu_{i}^{\nu,\jump }\pr{[0,T]\times\R}\lesssim \mu_{\nu}^{ICS} \pr{[0,T]\times\R} \ .
\]

$\lozenge$ 
We finally consider the remaining cases at update times.
Shocks that arise or disappear because of discontinuities of $g_{\nu}$ are treated as initial or terminal points, that we already discussed above; actually, if $\tau_{\nu}$ is sufficiently smaller than $\beta_{\nu}$ shocks are neither created nor destroyed from the family $\J_{\beta,i}(\nu)$, but we do not care of that.
Concerning nodes $(t_{k},x_{k})$ at update times $n\tau_{\nu}$, if we exclude interaction points, initial and terminal points, we are left with points where the slopes of the discontinuity and the left and right values change because the source acts correcting the term $u_{\nu} (\tau_{\nu}-,\cdot)$, according to~\eqref{E:correctionsource}. 
By~\cite[Remark 4.3]{ACM1}, denoting by $s$ the strength of the outgoing $i$-wave and by $s'$ the size of the $i$-th wave incoming
\bas
&\abs{s-s'}\lesssim \tau_{\nu}\abs{s'}
\qquad \Longrightarrow
\qquad\abs{q_{k}}\lesssim \mu_{\nu}^{\sour}(t_{k},x_{k})\,.\qedhere
\eas
\end{proof}

 We exploit the introduction of the source measures explaining that, with this tool, \cite[Lemmas~5.3;10,11]{ACM1} and be extended under Assumption~\textbf{(G)} instead of only considering $g=g(u)$ Lipschitz continuous.

\begin{remark}\label{R:lemma51011}
We consider a plygonal region $\Gamma$ with edges transversal to the waves it encounters. For $k\in\{1,\dots,N\}$, the incoming flux $W^{\nu,k}_\mathrm{in}$ and outgoing flux $W^{\nu,k}_\mathrm{of}$ of the $k$-waves through the boundary of the region are defined as~\cite[Page~359]{ACM1}, as the corresponding amount of positive $k$-waves $W^{\nu,k,+} (t)$ or negative $k$-waves $W^{\nu,k,-} (t)$ present in a $t$-section of $\Gamma$.
Assume also, just for simplicity, that no singular point of $\mu_\nu^S$ lies on the boundary of $\Gamma$.

When the source satisfies Assumption~\textbf{(G)}, in Step 2 of the proof of \cite[Lemma~5.3, Page~360]{ACM1}, applying the same \cite[Remark~4.3]{ACM1} as there, thanks to the introduction of source measures, one immediately has the worse estimates
\begin{gather*}
\abs{W^{\nu,k,+} (t+)-W^{\nu,k,+} (t-)}\lesssim \tau_\nu W^{\nu,k,+} (t-)+\mu_\nu^\sour(\Gamma\cap\{t\}\times\R)\,,
\\
\abs{W^{\nu,k,-} (t+)-W^{\nu,k,-} (t-)}\lesssim \tau_\nu W^{\nu,k,-} (t-)+\mu_\nu^\sour(\Gamma\cap\{t\}\times\R)\,.
\end{gather*}
As a consequence, the statement of~\cite[Lemma~5.3, Page~360]{ACM1} tells
\begin{gather*}
  e^{-\tau_{(k-h)\nu C}}\left[W^{\nu,k,+}_\mathrm{in} \!\!\!- C \mu_\nu^{ICS}(\overline\Gamma)\right]\leq W^{\nu,k,+}_\mathrm{in}\!\!\!\leq e^{\tau_{(k-h)\nu C}}\left[W^{\nu,k,+}_\mathrm{in} \!\!\!+C \mu_\nu^{ICS}(\overline\Gamma)\right],
  \\
  e^{-\tau_{(k-h)\nu C}}\left[W^{\nu,k,-}_\mathrm{in}\!\!\! - C \mu_\nu^{ICS}(\overline\Gamma)\right]\leq W^{\nu,k,-}_\mathrm{in}\!\!\!\leq e^{\tau_{(k-h)\nu C}}\left[W^{\nu,k,-}_\mathrm{in}\!\!\! +C \mu_\nu^{ICS}(\overline\Gamma)\right].
  \end{gather*}
  
  Concerning~\cite[Lemmas~5.10-11]{ACM1}, what slightly changes is the estimate of the slopes of $k$-characteristics $\gamma$:
  \[
  \abs{\gamma(t+)-\gamma(s-)} \lesssim \abs{t-s} + M_k^\nu(\gamma;s,t)+ M_*^\nu(\gamma;s,t)+ \mu_\nu^{ICS}(\{(r,\gamma(r))\}_{s\leq r\leq t})\,,
    \]
    where $M_k^\nu(\gamma;s,t)$ is the total strength of all $k$-waves interacting with $\gamma$ at times in $[s,t]$, while $ M_*^\nu(\gamma;s,t)$ is the total strength of all $j$-waves interacting with $\gamma$ at times in $[s,t]$ for $j\neq k$.
   Since $ \mu_\nu^{ICS}(\{(r,\gamma(r))\}_{s\leq r\leq t})$ is the sum of $ \mu_\nu^{IC}(\{(r,\gamma(r))\}_{s\leq r\leq t})$, already present in such estimate when $g=g(u)$, and $ \mu_\nu^{\sour}$, controlled by $ \pr{{ {\dssb+G T}}+ \norm{\alpha}_{L^{1}}} (t-s+\tau_{\nu}) $ in Lemma~\ref{L:estSource}, then the arguments of such lemmas can be repeated enlarging accordingly the constants in the inequalities.
\end{remark}

 We conclude this auxiliary section repeating that the number of $(\beta,i)$-approximate discontinuities is uniformly bounded.

 \begin{lemma}\label{L:boundsOnJumps}
 When the threshold $\beta$ is fixed, then the cardinality $\sharp \J^{}_{\beta,i}(\nu)$ of $(\beta,i)$-approximate discontinuities, up to a fixed time $T$, is uniformly bounded in $\nu$ and of order $M_{\beta,i}\lesssim \beta^{-2}$.
 \end{lemma}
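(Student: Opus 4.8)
The plan is to bound the number of maximal, leftmost $(\beta,i)$-approximate discontinuity curves by charging to each of them a definite amount of the $\mu^{ICS}_{\nu}$-measure, and then exploiting the uniform bound on $\mu^{ICS}_{\nu}$ over a time strip coming from Corollary~\ref{cor:unifBounds} (together with Lemma~\ref{L:estSource} and~\eqref{E:estimatemuIC}). First I would recall the defining feature of a curve $\curva\in\J^{}_{\beta,i}(\nu)$: it carries an $i$-front of strength $\geq\beta/4$ along its whole life, and there is at least one time at which this strength reaches $\beta$. Hence, reading off the variation of the front strength along $\curva$ from birth to the time of maximal strength and back down, the strength must drop by at least $\beta-\beta/4=3\beta/4$ (or rise from a smaller value). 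By the wave-balance estimate---concretely the local bound $\abs{\mu_{i}^{\nu}}\lesssim\mu_{\nu}^{ICS}$ on $\curva$ used in the proof of Lemma~\ref{L:fagrgrgggraarg}, see~\eqref{e:rgqrrrgqgqrwgqrgqregr}---this forces
\[
\tfrac{3}{4}\beta \;\leq\; \OL(1)\,\mu_{\nu}^{ICS}\big(\mathfrak l_{\curva}\big)\,,
\qquad \mathfrak l_{\curva}\doteq\Graph(\curva)\,,
\]
so each curve accounts for at least $c\beta$ of the $\mu^{ICS}_{\nu}$-mass carried along its graph.

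The key point is then that distinct curves in $\J^{}_{\beta,i}(\nu)$ have essentially disjoint graphs: by the maximality/leftmost selection in Definition~\ref{D:subdisc}\eqref{definitionSF3}, two different maximal leftmost $(\beta,i)$-curves cannot share a sub-segment (if they did, past the branching point one would lie to the left of the other, contradicting maximality of the right one or leftmostness). They may meet only at isolated interaction nodes, which form an at most countable set and carry at most the atomic $\mu^{ICS}_\nu$-mass there; summing the inequality above over all curves and using disjointness, plus finite additivity of $\mu^{ICS}_{\nu}$, yields
\[
\tfrac{3}{4}\beta\,\sharp\J^{}_{\beta,i}(\nu)\;\leq\;\OL(1)\,\mu^{ICS}_{\nu}\big([0,T]\times[-R-\widehat\lambda T,\,R+\widehat\lambda T]\big)
\]
for the relevant spatial window determined by finite speed of propagation. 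By Corollary~\ref{cor:unifBounds} the right-hand side is bounded by $\OL(1)\big(\mu^{IC}_{\nu}([0,T]\times\R)+\Upsilon^{\nu}(0)\big)$, which in turn is controlled, uniformly in $\nu$ (up to the subsequence of Remark~\ref{R:norelabelingsubs}), by the negative total variation of the Glimm functional via~\eqref{E:estimatemuIC} and by the total variation of the initial datum. Hence $\sharp\J^{}_{\beta,i}(\nu)\lesssim\beta^{-1}$; a slightly more careful bookkeeping---charging not just the net drop but the full variation of the strength along each curve, which may oscillate $O(\beta^{-1})$ times by units of $\beta$ between successive interactions before it is finally killed---gives the stated $M_{\beta,i}(\nu)\lesssim\beta^{-2}$.

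The main obstacle I anticipate is the disjointness/charging bookkeeping rather than any single estimate: one must be careful that the leftmost-maximal selection genuinely produces graphs that overlap only on a negligible (countable, atomic) set, and that the ``charge'' $\mu^{ICS}_{\nu}(\mathfrak l_{\curva})$ assigned to one curve is not double-counted when several curves merge or when a front is absorbed into another front of the same family. This is handled by organizing the curves into a forest structure under the merging relation and charging each branching/termination event once to the interaction-cancellation-source measure at that node, exactly as in the terminal-point analysis of the proof of Lemma~\ref{L:fagrgrgggraarg}; the source contribution is harmless because $\mu^{\sour}_{\nu}$ of a time strip is $\lesssim(\dssb+GT+\norm{\alpha}_{L^{1}})(t_2-t_1+\tau_\nu)$ by Lemma~\ref{L:estSource}, uniformly in $\nu$. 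Since this is precisely the argument of~\cite[Lemma~5.7]{ACM1} with $g=g(u)$, the only new ingredient is the replacement of $\mu^{IC}_{\nu}$ by $\mu^{ICS}_{\nu}$ throughout, legitimized by Remark~\ref{R:lemma51011}.
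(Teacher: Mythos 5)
Your overall strategy is the same as the paper's: charge each curve in $\J^{}_{\beta,i}(\nu)$ a definite amount $\gtrsim\beta$ of $\mu^{ICS}_{\nu}$-mass along its graph, use disjointness of the graphs, and conclude from the uniform bound on $\mu^{ICS}_{\nu}([0,T]\times\R)$. However, there is a genuine gap in the charging step. You claim that along every curve the strength must vary by at least $\beta-\beta/4$, ``from birth to the time of maximal strength and back down (or rise from a smaller value).'' This is false for curves that originate at time $t=0$ with initial strength already $\geq\beta$: such a front can persist unchanged up to time $T$, its strength never varies, and the estimate $\tfrac34\beta\leq \OL(1)\,\mu_{\nu}^{ICS}(\mathfrak l_{\curva})$ simply does not hold for it --- it could carry zero interaction-cancellation-source mass. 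The paper's proof splits into two cases precisely for this reason: curves originating at $t=0$ carry strength $\geq\beta/4$ at disjoint points of the initial line and are therefore at most $\OL(1)\,\TV(\overline u)/\beta$ in number, while only curves originating at positive time (which by maximality must have incoming strength $\abs{s(t_{1}-)}\leq\beta/4$ and later reach $\beta$) are counted by charging $\gtrsim\beta$ to $\mu_{\nu}^{ICS}(\gamma[t_{1},t_{2}])$. You need to add this case distinction; without it the count of initially-present strong fronts is not controlled.

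Two smaller points. First, invoking Corollary~\ref{cor:unifBounds} here risks circularity: that corollary bounds $\mu_{\nu}^{ICJS}$, which involves $\abs{\mu_{i}^{\nu,\jump}}$ and hence already presupposes the enumeration of $\J^{}_{\beta,i}(\nu)$ that this lemma is meant to justify. What you actually need is only the bound on $\mu_{\nu}^{ICS}=\mu_{\nu}^{IC}+\mu_{\nu}^{\sour}$, which follows from~\eqref{E:estimatemuIC} and Lemma~\ref{L:estSource} independently of any jump decomposition; cite only those. Second, your closing paragraph about upgrading $\beta^{-1}$ to $\beta^{-2}$ via oscillation bookkeeping is unnecessary and somewhat muddled: $\beta^{-2}$ is an upper bound of order, and $\beta^{-1}\lesssim\beta^{-2}$ already suffices; the paper's proof likewise produces a bound of order $\beta^{-1}$ and stops there.
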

 
 \begin{proof}
Let $\gamma:[t^{-},t^{+})\to \R^{2}$ be a curve along which $u^{\nu}$ is discontinuous, let $s_{i}$ be its strength.
For any $[t_{1},t_{2}]\subseteq(t^{-},t^{+})$, by the estimates on interactions, on the action of the source, see Remark~\ref{R:timeupdate} and the definition of the source measure,
\[
|s(t_{2}{+})|-|s(t_{1}{-})|\lesssim\mu_{\nu}^{ICS} \left(\gamma[t_{1},t_{2}]\right)+\beta_{\nu}\,.
\]

When $\gamma(t_{2}) $ belongs in a leftmost $(\beta,i)$-approximate discontinuity originating a time $0$ then $s(0)\geq \beta/4$, thus there are at most $\TV(u(0,x))/\beta$ of them.

When $\gamma(t_{2}) $ belongs in a leftmost $(\beta,i)$-approximate discontinuity originating at positive time $t_{1}$ where $\abs{s(t_{1-})}\leq\beta/4$ and contemporarily $t_{2}$ is one of such times of the $(\beta,i)$-approximate discontinuity satisfying $s(t_{2})\geq\beta$, then for $\nu$ large
\[
\beta/2\lesssim \mu_{\nu}^{ICS} \left(\gamma[t_{1},t_{2}]\right) \,.
\]
We conclude that there are at most $C\mu^{ICS}([0,T]\times\R)/\beta$ of them since they are disjoint in the plane.
 \end{proof}

\subsection{Balances on characteristic regions}
\label{S:balances-}

\begin{subequations}
Generalized $i$-characteristic curves of $u^{\nu}$ are curves which are Lipschitz continuous and with speed which is given, at almost every time, by $\lambda_{i}(u^{\nu})$ at continuity points of $u^{\nu}$, or by the speed of the jump at $i$-discontinuities of $u$.

Consider generalized, order preserving $i$-characteristic curves of $u^{\nu}$:
\ba
&x_{m}^{\nu,\ell},x_{m}^{\nu,r}:[0,T]\to\R\,,\quad \text{ for }m=1,\dots,k \text{ with $k\in\N$ fixed,}
\ea
such that for $0\leq t\leq T$ they satisfy
\ba\label{E:orderCurves}
&x_{1}^{\nu,\ell}(t)\leq x_{1}^{\nu,r}(t)\leq x_{2}^{\nu,\ell}(t)\leq\dots\leq x_{k-1}^{\nu,r} (t)\leq x_{k}^{\nu,\ell}(t)\leq x_{k}^{\nu,r}(t)
\,.
\ea
If $J=\cup_{m=1}^{k}[x_{m}^{\ell},x_{m}^{r}]$ is the union of $k$ disjoint ordered intervals, thus $x_{m}^{r}<x_{m+1}^{\ell}$ for $m=1,\dots,k-1$, a common choice consists in the leftmost $i$-th characteristics $ x(t;t_{0},x_{m}^{\ell})$, $ x(t;t_{0},x_{m}^{r})$ as in~\eqref{E:ab}.
We stress that we do not need to assume this choice.

Following~\eqref{E:iCharacteristicRegion} we define for $0\leq s<t\leq T$ the region, see Fig.~\ref{fig:balance},
\ba\label{E:ffdgagwagfffff}
A_{\{x_{m}^{\nu,\ell/r}\}}^{s,t}\doteq{}
\bigcup_{m=1}^k\left\{(\zeta ,x)\ :\ s<\zeta\leq t \,,\ x_{m}^{\nu,\ell}(t)\leq x\leq x_{m}^{\nu,r}(t)\right\}\;.
\ea
\end{subequations}

We recall from Remark~\ref{R:relation} that the threshold $\beta_{\nu}$ can be chosen larger than $\varepsilon_{\nu}$, $\tau_{\nu}$, and we assume this choice.

 
\begin{lemma}\label{L:balance0}
Suppose the $i$-th family is genuinely nonlinear: then, with the above notation, for $0\leq s<t\leq T$ and setting $J (r)=\cup_{m=1}^{k}[x_{m}^{\nu,\ell}(r),x_{m}^{\nu,r}(r)]$,
\ba
\label{E:contest} \left[ \wm_{i}^{\nu,\cont}(t)\right](J (t))-
\left[ \wm_{i}^{\nu,\cont}(s)\right](J(s))
&\lesssim \left(\mu_{\nu}^{ICJS} \right)\left(A_{\{x_{m}^{\nu,\ell/r}\}}^{s,t}\right)+ \varepsilon_\nu\,,
\ea\ba
\label{E:contest2}
\left[ \wm_{i}^{\nu,\cont}(t)\right](J (t))-
\left[ \wm_{i}^{\nu,\cont}(s)\right](J(s)) &\gtrsim
 -\left(\mu_{\nu}^{ICS} \right)\left(A_{\{x_{m}^{\nu,\ell/r}\}}^{s,t}\right)-k \beta_\nu\,,
\\\label{E:allest}
 \left[ \wm_{i}^{\nu}(t)\right](J (t))-
\left[ \wm_{i}^{\nu}(s)\right](J(s))
&\lesssim \left(\mu_{\nu}^{ICS} \right)\left(A_{\{x_{m}^{\nu,\ell/r}\}}^{s,t}\right)+ \varepsilon_\nu\,.
\ea
\end{lemma}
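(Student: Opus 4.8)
\textbf{Proof proposal for Lemma~\ref{L:balance0}.}

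The plan is to transform the wave-balance identities of Definition~\ref{D:measuresnu} into estimates over the characteristic region $A_{\{x_{m}^{\nu,\ell/r}\}}^{s,t}$ by testing them against a suitable (approximate) indicator of that region, then to account for the non-exactness of characteristic speeds. First I would recall that, by construction of $\wm_{i}^{\nu,\cont}$ and $\wm_{i}^{\nu}$ as measures that are transported with speed $\widetilde{\lambda_{i}^{\nu}}$ up to the source correction, the quantities $\mu_{i}^{\nu,\cont}=\partial_{t}\wm_{i}^{\nu,\cont}+\partial_{x}(\widetilde{\lambda_{i}^{\nu}}\wm_{i}^{\nu,\cont})$ and $\mu_{i}^{\nu}=\partial_{t}\wm_{i}^{\nu}+\partial_{x}(\widetilde{\lambda_{i}^{\nu}}\wm_{i}^{\nu})$ are explicit purely atomic measures concentrated on interaction nodes, update nodes, and nonphysical nodes, with the weights described in~\eqref{E:muinuexp} and~\eqref{E:mujump}. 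Since the boundary curves $x_{m}^{\nu,\ell}, x_{m}^{\nu,r}$ are generalized $i$-characteristics, they travel \emph{exactly} at the transport speed $\widetilde{\lambda_{i}^{\nu}}$ of the wave measures at their own location (by the very definition of generalized $i$-characteristic and Definition~\ref{def:velCarAppr}); hence no flux of $\wm_{i}^{\nu,\cont}$ or $\wm_{i}^{\nu}$ crosses the lateral boundary of $A_{\{x_{m}^{\nu,\ell/r}\}}^{s,t}$ except through discrete crossings, which are charged to the interaction measure. Integrating the balance law over the region and using the divergence theorem (equivalently testing against $\phi=\uno_{A}$ regularized in time as in the lower-bound part of the proof of Lemma~\ref{L:fagrgrgggraarg}) gives
\[
\left[ \wm_{i}^{\nu,\cont}(t)\right](J(t))-\left[ \wm_{i}^{\nu,\cont}(s)\right](J(s))=\mu_{i}^{\nu,\cont}\!\left(A_{\{x_{m}^{\nu,\ell/r}\}}^{s,t}\right)+(\text{lateral crossing terms})+(\text{RH-speed errors}),
\]
and similarly for $\wm_{i}^{\nu}$.

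Next I would bound the three error sources. The lateral crossing terms: whenever a wave of any family crosses one of the $2k$ boundary characteristics, the change contributed to $\wm_{i}^{\nu,\cont}$ is controlled, by the interaction estimates (Proposition~\ref{pro:ie}, Lemma~\ref{def:qi}) and by genuine nonlinearity, by $\mu_{\nu}^{IC}$ evaluated at that node — these nodes lie on the closure of $A_{\{x_{m}^{\nu,\ell/r}\}}^{s,t}$, so they are absorbed into $\mu_{\nu}^{ICJS}(A)$ (for the upper bound) or into $\mu_{\nu}^{ICS}(A)$ (lower bound). The RH-speed errors come from~\eqref{shockspeederr}: since each front speed differs from the exact RH speed by at most $2\varepsilon_{\nu}$ and the total strength of physical fronts is bounded, these contribute an error of order $\varepsilon_{\nu}$; the nonphysical fronts contribute at most $O(\varepsilon_{\nu})$ by~\eqref{npbound} and the estimate for $\rho_{i}^{\nu}$ in~\eqref{E:muinuexp}. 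For~\eqref{E:contest} I then use $\mu_{i}^{\nu,\cont}=\mu_{i}^{\nu}-\mu_{i}^{\nu,\jump}$ together with the \emph{upper} bound $\mu_{i}^{\nu,\jump}(A)\lesssim\mu_{\nu}^{ICS}(A)$ from Lemma~\ref{L:fagrgrgggraarg} and the bound $|\mu_{i}^{\nu}|(A)\lesssim\mu_{\nu}^{ICS}(A)$ from Lemma~\ref{L:estSource}, plus the source part of $\mu_{\nu}^{ICJS}$ absorbing update-time contributions; for~\eqref{E:contest2} I use instead the \emph{lower} bound on $\mu_{i}^{\nu,\jump}$, whose defect can only be created at terminal nodes of $(\beta_{\nu},i)$-curves, each paying at least $\beta_{\nu}/4$ — hence the $-k\beta_{\nu}$ term, there being at most $O(k)$ such terminal nodes meeting the region's section (one per interval $[x_m^{\nu,\ell},x_m^{\nu,r}]$ at the relevant time). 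Estimate~\eqref{E:allest} is the direct consequence of $|\mu_{i}^{\nu}|(A)\lesssim\mu_{\nu}^{ICS}(A)+\varepsilon_{\nu}$, which is Lemma~\ref{L:estSource} together with the RH-speed error bound.

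The main obstacle I expect is the careful bookkeeping of the \emph{jump} part near the lateral boundary: a $(\beta_{\nu},i)$-front may run alongside, merge into, or split off from one of the boundary characteristics $x_m^{\nu,\ell/r}$, and one must be sure that such events are charged correctly — to $\mu_{\nu}^{ICJS}$ in the upper estimate and not double-counted, while in the lower estimate the possible \emph{loss} of a jump front exiting the region is precisely what forces the $-k\beta_{\nu}$ defect rather than a clean $\mu_{\nu}^{ICS}$ bound. Handling this requires the maximality/leftmost structure of $\J_{\beta,i}(\nu)$ from Definition~\ref{D:subdisc} and the relation $\beta_{\nu}>\varepsilon_{\nu},\tau_{\nu}$ of Remark~\ref{R:relation}, ensuring that within the region no rarefaction front and no source-generated front ever belongs to $\J_{\beta,i}(\nu)$, so that the sign information (outgoing $i$-shocks are negative, incoming strengths subtract) is preserved and the interaction estimates close. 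The remaining steps are the routine transport/divergence computation and the collection of the already-established bounds from Lemmas~\ref{L:estSource} and~\ref{L:fagrgrgggraarg}.
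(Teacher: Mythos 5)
Your overall skeleton --- write the balance of $\wm_{i}^{\nu,\cont}$ over the characteristic region as (interior wave-balance measure) plus (lateral fluxes), control the interior part by Lemmas~\ref{L:estSource} and~\ref{L:fagrgrgggraarg}, and control the lateral fluxes by the interaction and source measures using genuine nonlinearity for the signs --- is the same as the paper's, and your treatment of the two upper bounds~\eqref{E:contest} and~\eqref{E:allest} is essentially the paper's Steps 2 and 4. The gap is in the lower bound~\eqref{E:contest2}, specifically in where the term $-k\beta_{\nu}$ comes from and why it is proportional to $k$. You attribute it to terminal nodes of $(\beta_{\nu},i)$-approximate discontinuity curves, claiming there are $O(k)$ of them, ``one per interval''. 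Neither half of this holds: the number of such terminal nodes inside the region is only bounded by $M_{\beta,i}\lesssim\beta^{-2}$ (Lemma~\ref{L:boundsOnJumps}), not by $k$, and in any case the terminal-node defect of $\mu_{i}^{\nu,\jump}$ is already absorbed into $\mu_{\nu}^{ICS}$ in Lemma~\ref{L:fagrgrgggraarg}, via the observation that a curve must accumulate at least $\beta_{\nu}-\beta_{\nu}/4$ of interaction--cancellation--source mass before terminating (inequality~\eqref{e:rgqrrrgqgqrwgqrgqregr}); so it produces no $\beta_{\nu}$-sized error at all. The actual source of $-k\beta_{\nu}$ is a lateral-flux phenomenon that your step ``discrete crossings are charged to the interaction measure'' silently excludes: a negative $i$-wave of the \emph{continuous} part (a small shock, of strength at least $-\beta_{\nu}$, since larger ones belong to the jump part) can enter the region when one of the $2k$ boundary characteristics $x_{m}^{\nu,\ell/r}$ merges into a shock that was previously outside the region; at such a point there need be no interaction, no cancellation and no update, so nothing in $\mu_{\nu}^{ICS}$ pays for it. The count that closes the estimate is: by genuine nonlinearity, once an $i$-characteristic merges into an $i$-shock it follows it forever and the shock cannot be cancelled, so each of the $2k$ lateral curves produces at most one such event, each costing at most $\beta_{\nu}$. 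Without this argument the lower bound does not close.

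A second, smaller omission: the ordering hypothesis~\eqref{E:orderCurves} allows $x_{m}^{\nu,r}(\zeta)=x_{m+1}^{\nu,\ell}(\zeta)$, i.e.\ consecutive intervals may touch, separate and touch again (finitely many times for a piecewise constant approximation). Your argument tacitly assumes the $k$ sub-regions remain disjoint throughout $(s,t)$. The paper handles the general case by partitioning $[s,t]$ into finitely many subintervals on which the configuration is fixed and telescoping the balance, checking that the total number of merge-into-external-shock events over all subintervals is still at most $k$ and that the interaction measures of the disjoint sub-regions recombine by additivity. You would need to add this reduction step.
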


 \begin{figure}\centering
\includegraphics[width=.6\linewidth]{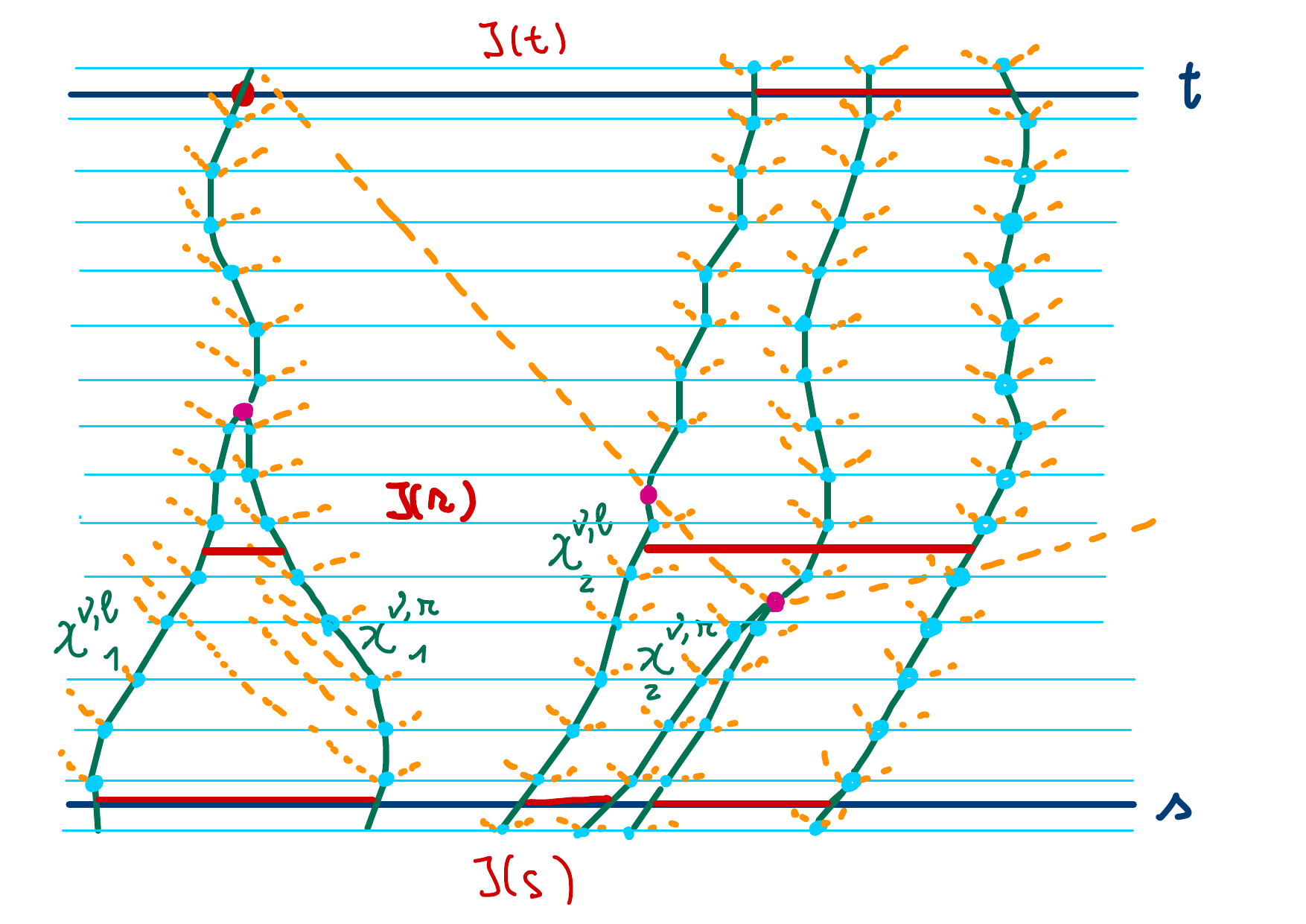}
\caption{Highlighted in bold, there are $t$-sections $J(t)$ of the regions $A_{\{x_{m}^{\nu,\ell/r}\}}^{s,t}$ of Lemma~\ref{L:balance0}. Horizontal thin full lines are update times. Dashed lines evoke discontinuities of $u^{\nu}$ different from the curves $\{x_{m}^{\nu,\ell/r}\}$, that might produce interactions and that arise also at update times and interaction times. Dots also highlight some interactions.}
\label{fig:balance}
\end{figure}

 \begin{proof}
 We recall that the strength of nonphysical fronts remains uniformly bound\-ed by $\eps_{\nu}$ as in~\eqref{npbound}, and such nonphysical fronts have constant speed $\widehat\lambda$.
Neglecting them, and due to the definition of the piecewise-constant approximation $u^{\nu}$ and by Definition~\ref{D:approximateDiscC} of the $i$-wave measure of $u^{\nu}$, the wave-measures $\wm_{i}^{\nu}$ are concentrated on (approximate) generalized characteristics; the speeds $\widetilde{\lambda_{i}^{\nu}}$ of such characteristics change only due to interactions or at update times: in particular the wave-balance measures $\mu_i^{\nu}$ of Definition~\ref{D:measuresnu} vanish on time intervals which do not contain neither interaction times nor update times. 
 
\emph{ From Step 1 up to Step 4 we assume that $ x_{m}^{\nu,r}(\zeta) < x_{m+1}^{\nu,\ell}(\zeta)$, for $m=1,\dots,k-1$ and $s<\zeta<t$.}
 
  \firststep\step{Setting up notation}
 For the rest of the proof, since $\nu$, $s$, $t$, $J$ are fixed, we omit them: 
 \bas
&A\doteq{}A_{\{x_{m}^{\nu,\ell/r}\}}^{s,t}\,,
\quad
 x_{m}^{\ell}(\zeta)=x_{m}^{\nu,\ell}(\zeta)\,,
 \quad
  x_{m}^{r}(\zeta)=x_{m}^{\nu,r}(\zeta)\,,
  \quad
  m=1,\dots,k\ .
\eas
\emph{We repeat that for the time being $ x_{m}^{r}(\zeta)< x_{m+1}^{\ell}(\zeta)$, for $m=1,\dots,k-1$ and $0\leq s<\zeta<t\leq T$}, so that the different interval stay separated in their evolution. Define the lateral boundaries by\bas
&\Lf_{\out}\doteq{}\bigcup_{m=1}^{k}\left\{(\zeta ,\ x_{m}^{\ell}(\zeta))\cup (\zeta,\ x_{m}^{r}(\zeta))\ :\ s\leq \zeta < t   \right\}\,,
\\
&\Lf_{\rin}\doteq{}\bigcup_{m=1}^{k}\left\{(\zeta ,\ x_{m}^{\ell}(\zeta))\cup (\zeta ,\ x_{m}^{r}(\zeta))\ :\ s< \zeta \leq t   \right\}
\;.
\eas

For time intervals $(s,t)$, possibly including interaction times or update times, we define the $i$-fluxes  $\Phi_{i,\rin}^{\nu}$ and $\Phi_{i,\out}^{\nu}$, entering and exiting the region across the lateral boundary, so that the following balance hold:
\bas
 \left[ \wm_{i}^{\nu}(t)\right](J(t))-
\left[ \wm_{i}^{\nu}(s)\right](J(s))
= \mu_i^{\nu} \left(A \right)+\Phi_{i,\rin}^{\nu}\left(\Lf_{\rin}\right)+\Phi_{i,\out}^{\nu}\left(\Lf_{\out}\right)\,.
\eas
By definition of $ \mu_i^{\nu} $ and $\wm_{i}^{\nu}(t)$, as $u(t)$ is right continuous, more precisely, there is a contribution equal to
\begin{itemize}
\item $-\sigma$ in $\Phi_{i,\out}^{\nu}$ whenever an $i$-wave of strength $\sigma$ is leaving the region at some time in $[s,t)$, 
\item $+\sigma$ in $\Phi_{i,\rin}^{\nu}$ whenever an $i$-wave of strength $\sigma$ is entering the region at some time in $(s,t]$. 
\end{itemize}
Of course the above situation might happen contemporarily at the same points and also with more waves. 

Similarly, we define the $i$-flux $\Phi_i^{\nu,\jump} $ due to $(\beta_\nu,i)$-approximate discontinuities of Definition~\ref{D:approximateDiscC}, not anymore due to all $i$-waves, again across the lateral boundaries of the region, so that
\bas 
\left[ \wm_{i}^{\nu,\jump}(t)\right](J(t))-
\left[ \wm_{i}^{\nu,\jump}(s)\right](J(s))
=&\mu_i^{\nu,\jump} \left(A \right)+
\\
&+\Phi_{i,\rin}^{\nu,\jump}\left(\Lf_{\rin}\right)
+\Phi_{i,\out}^{\nu,\jump}\left(\Lf_{\out}\right)\,.
\eas
We remind the expression $\eqref{E:mujump}$ of $\mu_i^{\nu,\jump} $.
More precisely, there is a contribution equal to
\begin{itemize}
\item $-\sigma$ in $\Phi_{i,\out}^{\nu,\jump}$ whenever a $(\beta_\nu,i)$-approximate discontinuity of strength $\sigma$ leaves the region in $[s,t)$, 
\item $+\sigma$ in $\Phi_{i,\rin}^{\nu,\jump}$ whenever a $(\beta_\nu,i)$-approximate discontinuity of strength $\sigma$ enters the region in $(s,t]$. 
\end{itemize}

Subtracting the balances for $\wm_{i}^{\nu}$ and for $\wm_{i}^{\nu,\jump}$ we get
\bas
\left[ \wm_{i}^{\nu,\cont}(t)\right](J(t))-
\left[ \wm_{i}^{\nu,\cont}(s)\right](J)
=&\mu_i^{\nu,\cont} \left(A \right)+
\\ &+\Phi_{i,\rin}^{\nu,\cont}\left(\Lf_{\rin}\right)+\Phi_{i,\out}^{\nu,\cont}\left(\Lf_{\out}\right)\,.
\eas
where $\Phi_{i,\rin}^{\nu,\cont} $ and $\Phi_{i,\out}^{\nu,\cont} $ finally denote the $i$-flux across the lateral boundary of the region only due to $i$-waves of $u^{\nu}$ which are not maximal leftmost $(\beta,i)$-approximate discontinuities of Definition~\ref{D:approximateDiscC}.
More precisely, there is a contribution
\begin{itemize}
\item $-\sigma$ in $\Phi_{i,\out}^{\nu,\cont}$ whenever an $i$-wave of strength $\sigma$---which is not a $(\beta_\nu,i)$-approximate discontinuity according to Definition~\ref{D:approximateDiscC}---is leaving the region at some time in $[s,t)$, 
\item $+\sigma$ in $\Phi_{i,\rin}^{\nu,\cont}$ whenever an $i$-wave of strength $\sigma$---which is not a $(\beta_\nu,i)$-approximate discontinuity according to Definition~\ref{D:approximateDiscC}---is entering the region at some time in $(s,t]$. 
\end{itemize}

\step{Upper estimate for the balance of the continuous part}
The thesis~\eqref{E:contest} follows just estimating from above in terms of $\mu^{ICS}$ the continuous flux $\Phi_{i}^{\nu,\cont}$ across the lateral boundary of $A $, introduced at the end the previous step.

There are a large number of possible configurations for waves entering and exiting, since to all the possibilities of the homogeneous case in~\cite[Page 25]{BCSBV} we need to add waves updated, created or cancelled when correcting the term $u_{\nu} (\tau_{\nu}-,\cdot)$ according to~\eqref{E:correctionsource}. In this latter cases, by~\cite[Remark 4.3]{ACM1} the source measures controls the change in speed and strength of waves, including the strength of new ones arising or the old ones cancelled, if any.

The key to control the flux $\Phi_{i}^{\nu,\cont}$ in terms of the measure $\mu^{ICS}$ is again exploiting the genuine nonlinearity: from above, we need to estimate the total strength of \emph{positive $i$-waves entering the region} and of \emph{negative waves exiting the region}. Namely: 
\begin{itemize}
\item A positive $i$-wave, namely an $i$-rarefaction wave, might enter the region at $P$ only if the boundary it crosses is a negative $i$-wave, since such boundary is an $i$-approximate characteristic: cancellation occurs and thus \[\Phi_i^{\nu,\cont}\left(P\right) \lesssim\mu^{IC}_{\nu}(P) \ .\]
\item A positive $i$-wave, namely an $i$-rarefaction wave, coming from outside might be cancelled on the boundary at $P$ because of the update. In such case $\mu_i^{\nu,\cont} (P)=\left[\wm_{i}^{\nu,\cont}(t^{+})-\wm_{i}^{\nu,\cont}(t^{-})\right](\R)$: then in particular
\[
\Phi_i^{\nu,\cont}\left(P\right) \lesssim \abs{\mu_{\nu}^{\sour}}(P)\ .
\]

 \item A negative $i$-wave cannot exit the region, because of the entropy condition, since it is an $i$-shock.

 \item We neglect nonphysical waves for a reduction argument as in~\cite[Page 24]{BCSBV}. 
 \end{itemize}
Summarizing, we have that $\Phi_i^{\nu,\cont}\pr{\mathfrak L}\lesssim\mu^{ICS}_{\nu}(\mathfrak L)$, where $\mathfrak L$ is the lateral boundary of $A_{\{x_{m}^{\nu,\ell/r}\}}^{s,t}$. By the upper estimate on $\mu_{i}^{\nu}$ in Lemma~\ref{L:fagrgrgggraarg} and the lower estimate on $\mu_{i}^{\nu,\jump}$, both with $\mu_{\nu}^{ICJS}$, we get the upper estimate on $\left[ \wm_{i}^{\nu,\cont}(t)\right](J(t))-
\left[ \wm_{i}^{\nu,\cont}(s)\right](J)$ in thesis~\eqref{E:contest}.

\step{Lower estimate \eqref{E:contest2} for the balance of the continuous part}
We need to estimate the negative $i$-waves entering the region and the positive $i$-waves exiting the region.
From below the estimate is less easy:
\begin{itemize}
\item A positive $i$-wave, namely an $i$-rarefaction wave, might exit the region at $P$ only if it is generated at $P$. This can either happen if there is an interaction, or a cancellation, or an update at $P$: summing up either by classical interaction estimates or by~\cite[Remark 4.3]{ACM1} the interaction-cancellation-source measures at the point controls the change in speed and strength of waves present, including the strength of new ones arising or the old ones cancelled: then in particular
\[
\abs{\Phi_i^{\nu,\cont}\left(P\right)} \gtrsim \mu_{\nu}^{ICS}(P)\ .
\]

\item A negative $i$-wave, namely an $i$-shock wave, can enter the region simply because any of the $2k$ lateral $i$-characteristics merges into the previously external shock. We do not see a way to estimate this situation other than observing that it might happen only once for each lateral side, since once an $i$-characteristic merges into a shock it must always follow such shock later on by genuine nonlinearity, and the shock cannot be cancelled. Counting the curves of the boundary, and waves in the continuous part have strength $\geq-\beta_{\nu}$, we get estimate~\eqref{E:contest2}, where we stress that $k$ could be replaced by the number of times any of the $2k$ lateral $i$-characteristics merges into an external shock within the interval $(s,t]$.
Indeed, in Remark~\ref{R:relation} we noticed that we can choose $\varepsilon_{\nu}\leq \OL(1)\beta_{\nu}$.

 \item A negative $i$-wave can enter the region also if there is an interaction.
 In case the interaction takes place with a $(\beta,i)$-approximate discontinuity at a boundary, at time $t^{*}$, this gets reinforced so that it is not present in the measure $\wm_{i}^{\nu}(t^{*}+)$, $\wm_{i}^{\nu}(t^{*}-)$; also the negative $i$-wave that is trying to enter the region, being out of the region before the interaction takes place, does not contribute to estimate~\eqref{E:contest2}.
In case the interaction takes place with a wave that is present in $\wm_{i}^{\nu}(t^{*}-)$, by classical interaction estimates \[\abs{\Phi_i^{\nu,\cont}\left(P\right)} \gtrsim\mu^{IC}_{\nu}(P) \ .\]

 \item  We recall that we neglect nonphysical waves by a reduction argument as in~\cite[Page 24]{BCSBV}. 
 \end{itemize}

\step{Upper estimate for the balance of full waves~\eqref{E:allest}} 
The argument for~\eqref{E:allest} is similar to the one for~\eqref{E:contest}starting from the balance for $\wm_{i}^{\nu}$, being $ \wm_{i}^{\nu}= \wm_{i}^{\nu,\cont}+ \wm_{i}^{\nu,\jump}$ and similarly for the fluxes.
We need to estimate the positive $i$-waves entering the region and the negative $i$-waves exiting the region.
We remind Lemma~\ref{L:fagrgrgggraarg} that guarantees that $\mu_{i}^{\nu}$ is controlled by $ \mu_{\nu}^{ICS }$.

\begin{itemize}
\item A positive $i$-wave of strength $\sigma$, namely an $i$-rarefaction wave, might enter the region at $P$ only if the boundary it crosses is a negative $i$-wave, since such boundary is an $i$-approximate characteristic: cancellation occurs and thus 
\[\left[ \wm_{i}^{\nu}(t^{+})\right](J(t))-
\left[ \wm_{i}^{\nu}(t^{-})\right](\ri(J(t)))
 -\mu_i^{\nu} (P)= \sigma=\Phi_i^{\nu}\left(P\right) \lesssim\mu^{IC}_{\nu}(P) \ .\]

 \item A positive $i$-wave of strength $\sigma$, namely an $i$-rarefaction wave, coming from outside might be cancelled on the boundary at $P$ because of the update. In such case $\mu_i^{\nu} (P)=\left[\wm_{i}^{\nu}(t^{+})-\wm_{i}^{\nu}(t^{-})\right](\R)$: then in particular
\[
\sigma=\Phi_i^{\nu}\left(P\right) \lesssim \abs{\mu_{\nu}^{\sour}}(P)\ .
\]

 \item A negative $i$-wave cannot exit the region, because of the entropy condition, since it is an $i$-shock.

 \item We neglect nonphysical waves for a reduction argument as in~\cite[Page 24]{BCSBV}. 
 \end{itemize}
Summarizing, we have that $\Phi_i^{\nu}\pr{\mathfrak L}\lesssim\mu^{ICS}_{\nu}(\mathfrak L)$, where $\mathfrak L$ is the lateral boundary of $A_{\{x_{m}^{\nu,\ell/r}\}}^{s,t}$. By the upper estimate on $\mu_{i}^{\nu}$ in Lemma~\ref{L:fagrgrgggraarg} we get~\eqref{E:allest}.

\step{Conclusion with more generality}
Consider now curves ordered as in~\eqref{E:orderCurves} without requiring $ x_{m}^{r}(\zeta)<x_{m+1}^{\ell}(\zeta)$, for $m=1,\dots,k-1$ and $0\leq s<\zeta<t\leq T$: indeed, $ x_{m}^{r}$ and $x_{m+1}^{\ell}$ could meet and then later on move far apart many times.
Still, working with piecewise constant approximations, it will be only finitely many times.
In particular we can define finitely many times $s  =  t^{\heart}_{0}\leq t^{\heart}_{1}\leq  \dots \leq  t^{\heart}_{q-1}\leq  t^{\heart}_{q}=t$, cardinalities $k_1,\dots,k_q$, and new monotone indices $h_1^q,\dots,h_m^q\in\{1,\dots,m\}$ so that  $ x_{h_m^q}^{r}<x_{h_{m+1}^q}^{\ell}$ in the interval $(t^{\heart}_{j},t^{\heart}_{j+1} )$ for $m=1,\dots,k_q-1$ and $j=0,\dots ,q-1$.
We thus already proved the balances of thesis referred to each interval $(t^{\heart}_{j},t^{\heart}_{j+1} )$. 
We perform a telescopic summation: we thus obtain, for example studying estimate~\eqref{E:contest2},
\bas
\left[ \wm_{i}^{\nu,\cont}(t)\right](J (t))-
\left[ \wm_{i}^{\nu,\cont}(s)\right](J(s)) =&
\sum_{j=0}^{q-1}\left(\left[ \wm_{i}^{\nu,\cont}(t^{\heart}_{j+1} )\right](J (t^{\heart}_{j+1} ))\right.+
\\
&\left.\quad-
\left[ \wm_{i}^{\nu,\cont}(t^{\heart}_{j})\right](J(t^{\heart}_{j} ))\right)\,,
 \eas
\bas
\left[ \wm_{i}^{\nu,\cont}(t^{\heart}_{j+1} )\right](J (t^{\heart}_{j+1} ))-
\left[ \wm_{i}^{\nu,\cont}(t^{\heart}_{j})\right](J(t^{\heart}_{j} ))\geq 
 & -C\left(\mu_{\nu}^{ICS} \right)\left(A_{\{x_{m}^{\nu,\ell/r}\}}^{t^{\heart}_{j},t^{\heart}_{j+1} }\right)+
  \\
  &-Ck_{j} \beta_\nu\,,
 \eas
 where $k_{j}$ denotes the number of times a lateral $i$-characteristic merges into the external shock in the interval $(t^{\heart}_{j},t^{\heart}_{j+1}]$.
Nevertheless, even with the configuration when boundary characteristics merge and later split the number of times a lateral $i$-characteristic merges into the external shock in the interval $(s,t)$ is still bounded by $k$, for the same reasons: thus $\sum_{j=0}^{q-1}k_{j}\leq k$.
Observing that the regions $A_{j}=A_{\{x_{h_m^j}^{\nu,\ell/r}\}}^{t^{\heart}_{j},t^{\heart}_{j+1} }$ are disjoint, by additivity of measures
 \[
 \sum_{j=0}^{q-1}\left(\mu_{\nu}^{ICS} \right)\left(A_{j}\right)=\left(\mu_{i}^{\nu,ICS} \right)\left(\bigcup_{j=0}^{q-1}A_{j}\right)
 =\left(\mu_{\nu}^{ICS} \right)\left(A_{\{x_{m}^{\nu,\ell/r}\}}^{s,t  }\right)
 \]
so that we get~\eqref{E:contest2} in the generality we stated it:
  \bas
  \left[ \wm_{i}^{\nu,\cont}(t)\right](J (t))-
\left[ \wm_{i}^{\nu,\cont}(s)\right](J(s)) 
&\geq 
-C\sum_{j=0}^{q-1}\left(\mu_{\nu}^{ICS} \right)\left(A_{j }\right)-C\sum_{j=0}^{q-1}k_{j}\beta_{\nu}
\\&\geq
-C\left(\mu_{\nu}^{ICS} \right)\left(A_{\{x_{m}^{\nu,\ell/r}\}}^{s,t  }\right)-C k\beta_{\nu}\,.
  \eas
The telescopic series works similarly for estimates~\eqref{E:contest},~\eqref{E:allest}.
 \end{proof}

\subsection{Decay estimate for the negative continuous part}
\label{Ss:decay-}

We keep the notation introduced in \S~\ref{S:balances-}, and in particular the relation in Remark~\ref{R:relation} for the vanishingly small parameters. 

Let $i\in\{1,\dots,N\}$.
 
\begin{lemma}\label{L:grgaaear}
Let $0\leq s\leq t\leq T$. Suppose $J=\cup_{m=1}^{k}[x_{m}^{\nu,\ell}(s),x_{m}^{\nu,r}(s)]$ is the union of $k$ disjoint intervals, where the generalized $i$-characteristics are as in~\eqref{E:orderCurves} and the region $A_{\{x_{m}^{\nu,\ell/r}\}}^{s,t}$ is defined in~\eqref{E:ffdgagwagfffff}.
Then
\ba\label{E:thesisnfakvaargrarw}
&- {\left[ \wm_{i}^{\nu,\cont}(s)\right](J)} \lesssim\frac{\Ll^{1}(J)}{t-s}+\left(\mu_{\nu}^{ICJS} \right)\left(A_{\{x_{m}^{\nu,\ell/r}\}}^{s,t}\right)+k  \beta_{\nu} 
\ea
for all $i=1,\dots,N$.
\end{lemma}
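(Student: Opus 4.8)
The plan is to derive \eqref{E:thesisnfakvaargrarw} by combining the lower balance estimate \eqref{E:contest2} from Lemma~\ref{L:balance0} with a quantitative control on how much the $t$-sections of a characteristic region can shrink as time runs forward. The heuristic is the classical Oleinik argument: negative (continuous) waves are compressive shocks in the genuinely nonlinear setting, so if there were a large amount of negative $i$-continuous mass on $J$ at time $s$, the corresponding $i$-characteristics would converge at a definite rate, making the evolved section $J(t)$ much smaller than $J$ — but $J(t)$ has nonnegative length, which forces a bound on $-[\wm_i^{\nu,\cont}(s)](J)$ proportional to $\Ll^1(J)/(t-s)$ up to the error terms coming from interactions, cancellations, source action and the threshold $\beta_\nu$.

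First I would make precise the choice of curves: take $x_m^{\nu,\ell}(\cdot)$, $x_m^{\nu,r}(\cdot)$ to be the leftmost generalized $i$-characteristics issuing backward/forward appropriately, so that the difference $x_m^{\nu,r}(\zeta)-x_m^{\nu,\ell}(\zeta)$ is governed by the speed gap $\widetilde{\lambda_i^\nu}$ across the strip $[x_m^{\nu,\ell}(\zeta),x_m^{\nu,r}(\zeta)]$. The key ODE-type lemma (``a variation of a classical lemma for ODEs applied to the speed of the boundary of characteristic regions'', as announced in \S~\ref{S:SBVGNargument}) says: by genuine nonlinearity $\dot x_m^{\nu,r}-\dot x_m^{\nu,\ell}\le -c\,[\wm_i^{\nu,\cont}]^-(\text{strip})+(\text{error controlled by }\mu_\nu^{ICJS}\text{ on the region}+\beta_\nu)$, because the enclosed $i$-continuous mass is essentially negative (compressive) and the spread of the two characteristics decreases proportionally to that mass. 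Integrating from $s$ to $t$, and using $\sum_m(x_m^{\nu,r}(t)-x_m^{\nu,\ell}(t))=\Ll^1(J(t))\ge 0$ and $\sum_m(x_m^{\nu,r}(s)-x_m^{\nu,\ell}(s))=\Ll^1(J)$, one obtains
\[
c\,(t-s)\,\bigl(-[\wm_i^{\nu,\cont}(s)](J)\bigr)\lesssim \Ll^1(J)+(t-s)\Bigl((\mu_\nu^{ICJS})(A_{\{x_m^{\nu,\ell/r}\}}^{s,t})+k\beta_\nu\Bigr),
\]
after also invoking \eqref{E:contest2} to compare $[\wm_i^{\nu,\cont}(s)](J)$ with the (nonnegatively-bounded-below) quantity $[\wm_i^{\nu,\cont}(t)](J(t))$ along the way. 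Dividing by $c(t-s)$ gives \eqref{E:thesisnfakvaargrarw}. I would handle the general ordering \eqref{E:orderCurves} (where intervals may collide and separate finitely often) by the same telescopic summation device used at the end of the proof of Lemma~\ref{L:balance0}, so that $\sum_j k_j\le k$ and the regions $A_j$ partition $A_{\{x_m^{\nu,\ell/r}\}}^{s,t}$.

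The main obstacle I expect is precisely the bookkeeping at update times and at interactions involving the lateral characteristics. At update times the source modifies speeds and strengths of waves — including possibly turning a small rarefaction into a shock or cancelling waves on the boundary — so the clean ``compressive'' monotonicity of the characteristic spread is only valid up to source-controlled corrections; this is exactly what the new measure $\mu_\nu^{\sour}$ (and hence $\mu_\nu^{ICJS}$) was built to absorb, via \cite[Remark 4.3]{ACM1} and Remark~\ref{R:lemma51011}. A second delicate point is that a negative $i$-shock can enter the region just because a lateral $i$-characteristic merges into a pre-existing external shock: as noted in Step~3 of the proof of Lemma~\ref{L:balance0}, this can happen only once per lateral side (genuine nonlinearity prevents a characteristic from leaving a shock it has joined), which is what produces the benign additive term $k\beta_\nu$ rather than something uncontrolled. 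I would therefore organize the proof so that the only nontrivial estimates needed are (i) the lower balance \eqref{E:contest2}, already proved, and (ii) the elementary differential inequality for $\zeta\mapsto\Ll^1(J(\zeta))$ together with its non-negativity at $\zeta=t$; everything else is routine, the error terms being routed into $\mu_\nu^{ICJS}$ and $k\beta_\nu$ exactly as in \eqref{E:contest2}.
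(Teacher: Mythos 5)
Your heuristic is the right one, and the general shape of the argument (characteristic-shrinkage constrained by $\Ll^1(J(t))\ge 0$, with interference from other families and the source absorbed into $\mu_\nu^{ICJS}$ and $k\beta_\nu$, plus a telescoping over the $k$ intervals) agrees with the paper. But there are two substantive gaps.

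First, you repeatedly say you will ``invoke~\eqref{E:contest2} to compare $[\wm_i^{\nu,\cont}(s)](J)$ with $[\wm_i^{\nu,\cont}(t)](J(t))$.'' This is the wrong half of Lemma~\ref{L:balance0}. For the \emph{negative} decay estimate you need the \emph{upper} bound~\eqref{E:contest} — this is exactly what \S~\ref{S:SBVGNargument} announces: ``\eqref{E:E:iOleininknu-}\dots jointly with the \emph{upper} estimate in~\eqref{E:balancesnu}.'' The point is that to make the integration close you must show that the continuous $i$-mass on the evolving section stays \emph{very negative} at all intermediate times $\zeta$: from~\eqref{E:contest} you get $[\wm_i^{\nu,\cont}(\zeta)](I(\zeta))\le[\wm_i^{\nu,\cont}(s)](I)+C\mu_\nu^{ICJS}(A)+C\varepsilon_\nu$, which feeds an upper bound on $\dot z$ and then a contradiction with $z(t)\ge 0$ if $-[\wm_i^{\nu,\cont}(s)](J)$ is too large. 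Estimate~\eqref{E:contest2} only says the mass cannot drop further than an $\mu_\nu^{ICS}+k\beta_\nu$ amount below its initial value, which is the wrong direction: it gives a lower bound on $[\wm_i^{\nu,\cont}(\zeta)]$, and hence at best a \emph{lower} bound on $-[\wm_i^{\nu,\cont}(s)](J)$, which is useless here.

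Second, the ``ODE-type lemma'' you state, $\dot x_m^{\nu,r}-\dot x_m^{\nu,\ell}\le -c\,[\wm_i^{\nu,\cont}]^-(\text{strip})+\mathrm{err}$, is not correct as written. The speed gap across the strip is driven by the \emph{signed} continuous $i$-mass (rarefactions spread the characteristics, they are not absorbable into the $\mu_\nu^{ICJS}$, $\beta_\nu$ errors), and in addition waves of the other families $j\ne i$ contribute a term \emph{proportional to} $z$ itself. The correct inequality is the one from~\eqref{est:Bressan}, $\lvert\dot z+\xi-[\wm_i^{\nu}(\zeta)](I(\zeta))\rvert\le\dot\Phi(\zeta)\,z(\zeta)+O(1)\varepsilon_\nu$, with $\Phi$ the auxiliary functional of~\eqref{E:PhiAlberto} and $\xi$ accounting for the RH jumps of the boundary curves. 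The proportional term $\dot\Phi z$ is what forces the integrating-factor treatment (multiplying by $e^{-\int\dot\Phi}$). The paper then avoids having to handle the positive continuous part directly by a dichotomy: either $\dot z-\dot\Phi z<[\wm_i^{\nu,\cont}(s)](I)/4$ for all times (integrate the ODE, get $\Ll^1(I)/(t-s)$), or the inequality fails at some $r$, in which case~\eqref{est:Bressan} at $r$ plus a backward application of~\eqref{E:contest} gives $-\tfrac34[\wm_i^{\nu,\cont}(s)](I)\lesssim\mu_\nu^{ICJS}(A)+k\beta_\nu$. Your ``direct integration'' can be made to work, but only after replacing $-c[\wm]^-$ by the signed quantity $[\wm_i^{\nu,\cont}(\zeta)](I(\zeta))$, bounding it from above by~\eqref{E:contest}, and carrying the $\dot\Phi z$ term through the integrating factor. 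As written, both the statement of the differential inequality and the choice of balance estimate are incorrect, so the proposal would not close.
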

This part is an extension of \S 6.2.4, Page 466, in~\cite{BYTrieste}, relative to $1d$ conservation laws.

\begin{proof}
For simplicity of notations, by the semigroup property assume that $s=0$; denote $t^{*}$ the final time $t$. 

Consider at time $s=0$ any closed interval $I=[a,b]$ of which $J$ is made of, for example $[x_{m}^{\nu,\ell}(0),x_{m}^{\nu,r}(0)]$. Set
%
\[
 a(t)\doteq{}x_{m}^{\nu,\ell}(t)\ , \qquad b(t)\doteq{}x_{m}^{\nu,r}(t)
 \qquad \text{for $0\leq t\leq t^{*}$.}
\]
For $t\in[0,t^{*}]$, set $I(t)=[a(t),b(t)]$ and $z(t)=\Ll^{1}(I(t))=b(t)-a(t)$. Then by definition
\[
\dot z(t)=\dot b(t)-\dot a(t)=\widetilde{\lambda_{i}^{\nu}}(t,b(t))-\widetilde{\lambda_{i}^{\nu}}(t,a(t))\ .
\]
where $\widetilde{\lambda_{i}^{\nu}}$ is the speed of the $i$-th wave of $u^{\nu}$ in Definition~\ref{def:velCarAppr}.
From now on we omit $\nu$ and denote $\widetilde{\lambda_{i}^{\nu}}$ by $\widetilde{\lambda_{i}}$.
\\
In~\cite[Page 213]{BressanBook} it is introduced the following explicit \emph{piecewise} Lipschitz continuous function
\begin{subequations}\label{E:PhiAlberto}
\ba
&\Phi(t)\doteq{}\sum_{k_{\alpha}\neq i}\varphi_{k_{\alpha}}(t,x_{\alpha}(t))\cdot \abs{\sigma_{\alpha}}
\ea
denoting by $\sigma_{\alpha}$ the signed size of the $k_{\alpha}$-th wave of $u^{\nu}$ at $(t,x_{\alpha}(t))$,
where for $j\neq i$ it is defined
\ba
&\varphi_{j}(t,x)
=
\begin{cases}
\uno_{j<i} &\text{if $x<a(t)$,}\\
\rho_{j}(t) &\text{if $a(t)\leq x< b(t)$,}\\
\uno_{i>j}  &\text{if $x\geq b(t)$,}
\end{cases}
&&
\rho_{j}(t)
=
\begin{cases}
\displaystyle\frac{b(t)-x}{b(t)-a(t)} & \text{if $j<i$,}\\
\displaystyle\frac{x-a(t)}{b(t)-a(t)} & \text{if $j>i$,}
\end{cases}
\ea
\end{subequations}
where $\uno_{j<i} $ takes value $1$ if $j<i$ and $0$ otherwise and $\uno_{i<j} $ takes value $1$ if $i<j$ and $0$ otherwise.

While in~\cite{BressanBook} the function $\Phi$ can jump only at interaction times, here $\Phi$ can jump also at update times: at interaction times as before the variation of $\Phi$ is controlled by the Glimm function~\cite[(10.51)]{BressanBook}, while at update times it is controlled by the source measure by~\cite[Remark 4.3]{ACM1}.
As before, the derivative $\dot\Phi$  satisfies $\dot\Phi\geq0$ where differentiable, which is excluding interaction and update times, and $\Phi$ controls the variation of the $i$-wave due to waves of other families in this quantitative way:
\ba\label{est:Bressan}
\abs{\dot z(t)+\xi(t)-\left[\wm_{i}^{\nu}(t)\right](I(t))}\leq \dot\Phi(t)z(t)+\OL(1)\varepsilon_{\nu}\ ,
\ea
where again we are excluding interaction and update times and we have defined
\[
\xi(t)\doteq{}\pr{\widetilde{\lambda_{i}}(t,a(t ))-\widetilde{\lambda_{i}}(t,a(t-))}+\pr{\widetilde{\lambda_{i}}(t,b(t+))-\widetilde{\lambda_{i}}(t,b(t) )}\ .
\]
Since by its very definition $\Phi(t)$ is controlled by the total variation of $u^{\nu}(t)$, then $\int_{0}^{r}\dot\Phi $ is uniformly bounded at finite times and it can be estimated in terms only of the total variation of $u^{\nu}(t)$, the Glimm function and the source measure, in turn estimated in Lemma~\ref{L:estSource}.

Suppose $\left[ \wm_{i}^{\nu,\cont}(0)\right](I)<0$, otherwise thesis is trivial.
Two cases are possible:

\paragraph{Case 1} Suppose, in addition to $\left[ \wm_{i}^{\nu,\cont}(0)\right](I)<0$, that
\bas
\dot z(r)-\dot\Phi(r)z(r)<\frac{\left[ \wm_{i}^{\nu,\cont}(0)\right](I)}{4}<0
&&\forall r\in(0,t^{*})\ .
\eas
By elementary calculus, multiplying this assumption by $e^{-\int_{0}^{r}\dot\Phi }>0$ we arrive to
\bas
\ddr\pr{z(r)e^{-\int_{0}^{r}\dot\Phi }}<\frac{\left[ \wm_{i}^{\nu,\cont}(0)\right](I)}{4}\cdot e^{-\int_{0}^{r}\dot\Phi }\,.
\eas
Integrating the inequality in $[0,t^{*}]$ we get
\bas
z(t^{*})\exp\pr{- \int_{0}^{t^{*}}\!\!\dot\Phi }-z(0)< \frac{\left[ \wm_{i}^{\nu,\cont}(0)\right](I)}{4}\cdot \int_{0}^{t^{*}}\!\!\!\exp\pr{-\int_{0}^{r}\!\!\dot\Phi }dr
 .
\eas
Of course, having $z\geq0$, $\left[ \wm_{i}^{\nu,\cont}(0)\right](I)<0$ and $\dot\Phi\geq0$, this implies
\bas
 -z(0)\leq \frac{\left[ \wm_{i}^{\nu,\cont}(0)\right](I)}{4}\cdot t^{*} \exp\pr{-\int_{0}^{t^{*}}\dot\Phi(s)ds }
\ .
\eas
Such inequality, taking into account that $\int_{0}^{T}\dot\Phi$ is uniformly bounded by some constant depending only on time and on the system, can be rewritten as
\ba\label{E:estCase1-}
&0<-\left[ \wm_{i}^{\nu,\cont}(0)\right]^{}(I)\leq C(T)\cdot \frac{\Ll^{1}(I) }{t^{*}}
&&
 C(T)\geq4e^{\int_{0}^{T}\dot \Phi}\ .
\ea

\paragraph{Case 2} Suppose, in addition to $\left[ \wm_{i}^{\nu,\cont}(0)\right](I)<0$, that
\bas
\dot z(r)-\dot\Phi(r)z(r)\geq\frac{\left[ \wm_{i}^{\nu,\cont}(0)\right](I)}{4}
&&\text{at some time }r\in(0,t^{*})\ .
\eas
By genuine nonlinearity and Rankine-Hugoniot conditions, and by the parameterization choice,
\bas
\abs{ \left[\wm_{i}^{\nu,\jump}(t)\right](a(t))-\pr{ \widetilde{\lambda_{i}}(t,a(t+))- \widetilde{\lambda_{i}}(t,a(t-))}}\leq2\varepsilon_{\nu}+\beta_\nu\,.
\eas
and the same holds for $b$. Again by genuine nonlinearity, since $\wm_{i}^{\nu,\jump}\leq0$, thus
\bas
\xi(t)&\geq \left[\wm_{i}^{\nu,\jump}(t)\right](a(t))+ \left[\wm_{i}^{\nu,\jump}(t)\right](b(t))-4\varepsilon_{\nu} {-2\beta_{\nu}}
\\
&\geq \left[\wm_{i}^{\nu,\jump}(t)\right](I(t))-4\varepsilon_{\nu} {-2\beta_{\nu}}\,,
\qquad\qquad I(t)=[a(t),b(t)]\,.
\eas
We thus find that $\left[\wm_{i}^{\nu,\jump}(t)\right](I(t))-\xi(t)\leq 4\varepsilon_{\nu}{+2\beta_{\nu}}$: plugging this in turn into~\eqref{est:Bressan} we get
\bas
\dot z(r) -\dot\Phi(r)z(r)\leq \left[\wm_{i}^{\nu,\cont}(r)\right](I(r))+\OL(1)\varepsilon_{\nu} {+2\beta_{\nu}}\ .
\eas
Under the assumption of Case 2, at some $r\in(0,t^{*})$ we thus find
\ba\label{E:grgqqggqqrggqr}
\frac{\left[ \wm_{i}^{\nu,\cont}(0)\right](I)}{4}\leq\left[\wm_{i}^{\nu,\cont}(r)\right](I(r))+\OL(1)\varepsilon_{\nu} {+2\beta_{\nu}}\ .
\ea
\paragraph{Conclusion} Suppose now $J=\cup_{\ell=1}^{k}I_{\ell}$ where $\{I_{\ell}\}_{\ell=1}^{k}$ is a disjoint family of $k$ closed intervals.
 Let $S_{1}$ be the set of indexes $\ell$ where the interval $I_{\ell}$, bounded by the two curves $x_{m}^{\nu,\ell}$ and $x_{m}^{\nu,r}$ only, fall in Case 1.
 Let $S_{2}=\{1,\dots,k\}\setminus S_{1}$ be thus the set of indexes $\ell$ where the interval $I_{\ell}$, bounded by the two curves $x_{m}^{\nu,\ell}$ and $x_{m}^{\nu,r}$ only, fall in Case 2.
 
Focusing only on intervals that fall in Case 1, thanks to~\eqref{E:estCase1-} on each interval and by additivity of measures, the set $J_{1}\doteq{}\cup_{\ell\in S_{1}}I_{\ell}$ satisfies
\ba\label{E:auxjdjejjeje}
&0<-\left[ \wm_{i}^{\nu,\cont}(0)\right]^{}(J_{1})\leq C(T)\cdot \frac{\Ll^{1}(J_{1}) }{t^{*}}\,,
&&
 C(T)=4e^{\int_{0}^{T}\dot \Phi} \ .
\ea

On the other hand, by additivity of measures jointly with \eqref{E:grgqqggqqrggqr}, the set $J_{2}\doteq{}\cup_{\ell\in S_{2}}I_{\ell}$ satisfies
\ba\label{E:robuvpbwqwgqqgqgw}
&\frac{\left[ \wm_{i}^{\nu,\cont}(0)\right](J_{2})}{4}\leq\sum_{\ell\in S_{2}}\left[\wm_{i}^{\nu,\cont}(r_{\ell})\right](I_{_{\ell}}(r_{\ell}))+k\OL(1)\varepsilon_{\nu}+2k\beta_{\nu}
\ea
for some $r_{\ell}\in (0,t)$ for $\ell\in S_{2}$\,.

We now work on the sum in the right hand side to bring every addend to the initial time $s=0$.
Reorder such special times so that $r_{\ell_{m}}\leq r_{\ell_{m+1}}$ for $m=1,\dots,\#S_{2}$.
By the continuous balance estimate~\eqref{E:contest}
\bas
 \left[\wm_{i}^{\nu,\cont}(r_{\ell_{m+1}})\right](I_{\ell_{m+1}}(r_{\ell_{m+1}}))\leq & \left[\wm_{i}^{\nu,\cont}(r_{\ell_{m}})\right](I_{\ell_{m+1}}(r_{\ell_{m}}))+
 \\
 &+\OL(1) \left(\mu_{\nu}^{ICJS} \right)\left(A_{I_{\ell_{m+1}}}^{r_{\ell_{m}},r_{\ell_{m+1}}}\right)+\OL(1) \varepsilon_\nu\,,
\eas
with $A_{I_{\ell_{m+1}}}^{r_{\ell_{m}},r_{\ell_{m+1}}}$ as in~\eqref{E:ffdgagwagfffff} relative only to the generalized, approximate $i$-characteristics $x_{m+1}^{\nu,\ell}(t)$, $x_{m+1}^{\nu,r}(t)$.
Add $I_{\ell_{m}}(r_{\ell_{m}})$: if $I_{\ell_{m}}(r_{\ell_{m}})$ and $ I_{\ell_{m+1}}(r_{\ell_{m}})$ intersect we estimate such intersection with \[\wm_{i}^{\nu,\cont}(\{(r_{\ell_{m}},x_{\ell_{m}}^{\nu,r}(r_{\ell_{m}}))\})\leq \OL(1) \varepsilon_\nu\] as at all points; with the convention in Remark~\ref{R:relation}, by additivity of measures
\bas
 \sum_{q=m}^{m+1}\left[\wm_{i}^{\nu,\cont}(r_{\ell_{q}})\right](I_{\ell_{q}}(r_{\ell_{q}}))\leq 
 &\left[\wm_{i}^{\nu,\cont}(r_{\ell_{m}})\right]((I_{\ell_{m}}\cup I_{\ell_{m+1}})(r_{\ell_{m}}))+
 \\
 &+\OL(1) \left(\mu_{\nu}^{ICJS} \right)\left(A_{I_{\ell_{m+1}}}^{r_{\ell_{m}},r_{\ell_{m+1}}}\right)+\OL(1) \beta_\nu\,.
\eas
Once more, denoting $J_{m}^{m+1}\doteq{}I_{\ell_{m}}\cup I_{\ell_{m+1}}$, by the continuous balance estimate~\eqref{E:contest} jointly with Remark~\ref{R:relation}
\bas
\left[\wm_{i}^{\nu,\cont}(r_{\ell_{m}})\right](J_{m}^{m+1}(r_{\ell_{m}}))\leq
&\left[\wm_{i}^{\nu,\cont}(r_{\ell_{m}})\right](J_{m}^{m+1}(r_{\ell_{m-1}}))+
\\
&+
\OL(1) \left(\mu_{\nu}^{ICJS} \right)\left(A_{J_{m}^{m+1}}^{r_{\ell_{m-1}},r_{\ell_{m}}}\right)+
\OL(1) \beta_\nu\,,
\eas
so that
\bas
 \sum_{q=m}^{m+1}\left[\wm_{i}^{\nu,\cont}(r_{\ell_{q}})\right](I_{\ell_{q}}(r_{\ell_{q}}))\leq 
& \left[\wm_{i}^{\nu,\cont}(r_{\ell_{m}})\right](J_{m}^{m+1}(r_{\ell_{m-1}}))+
 \\&+
\OL(1) \left(\mu_{\nu}^{ICJS} \right)\left(A_{J_{m}^{m+1}}^{r_{\ell_{m-1}},r_{\ell_{m+1}}}\right)+
\OL(1) 2\beta_\nu\,,
\eas
Iterating this argument back up to time $s=0$, and setting $r_{\ell_{0}}\doteq{}r_{0}=0$, we get
\ba\label{E:sommaAux}
\sum_{\ell\in S_{2}}\left[\wm_{i}^{\nu,\cont}(r_{\ell})\right](I_{\ell}(r_{\ell}))\leq &\left[\wm_{i}^{\nu,\cont}(0)\right](J_{2})+\\
\notag
&+\OL(1) \left(\mu_{\nu}^{ICJS} \right)\left(A_{\{x_{m}^{\nu,\ell/r}\}}^{0,t}\right)+k\OL(1) \beta_\nu\,.
\ea
We used above that $A_{I_{\ell}}^{r_{\ell_{i}},r_{\ell_{i+1}}}\subseteq A_{I_{\ell}}^{0,r_{\ell}}\subseteq A_{\{x_{m}^{\nu,\ell/r}\}}^{0,t}$ for all $\ell\in S_{2}$ and $\mu_{\nu}^{ICJS} $ nonnegative.
By~\eqref{E:robuvpbwqwgqqgqgw}-\eqref{E:sommaAux} thus
\ba\label{E:auxjdjejjeje333he}
-\frac34 {\left[ \wm_{i}^{\nu,\cont}(0)\right](J_{2})}     & \leq\OL(1)  \left(\mu_{\nu}^{ICJS} \right)\left(A_{\{x_{m}^{\nu,\ell/r}\}}^{0,t}\right)+k\OL(1)\beta_{\nu}  \ .
\ea
Joining the two cases, thus adding~\eqref{E:auxjdjejjeje} and~\eqref{E:auxjdjejjeje333he} multiplied by $4/3$, we get the claim~\eqref{E:thesisnfakvaargrarw}.
\end{proof}

\subsection{Decay estimate for the positive continuous part}
\label{Ss:decay+}

We keep the notation introduced in \S~\ref{S:balances-} and, once more, the relation in Remark~\ref{R:relation} among the vanishingly small parameters.

Let $i\in\{1,\dots,N\}$.
 
\begin{lemma}
Let $0\leq s\leq t\leq T$. 
Suppose $J=\cup_{m=1}^{k}[x_{m}^{\nu,\ell}(t),x_{m}^{\nu,r}(t)]$ is the union of $k$ disjoint intervals, where the generalized $i$-characteristics are as in~\eqref{E:orderCurves} and the region $A_{\{x_{m}^{\nu,\ell/r}\}}^{s,t}$ is defined in~\eqref{E:ffdgagwagfffff}.
Then
\ba\label{E:grgeraeraeaagargrwa}
 {\left[ \wm_{i}^{\nu}(t)\right](J)} \lesssim\frac{\Ll^{1}(J)}{t-s}+\left(\mu_{\nu}^{ICS} \right)\left(A_{\{x_{m}^{\nu,\ell/r}\}}^{s,t}\right)+k\beta_{\nu} \ .
\ea
\end{lemma}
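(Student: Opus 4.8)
The plan is to mirror the proof of Lemma~\ref{L:grgaaear}, but tracing the characteristic region \emph{backward} from the final time $t$ and exploiting the spreading of rarefaction fronts rather than the formation of shocks; this is the classical positive-wave Oleinik estimate (as in~\cite[Theorem 10.3]{BressanBook}) made compatible with the fractional-step approximation and with the balance of Lemma~\ref{L:balance0}. By additivity of the signed measure $\wm_i^{\nu}(t)$ and the telescopic-in-time bookkeeping used in Step~5 of the proof of Lemma~\ref{L:balance0} (which reduces the general configuration~\eqref{E:orderCurves} to the strictly separated one), it suffices to prove, for a single interval $I=[a(t),b(t)]$ with $a(\zeta)=x_{m}^{\nu,\ell}(\zeta)$, $b(\zeta)=x_{m}^{\nu,r}(\zeta)$ for $s\le\zeta\le t$ and region $A=A_{I}^{s,t}$, that $\eta_{0}\doteq[\wm_i^{\nu}(t)](I)\lesssim \Ll^{1}(I)/(t-s)+\mu_{\nu}^{ICS}(A)+\beta_{\nu}$; we may assume $\eta_{0}>0$. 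Set $z(\zeta)=b(\zeta)-a(\zeta)\ge0$, a Lipschitz function with $\dot z(\zeta)=\widetilde{\lambda_{i}^{\nu}}(\zeta,b(\zeta))-\widetilde{\lambda_{i}^{\nu}}(\zeta,a(\zeta))$ off interaction and update times.

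First I would recall the piecewise Lipschitz Bressan functional $\Phi$ of~\eqref{E:PhiAlberto} and the inequality~\eqref{est:Bressan}, which here gives $\dot z(\zeta)-\dot\Phi(\zeta)z(\zeta)\ge[\wm_i^{\nu}(\zeta)](I(\zeta))-\xi(\zeta)-\OL(1)\varepsilon_{\nu}$ off interaction and update times, with $\dot\Phi\ge0$ and $\int_{s}^{t}\dot\Phi$ uniformly bounded, as recorded in \S~\ref{Ss:decay-}. The boundary term $\xi(\zeta)$ must be bounded \emph{from above} by $\OL(1)(\varepsilon_{\nu}+\beta_{\nu})$: any $i$-wave sitting along a lateral boundary of $A$ is, by construction, an $i$-characteristic, hence either a compressive $i$-shock --- whose Rankine--Hugoniot speed lies below the adjacent characteristic speed by genuine nonlinearity, up to the speed error~\eqref{shockspeederr} --- or a rarefaction front of vanishingly small strength, so that both $\widetilde{\lambda_{i}^{\nu}}(\zeta,a(\zeta))-\widetilde{\lambda_{i}^{\nu}}(\zeta,a(\zeta-))$ and $\widetilde{\lambda_{i}^{\nu}}(\zeta,b(\zeta+))-\widetilde{\lambda_{i}^{\nu}}(\zeta,b(\zeta))$ are $\le\OL(1)(\varepsilon_{\nu}+\beta_{\nu})$.

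Then I would argue by dichotomy. If $\eta_{0}\le C_{0}\big(\mu_{\nu}^{ICS}(A)+\varepsilon_{\nu}+\beta_{\nu}\big)$ for a suitable absolute constant $C_{0}$, the claim for $I$ is immediate (recall $\varepsilon_{\nu}\le\beta_{\nu}$ by Remark~\ref{R:relation}). Otherwise, applying the upper balance~\eqref{E:allest} on the time interval $(\zeta,t)$ gives $[\wm_i^{\nu}(\zeta)](I(\zeta))\ge\eta_{0}-\OL(1)\mu_{\nu}^{ICS}(A)-\OL(1)\varepsilon_{\nu}\ge\eta_{0}/2$ for every $\zeta\in(s,t)$ once $C_{0}$ is chosen large; combined with the bound on $\xi$ this forces $\dot z(\zeta)-\dot\Phi(\zeta)z(\zeta)\ge\eta_{0}/4$ for a.e.\ $\zeta\in(s,t)$. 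Multiplying by the integrating factor $e^{-\int_{s}^{\zeta}\dot\Phi}$, integrating over $[s,t]$, and using $z(s)\ge0$ together with $\dot\Phi\ge0$, one obtains $z(t)\ge\tfrac14\eta_{0}(t-s)$, i.e.\ $\eta_{0}\le 4\,\Ll^{1}(I)/(t-s)$. In either case $[\wm_i^{\nu}(t)](I)\lesssim \Ll^{1}(I)/(t-s)+\mu_{\nu}^{ICS}(A)+\beta_{\nu}$; summing over the disjoint intervals $I_{m}$ of $J$, using $\sum_{m}\Ll^{1}(I_{m})=\Ll^{1}(J)$, $\sum_{m}\mu_{\nu}^{ICS}(A_{I_{m}}^{s,t})\lesssim\mu_{\nu}^{ICS}(A_{\{x_{m}^{\nu,\ell/r}\}}^{s,t})$ (the regions having disjoint interiors by~\eqref{E:orderCurves}, the coincident boundaries organised telescopically as in Step~5 of Lemma~\ref{L:balance0}), and $[\wm_i^{\nu}(t)](J)\le\sum_{m:\,[\wm_i^{\nu}(t)](I_{m})>0}[\wm_i^{\nu}(t)](I_{m})$, yields~\eqref{E:grgeraeraeaagargrwa}.

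The main obstacle I expect is the careful treatment of the boundary term $\xi$ --- showing that a lateral $i$-characteristic never carries a genuine positive $i$-wave that would artificially inflate $z(\zeta)$, so that the width growth is really driven by the \emph{interior} rarefaction waves --- together with the bookkeeping when the lateral boundaries of the $k$ regions coincide on time intervals, which for piecewise-constant approximations happens only finitely often but must be organised into the telescopic sum already used in Lemma~\ref{L:balance0}. By contrast, the integrating-factor step is routine once~\eqref{est:Bressan} and~\eqref{E:allest} are available, since the jumps of $\Phi$ at interaction and update times do not affect the continuous function $z$.
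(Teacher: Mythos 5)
Your proposal follows essentially the same route as the paper's own proof: the Bressan functional $\Phi$ of~\eqref{E:PhiAlberto}, the differential inequality for the width $z(\zeta)$ of the backward-traced characteristic region, a dichotomy between linear growth of the width (yielding the $\Ll^{1}(J)/(t-s)$ term) and control of the wave amount by $\mu_{\nu}^{ICS}$ via the balance~\eqref{E:allest}, and the telescopic bookkeeping over the $k$ intervals; your reorganisation of the dichotomy (testing the size of $\eta_{0}$ against the measure rather than testing the differential inequality at all times) is just the contrapositive of the paper's Case 1/Case 2 split. The only slip is the sign of the $\dot\Phi z$ term: \eqref{est:Bressanbis} gives $\dot z+\dot\Phi z\ge\left[\wm_{i}^{\nu}(\zeta)\right](I(\zeta))-\xi(\zeta)-\OL(1)\varepsilon_{\nu}$, not $\dot z-\dot\Phi z\ge\cdots$, so the integrating factor must be $e^{+\int\dot\Phi}$ rather than $e^{-\int\dot\Phi}$, which only changes the final constant by the uniformly bounded factor $e^{\int_{0}^{T}\dot\Phi}$.
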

This part is an extension of~\cite[Theorem 10.3]{BressanBook} relative to genuinely nonlinear, $1d$, conservation laws.

\begin{remark}
We stress that the relevant information from Estimate~\eqref{E:grgeraeraeaagargrwa} is on the positive part of the measure, while the negative is not relevant in that estimate.
\end{remark}

\begin{proof}
For simplicity of notations, by the semigroup property assume that $s=0$; denote $t^{*}$ the final time $t$. 

Consider at time $t^{*}$ any closed interval $I=[a,b]$ of which $J$ is made of, for example $[x_{m}^{\nu,\ell}(t^{*}),x_{m}^{\nu,r}(t^{*})]$. Set
%
\[
 a(t)\doteq{}x_{m}^{\nu,\ell}(t)\ , \qquad b(t)\doteq{}x_{m}^{\nu,r}(t)
 \qquad \text{for $0\leq t\leq t^{*}$.}
\]
For $t\in[0,t^{*}]$, set $I(t)=[a(t),b(t)]$ and $z(t)=\Ll^{1}(I(t))=b(t)-a(t)$. Then by definition
\[
\dot z(t)=\dot b(t)-\dot a(t)=\widetilde{\lambda_{i}^{\nu}}(t,b(t))-\widetilde{\lambda_{i}^{\nu}}(t,a(t))\ ,
\]
where $\lambda_{i}^{\nu}$ is the speed of the $i$-th wave of $u^{\nu}$ in Definition~\ref{def:velCarAppr}.

Recall the explicit expression of the \emph{piecewise} Lipschitz continuous function $\Phi$ in~\eqref{E:PhiAlberto}.
Remind again that while in~\cite{BressanBook} the function $\Phi$ can jump only at interaction times, here $\Phi$ can jump also at update times: at interaction times as before the variation of $\Phi$ is controlled by the Glimm function~\cite[(10.51)]{BressanBook}, while at update times it is controlled by the source measure by~\cite[Remark 4.3]{ACM1}. 
As before, it satisfies $\dot\Phi\geq0$ where differentiable, which is excluding interaction and update times, and $\Phi$ controls the variation of the $i$-wave due to waves of other families in this quantitative way:
\ba\label{est:Bressanbis}
\abs{\dot z(t)+\xi(t)-\left[\wm_{i}^{\nu}(t)\right](I(t))}\leq \dot\Phi(t)z(t)+\OL(1)\varepsilon_{\nu}\ ,
\ea
where we define
\[
\xi(t)\doteq{}\pr{\widetilde{\lambda_{i}}(t,a(t ))-\widetilde{\lambda_{i}}(t,a(t)-)}+\pr{\widetilde{\lambda_{i}}(t,b(t)+)-\widetilde{\lambda_{i}}(t,b(t) )}\ .
\]
Since by its very definition $\Phi(t)$ is controlled by the total variation of $u^{\nu}(t)$, then $\int_{0}^{r}\dot\Phi $ is uniformly bounded at finite times and it can be estimated in terms only of the total variation of $u^{\nu}(t)$, the Glimm function and the source measure, in turn estimated in Lemma~\ref{L:estSource}.
.

Suppose $\left[ \wm_{i}^{\nu}(t^{*})\right](I)>0$, otherwise thesis is trivial. Two cases are possible:

\paragraph{Case 1} Suppose, in addition to $\left[ \wm_{i}^{\nu,\cont}(0)\right](I)>0$,  that
\bas
\dot z(r)+\dot\Phi(r)z(r)>\frac{\left[ \wm_{i}^{\nu}(t^{*})\right](I)}{4}
&&\forall r\in(0,t^{*})\ .
\eas
By elementary calculus, when $r\in (0,t^{*})$, this assumption implies
\bas
\ddr\pr{z(r)e^{\int_{0}^{r}\dot\Phi }}>\frac{\left[ \wm_{i}^{\nu}(t^{*})\right](I)}{4}\cdot e^{\int_{0}^{r}\dot\Phi }\geq \frac{\left[ \wm_{i}^{\nu}(t^{*})\right](I)}{4}>0\ .
\eas
Integrating the inequality in $(0,t^{*})$ we get
\bas
z(t)\exp\pr{ \int_{0}^{t^{*}}\dot\Phi(s)ds }-z(s)> \frac{\left[ \wm_{i}^{\nu}(t^{*})\right](I)}{4} \cdot t^{*}
\ .
\eas
Of course, having $z\geq0$ and $\dot\Phi\geq0$,
\bas
 \exp\pr{\int_{0}^{t^{*}}\dot\Phi(s)ds }z(t)\geq \frac{\left[ \wm_{i}^{\nu}(t^{*})\right](I)}{4}\cdot t^{*}
\ .
\eas
Taking into account that $\int_{0}^{T}\dot\Phi$ is uniformly bounded by some constant depending only on time and on the system, such inequality can be rewritten as
\ba\label{E:estCase1+}
&0<\left[ \wm_{i}^{\nu}(t^{*})\right]^{}(I)\leq C(T)\cdot \frac{\Ll^{1}(I) }{t^{*}}
&&
 C(T)\geq 4e^{\int_{0}^{T}\dot \Phi}\ .
\ea

\paragraph{Case 2} Suppose, in addition to $\left[ \wm_{i}^{\nu,\cont}(0)\right](I)>0$,  that
\bas
\dot z(r)+\dot\Phi(r)z(r)\leq\frac{\left[ \wm_{i}^{\nu}(t)\right](I)}{4}
&&\text{at some time } r\in(0, t^{*})\ .
\eas
Reordering terms in the estimate~\eqref{est:Bressanbis}, and since $-\xi(r)\geq -\OL(1)  \varepsilon_\nu$, we get thus
\bas
\dot z(r)+\dot\Phi(r)z(r)\geq \left[ \wm_{i}^{\nu}(r)\right](I(r))
- \OL(1) \varepsilon_\nu \ .
\eas
Jointly with the hypothesis of Case 2, at some $r\in(0,t^{*})$ the latter estimate yields
\ba\label{E:grggggqrgrqgrgrgqgq}
 \frac{\left[ \wm_{i}^{\nu}(t^{*})\right](I)}{4}
 \geq \left[ \wm_{i}^{\nu}(r)\right](I(r))
- \OL(1) \varepsilon_\nu \ .
\ea
\paragraph{Conclusion} 
 Suppose now $J=\cup_{\ell=1}^{k}I_{\ell}$ where $\{I_{\ell}\}_{\ell=1}^{k}$ is a disjoint family of $k$ closed intervals.
 Let $S_{1}$ be the set of indexes $\ell$ where the interval $I_{\ell}$, bounded by the two curves $x_{m}^{\nu,\ell}$ and $x_{m}^{\nu,r}$ only, fall in Case 1.
 Let $S_{2}=\{1,\dots,k\}\setminus S_{1}$ be thus the set of indexes $\ell$ where the interval $I_{\ell}$, bounded by the two curves $x_{m}^{\nu,\ell}$ and $x_{m}^{\nu,r}$ only, fall in Case 2.

Focusing only on intervals that fall in Case 1, thanks to~\eqref{E:estCase1+} on each interval and by additivity of measures, the set $J_{1}\doteq{}\cup_{\ell\in S_{1}}I_{\ell}$ satisfies
\ba\label{E:auxjdjejjejebis}
&0<-\left[ \wm_{i}^{\nu}(t^{*})\right]^{}(J_{1})\leq C(T)\cdot \frac{\Ll^{1}(J_{1}) }{t^{*}}\,,
&&
 C(T)\geq4e^{\int_{0}^{T}\dot \Phi} \ .
\ea

On the other hand the set $J_{2}\doteq{}\cup_{\ell\in S_{2}}I_{\ell}$, by additivity of measures jointly with \eqref{E:grggggqrgrqgrgrgqgq}, satisfies
\ba\label{E:robuvpbwqwgqqgqgwbis}
&\frac{\left[ \wm_{i}^{\nu}(t^{*})\right](J_{2})}{4}\geq\sum_{\ell\in S_{2}}\left[\wm_{i}^{\nu}(r_{\ell})\right](I(r_{\ell}))-\OL(1)k\varepsilon_{\nu} 
\ea
for some $r_{\ell}\in (0,t^{*})$ and for $\ell\in S_{2}$.

We now work on the sum in the right hand side to bring every addend to the final time $t=t^{*}$.
Reorder such special times so that $r_{\ell_{m}}\leq r_{\ell_{m+1}}$ for $m=1,\dots,\#S_{2}$.
By the continuous balance estimate~\eqref{E:allest} 
\bas
 \left[\wm_{i}^{\nu}(r_{\ell_{m}})\right](I(r_{\ell_{m}}))\geq  &\left[\wm_{i}^{\nu}(r_{\ell_{m+1}})\right](I(r_{\ell_{m+1}}))+
 \\
 &-\OL(1) \left(\mu_{\nu}^{ICS} \right)\left(A_{I_{\ell_{m+1}}}^{r_{\ell_{m}},r_{\ell_{m+1}}}\right)-\OL(1) \varepsilon_\nu\,,
\eas
so that, arguing similarly to the proof of the decay for the negative waves in Lemma~\ref{L:grgaaear}, going up to time $t^{*}$,
\ba\label{E:rrerefrffwqfwq}
\sum_{\ell\in S_{2}}\left[\wm_{i}^{\nu}(r_{\ell})\right](I_{\ell}(r_{\ell}))
\geq 
&\left[\wm_{i}^{\nu}(t^{*})\right](J_{2})+
\\
&\notag\qquad-\OL(1) \left(\mu_{\nu}^{ICS} \right)\left(A_{\{x_{m}^{\nu,\ell/r}\}}^{0,t^{*}}\right)-k\OL(1) \beta_\nu\,,
\ea
were we also used the relations in Remark~\ref{R:relation}, that now we apply also below.
We used that $A_{I_{\ell}}^{r_{j},r_{\ell}}\subseteq A_{I_{\ell}}^{0,r_{\ell}}\subseteq A_{\{x_{m}^{\nu,\ell/r}\}}^{0,t}$ for all $\ell\in S_{2}$ and $\mu_{\nu}^{ICJS}$ being nonnegative.
By~\eqref{E:robuvpbwqwgqqgqgwbis}-\eqref{E:rrerefrffwqfwq} thus
\ba\label{E:auxjdjejjeje333}
\frac34 {\left[ \wm_{i}^{\nu}(t^{*})\right](J_{2})}     & \leq \OL(1) \left(\mu_{\nu}^{ICS} \right)\left(A_{\{x_{m}^{\nu,\ell/r}\}}^{0,t^{*}}\right)+k\OL(1)\beta_{\nu} \ .
\ea

Joining the two cases, thus adding~\eqref{E:auxjdjejjejebis} and~\eqref{E:auxjdjejjeje333}, multiplied by $4/3$ we get the claim~\eqref{E:grgeraeraeaagargrwa}.
\end{proof}

%
%
\nomenclature{$\mathrm{ceil}$}{Smallest integer bigger than a given real number, i.e.~integer part of the number plus one}
\nomenclature{$ \delta_{hk}$}{Delta di Kronecker, which is equal to $1$ if $h=k$ and it vanishes otherwise}
\nomenclature{$\hat\lambda$}{Uniform bound for the characteristic speeds of very family}
\nomenclature{$\vSC{t}{h}\bar u$}{The viscous semigroup of the Cauchy problem Eqs.~\eqref{eq:sysnc}-\eqref{eq:inda} starting at time $h$, rather than fixing the initial time $h=0$. See~\cite{Ch1}}
\nomenclature{$\vSB{t}{h}$}{The semigroup of the Cauchy problem for the homogeneous system of Eqs.~\eqref{eq:sysnc}-\eqref{eq:inda} when $g\equiv 0$ starting at time $h$, rather than fixing the initial time $h=0$. See~\cite{BB}}
\nomenclature{$\ftS{t}{h}$}{The $\varepsilon_{\nu}$-wave-front tracking approximation of $\vSB{t}{h}$ by~\cite{AMfr}}
\nomenclature{$\Omega$}{Open, bounded, connected subset of $\real^N$ where $u$ is valued}
\nomenclature{$\dssb$}{Smallness parameter for initial datum in the Cauchy problem of the viscous system of Eqs.~\eqref{eq:sysnc} as in Theorem~\ref{T:localConv}}
\nomenclature{$ \dH$}{Smallness parameter for initial datum in the Cauchy problem of the homogeneous system as at Page~\pageref{E:rgabab} which provides a threshold for global existence and convergence of wave-front tracking approximations. See~\cite{AMfr}, where it is denoted by $\delta_{0}$}
\nomenclature{$\varepsilon$}{Positive parameter}
\nomenclature{$\nu$}{Positive integer parameter relative to subsequences of $\varepsilon_{\nu}$-wave-front tracking approximations or of $(\varepsilon_{\nu},\tau_{\nu})$-fractional-step approximations. See~\S~\ref{sec:PCA}}
\nomenclature{$\varepsilon_{\nu}$}{Positive vanishing constant, as $\nu\uparrow\infty$, in $\varepsilon_{\nu}$-wave-front tracking approximation and in $(\varepsilon_{\nu},\tau_{\nu})$-fractional-step approximation. It correspond also to maximum size of rarefactions}
\nomenclature{$\tau_{\nu}$}{Size of the time-step in $(\varepsilon_{\nu},\tau_{\nu})$-fractional-step approximations. See~\S~\ref{sec:PCA}}
\nomenclature{$u_{\nu}$, $u$}{Often, $\varepsilon_{\nu}$-wave front tracking approximation of the Cauchy problem in Equation~
\eqref{eq:sysnc}-\eqref{eq:inda}, either homogeneous or not, and its limit entropy solution, as constructed in~\cite{AMfr} and recalled in \S~\ref{Ss:frontTr}}
\nomenclature{$\ww_{\nu}$, $\ww$}{Fractional step approximation, and its limit as $\nu\uparrow\infty$, of the Cauchy problem in Eqs.~\eqref{eq:sysnc}-\eqref{eq:inda} that we review in~\S\S~\ref{sec:PCA}-\ref{s:reviewDisc}. We fix the right-continuous representative in time and space}
\nomenclature{$\ell_{g}$}{The Lipschitz constant of $g(\cdot, x,u)$ in $x,u$. See the Assumption~\textbf{(G)} at Page~\pageref{Ass:G}}
\nomenclature{$\alpha$}{See the assumption \textbf{(G)} at Page~\pageref{alpha}}
\nomenclature{$\Phi$}{Functional defined at Equation~
\eqref{eq2:RP1}}
\nomenclature{$\Qg$, $\mathcal{V}$, $\Upsilon$}{Functionals defined
in Eqs.~\eqref{eq:ip}-\eqref{eq:Ups}}
\nomenclature{$\lesssim$}{ Less or equal up to a constant which only depends only on the flux of Equation~
\eqref{eq:sysnc} and on $\dss$ or $\dH$}
\nomenclature{$\mu_{(\nu)}^{I}$, $\mu_{(\nu)}^{IC}$}{Interaction and interaction-cancellation measures~\eqref{E:interactionMeas}-\eqref{E:muICl}, either on a sequence of ($\varepsilon_{\nu}$-$\tau_{\nu}$)-fractional step approximations or a fixed limit of them}

\nomenclature{$(\beta,i)$-approximate discontinuity}{As in Definition~\ref{D:subdisc}}
\nomenclature{$\Theta$}{At most countable set containing interaction points~\eqref{E:muICl}}
\nomenclature{$\TV(v)$}{The total variation of $v:\real\to\real^m$, $m\in\nat$, which is $\sup_{K\in\nat}\sup_{x_{1}<\dots<x_{K}}\sum_{j}|v(x_{j+1})-v(x_{j})|$}


\begin{thenomenclature} 
\nomgroup{A}
  \item [{$ \delta_{hk}$}]\begingroup Delta di Kronecker, which is equal to $1$ if $h=k$ and it vanishes otherwise\nomeqref {B.32}\nompageref{34}
  \item [{$ \dH$}]\begingroup Smallness parameter for initial datum in the Cauchy problem of the homogeneous system as at Page~\pageref{E:rgabab} which provides a threshold for global existence and convergence of wave-front tracking approximations. See~\cite{AMfr}, where it is denoted by $\delta_{0}$\nomeqref {B.32}\nompageref{34}
  \item [{$ \widetilde{\lambda_{i}^{\nu}}$}]\begingroup Speed of the $i$-th wave of the $\nu$-approximate solution $u^{\nu}$ in Definition~\ref{def:velCarAppr}\nomeqref {B.7}\nompageref{22}
  \item [{$(\beta,i)$-approximate discontinuity}]\begingroup As in Definition~\ref{D:subdisc}\nomeqref {B.32}\nompageref{34}
  \item [{$[D_{x}\lambda_{i}(u)]_{i}$}]\begingroup The $i$-component of $D_{x}\lambda_{i}(u)$ in Definition~\ref{D:icomponent}\nomeqref {3.1}\nompageref{8}
  \item [{$\alpha$}]\begingroup See the assumption \textbf{(G)} at Page~\pageref{alpha}\nomeqref {B.32}\nompageref{34}
  \item [{$\beta_{\nu}$}]\begingroup See Remark~\ref{R:relation}\nomeqref {B.5}\nompageref{20}
  \item [{$\dssb$}]\begingroup Smallness parameter for initial datum in the Cauchy problem of the viscous system of Eqs.~\eqref{eq:sysnc} as in Theorem~\ref{T:localConv}\nomeqref {B.32}\nompageref{34}
  \item [{$\ell_{g}$}]\begingroup The Lipschitz constant of $g(\cdot, x,u)$ in $x,u$. See the Assumption~\textbf{(G)} at Page~\pageref{Ass:G}\nomeqref {B.32}\nompageref{34}
  \item [{$\ftS{t}{h}$}]\begingroup The $\varepsilon_{\nu}$-wave-front tracking approximation of $\vSB{t}{h}$ by~\cite{AMfr}\nomeqref {B.32}\nompageref{34}
  \item [{$\Ha^{M}$}]\begingroup The Hausdorff $M$-dimensional external measure, where $M\in\N\cup\{0\}$\nomeqref {2.2}\nompageref{5}
  \item [{$\hat\lambda$}]\begingroup Uniform bound for the characteristic speeds of very family\nomeqref {B.32}\nompageref{34}
  \item [{$\lesssim$}]\begingroup  Less or equal up to a constant which only depends only on the flux of Equation~ \eqref{eq:sysnc} and on $\dss$ or $\dH$\nomeqref {B.32}\nompageref{34}
  \item [{$\Ll^{M}$}]\begingroup The Lebesgue $M$-dimensional measure, where $M\in\N\cup\{0\}$\nomeqref {2.2}\nompageref{5}
  \item [{$\mathrm{ceil}$}]\begingroup Smallest integer bigger than a given real number, i.e.~integer part of the number plus one\nomeqref {B.32}\nompageref{34}
  \item [{$\mu^{I}$, $\mu^{IC}$}]\begingroup An interaction and an interaction-cancellation measures of $u$ in Definition~\ref{D:ICmeasures}\nomeqref {B.5}\nompageref{20}
  \item [{$\mu^{I}_{\nu}$, $\mu^{IC}_{\nu}$}]\begingroup The interaction and interaction-cancellation measures of $u^{\nu}$ in Definition~\ref{D:ICmeasuresnu}\nomeqref {B.3}\nompageref{20}
  \item [{$\mu_{(\nu)}^{I}$, $\mu_{(\nu)}^{IC}$}]\begingroup Interaction and interaction-cancellation measures~\eqref{E:interactionMeas}-\eqref{E:muICl}, either on a sequence of ($\varepsilon_{\nu}$-$\tau_{\nu}$)-fractional step approximations or a fixed limit of them\nomeqref {B.32}\nompageref{34}
  \item [{$\mu_{\nu}^{\sour}$}]\begingroup The $\nu$-approximate source measure in Definition~\ref{D:measuresnu}\nomeqref {B.7}\nompageref{22}
  \item [{$\mu_{\nu}^{ICJS}$}]\begingroup The interaction-cancellation-jump-source measure in Definition~\ref{D:measuresnu}\nomeqref {B.7}\nompageref{22}
  \item [{$\mu_{\nu}^{ICJS}$}]\begingroup The interaction-cancellation-source measure in Definition~\ref{D:measuresnu}\nomeqref {B.7}\nompageref{22}
  \item [{$\mu_{i}^{\nu,\jump} $}]\begingroup The $\nu$-jump-wave-balance measure in Definition~\ref{D:measuresnu}\nomeqref {B.7}\nompageref{22}
  \item [{$\mu_{i}^{\nu} $}]\begingroup The $\nu$-approximate wave-balance measure in Definition~\ref{D:measuresnu}\nomeqref {B.7}\nompageref{22}
  \item [{$\nu$}]\begingroup Positive integer parameter relative to subsequences of $\varepsilon_{\nu}$-wave-front tracking approximations or of $(\varepsilon_{\nu},\tau_{\nu})$-fractional-step approximations. See~\S~\ref{sec:PCA}\nomeqref {B.32}\nompageref{34}
  \item [{$\Omega$}]\begingroup Open, bounded, connected subset of $\real^N$ where $u$ is valued\nomeqref {B.32}\nompageref{34}
  \item [{$\Phi$}]\begingroup Functional defined at Equation~ \eqref{eq2:RP1}\nomeqref {B.32}\nompageref{34}
  \item [{$\Qg$, $\mathcal{V}$, $\Upsilon$}]\begingroup Functionals defined in Eqs.~\eqref{eq:ip}-\eqref{eq:Ups}\nomeqref {B.32}\nompageref{34}
  \item [{$\SBV(I;\Omega)$}]\begingroup See Definition~\ref{D:cantorPart1d}, when $\Omega\subset \R^{N}$ is open and $I= (a,b)\subseteq\R$\nomeqref {1.6}\nompageref{2}
  \item [{$\SBV_{\loc}((t_{1},t_{2})\times\R;\Omega)$}]\begingroup Subspace of $\BV_{\loc}((t_{1},t_{2})\times\R;\Omega)$, where $\Omega\subseteq \R^{M}$ is open, consisting of those functions whose derivative does not have the Cantor part, see~\cite[\S~3.9]{AFPBook}\nomeqref {2.2}\nompageref{5}
  \item [{$\tau_{\nu}$}]\begingroup Size of the time-step in $(\varepsilon_{\nu},\tau_{\nu})$-fractional-step approximations. See~\S~\ref{sec:PCA}\nomeqref {B.32}\nompageref{34}
  \item [{$\Theta$}]\begingroup At most countable set containing interaction points~\eqref{E:muICl}\nomeqref {B.32}\nompageref{34}
  \item [{$\varepsilon$}]\begingroup Positive parameter\nomeqref {B.32}\nompageref{34}
  \item [{$\varepsilon_{\nu}$}]\begingroup Positive vanishing constant, as $\nu\uparrow\infty$, in $\varepsilon_{\nu}$-wave-front tracking approximation and in $(\varepsilon_{\nu},\tau_{\nu})$-fractional-step approximation. It correspond also to maximum size of rarefactions\nomeqref {B.32}\nompageref{34}
  \item [{$\vSB{t}{h}$}]\begingroup The semigroup of the Cauchy problem for the homogeneous system of Eqs.~\eqref{eq:sysnc}-\eqref{eq:inda} when $g\equiv 0$ starting at time $h$, rather than fixing the initial time $h=0$. See~\cite{BB}\nomeqref {B.32}\nompageref{34}
  \item [{$\vSC{t}{h}\bar u$}]\begingroup The viscous semigroup of the Cauchy problem Eqs.~\eqref{eq:sysnc}-\eqref{eq:inda} starting at time $h$, rather than fixing the initial time $h=0$. See~\cite{Ch1}\nomeqref {B.32}\nompageref{34}
  \item [{$\wm_i$}]\begingroup The $i$-th wave measure in Definition~\ref{D:upsiloni}\nomeqref {2.4}\nompageref{6}
  \item [{$\wm_i^{\nu}$}]\begingroup The $i$-wave measure of $u^{\nu}$, Definition~\ref{D:approximateDiscC}\nomeqref {B.1}\nompageref{20}
  \item [{$\wm_{\beta,i}^{\nu,\cont}$}]\begingroup The approximate $(\beta,i)$-continuity measure of $u^{\nu}$, Definition~\ref{D:approximateDiscC}\nomeqref {B.1}\nompageref{20}
  \item [{$\wm_{\beta,i}^{\nu,\jump}$}]\begingroup The approximate $(\beta,i)$-jump measure of $u^{\nu}$, Definition~\ref{D:approximateDiscC}\nomeqref {B.1}\nompageref{20}
  \item [{$\ww_{\nu}$, $\ww$}]\begingroup Fractional step approximation, and its limit as $\nu\uparrow\infty$, of the Cauchy problem in Eqs.~\eqref{eq:sysnc}-\eqref{eq:inda} that we review in~\S\S~\ref{sec:PCA}-\ref{s:reviewDisc}. We fix the right-continuous representative in time and space\nomeqref {B.32}\nompageref{34}
  \item [{$u_{\nu}$, $u$}]\begingroup Often, $\varepsilon_{\nu}$-wave front tracking approximation of the Cauchy problem in Equation~ \eqref{eq:sysnc}-\eqref{eq:inda}, either homogeneous or not, and its limit entropy solution, as constructed in~\cite{AMfr} and recalled in \S~\ref{Ss:frontTr}\nomeqref {B.32}\nompageref{34}

\end{thenomenclature}

\section*{Acknowledgments}
All authors are members of the Gruppo Nazionale per l'Analisi Matematica, la Probabilit\`a e le loro Applicazioni (GNAMPA) of the Istituto Nazionale di Alta Matematica (INdAM) and are supported by the PRIN national project ``Hyperbolic Systems of Conservation Laws and Fluid Dynamics: Analysis and Applications''. This research was supported by PRIN 2020 ``Nonlinear evolution PDEs, fluid dynamics and transport equations: theoretical foundations and applications'' and PRIN PNRR P2022XJ9SX of the European Union – Next Generation EU.


\end{document}